\numberwithin{equation}{section}
\newtheorem{prop}{Proposition}
\newtheorem{lemma}[prop]{Lemma}
\newtheorem{thm}[prop]{Theorem}
\newtheorem{cor}[prop]{Corollary}
\numberwithin{prop}{section}
\theoremstyle{definition}
\newtheorem{defn}[prop]{Definition}
\newtheorem{rmk}[prop]{Remark}
\newcommand{\del}{\partial}
\newcommand{\dt}{\frac{\partial}{\partial t}}
\newcommand{\brs}[1]{\left| #1 \right|}
\newcommand{\gG}{\Gamma}
\renewcommand{\gg}{\gamma}
\newcommand{\gD}{\Delta}
\newcommand{\gd}{\delta}
\newcommand{\gs}{\sigma}
\newcommand{\gl}{\lambda}
\newcommand{\gw}{\omega}
\newcommand{\ga}{\alpha}
\newcommand{\gb}{\beta}
\renewcommand{\ge}{\epsilon}
\newcommand{\N}{\nabla}
\newcommand{\FF}{\mathcal F}
\newcommand{\nm}[2]{\brs{\brs{ #1}}_{#2}}
\renewcommand{\bar}[1]{\overline{#1}}
\newcommand{\TT}{\mathcal T}
\DeclareMathOperator{\Rc}{Rc}
\DeclareMathOperator{\Rm}{Rm}
\DeclareMathOperator{\inj}{inj}
\DeclareMathOperator{\tr}{tr}
\DeclareMathOperator{\grad}{grad}
\DeclareMathOperator{\Vol}{Vol}
\DeclareMathOperator{\diam}{diam}
\DeclareMathOperator{\Area}{Area}
\DeclareMathOperator{\Jac}{Jac}
\DeclareMathOperator{\dist}{dist}
\newcommand{\IP}[1]{\left<#1\right>}
\begin{document}

\title[A concentration-collapse decomposition for $L^2$ flow singularities]{A
concentration-collapse
decomposition for $L^2$ flow singularities}

\begin{abstract} We exhibit a concentration-collapse decomposition of
singularities of fourth order curvature flows, including the $L^2$ curvature
flow
and Calabi flow, in dimensions $n \leq 4$.  The proof requires the development
of several new a priori estimates.  First, we develop a smoothing result
for initial metrics with small energy and a volume growth lower
bound, in the vein of Perelman's pseudolocality result.  Next, we generalize our
technique from prior work to exhibit local smoothing estimates for the $L^2$
flow in the presence of a curvature-related bound.  A final key ingredient is a
new
local $\ge$-regularity result for $L^2$-critical metrics with possibly
nonconstant scalar curvature.  Applications of these results include new
compactness and diffeomorphism-finiteness theorems for smooth compact
four-manifolds satisfying the necessary and effectively minimal hypotheses of
$L^2$ curvature pinching and a volume noncollapsing condition.
\end{abstract}

\date{October 14th, 2013}

\author{Jeffrey Streets}

\thanks{The author gratefully acknowledges support from the National Science
Foundation and the Sloan Foundation.}

\maketitle

\section{Introduction}
\subsection{Background}

Given $M^n$ a smooth compact manifold, consider the functional of Riemannian
metrics
\begin{align*}
\FF(g) := \int_M \brs{\Rm_g}^2_g dV_g.
\end{align*}
The Euler-Lagrange equation for this functional is a fourth-order degenerate
elliptic equation for a Riemannian metric, and critical metrics for $\FF$
provide natural candidates for optimal metrics on manifolds, especially in
dimension $4$.  Indeed, in this dimension critical points for $\FF$ include
well-known classes of metrics such as Einstein metrics and scalar-flat,
half-conformally flat metrics.  The existence of critical metrics in dimension
$4$ has an interesting relationship to the smooth topology of $M$
(\cite{LeBrun1, LeBrun2}).

A natural tool for understanding the existence of critical metrics is via the
negative gradient flow of $\FF$, which we will refer to for convenience as the
\emph{$L^2$ flow}:
\begin{align*}
\dt g =&\ - \grad \FF.
\end{align*}
This is a degenerate parabolic fourth-order equation for the metric $g$, and
general short-time existence of solutions to the $L^2$ flow was established in
\cite{SL21} (cf. \cite{Yu} for the case $n=3$).  In motivating natural questions
and conjectures about the long
time behavior of this flow, it is natural to compare $\FF$ to the Yang-Mills
energy for a connection on a
principal bundle, and to compare the $L^2$ flow to the Yang-Mills flow.  The
behavior of these flows is closely related to the idea of dimensional
criticality:
the Yang-Mills functional and $\FF$ both obey scaling laws which render
dimensions $n=2,3$
``subcritical," dimension $n=4$ ``critical," and dimensions $n \geq 5$
``supercritical."  Correspondingly, Rade \cite{Rade} has shown that for $n=2,3$
the
Yang-Mills flow exists smoothly (with smooth initial data) for all time and
converges to a Yang-Mills connection.  Next, Struwe \cite{Struwe} has shown
that, in
the critical dimension $n=4$, finite time singularities to Yang-Mills flow (if
they exist, a question which is still open), occur only via ``concentration of
energy."  In particular, this implies long time existence of the flow assuming
the initial energy is sufficiently small.  Lastly, in dimension $n \geq 5$
finite time singularities have been
exhibited by Grotowski \cite{Grotowski}.

\subsection{Statement of singularity decomposition}

With these results from Yang-Mills theory as guidance, we conjectured in
\cite{SL2coll} that analogous
results hold for the $L^2$ flow, namely that one has long time existence in
dimensions $n=2,3$ and in dimension $n=4$ with sufficiently small energy.  More
generally, we can imagine that for arbitrary energy in dimension $n=4$,
singularities form via a concentration of energy phenomenon.  The
case $n=2$ of this conjecture was confirmed in \cite{SL2Surf}, and for this
reason we will ignore the case $n=2$ in this paper, although the results
apply in this case as well.  The main results of this paper, Theorems
\ref{thickthin1} and \ref{threefoldtt}, exhibit a concentration-collapse
decomposition of the metric at singular times of the $L^2$ flow in dimensions
$n=3,4$.  This theorem requires the development of several new analytic tools
for understanding $L^2$ flow solutions which are of interest themselves, and
which have interesting compactness and diffeomorphism finiteness statements as
corollaries.  In the statement below, $T_{\mu}(g)$ corresponds to a ``thin" set
of a Riemannian manifold in an appropriate sense and $d(x,T_{\mu}(g_t),t)$ means
the distance between a point and a set in the metric $g_t$.  These and other
basic definitions are made precise in \S \ref{bckgrnd}.

\begin{thm} \label{thickthin1} (Concentration-Collapse Decomposition) For any
$E,\mu > 0$
there exists $\ge(E,\mu) > 0$ so that if $(M^4, g_t)$ is a solution to the $L^2$ flow satisfying
\begin{align*}
\FF(g_0) \leq E,
\end{align*}
then for any $T \in \mathbb R_{\geq 0} \cup \{\infty\}$, and any $x \in M$ such that $\limsup_{t \to T} \brs{\Rm}(x,t) = \infty$, we have either:
\begin{enumerate}
\item For all $r > 0$, $\displaystyle \limsup_{t \to T} \int_{B_{r}(x,t)}
\brs{\Rm}^2 \geq \ge$,
\item $\liminf_{t \to T} d(x,T_{\mu}(g_t),t) = 0.$
\end{enumerate}
\end{thm}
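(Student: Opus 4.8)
The plan is to argue by contradiction, combining an $\epsilon$-regularity statement with the compactness/blowup machinery that the introduction promises. Suppose the theorem fails: then there is a point $x$ and a time $T$ at which $\limsup_{t\to T}|\Rm|(x,t)=\infty$, yet for some radius $r_0>0$ one has $\limsup_{t\to T}\int_{B_{r_0}(x,t)}|\Rm|^2 < \epsilon$, while simultaneously $\liminf_{t\to T} d(x,T_\mu(g_t),t) = \delta_0 > 0$. The second failure means that along a sequence of times $t_i\to T$ the point $x$ sits at definite distance $\delta_0$ from the thin set, i.e.\ a definite-size ball around $x$ is $\mu$-noncollapsed; the first failure means the energy in that ball is below the regularity threshold $\epsilon$. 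The hope is that noncollapsing plus subthreshold energy plus the flow equation forces a local curvature bound near $x$ for $t$ near $T$, contradicting $\limsup|\Rm|(x,t)=\infty$.

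First I would set up the blowup: choose times $t_i \to T$ with $|\Rm|(x,t_i)\to\infty$, and (after the standard point-selection argument adapted to fourth-order flows, picking a nearly-maximal-curvature point in a parabolic neighborhood) rescale the flow so that the curvature at the center becomes $1$. Because the flow is scale-invariant with parabolic weight four, the rescaled solutions $\tilde g_i$ are $L^2$ flows on larger and larger regions, with $|\Rm|_{\tilde g_i} \le 2$ on a fixed parabolic ball and $=1$ at the center. The energy $\int |\Rm|^2$ is scale-invariant in dimension $4$, so the subthreshold bound $\int_{B_{r_0}}|\Rm|^2 < \epsilon$ is inherited by the rescaled solutions on balls of radius $r_0/\lambda_i \to \infty$. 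The noncollapsing hypothesis (distance $\delta_0$ to the thin set, in the metric $g_t$) rescales to noncollapsing at scale $\delta_0/\lambda_i \to \infty$, so the limit is noncollapsed at all scales. Applying the local smoothing estimates for the $L^2$ flow in the presence of a curvature-related bound (the second main analytic input quoted in the abstract) together with the a priori curvature bound, I would extract a smooth complete pointed limit $(M_\infty, g_\infty, x_\infty)$ which is a (possibly ancient, possibly eternal) $L^2$ flow, noncollapsed, with $\int_{M_\infty}|\Rm_{g_\infty}|^2 \le \epsilon$ and $|\Rm_{g_\infty}|(x_\infty)=1$.

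Next I would show such a limit cannot exist, which is where the new local $\epsilon$-regularity result for $L^2$-critical metrics enters. Either the limit is static — an $L^2$-critical (or $\FF$-critical) metric — in which case the $\epsilon$-regularity theorem for such metrics, applied at $x_\infty$ with energy below $\epsilon$, forces $|\Rm_{g_\infty}|(x_\infty) \le C\epsilon^{1/2} < 1$ once $\epsilon$ is chosen small, a contradiction; or the limit genuinely evolves, in which case I would use monotonicity of $\FF$ along the flow together with the uniform energy bound to show that along a time sequence the flow converges (modulo diffeomorphism) to a critical limit, reducing to the static case by a diagonal argument and the same $\epsilon$-regularity estimate. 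The role of the volume noncollapsing / the thin set is precisely to guarantee that this extraction does not lose all the geometry to collapse; without it, the limit could degenerate and $|\Rm|(x_\infty)=1$ need not persist.

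The main obstacle I anticipate is the extraction of the smooth limiting flow: fourth-order flows lack a maximum principle, so controlling curvature from the $L^2$-energy bound alone requires the local smoothing estimates to be genuinely local and to tolerate only a curvature-type integral bound rather than a pointwise one, and one must verify that the point-selection argument produces a parabolic neighborhood on which the hypotheses of those smoothing estimates actually hold uniformly in $i$. A secondary delicate point is matching the definition of the thin set $T_\mu(g_t)$ to a clean noncollapsing statement that survives rescaling and passage to the limit; this is presumably why the precise definitions in \S\ref{bckgrnd} are set up as they are, and I would lean on them to convert ``distance to $T_\mu$ stays positive'' into ``injectivity-radius or volume-ratio lower bound at the rescaled center,'' which is what the compactness theorem needs.
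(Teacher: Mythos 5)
Your overall architecture coincides with the paper's: argue by contradiction, point-pick near the singular point, blow up at the curvature scale, use the noncollapsing coming from $d(x,T_{\mu}(g_t),t)\geq\gd_0$ (via the $3$-curvature volume ratio $\nu_3$ and Cheeger's lemma) to extract a pointed limit, show the limit is critical via the monotonicity $\int_0^T\int_M\brs{\grad\FF}^2\,dV\,dt\leq\FF(g_0)$, and then contradict the normalization $f_3(x_\infty,0)=1$ with the $\ge$-regularity theorem for critical metrics. This is exactly the chain Lemma \ref{ppl} $\to$ Proposition \ref{ppp} $\to$ Proposition \ref{thickblowup} $\to$ Theorem \ref{er} in the paper.

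The genuine gap is the step you describe as ``the standard point-selection argument adapted to fourth-order flows.'' Selecting a nearby spacetime point $(\bar{x},\bar{t})$ of nearly maximal curvature (Lemma \ref{ppl}) is indeed standard, but showing that this point sits at the center of a \emph{parabolic} ball of radius $\gd \bar{K}^{-\frac{1}{2}}$ on which $f_3\leq 2\bar{K}$ is not: one must rule out that, going backward in time over $[\bar{t}-\gd^4\bar{K}^{-2},\bar{t}]$, points of $B_{\gd\bar{K}^{-1/2}}(\bar{x})$ drift outside the region where the curvature-doubling property (\ref{maxprop2}) is known to hold. For Ricci flow this distance control comes from Perelman's second-variation-of-length estimate, which has no analogue for a fourth-order flow, and naive integration of $\brs{\grad\FF}\lesssim t^{-1}$ along a curve is non-integrable. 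The paper resolves this with a second application of the tubular-averaging technique (Proposition \ref{ppp}), replacing $\brs{\grad\FF}$ along a quasi-geodesic by its average over transverse discs of radius $\gd\bar{K}^{-3/5}$ and exploiting the global bound on $\int\brs{\grad\FF}^2$; this is also where the restriction $n\leq 4$ and the exponent $\eta=\frac{1}{10}$ enter. You correctly flag this as the main anticipated obstacle, but without supplying this (or an equivalent) distance estimate the blowup limit cannot be extracted and the argument does not close. A minor secondary point: the blowup limit is critical directly because $\int_{t_i-\gl_i^{-2}}^{t_i}\int_M\brs{\grad\FF}^2\to 0$ by monotonicity and boundedness of $\FF$, so no further diagonal argument over an auxiliary time sequence is needed.
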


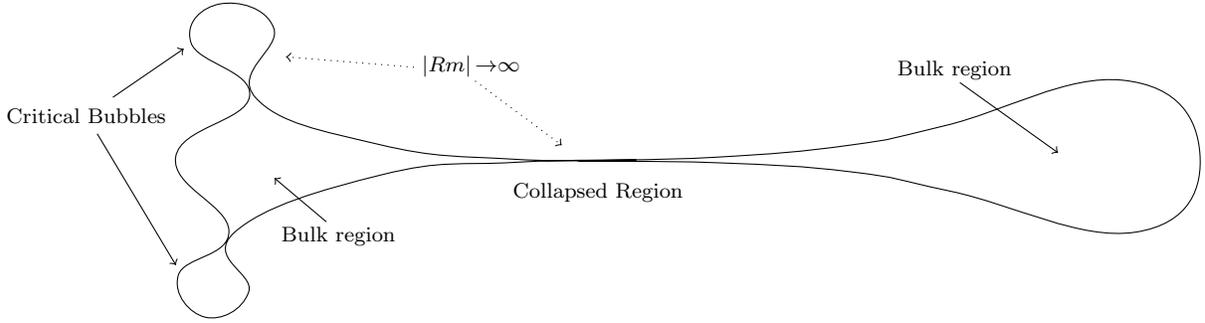
\begin{figure}
\begin{tikzpicture}[scale=2.3]

\draw  plot[smooth cycle, tension=.8] coordinates {(4.5428,4.7265)
(5.5741,4.9892)  (6.017,4.5675)      (5.6444,4.1105) (4.5779,4.3469)
(4.0015,4.4594) (3.2212,4.5122) (2.4256,4.5227)      (3.2141,4.5438)
(3.9945,4.614)};

\draw  plot[smooth cycle, tension=.8] coordinates {(2.7601,4.5286)
(2.0594,4.5321) (1.2717,4.618)  (0.5587,4.8891) (0.6638,5.2913) (0.3545,5.431)
(0.1858,5.2061) (0.5245,4.8976) (0.0972,4.5452) (0.405,4.1173) (0.1118,3.8598)
(0.2678,3.6187) (0.5189,3.7644) (0.4385,4.1108)    (1.2868,4.4422)
(2.0594,4.5145)};
\node at (2.5386,4.3423) {\tiny{Collapsed Region}};
\node (1) at (-0.4195,4.7852) {\tiny{Critical Bubbles}};
\node (2) at (0.1359,3.8572) {};
\node (3) at (0.2062,5.214) {};
\path[->] (1) edge (2);
\path[->] (1) edge (3);
\node at (1.0377,4.0892) {\tiny{Bulk region}};
\node (4) at (1.0307,4.1173) {};
\node (5) at (0.6053,4.4758) {};
\path[->] (4) edge (5);

\node at (4.5984,5.0488) {\tiny{Bulk region}};
\node (7) at (4.5667,5.0207) {};
\node (8) at (5.2592,4.5251) {};
\path[->] (7) edge (8);

\node at (1.804,5.0699) {\tiny{$\left| \displaystyle{Rm} \right|
\displaystyle{\to} \displaystyle{\infty}$}};
\node (10) at (1.5369,5.0593) {};
\node (11) at (0.6722,5.1296) {};
\node (12) at (1.7653,5.0312) {};
\node (13) at (2.391,4.5672) {};
\path[->,dotted] (10) edge (11);
\path[->,dotted] (12) edge (13);
\end{tikzpicture} \caption{Concentration-collapse decomposition of
singularities}
\label{tt}
\end{figure}

The theorem asserts two distinct behaviors which can occur for singularities:
either metric balls of a fixed radius eventually acquire a definite amount of
energy (``concentration'', or ``bubbling''), or the point eventually
becomes arbitrarily close to highly collapsed points (``collapse''). 
Figure \ref{tt} gives a rough sketch of this behavior.  It is moreover possible
to construct singularity models near the concentration points which are critical
metrics
on metric balls of a definite size.  One immediate
corollary is that for sufficiently small initial energy one can ensure the
singular region is collapsed.

\begin{cor} \label{singularitygap} Given $\mu > 0$, there exists $\ge(\mu) > 0$
so that if
$(M^4, g_0)$ satisfies $\FF(g_0) \leq \ge$ and the solution to the $L^2$ flow
with initial condition $g_0$ exists on a maximal time interval $[0,T)$, $T <
\infty$, then for any $x
\in M$ such that $\limsup_{t \to T} \brs{\Rm}(x,t) = \infty$,
\begin{align*}
\liminf_{t \to T} d(x,\mathcal T_{\mu}(g_t),t) = 0.
\end{align*}
\end{cor}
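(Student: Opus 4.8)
The plan is to obtain this as an immediate consequence of Theorem \ref{thickthin1}, by arranging that the concentration alternative (1) becomes vacuous once the total energy is small. First I would fix $\mu > 0$ and apply Theorem \ref{thickthin1} with the energy bound $E := 1$, producing a threshold $\ge_0 := \ge(1,\mu) > 0$; then I would set $\ge(\mu) := \tfrac{1}{2}\min\{1,\ge_0\}$. The point of this choice is twofold: $\ge(\mu) \leq 1$ guarantees that any $g_0$ with $\FF(g_0) \leq \ge(\mu)$ satisfies the hypothesis $\FF(g_0) \leq E$ of Theorem \ref{thickthin1}, while $\ge(\mu) < \ge_0$ will be used to rule out alternative (1). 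Freezing $E$ at a fixed value before extracting $\ge(\mu)$ is essential, since the threshold in Theorem \ref{thickthin1} depends on $E$.

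Next I would invoke the monotonicity of $\FF$ along the flow: since the $L^2$ flow is the negative gradient flow of $\FF$, one has $\frac{d}{dt}\FF(g_t) = -\nm{\grad \FF}{L^2}^2 \leq 0$, hence $\FF(g_t) \leq \FF(g_0) \leq \ge(\mu)$ for all $t \in [0,T)$. Consequently, for every $x \in M$, every $r > 0$, and every $t \in [0,T)$, the crude estimate $\int_{B_r(x,t)} \brs{\Rm}^2 \leq \FF(g_t) \leq \ge(\mu) < \ge_0$ holds, so that $\limsup_{t \to T}\int_{B_r(x,t)} \brs{\Rm}^2 < \ge_0$ and alternative (1) of Theorem \ref{thickthin1} cannot occur at any point. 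Applying Theorem \ref{thickthin1} at the (finite) singular time $T$ and at a point $x$ with $\limsup_{t\to T}\brs{\Rm}(x,t) = \infty$, we are left with alternative (2), namely $\liminf_{t\to T} d(x,\mathcal T_\mu(g_t),t) = 0$, which is exactly the assertion of the corollary.

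Since this is essentially a one-line deduction from the main theorem, I do not anticipate any serious obstacle. The only items requiring attention are the bookkeeping of the quantifier $E$ in Theorem \ref{thickthin1} described above, and the standard fact that $\FF$ is non-increasing along its gradient flow; all of the genuine content is contained in Theorem \ref{thickthin1} itself.
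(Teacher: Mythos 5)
Your proof is correct and takes essentially the same route as the paper's: one chooses $\ge$ below the concentration threshold of Theorem \ref{thickthin1}, so that the monotonicity of $\FF$ along the flow makes alternative (1) impossible and forces the collapse alternative (2). Your explicit handling of the $E$-dependence of the threshold (fixing $E=1$ before extracting $\ge(\mu)$) is a quantifier detail that the paper's one-line proof glosses over, but the substance is identical.
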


Another corollary is that, assuming the Sobolev constant remains bounded up to
the time of curvature blowup, energy concentration is the only possibility.

\begin{cor} \label{singularityconc} Given $E,\Omega > 0$, there exists
$\ge(E,\Omega) > 0$ so that if $(M^4, g_t)$ is a solution to the $L^2$ flow on
$[0,T), T \leq \infty$ such that
\begin{align*}
\FF(g_0) \leq E, \qquad \sup_{[0,T)} C_S(g_t) \leq \Omega,
\end{align*}
 then for any $x \in M$ such that $\limsup_{t \to T} \brs{\Rm}(x,t) = \infty$,
one has that for all $r > 0$,
\begin{align*}
  \limsup_{t \to T} \int_{B_r(x,t)} \brs{\Rm}^2 \geq \ge.
\end{align*}
\end{cor}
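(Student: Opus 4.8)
The plan is to derive this as a direct consequence of Theorem \ref{thickthin1}: we must rule out alternative (2), i.e. show that a bounded Sobolev constant precludes collapse near the curvature-blowup point. Fix $E$ and $\Omega$, set $\mu$ to be a parameter to be chosen, and let $\ge_0 = \ge(E,\mu)$ be the constant produced by Theorem \ref{thickthin1}. We will select $\mu = \mu(\Omega)$ and then the output constant for the corollary will be this $\ge_0$. Suppose $x \in M$ satisfies $\limsup_{t \to T} \brs{\Rm}(x,t) = \infty$ but, for contradiction, there is some $r_0 > 0$ with $\limsup_{t \to T} \int_{B_{r_0}(x,t)} \brs{\Rm}^2 < \ge_0$. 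By Theorem \ref{thickthin1}, alternative (2) must hold, so $\liminf_{t \to T} d(x, T_\mu(g_t), t) = 0$; thus along some sequence $t_j \to T$ there are points $y_j$ with $y_j \in T_\mu(g_{t_j})$ and $d(x, y_j, t_j) \to 0$.

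The heart of the argument is a volume lower bound. A uniform bound $C_S(g_t) \leq \Omega$ forces a uniform Euclidean-type lower bound on volumes of metric balls: by the standard argument (testing the Sobolev inequality against a cutoff supported on $B_\rho(y,t)$, as in the Michael–Simon / Croke-type estimate), there is $c = c(\Omega, n) > 0$ with $\Vol_{g_t}(B_\rho(y,t)) \geq c \rho^n$ for all $y$ and all $\rho$ below a scale controlled by $\Omega$. In particular every point of $(M, g_t)$ satisfies the volume growth lower bound that, by the definitions in \S\ref{bckgrnd}, is incompatible with membership in the thin set $T_\mu(g_t)$ once $\mu$ is chosen small enough relative to $c$. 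Hence for $\mu = \mu(\Omega)$ sufficiently small, $T_\mu(g_t) = \emptyset$ for every $t$, contradicting $y_j \in T_\mu(g_{t_j})$.

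The main obstacle — really the only nontrivial point — is matching the quantitative form of the thinness condition defining $T_\mu(g)$ to what the Sobolev inequality delivers. If $T_\mu(g)$ is defined via a volume ratio falling below $\mu$ at \emph{some} scale, the Sobolev-to-volume-lower-bound implication is clean and scale-uniform only below the Sobolev scale, so one must check that the definition of $T_\mu$ (and the distance statement in alternative (2)) only probes scales on which the estimate is valid; this should be exactly how the "thin set" is set up in \S\ref{bckgrnd}, possibly after a parabolic rescaling so that the relevant window sits at unit scale. A secondary point is that $C_S(g_t)$ controls volumes of balls in $g_t$ uniformly in $t \in [0,T)$, which is immediate from the stated hypothesis $\sup_{[0,T)} C_S(g_t) \leq \Omega$. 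With these checks in place the contradiction closes and the corollary follows with $\ge(E,\Omega) := \ge(E, \mu(\Omega))$.
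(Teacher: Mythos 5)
Your overall strategy --- rule out alternative (2) of Theorem \ref{thickthin1} by using the Sobolev bound to produce Euclidean volume lower bounds and hence to empty out the thin set --- is the same as the paper's, and you correctly isolate the one nontrivial point: a bound $C_S \leq \Omega$ only yields $\Vol_{g_t}(B_\rho(y,t)) \geq c(\Omega)\rho^4$ for $\rho$ below a scale determined by $\Omega$. But you then assume this scale mismatch is already absorbed by how $T_\mu$ is set up in \S \ref{bckgrnd}, and it is not. By Definition \ref{thinset}, $x \in T_\mu(g)$ exactly when $\nu_3(x) = \Vol\bigl(B_{\gs_3(x)}(x)\bigr)/\gs_3(x)^4 \leq \mu$, where $\gs_3(x)$ is the $3$-curvature radius; at points where the curvature and its first three derivatives are very small, $\gs_3(x)$ can be enormous compared to the diameter, and then $\nu_3(x) \leq \Vol(M)/\gs_3(x)^4$ is arbitrarily small no matter what $C_S$ is. So the claim ``$T_\mu(g_t) = \emptyset$ for $\mu = \mu(\Omega)$ small'' is false for the thin set as actually defined, and the black-box application of Theorem \ref{thickthin1} does not close.

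The paper repairs exactly this by introducing, for $\ga = \ga(\Omega)$ sufficiently small, the cutoff radius $\bar{\gs}_k^{\mathrm{cut}} := \min\{\bar{\gs}_k, \ga\}$ and the associated quantities $\nu^{\mathrm{cut}}$ and $T_\mu^{\mathrm{cut}}$; since these only probe scales at most $\ga$, the Sobolev bound does force $T_\mu^{\mathrm{cut}}(g_t) = \emptyset$ for $\mu(\Omega)$ small. The price is that one can no longer quote Theorem \ref{thickthin1} verbatim: one must rerun the point-picking and thick-blowup arguments of \S \ref{ttsec} (Lemma \ref{ppl}, Proposition \ref{ppp}, Proposition \ref{thickblowup}) with the cutoff quantities, which works because those arguments only invoke the curvature radius at points of large curvature, where the cutoff is inactive. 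Your proposal needs this extra step (or an equivalent modification of the thin set) to be complete; as written, the contradiction with $y_j \in T_\mu(g_{t_j})$ is not reached.
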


\begin{rmk} For the $L^2$ flow, the behavior described in Corollary
\ref{singularitygap} is strictly confined to low dimensions.  Indeed, for the
example of a shrinking sphere $(S^n,
\sqrt{1 - c_n t} g_{S^n})$, $n \geq 5$ we have that the curvature blows up at
every point, yet $\TT_{\mu} =
\emptyset$ for sufficiently small $\mu$.
\end{rmk}

\begin{rmk} By \cite{CFG} Theorem 1.7 we know that for $\mu$ chosen sufficiently
small, for any metric $g$ the region $T_{\mu}(g)$ admits a nearby metric with a
nilpotent Killing structure.
\end{rmk}

Next we address the case $n=3$, where we assert that for arbitrary initial
energy, the singular points eventually become arbitrarily close to
\emph{arbitrarily collapsed} points.  

\begin{thm} \label{threefoldtt} Let $(M^3, g_0)$ be a compact manifold, and
suppose the solution to the $L^2$ flow
with initial condition $g_0$ exists on a maximal time interval $[0,T)$, $T <
\infty$.  Then for any $x
\in M$ such that $\limsup_{t \to T} \brs{\Rm}(x,t) = \infty$ and any $\mu > 0$,
\begin{align*}
\liminf_{t \to T} d(x,\mathcal T_{\mu}(g_t),t) = 0. 
\end{align*}
\end{thm}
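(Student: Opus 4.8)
The plan is to prove Theorem \ref{threefoldtt} by contradiction, exploiting the fact that dimension $n=3$ is subcritical so that the only singularity mechanism available is collapse. Suppose the conclusion fails: there is a singular point $x$, a radius threshold $\mu > 0$, and a sequence $t_i \to T$ with $\brs{\Rm}(x,t_i) \to \infty$, yet $d(x, \TT_\mu(g_{t_i}), t_i) \geq \delta > 0$ for all $i$. Then on each metric ball $B_{\delta}(x,t_i)$ the metric is \emph{not} $\mu$-thin, i.e.\ it enjoys a uniform volume noncollapsing bound at the appropriate scale. The first step is to rescale: set $\lambda_i := \brs{\Rm}(x,t_i)$ and consider the parabolically rescaled flows $g^i_s := \lambda_i g_{t_i + \lambda_i^{-2} s}$ (the $L^2$ flow is parabolic of fourth order, so the correct time rescaling is $\lambda_i^{-2}$ given the $\lambda_i$ rescaling in space and the second-order scaling behavior of $\FF$ in $n=4$; in $n=3$ one must be careful, since $\FF$ is not scale-invariant, but the relevant local energy $\int_{B_r}\brs{\Rm}^2$ still goes to zero under blow-up because in $n=3$, $\int \brs{\Rm}^2$ scales with a positive power of $\lambda_i$). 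This is the crucial subcriticality input: after rescaling, the local energy on unit balls tends to $0$.

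The second step is to produce a smooth limit flow from the rescaled sequence. Here I would invoke the local smoothing estimates for the $L^2$ flow in the presence of a curvature-related bound (the ``generalization of our technique from prior work'' advertised in the abstract), together with the volume noncollapsing guaranteed by the failure of the $\mu$-thin condition on $B_\delta(x,t_i)$, to get uniform $C^\infty$ bounds on the rescaled flows $g^i_s$ on a fixed parabolic neighborhood of $(x,0)$. One needs noncollapsing to rule out the limit being degenerate; that is exactly what the hypothesis $d(x,\TT_\mu(g_{t_i}),t_i) \geq \delta$ buys, after checking that the rescaled balls of radius $\sim \lambda_i^{1/2}\delta \to \infty$ remain noncollapsed at the relevant scale — indeed after rescaling one gets noncollapsing on balls of enormous radius. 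Passing to a Cheeger--Gromov--Hamilton limit, we obtain a smooth nontrivial ancient (or eternal) solution $(M_\infty, g^\infty_s)$ to the $L^2$ flow, defined on a complete noncollapsed $3$-manifold, with $\brs{\Rm}_{g^\infty}(x_\infty, 0) = 1$ and, from the vanishing of local energy under blow-up, $\int_{M_\infty} \brs{\Rm_{g^\infty}}^2 = 0$ at time $s=0$ (and in fact for all $s$). But $\int \brs{\Rm}^2 = 0$ forces $g^\infty$ to be flat, contradicting $\brs{\Rm}_{g^\infty}(x_\infty,0) = 1$.

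A technical point to handle carefully is the extraction of the limit: the rescaled flows are a priori only defined on shrinking backward time intervals $[-\lambda_i^2 t_i, \ldots)$ or on intervals whose length one must justify is bounded below, so one should argue that the local smoothing estimates — which give curvature control, hence control of all derivatives, on a definite parabolic cylinder once the \emph{scale-invariant local energy is small} and the ball is noncollapsed — apply uniformly in $i$. The role of the new smoothing result for initial metrics with small energy and a volume growth lower bound (the ``pseudolocality'' result) is precisely to guarantee that a small amount of energy plus noncollapsing on $B_\delta(x,t_i)$ propagates forward in time with uniform estimates, so that after rescaling we control a fixed-size forward parabolic neighborhood; combined with the backward pseudolocality-type reasoning one gets control on a two-sided cylinder, enough to pass to an ancient limit. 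The main obstacle I anticipate is the bookkeeping of scaling exponents in $n=3$: since $\FF$ is not scale-invariant, one must identify the correct blow-up normalization and verify that \emph{both} the local energy vanishes \emph{and} the noncollapsing survives in the limit; getting these two to be compatible (energy $\to 0$ but volume ratios bounded below) is the heart of why the theorem holds in $n=3$ without any energy or noncollapsing hypothesis on $g_0$, unlike the $n=4$ case, and it is where subcriticality must be used decisively rather than incidentally.
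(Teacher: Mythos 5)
Your overall strategy --- blow up at the singular point, use the subcritical scaling $\FF(\lambda g)=\lambda^{-1/2}\FF(g)$ in $n=3$ to force the rescaled energy to zero, extract a smooth noncollapsed limit, and conclude it is flat, contradicting the curvature normalization --- is exactly the engine of the paper's proof (Proposition \ref{thickblowup}, part (1)). But there is a genuine gap in how you set up the blowup, and it is the part of the argument the paper spends the most effort on. You rescale directly at $(x,t_i)$ with factor $\lambda_i=\brs{\Rm}(x,t_i)$. At that point you know only a \emph{pointwise} lower bound on curvature; you have no upper bound on $\brs{\Rm}$ (let alone on $f_2$ or $f_3$) at nearby points or slightly earlier times at the scale $\lambda_i^{-1/2}$. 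The local smoothing estimate (Theorem \ref{smoothing1}) and the local compactness theorem (Theorem \ref{locacomp}) both take as \emph{hypothesis} a bound on $f_2$ (resp.\ $f_m$) over an entire parabolic ball; they cannot manufacture it. Nor does the pseudolocality-type result you invoke help here: Theorems \ref{mainthm}--\ref{mainthm2} are forward-in-time statements about initial metrics with a global volume-ratio lower bound and do not give backward control near an interior singular point. So as written, the Cheeger--Gromov--Hamilton limit you want to pass to may simply fail to exist.

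The paper closes this gap with two steps you are missing. First, a point-picking lemma (Lemma \ref{ppl}) replaces $(x,t_i)$ by a nearby spacetime point $(\bar{x}_i,\bar{t}_i)$ with $f_3(\bar{x}_i,\bar{t}_i)$ still large, at which $f_3$ at most doubles over all points satisfying a certain distance-plus-time condition. Second --- and this is the hard part --- Proposition \ref{ppp} upgrades that doubling property to a genuine parabolic curvature-radius bound $\bar{\gs}_3(\bar{x}_i,\bar{t}_i)\geq \gd\, f_3(\bar{x}_i,\bar{t}_i)^{-1/2}$, which is what the compactness theorem needs. That upgrade is nontrivial precisely because one must control how distances between $x$ and $\bar{x}_i$ distort backward in time over a gap much larger than the curvature scale; for Ricci flow this is done with second variation of length, but for the fourth-order flow the paper has to use the tubular-averaging technique, and this is also exactly where the noncollapsing hypothesis $\nu_3\geq\mu$ (supplied by $d(x,\TT_\mu(g_t),t)\geq\gd>0$) enters, via Cheeger's Lemma to get injectivity radius control for the tubular neighborhoods. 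Once the blowup is performed at $(\bar{x}_i,\bar{t}_i)$ rather than at $(x,t_i)$, your energy-scaling argument goes through verbatim and finishes the proof.
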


\subsection{Statement of smoothing results and their corollaries}

The first tool is a new group of short-time existence results for the $L^2$ flow
which give a lower bound on the existence time of the flow in terms of a
certain measure of volume-noncollapse defined below.  These smoothing results
have
a number of corollaries on gap phenomena and diffeomorphism finiteness which we
state below.

\begin{defn} Let $(M^n, g)$ be a Riemannian manifold.  Fix $0 < \gd < 1$, and
let $\gw_n$ denote the volume of the unit $n$-ball in $\mathbb R^n$.   Given $x
\in M$, define the \emph{$\gd$-volume radius at $x$} to be
\begin{align*}
r_{\gd} (x) := \sup \left\{ r \geq 0 \ \left| \ \forall s \leq r, \frac{\Vol
B_s(x)}{s^n} \geq \gd
\gw_n \right. \right\}.
\end{align*}
Moreover, define the \emph{$\gd$-volume radius of $M$} to be
\begin{align*}
r_{\gd} := \inf_{x \in M} r_{\gd}(x).
\end{align*}
Certainly $r_{\gd}(x) > 0$ in full generality, whereas $r_{\gd} > 0$ will hold
for compact Riemannian manifolds, but can possibly fail on complete manifolds.
\end{defn}

\begin{thm} \label{mainthm} Given $0<\gd < 1$, there exists $\ge, \iota, A > 0$
depending only on $\gd$ so that if 
$(M^4, g)$ is a compact Riemannian manifold satisfying
\begin{enumerate}
 \item $r_{\gd} \geq \rho$,
 \item $\FF(g) \leq \ge$,
\end{enumerate}
the $L^2$ flow with initial condition $g$ exists on $\left[0,\rho^4 \right]$ and
moreover satisfies the estimates
\begin{enumerate}
\item $\brs{\Rm}_{g_t} \leq A \FF^{\frac{1}{6}}(g_t) t^{-\frac{1}{2}}$,
\item $\inj_{g_t} \geq \iota t^{\frac{1}{4}}$,
\item $\diam_{g_t} \leq 2 (\rho + \diam_{g_0})$.
\end{enumerate}
\end{thm}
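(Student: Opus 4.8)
The plan is to establish this via a continuity/bootstrap argument combined with parabolic smoothing estimates, following the general strategy of pseudolocality-type results (Perelman, and the author's earlier work on the $L^2$ flow) but adapted to the fourth-order setting where the relevant control is the energy $\FF$ rather than a lower Ricci bound. First I would set up the standard short-time existence theory from \cite{SL21} to guarantee the flow exists on some maximal interval $[0,T_{\max})$ and is smooth there; the task is to show $T_{\max} \geq \rho^4$ together with the three stated estimates. The key analytic input is a local smoothing estimate: on a region where the energy density $\int_{B_s(x)} \brs{\Rm}^2$ stays below the threshold $\ge$ chosen small in terms of $\gd$, one gets a pointwise decay $\brs{\Rm}_{g_t} \lesssim \FF^{1/6} t^{-1/2}$ on a parabolic neighborhood. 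This is precisely the type of $\ge$-regularity estimate that the fourth-order scaling dictates: $\FF$ is scale-invariant in dimension $4$, $\brs{\Rm}$ scales like (length)$^{-2}$, and $t$ scales like (length)$^4$, so $\brs{\Rm} \sim t^{-1/2}$ is the natural rate, and the exponent $1/6$ on $\FF$ comes from interpolating the local energy against the global energy bound (three factors of a local energy $\leq \ge$ versus... more precisely balancing $L^2$ curvature control against the sup bound via a Moser-type iteration, which typically produces a fractional power less than $1/2$).

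The main steps, in order: (1) Choose $\ge(\gd)$ small enough that the local $\ge$-regularity estimate applies, and also small enough that along the flow, starting from $\FF(g_0) \leq \ge$ and using $\dt \FF \leq 0$ (monotonicity of energy under its own gradient flow), every metric ball retains at most $\ge$ energy. (2) Run a continuity argument on the time interval: let $[0,\tau)$ be the maximal subinterval on which estimates (1)--(3) hold with, say, doubled constants; the monotonicity $\FF(g_t) \leq \FF(g_0)$ and the local curvature bound feed into short-time-existence extension criteria to show $\tau$ cannot be less than $\rho^4$ unless one of the estimates is saturated. (3) The curvature estimate (1) is the output of the local $\ge$-regularity result applied at each point $x$, using that the $\gd$-volume radius bound $r_\gd \geq \rho$ gives the noncollapsing needed to localize (volume lower bounds prevent the Moser iteration from degenerating). (4) The injectivity radius bound (2) follows from (1) together with the volume lower bound via Cheeger-type injectivity radius estimates: bounded curvature plus volume noncollapse at scale $\sim t^{1/4}$ gives $\inj \gtrsim t^{1/4}$; here one must check the volume lower bound is preserved along the flow, which follows from controlling $\dt dV_g = -\tr(\grad \FF) dV_g$ using the curvature bound. (5) The diameter estimate (3) follows by integrating the metric's time-derivative: $\brs{\dt g} \leq C \brs{\grad \FF} \lesssim \brs{\N^2 \Rm} + \cdots$, which after the smoothing estimate is integrable in $t$ up to $t = \rho^4$, yielding that distances change by at most $O(\rho)$.

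The hard part will be step (3): making the local $\ge$-regularity / smoothing estimate genuinely local in the presence of the fourth-order, quasilinear and degenerate structure of $\grad \FF$. Unlike Ricci flow, the $L^2$ flow's principal part involves $\gD^2 g$ plus lower-order curvature terms with no maximum principle directly available for $\brs{\Rm}$, so one cannot simply invoke a scalar parabolic comparison. The standard remedy is an integral (energy) method: derive an evolution inequality for $\int \eta^k \brs{\Rm}^p$ against a cutoff $\eta$, absorb the bad terms using the smallness of the local energy $\int_{\supp \eta} \brs{\Rm}^2 \leq \ge$ together with Sobolev inequalities — and here is where the volume noncollapsing $r_\gd \geq \rho$ is essential, as it provides a uniform local Sobolev constant (via the isoperimetric-type consequence of $\Vol B_s(x)/s^n \geq \gd \gw_n$) with no a priori curvature bound. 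Then Moser iteration upgrades the $L^2$ smallness to the $L^\infty$ bound. A subtlety is that the Sobolev constant a priori depends on the evolving metric, so one needs the continuity argument and the volume estimate to run simultaneously, keeping the Sobolev constant controlled on the relevant parabolic cylinder; I expect this coupling — curvature bound $\Rightarrow$ volume control $\Rightarrow$ Sobolev control $\Rightarrow$ curvature bound — to be the technical heart, closed off by the choice of $\ge$ small and the continuity method. The factor $t^{-1/2}$ and its companion $\FF^{1/6}$ emerge naturally from the scaling of this iteration, and the appearance of $\FF(g_t)$ (rather than $\FF(g_0)$) on the right of estimate (1) is harmless since $\FF(g_t) \leq \FF(g_0)$, but stating it this way makes the estimate self-improving along the flow.
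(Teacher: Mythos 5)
There is a genuine gap, and it sits exactly at the point you dismiss most quickly: the control of distances (hence of metric balls, volume ratios, and the diameter) along the flow. You propose to get the diameter estimate, and the preservation of volume noncollapsing needed for the injectivity radius bound, by ``integrating the metric's time-derivative'' using the smoothing estimate. But the smoothing estimate gives only $\brs{\N^m \Rm} \lesssim t^{-(2+m)/4}$, hence $\brs{\grad \FF} \lesssim \brs{\N^2 \Rc} + \brs{\Rm}^2 \lesssim t^{-1}$, which is exactly non-integrable on $(0,\rho^4]$; direct integration of $\brs{\dt g}$ along a curve therefore yields nothing. The paper's proof is organized around precisely this obstruction: it introduces a ``tubular averaging'' technique in which one follows $\gb$-quasi-geodesics (re-chosen on short time subintervals so that foliated disc neighborhoods $D_{r_t}(\gg)$ with $r_t = R t^{\ga}$ exist), replaces $\brs{\grad\FF}$ at each point of the curve by its average over the transverse disc, and then bounds the resulting curve integral by the spacetime energy identity $\int_0^1\int_M \brs{\grad\FF}^2 \leq \ge$. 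The exponent bookkeeping ($\ga > \frac14$ to control the error from $\brs{\N\grad\FF}$, $\ga < \frac{1}{n-1}$ to control the disc-area factor) is also where the restriction $n \leq 4$ enters, which your outline does not account for. Without some substitute for this step, your proof of estimates (2) and (3) — and of the containment $B_{1/2}(x,g_0)\subset B_1(x,g_1)$ needed to contradict Cheeger's lemma — does not close.

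Two secondary points. First, the paper does not prove the curvature decay by a local parabolic $\ge$-regularity/Moser iteration; it runs a global blowup/contradiction argument, rescaling at the first time one of the scale-invariant inequalities becomes an equality and ruling out the curvature equality by a volume-of-superlevel-set computation: the gradient bound available at that stage is only $\brs{\N\Rm}\leq C$ (not $CK^{3/2}$), so $\brs{\Rm}\geq K/2$ on a ball of radius $\sim \mu K$, giving $\FF \geq c\mu^4 K^{4}\cdot K^2 = c\mu^4 A^6 \FF$ and forcing $A^6 \lesssim 1$ — this is the actual origin of the exponent $\tfrac16$, not an interpolation in a Moser scheme. Indeed the paper explicitly remarks that a parabolic $\ge$-regularity theorem for ``almost-critical'' metrics, which is essentially what your step (3) presupposes, is an open improvement it does not carry out, and separately cautions that Moser iteration alone tends to produce only $t^{-1}$ curvature decay in this small-energy setting. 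Second, your volume-preservation argument via $\dt dV_g$ controls only the volume form on a \emph{fixed} set (which the paper does handle, by Cauchy--Schwarz against the energy identity); it does not control the volume of a metric ball whose radius is measured in the evolving metric, which again requires the distance estimate above.
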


\begin{rmk}
\ 
\begin{enumerate} 
\item The reason for the power $\frac{1}{6}$ is technical.  A more elementary
proof can yield that the coefficient in front of the $t^{-\frac{1}{2}}$ decay of
curvature is $o(\FF)$, but we make this dependence precise.  It would be
interesting to improve this exponent on $\FF$ to $\frac{1}{2}$, in
correspondence with the elliptic estimates (\cite{Chang}, \cite{TV}).  One
possibility for attaining this would be to prove an $\ge$-regularity theorem for
sufficiently ``almost-critical" metrics, which could then be used in the proof
of
Theorem \ref{mainthm} to rule out curvature equality in the blowup sequence in
place of the argument we have given.
\item Once the curvature decay is in place, appropriate scale-invariant decay
estimates for all derivatives of curvature follow from (\cite{SL2LTB} Theorem
1.3, cf. Lemma \ref{derivest}).
\item Given $T > 0$, by an appropriately small choice of $\ge$ (depending on
$\gd$ and
$T$) one can guarantee smooth existence on $[0,T \rho^4]$.  We have chosen not
to phrase the result this way as it obscures what is the only real dependency in
the result, which is on $\rho$.
\item It is not the case that the diameter will only double in the setting we
have considered.  By considering homogeneous metrics on $S^3 \times S^1$ one can
show that the diameter can grow at the initial time like $t^{\frac{1}{4}}$,
which suggests the form of estimate (3).
\end{enumerate}
\end{rmk}

\begin{rmk} We observe that an estimate of this kind, controlling the curvature
in $C^0$ with respect to a small energy bound and the volume scale, can only
hold in dimension $n \leq 4$.  This is exhibited by the round sphere.  In
dimensions $n\geq 5$, round spheres shrink homothetically along the $L^2$ flow. 
In particular, their curvature remains a fixed multiple of some $r_{\gd}$ scale.
 But on the other hand the energy is approaching zero as the sphere shrinks, so
we do not obtain $C^0$ control in terms of energy, only in terms of the length
scale.  We will keep track of the dimension in our argument so as to clarify how
the dimensional restriction enters our proof.
\end{rmk}

\begin{rmk} The elliptic version of these estimates comes from work of 
Tian-Viaclovsky (\cite{TV,TV2,TV3} cf. Chang \cite{Chang}).  Chang's
results require an injectivity radius lower bound to obtain $\ge$-regularity. 
In \cite{TV} the authors show $\ge$-regularity for critical metrics assuming a
local Sobolev constant bound, and in \cite{TV3} this is reduced to assuming a
lower volume growth bound, as we have done here.  A crucial issue in
(\cite{TV,TV2,TV3}) is that the critical equation does not automatically
imply a Ricci curvature bound, and obtaining this bound and the attendant volume
comparison bounds requires great care.  In our parabolic case this issue is
compounded due to the fact that the metric is changing in such a way that has no
obvious $C^0$ control.  The primary difficulty is to obtain an estimate on how
the volumes of balls change along the flow, which requires significant new
technique, ``tubular averaging," which is described in detail in \S
\ref{mainthmsec}.
\end{rmk}

\begin{rmk} Despite the obvious fact that Theorem \ref{mainthm} makes global,
not local, hypotheses, it is generally speaking related to Perelman's
pseudolocality
theorem for Ricci flow.  Recall that this theorem says, roughly, that in the
presence of a local lower bound on scalar curvature and isoperimetric constant,
the Ricci flow smooths the metric to the appropriate spacetime scale.  The lower
bound on scalar curvature is a natural assumption for Ricci flow since such
bounds are preserved.  Perelman's proof involves many intricate techniques
including subtle applications of his entropy functional.

The hypotheses of Theorem \ref{mainthm} are similar, in particular we make the
natural assumption of small $L^2$ curvature, which is preserved by the flow, and
a lower volume growth bound, which can be thought of as a rough measure of how
``Euclidean" small metric balls are.  One point of particular interest is that
Perelman's proof works in arbitrary dimensions, whereas ours by necessity
cannot.
\end{rmk}

\begin{rmk} Observe that the statement implies the remarkable fact that that for
a \emph{universal} $\ge > 0$, the flow smooths the metric to the ``minimal
$r_{\gd}$ scale.''  In particular, any point where $r_{\gd}$ is roughly $\rho$
at time zero will have its curvature norm decay so that at time $\rho^4$ we have
$\brs{\Rm} \inj^2
\leq \FF^{\frac{1}{6}}$.  In other words, the metric at time $\rho^4$ already
exhibits collapsing for points at the minimal $r_{\gd}$ scale.  Thus, if one
could appropriately localize these estimates and obtain such a bound at
\emph{all} scales, this would lead to a proof that there is a uniform $\ge > 0$
so that $\FF(g) \leq \ge$ implies the existence of a collapsing structure on the
given manifold.  This is discussed more in \S \ref{mainthmsec}.
\end{rmk}

Next, we state the smoothing theorem for $3$-manifolds.

\begin{thm} \label{mainthm2} Given $0 < \gd < 1$, and $E > 0$ there exists $A =
A(\gd) > 0$ and 
$\ge = \ge(\gd) > 0$ so that if $(M^3, g)$ is a compact Riemannian
manifold satisfying
\begin{enumerate}
\item $r_{\gd} \geq \rho$,
\item $\FF(g) \leq E$,
\end{enumerate}
then the $L^2$ flow with initial condition $g$ exists on $[0, T]$, where $T :=
\min \{\ge^4 E^{-4}, \rho^4\}$,
and moreover the solution satisfies the estimates
\begin{enumerate}
\item $\brs{\Rm}_{g_t} \leq A \left[ T^{\frac{1}{4}} \FF(g_t)
\right]^{\frac{1}{5}} t^{-\frac{1}{2}},$
\item $\inj_{g_t} \geq \iota t^{\frac{1}{4}}$,
\item $\diam_{g_t} \leq 2 (\rho + \diam_{g_0})$.
\end{enumerate}
\end{thm}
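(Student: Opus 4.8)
The plan is to mirror the proof of Theorem \ref{mainthm}, exploiting in an essential way that $\FF$ is \emph{subcritical} in dimension three: under $g \mapsto \gl^2 g$, $t \mapsto \gl^4 t$ one has $\FF \mapsto \gl^{n-4} \FF = \gl^{-1} \FF$, so energy is destroyed under blow-up. Short-time existence is supplied by \cite{SL21}, and a standard continuity argument on $[0,T]$ reduces everything to the a priori curvature estimate (1) for some fixed $A = A(\gd)$: granting (1) with constant $A$ whenever the flow exists with constant $2A$, the flow cannot stop before time $T$ and (1) holds on all of $[0,T]$. Estimates (2) and (3) then follow exactly as in Theorem \ref{mainthm} --- (3) by integrating the distance-distortion estimate, which controls $\brs{\dt g_t} = \brs{\grad \FF}$ on balls at the curvature scale and thereby adds only an $O\big( \ge^{1/5} \big) \rho$ correction over $t \in [0,\rho^4]$, and (2) from (1) together with the volume lower bound, propagated along the flow, via a Cheeger--Gromov--Taylor type estimate. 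The scale-invariant bounds on the derivatives of curvature needed in these steps follow from \cite{SL2LTB} (cf. Lemma \ref{derivest}) once (1) is in hand.

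To establish (1) I would argue by contradiction. If (1) fails for every $A$ there is a sequence of solutions $(M_i^3, g_i(t))$ on $[0,T_i]$ satisfying the hypotheses --- which after rescaling we may assume have $\rho_i = 1$, and which, discarding the degenerate case $\FF(g_t) \equiv 0$ in which the metric stays flat, have $\FF(g_i(t)) > 0$ --- for which the scale-invariant ratio
\[ \Theta_i := \sup_{0 < t \le T_i} \; \sup_{M} \; t^{1/2} \brs{\Rm}_{g_i}(\cdot,t) \, \big[ T_i^{1/4} \FF(g_i(t)) \big]^{-1/5} \]
tends to infinity. Let this be attained at $(x_i, t_i)$, necessarily with $t_i > 0$, and set $K_i := \brs{\Rm}_{g_i}(x_i,t_i)$. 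A point-selection lemma produces a curvature bound $\brs{\Rm}_{g_i} \le C K_i$ on a parabolic neighborhood of $(x_i,t_i)$ of definite rescaled size; in particular $K_i^2 t_i$ is bounded away from $0$, so a definite amount of flow precedes time $t_i$ on the rescaled scale. Rescaling by $h_i(s) := K_i \, g_i(t_i + K_i^{-2} s)$, local parabolic regularity for the $L^2$ flow in the presence of the curvature bound gives uniform bounds on all derivatives of curvature at $s = 0$, while the control on how the volumes of small balls evolve --- the tubular-averaging technique of \S \ref{mainthmsec} --- propagates $r_{\gd} \ge 1$ to a $\gk$-noncollapsing estimate at the rescaled scale. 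A Hamilton-type compactness theorem then yields a complete pointed limit metric $(M_\infty^3, g_\infty, x_\infty)$ with bounded curvature, $\Vol B_s(x)/s^n \ge \gd' \gw_n$ at every scale, and $\brs{\Rm}_{g_\infty}(x_\infty) = 1$.

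Subcriticality now closes the argument without any $\ge$-regularity input. Since $\FF(g_i(t))$ is nonincreasing and $T_i^{1/4} \FF(g_i(0)) \le \ge$ by the choice of $T$, a short computation using the defining identity $t_i^{1/2} K_i = \Theta_i \big[ T_i^{1/4} \FF(g_i(t_i)) \big]^{1/5}$ gives
\[ \FF(h_i(0)) = K_i^{-1/2} \FF(g_i(t_i)) = \Theta_i^{-1/2} \, T_i^{-1/40} \, t_i^{1/4} \, \FF(g_i(t_i))^{9/10} \le \Theta_i^{-1/2} \big[ T_i^{1/4} \FF(g_i(0)) \big]^{9/10} \le \Theta_i^{-1/2} \, \ge^{9/10} \longrightarrow 0, \]
the exponent $\tfrac{1}{5}$ being (like the $\tfrac{1}{6}$ of Theorem \ref{mainthm}) precisely the technical choice that makes this rescaled energy vanish in the limit. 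Therefore $\FF(g_\infty) = 0$, so $g_\infty$ is flat; but a complete flat $3$-manifold with Euclidean volume growth at every scale is isometric to $\mathbb R^3$, contradicting $\brs{\Rm}_{g_\infty}(x_\infty) = 1$. In contrast to the four-dimensional case, where the blow-up limit is a nontrivial $L^2$-critical metric of small energy whose exclusion forces a universal smallness threshold and the $\tfrac{1}{6}$ exponent, here the limit is trivial; this is why the energy hypothesis $\FF(g) \le E$ may be taken arbitrary, at the price of the $E$-dependent existence time $T = \min \{ \ge^4 E^{-4}, \rho^4 \}$ dictated by the subcritical scaling of $\FF$.

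The main obstacle, exactly as in Theorem \ref{mainthm}, is the control on how the volumes of small metric balls change along the flow --- the metric moves with no a priori $C^0$ bound --- which is supplied by the tubular-averaging estimates of \S \ref{mainthmsec} and is reused here essentially verbatim; without it one cannot rule out collapse of the blow-up sequence. The remaining technical points are checking that the point-selection can be arranged so that $K_i^2 t_i$ is bounded below (so that the local smoothing estimates apply and the limit is nondegenerate) and the bookkeeping of scale-invariant exponents needed to close the quantitative loop.
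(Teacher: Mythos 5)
Your proposal assembles the right ingredients --- the subcritical scaling of $\FF$ in dimension three, the tubular-averaging volume control, and the coupling of the curvature and injectivity radius estimates --- but the way you organize them leaves two genuine gaps, and the paper's actual route is substantially shorter. The paper simply sets $T = \min\{\ge^4 E^{-4},\rho^4\}$ and performs \emph{one} global parabolic rescaling $\bar g_t := T^{-1/2} g_{Tt}$, so that $\FF(\bar g_0) = T^{1/4}\FF(g_0) \leq \ge$ and $r_\gd(\bar g_0) \geq 1$; this is exactly the hypothesis of Theorem \ref{mainthm}, and that proof is repeated verbatim with $n=3$. In particular, no compactness limit is taken to rule out the curvature equality: at the first time $\tau$ where one of the two estimates becomes an equality, one rescales by $\tau$, uses the gradient bound $\brs{\N\Rm}\leq C$ from \cite{SL2LTB} to get $\brs{\Rm}\geq K/2$ on a ball of radius $\mu K$, and lower-bounds its volume via the (still valid) injectivity radius estimate to obtain $\FF \geq c\,\mu^3 K^5 = c\,\mu^3 A^5 \FF$. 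That is where the exponent $\tfrac15 = \tfrac{1}{n+2}$ actually comes from; your rationale for it (making the rescaled energy of the blowup vanish) is not the operative one.

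The two gaps in your version are these. First, your point selection does not deliver what you need: because $\FF(g_t)$ is nonincreasing, at times $s<t_i$ the bound $\brs{\Rm}(y,s)\leq \Theta_i\,[T_i^{1/4}\FF(g_i(s))]^{1/5}s^{-1/2}$ is \emph{weaker} relative to $K_i$ than at $t_i$ (the factor $\FF(g_i(s))^{1/5}/\FF(g_i(t_i))^{1/5}$ can be arbitrarily large), so maximizing $\Theta_i$ gives no backward-in-time curvature control at scale $K_i$; moreover $K_i^2 t_i = \Theta_i^2[T_i^{1/4}\FF(g_i(t_i))]^{2/5}$ is a product of a divergent and a possibly degenerating factor, so it is not automatically bounded below. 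You flag both as bookkeeping, but they are structural obstacles to forming the limit on which your whole argument rests. Second, the ordering ``establish (1) first, then (2) follows'' breaks a necessary coupling: the tubular-averaging construction (Lemma \ref{tubularfocal}) requires both $f_3\leq K$ and $\inj\geq\iota$ to build the foliated disc neighborhoods, so the volume control you invoke to noncollapse your blowup sequence already presupposes the injectivity radius estimate you defer. The paper resolves this with a single continuity argument in which both estimates of (\ref{ncsmoothing}) hold up to the first failure time, so both are available for the averaging; your closing observation about why $E$ may be arbitrary is correct, but the mechanism is the single global rescaling by $T^{-1/2}$, not a per-point blowup.
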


As remarked above, Theorems \ref{mainthm} and \ref{mainthm2} can by exploited to
prove new
compactness results, gap theorems, and
diffeomorphism-finiteness theorems.  First we show
finiteness
theorems in dimensions three and four related to theorems of Petersen-Wei
\cite{PetersenWei}, Anderson-Cheeger \cite{ACFin}
and Gao \cite{Gao}.  In particular, in the context of small $L^2$ curvature, our
result replaces the pointwise Ricci
curvature hypothesis of these results with the weaker lower volume growth bound.
 Moreover, what
is notable in particular is that the choice of $\ge$ in our statement for $n=4$ 
is
essentially universal, in the sense that it only depends on $\gd$, and the
initial choice of $\gd$ is
arbitrary.

\begin{cor} \label{diff} Given $0 < \gd < 1$, there exists $\ge(\gd) > 0$ so
that for any $\rho,V > 0$, there are only finitely many diffeomorphism types of
compact Riemannian manifolds $(M^4, g)$ satisfying
\begin{enumerate}
\item $\Vol \leq V$,
\item $r_{\gd} \geq \rho$,
\item $\FF(g) \leq \ge$.
\end{enumerate}
\end{cor}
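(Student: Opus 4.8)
The plan is to use the smoothing estimate of Theorem \ref{mainthm} to trade the a priori very rough metric $g$ for the flowed metric $g_{\rho^4}$, which will have uniformly bounded geometry, and then to invoke a classical diffeomorphism-finiteness theorem. Concretely, I would run the $L^2$ flow starting from $g$ up to time $\rho^4$ --- which is permitted once $\ge = \ge(\gd)$ is chosen as in Theorem \ref{mainthm}, since hypotheses (2), (3) of the present statement are exactly hypotheses (1), (2) there --- and then extract the three estimates of that theorem at the final time $t = \rho^4$. Since the $L^2$ flow deforms $g$ through metrics on the fixed underlying smooth manifold $M$, the diffeomorphism type of $(M,g)$ coincides with that of $(M, g_{\rho^4})$, so it suffices to bound the number of diffeomorphism types occurring among the flowed metrics $g_{\rho^4}$.

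To make estimate (3) of Theorem \ref{mainthm} usable I first need an a priori bound on $\diam_{g_0} = \diam(M,g)$, and this I would obtain by a standard volume-packing argument: a maximal $\rho$-separated subset $\{x_1, \dots, x_N\} \subset M$ yields pairwise disjoint balls $B_{\rho/2}(x_i)$, each of volume at least $\gd \gw_4 (\rho/2)^4$ since $r_{\gd} \geq \rho$, whence $N \leq 2^4 V (\gd \gw_4 \rho^4)^{-1}$; as the balls $B_\rho(x_i)$ cover the connected manifold $M$ by maximality, this bounds $\diam_{g_0}$ in terms of $\gd, \rho$ and $V$ alone. Feeding this into Theorem \ref{mainthm}, and using that $\FF$ is nonincreasing along its own negative gradient flow (so that $\FF(g_{\rho^4}) \leq \FF(g_0) \leq \ge$), I can read off at $t = \rho^4$ the bounds $\brs{\Rm}_{g_{\rho^4}} \leq A \ge^{1/6} \rho^{-2}$, $\inj_{g_{\rho^4}} \geq \iota \rho$, and $\diam_{g_{\rho^4}} \leq 2(\rho + \diam_{g_0})$, all with constants depending only on $\gd, \rho, V$.

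At this point $(M, g_{\rho^4})$ lies in the class of closed $4$-manifolds with two-sided sectional curvature bound, injectivity radius bounded below, and diameter bounded above (a positive lower volume bound follows automatically from the first two), and Cheeger's finiteness theorem --- or equivalently the $C^{1,\ga}$-precompactness theorem of Cheeger--Gromov together with the fact that $C^{1,\ga}$-close manifolds are diffeomorphic --- shows that this class realizes only finitely many diffeomorphism types. Combined with the previous step this proves the corollary, with $\ge = \ge(\gd)$ precisely the constant furnished by Theorem \ref{mainthm}, hence in particular independent of $\rho$ and $V$ as asserted. I do not expect a genuine obstacle in the argument: the only delicate points --- that the flow survives to the fixed time $\rho^4$ and that its diameter stays controlled along the way --- are exactly what Theorem \ref{mainthm} (through its existence statement and estimate (3)) already supplies, so the ``real'' work of the corollary is entirely contained in that theorem. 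If smooth rather than merely $C^{1,\ga}$ convergence is wanted, the scale-invariant derivative-of-curvature estimates (Lemma \ref{derivest}) upgrade the compactness accordingly, though this is not needed in order to count diffeomorphism types.
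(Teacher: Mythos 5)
Your proposal is correct and follows essentially the same route as the paper: run the $L^2$ flow to time $\rho^4$ via Theorem \ref{mainthm}, extract the curvature and injectivity radius bounds there, and conclude by Cheeger--Gromov finiteness/compactness. The only cosmetic difference is that you convert the volume bound into a diameter bound by a packing argument, while the paper feeds the (flow-preserved) volume upper bound directly into the compactness theorem --- an equivalence the paper itself notes immediately after the statement.
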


\noindent Observe that, in the presence of the lower volume estimate for balls
of radius
$\rho$, the upper bound on volume is equivalent to an upper bound on diameter. 

\begin{cor} \label{diff2} Given $0 < \gd < 1$ and $\rho,E,V > 0$, there are only
finitely many diffeomorphism types of
compact Riemannian manifolds $(M^3, g)$ satisfying
\begin{enumerate}
\item $\Vol \leq V$,
\item $r_{\gd} \geq \rho$,
\item $\FF(g) \leq E$.
\end{enumerate}
\end{cor}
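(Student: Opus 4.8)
The plan is to follow the same strategy as Corollary \ref{diff} (diffeomorphism finiteness in dimension four), now using the three-dimensional smoothing result Theorem \ref{mainthm2} in place of Theorem \ref{mainthm}. The point of the argument is to run the $L^2$ flow for a definite amount of time in order to convert the weak hypotheses (lower volume growth plus an $L^2$ curvature bound) into the classical hypotheses of Anderson-Cheeger type compactness and finiteness theorems, namely a two-sided bound on sectional (or Ricci) curvature together with an injectivity radius lower bound and a diameter bound.

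First I would fix $\delta$ and apply Theorem \ref{mainthm2} with the given $E$: there are constants $A(\delta), \ge(\delta), \iota(\delta) > 0$ so that the $L^2$ flow starting at any admissible $g$ exists on $[0,T]$ with $T = \min\{\ge^4 E^{-4}, \rho^4\}$, and at the terminal time $t = T$ we have $\brs{\Rm}_{g_T} \leq A [T^{1/4} \FF(g_T)]^{1/5} T^{-1/2}$, $\inj_{g_T} \geq \iota T^{1/4}$, and $\diam_{g_T} \leq 2(\rho + \diam_{g_0})$. Since $\FF$ is nonincreasing along the flow (being its own negative gradient flow), $\FF(g_T) \leq \FF(g_0) \leq E$, so the curvature bound becomes a constant $C = C(\delta, E, \rho)$ depending only on the stated data; likewise, as noted in the remark following Corollary \ref{diff}, the volume upper bound $V$ together with $r_\delta \geq \rho$ gives a diameter bound $\diam_{g_0} \leq D(\delta, \rho, V)$, hence $\diam_{g_T} \leq D'(\delta, \rho, V)$. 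Thus the family $\{(M^3, g_T)\}$ obtained by flowing all admissible initial metrics lies in a class with uniformly bounded curvature, injectivity radius bounded below, and diameter bounded above. By the Cheeger-Gromov compactness theorem in this setting (or directly by Anderson-Cheeger finiteness), this class contains only finitely many diffeomorphism types. Since $M$ is diffeomorphic to $(M, g_T)$ — the flow only changes the metric, not the underlying smooth manifold — the original class contains only finitely many diffeomorphism types as well, proving the corollary.

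The one genuine subtlety, and the step I expect to require the most care, is ensuring that the derived bounds are truly uniform over the whole admissible family: the curvature estimate in Theorem \ref{mainthm2} is scale-invariant and degenerates as $t \to 0$, so one must evaluate it at the fixed terminal time $T$ (which itself depends only on $E$ and $\rho$), and one must also confirm that the injectivity radius lower bound $\inj_{g_T} \geq \iota T^{1/4}$ and the volume-to-diameter conversion do not secretly depend on the particular metric. Monotonicity of $\FF$ handles the curvature coefficient, the explicit form of $T$ handles the time-scale, and the Bishop-Gromov-type argument sketched after Corollary \ref{diff} handles the diameter; with these in hand the reduction to the classical finiteness theorem is routine. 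Note that, in contrast to the four-dimensional case, here $\ge$ is allowed to depend on $E$, so there is no need for the delicate universality built into Theorem \ref{mainthm}, which simplifies the bookkeeping.
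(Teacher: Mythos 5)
Your proposal is correct and follows essentially the same route as the paper: the paper's proof of Corollary \ref{diff2} simply says to repeat the argument of Corollary \ref{diff} with Theorem \ref{mainthm2} in place of Theorem \ref{mainthm}, i.e.\ flow to the uniform terminal time $T$, use the resulting curvature and injectivity radius bounds (together with the volume/diameter bound) to invoke Cheeger--Gromov compactness, and conclude finiteness of diffeomorphism types. Your additional remarks on evaluating the estimates at $t=T$ and using monotonicity of $\FF$ are exactly the bookkeeping the paper leaves implicit.
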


By allowing further dependencies on
$\ge$, we obtain long time existence of the flow and convergence to a flat
metric.

\begin{cor} \label{flatcor} Given $0 < \gd < 1$ and $\rho,V > 0$ there exists
$\ge =
\ge(\gd,\rho,V)> 0$ such that given
$(M^4, g)$ a compact Riemannian manifold satisfying
\begin{enumerate}
 \item $\Vol \leq V$,
 \item $r_{\gd} \geq \rho$,
 \item $\FF(g) \leq \ge$,
\end{enumerate}
then the solution to the $L^2$ flow with initial condition $g$ exists for all
time and converges exponentially to a flat metric.
\end{cor}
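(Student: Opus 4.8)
The plan is to run the flow up to time $\rho^4$ using Theorem \ref{mainthm}, to observe that by then the metric has uniformly bounded geometry with all curvature quantities controlled by a positive power of $\ge$, and then to invoke a stability ({\L}ojasiewicz--Simon) argument at the flat metrics to obtain long time existence and exponential convergence to a flat limit. To begin, since the $L^2$ flow is the negative gradient flow of $\FF$, the energy is monotone nonincreasing, so $\FF(g_t) \leq \FF(g_0) \leq \ge$ throughout the interval of existence; taking $\ge$ no larger than the threshold $\ge(\gd)$ of Theorem \ref{mainthm}, that theorem produces a smooth solution on $[0,\rho^4]$ which at $t_0 := \rho^4$ satisfies
\begin{align*}
\brs{\Rm}_{g_{t_0}} \leq A\, \ge^{\frac{1}{6}} \rho^{-2}, \qquad \inj_{g_{t_0}} \geq \iota \rho, \qquad \diam_{g_{t_0}} \leq 2(\rho + \diam_{g_0}),
\end{align*}
together with scale-invariant bounds on all covariant derivatives of curvature (the derivative estimates recorded after Theorem \ref{mainthm}; cf. Lemma \ref{derivest} and \cite{SL2LTB}), each as small as we like at the scale $\rho$ once $\ge$ is small. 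Finally $\diam_{g_0}$ is controlled in terms of $\gd,\rho,V$: since $r_{\gd} \geq \rho$ forces $\Vol B_\rho(x) \geq \gd\, \gw_4\, \rho^4$ for every $x$, a maximal $\tfrac{\rho}{2}$-separated subset of $M$ has at most $N = N(\gd,\rho,V)$ points (by disjointness of the associated $\tfrac{\rho}{4}$-balls and the volume bound), and their $\rho$-balls cover the connected manifold $M$, so $\diam_{g_0} \leq 2N\rho$ and hence $\diam_{g_{t_0}} \leq D$ for some $D = D(\gd,\rho,V)$.

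Consequently $g_{t_0}$ has injectivity radius at least $\iota\rho$, diameter at most $D$, and arbitrarily small $C^k$-norms at the scale $\rho$ (for each fixed $k$) once $\ge$ is small, so that $\brs{\Rm}\,\diam^2$ is as small as desired while $\inj/\diam \geq \iota\rho/D$ stays bounded below. By a Cheeger--Gromov precompactness argument (equivalently, Gromov's almost-flat manifold theorem together with the fact that a non-collapsed almost-flat closed manifold is flat), $M$ is diffeomorphic to a flat manifold, the set $\mathcal N$ of flat metrics on $M$ is a nonempty smooth submanifold of the space of metrics consisting of absolute minima of $\FF$, and, after further shrinking $\ge = \ge(\gd,\rho,V)$, the metric $g_{t_0}$ lies in an arbitrarily small $C^k$-neighborhood of $\mathcal N$.

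The last step, which I expect to be the main point requiring care, is a by-now-standard {\L}ojasiewicz--Simon argument (in the spirit of \cite{SL2LTB}). Since $\FF$ is real-analytic and $\mathcal N$ is a smooth critical submanifold of minima, there are a $C^k$-neighborhood $\mathcal U$ of $\mathcal N$, an exponent $\gt \in (0,\tfrac12]$, and a constant $c > 0$ with $\nm{\grad\FF(g)}{L^2} \geq c\, \FF(g)^{1-\gt}$ for all $g \in \mathcal U$, the diffeomorphism invariance of $\FF$ being accommodated in the standard way via a slice; by the previous paragraph we may arrange $g_{t_0} \in \mathcal U$. The interior smoothing estimates of Theorem \ref{mainthm} and Lemma \ref{derivest} depend only on the small-energy and volume-noncollapsing hypotheses, both of which are preserved along the flow---the latter because a small curvature bound together with $\inj_{g_t} \geq \iota\rho$ yields a Bishop--Gromov-type lower volume bound on small balls---and so they furnish uniform $C^k$ control of $g_t$ that keeps the solution inside $\mathcal U$. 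The usual differential-inequality argument applied to $t \mapsto \FF(g_t)^{\gt}$ then shows that the solution from $g_{t_0}$ exists for all time, $\FF(g_t) \to 0$, and $g_t$ converges exponentially in $C^\infty$ to some $g_\infty \in \mathcal N$, which is therefore flat; since the flow is smooth on the finite interval $[0,t_0]$, the solution from $g_0$ exists for all time and converges exponentially to $g_\infty$, as claimed.
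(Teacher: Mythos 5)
Your proposal follows the paper's architecture for the first half — run Theorem \ref{mainthm} to time $\rho^4$, note that monotonicity of $\FF$ keeps the energy below $\ge$, and use the resulting uniform bounds on curvature, derivatives of curvature, injectivity radius, and diameter/volume to conclude that $g_{\rho^4}$ is $C^k$-close to a flat metric — but the endgame is genuinely different. The paper argues by contradiction: it takes a sequence $\ge_i \to 0$, extracts a $C^{k,\ga}$ Cheeger--Gromov limit of the time-$\rho^4$ slices (flat, since the $L^2$ curvature norm tends to zero), and then simply invokes \cite{SL22} Theorem 1.6, a pre-existing stability theorem asserting long-time existence and exponential convergence for $L^2$ flow solutions starting $C^k$-close to a flat metric. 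You instead propose to re-derive that stability statement via a {\L}ojasiewicz--Simon argument. This is a legitimate route (and is in the spirit of how such stability theorems are proved), but it makes your proof carry the full weight of a separate theorem, and the sketch leaves open the one point that actually matters for the stated conclusion: exponential, as opposed to polynomial, convergence requires the optimal exponent $\gt = \tfrac12$, which in turn requires integrability of the flat deformations (that the kernel of the gauge-fixed linearization is tangent to the manifold of flat metrics); writing ``$\gt \in (0,\tfrac12]$'' and then claiming exponential decay elides this. Two smaller imprecisions: Lemma \ref{derivest} gives derivative bounds of size roughly $C\rho^{-(2+m)}$ at $t=\rho^4$, i.e.\ \emph{uniform} at scale $\rho$ rather than ``as small as we like'' (only $\brs{\Rm}\le A\ge^{1/6}\rho^{-2}$ is small), so the $C^k$-closeness to a flat metric really does need the compactness/contradiction step rather than a direct smallness claim; and keeping the solution inside the {\L}ojasiewicz neighborhood for all time is itself a continuity/length-of-trajectory argument that your sketch asserts rather than runs. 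If you replace your final paragraph with a citation to \cite{SL22} Theorem 1.6, your argument becomes essentially the paper's, phrased directly rather than by contradiction.
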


Using the fact that Sobolev constants can be used to obtain a lower volume
growth estimate, we obtain a further corollary which can be interpreted as a
direct analogue of Struwe's low-energy existence theorem for Yang-Mills flow
remarked on above.  This statement was conjectured in \cite{SL2coll}.  In the
statement below, given a compact Riemannian manifold $(M^4, g)$, the Sobolev
constant $C_S$ is defined to be the smallest positive constant so that for all
$f \in C^{\infty}(M)$ we have
\begin{align*}
\nm{f}{L^4}^2 \leq C_S \left[ \nm{\N f}{L^2}^2 + V^{-\frac{1}{2}} \nm{f}{L^2}^2
\right].
\end{align*}

\begin{cor} \label{sobflatcor} Given $A, V > 0$ there exists $\ge = \ge(A,V) >
0$ so that if $(M^4, g)$ is a compact Riemannian four-manifold satisfying
\begin{enumerate}
\item $\Vol \leq V$,
\item $C_S \leq A$,
\item $\FF(g) \leq \ge$,
\end{enumerate}
then the solution to the $L^2$ flow with initial condition $g$ exists for all
time and converges exponentially to a flat metric.
\end{cor}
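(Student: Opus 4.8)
The plan is to derive this corollary from Corollary \ref{flatcor} via the standard principle that a uniform bound on the Sobolev constant yields a lower bound on the volume ratios of small metric balls, hence a lower bound on the $\gd$-volume radius for a universal choice of $\gd$. One preliminary observation: all three hypotheses and the conclusion are invariant under a constant rescaling $g \mapsto \gl^2 g$ --- under such a rescaling $\FF$ and $C_S$ are unchanged (both being scale invariant in dimension four), while a solution $g_t$ of the $L^2$ flow becomes $\gl^2 g_{\gl^{-4}t}$, which exists for all time and converges exponentially to a flat metric exactly when $g_t$ does. So I may first normalize so that $\Vol_g(M) = V$; if $\Vol_g(M) < V$, replace $g$ by $\gl^2 g$ with $\gl^4 = V/\Vol_g(M) > 1$, which preserves $C_S \leq A$ and $\FF(g) \leq \ge$.

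Next I would establish that there is $\gd_0 = \gd_0(A) \in (0,1)$ such that $C_S \leq A$ and $\Vol(M) = V$ force $r_{\gd_0} \geq V^{1/4}$. This is the usual cutoff estimate: for $x \in M$ and $0 < s \leq V^{1/4}$, apply the Sobolev inequality to a Lipschitz function $f$ with $f \equiv 1$ on $B_{s/2}(x)$, $\supp f \subset B_s(x)$, and $\brs{\N f} \leq 2/s$. Using $\nm{f}{L^4}^2 \geq \left( \Vol B_{s/2}(x) \right)^{1/2}$, $\nm{\N f}{L^2}^2 \leq 4 s^{-2}\Vol B_s(x)$, and $V^{-1/2}\nm{f}{L^2}^2 \leq s^{-2}\Vol B_s(x)$ (valid since $s \leq V^{1/4}$, the normalization making the $\Vol(M)^{-1/2}$ in the Sobolev inequality equal to $V^{-1/2}$), one obtains
\[ \left( \Vol B_{s/2}(x) \right)^{1/2} \leq 5A\, s^{-2}\, \Vol B_s(x). \]
Writing $\Vol B_s(x) = a(s)\,s^4$, this reads $a(s) \geq (20A)^{-1} a(s/2)^{1/2}$, whose dyadic iteration, seeded by the Euclidean asymptotics $a(s) \to \gw_4$ as $s \to 0$, yields $a(s) \geq \min\{(20A)^{-2}, \gw_4\}$ for all $s \leq V^{1/4}$. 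Setting $\gd_0 := \min\{(20A)^{-2}/\gw_4,\ 1/2\}$, which depends only on $A$, gives $\Vol B_s(x) \geq \gd_0 \gw_4 s^4$ for all $x \in M$ and all $s \leq V^{1/4}$, i.e.\ $r_{\gd_0} \geq V^{1/4}$.

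Finally I would invoke Corollary \ref{flatcor} with $\gd = \gd_0(A)$, $\rho = V^{1/4}$, and volume bound $V$, obtaining $\ge = \ge(\gd_0(A), V^{1/4}, V) =: \ge(A,V) > 0$ so that $\FF(g) \leq \ge$ forces long-time existence of the $L^2$ flow and exponential convergence to a flat metric. Undoing the normalization via the scaling correspondence transfers these conclusions back to the original metric, completing the proof. The only step with genuine content is the volume-growth estimate, and there the two points to watch are the range of scales on which the zeroth-order term of the Sobolev inequality is dominated by the gradient term (controlled by the normalization $\Vol(M) = V$) and the closing of the dyadic iteration (controlled by the local Euclidean behavior of $\Vol B_s(x)$); everything else is a direct application of Corollary \ref{flatcor}.
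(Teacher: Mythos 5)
Your proposal is correct and follows essentially the same route as the paper: deduce a lower volume growth bound (hence a lower bound on $r_{\gd}$ for a $\gd$ depending only on $A$) from the Sobolev constant bound, then apply Corollary \ref{flatcor}. The only difference is that the paper cites Carron/Hebey for the volume lower bound while you prove it directly via the cutoff-function and dyadic iteration argument, and your explicit normalization $\Vol = V$ cleanly handles the scale at which the zeroth-order Sobolev term is controlled.
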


The crucial point of interest in this corollary is the fact that only curvature
hypothesis which is made is smallness of the scale-invariant $L^2$ norm of
curvature.  Many precursors exist for this type of result (\cite{ACFin,
DPW,Gao,Minoo,PetersenWei,Yang1,Yang2}), but all
rely on choosing the $L^{\frac{n}{2}}$ norm sufficiently small with respect to
other curvature bounds.  At the heart of these techniques is usually some form
of elliptic theory/Moser iteration, which requires a ``supercritical" $L^p$
bound to get started.  We in particular point out the results in
\cite{DPW,Minoo,Yang1,Yang2}, which
all rely on Ricci flow to smooth out metrics satisfying certain curvature bounds
in order to prove new compactness theorems.  As they rely on parabolic Moser
iteration, these results all require that one choose smallness of the
$L^{\frac{n}{2}}$ norm with respect to some supercritical $L^p$ measure of Ricci
curvature.  We point out that a statement is made in \cite{DPW} claiming to
smooth out metrics with $L^{\frac{n}{2}}$ curvature norm chosen small with
respect only to the Sobolev constant.  However, no proof is given and the author
was not able to reproduce it, as indeed the Moser iteration technique alone only
yields $t^{-1}$ decay of curvature in this setting, which cannot be integrated
over any definite time interval to ensure that the Sobolev constant remains
bounded on that interval.  While it does not appear that Moser iteration alone
can yield such a result, one wonders if by exploiting the various monotonic
quantities introduced by Perelman \cite{P1} that such a result could be
obtained.

More generally, we can obtain the existence of an $F$-structure on a compact
four-manifold with small $L^2$ curvature norm with respect to a scale-invariant
measure of the pointwise lower volume ratio.  Results of this kind begin with
the fundamental collapsing results of Cheeger-Gromov \cite{CG1,CG2}.  Since
then, several improvements have appeared which exploit $L^p$ bounds on curvature
instead of pointwise bounds (\cite{DPW, YangF}).  Our result exhibits an
$F$-structure for manifolds with small $L^2$ curvature, relying on the extra
assumption that the energy is small with respect to how much the
$\gd$-volume radius varies along the manifold.

\begin{cor} \label{Fstructcor} Given $0 < \gd < 1$ and $A > 0$ there exists
$\ge(\gd,A) > 0$ so that if $(M^4, g)$ is a compact Riemannian manifold
satisfying
\begin{enumerate}
\item{$\FF(g) \leq \ge$,}
\item{$\sup_M r_{\gd} \leq A \inf_M r_{\gd}$,}
\end{enumerate}
then $M$ admits a metric $\bar{g}$ satisfying $\sup_M \brs{\bar{\Rm}}
\inj^2_{\bar{g}} \leq C(A,\gd) \ge$.  In particular, $M$ collapses along an
$F$-structure.
\end{cor}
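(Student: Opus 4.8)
The plan is to run the $L^2$ flow from $g_0$, take the metric $\bar g = g_T$ at an appropriate later time $T$ --- which Theorem~\ref{mainthm} equips with small bounded geometry --- and then show $\bar g$ is volume-collapsed relative to its curvature scale, so that the nilpotent Killing structure theorem of \cite{CFG} (already the $F$-structure theorem of \cite{CG1,CG2} suffices) yields an $F$-structure along which $M$ collapses. Concretely, set $\rho := \inf_M r_{\gd}(g_0)$, so hypothesis (2) reads $\rho \le r_{\gd}(x) \le A\rho$ for all $x$, and take $\ge$ no larger than the threshold $\ge_0(\gd)$ of Theorem~\ref{mainthm} (to be shrunk further below). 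As the $L^2$ flow is the negative gradient flow of $\FF$, $\FF(g_t) \le \FF(g_0) \le \ge$ for all $t$; so, by Theorem~\ref{mainthm} together with Remark~(3) following it, for $\ge$ small the flow exists on $[0,T]$ with $T = \tau\rho^4$ for any prescribed $\tau = \tau(\ge)$. With $\bar g := g_T$, Theorem~\ref{mainthm}(1),(2) and the derivative estimates (Lemma~\ref{derivest}) then give
\begin{align*}
\brs{\Rm}_{\bar g} \le C\, \ge^{1/6}\tau^{-1/2}\rho^{-2}, \qquad \brs{\N^k\Rm}_{\bar g} \le C_k\, \ge^{1/6}\tau^{-(2+k)/4}\rho^{-(2+k)}, \qquad \inj_{\bar g} \ge \iota\, \tau^{1/4}\rho,
\end{align*}
with constants depending only on $\gd$.

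The crux is a matching \emph{upper} bound $\inj_{\bar g}(x) \le C(A,\gd)\rho$ for all $x$. For each $x$, the inequality $r_{\gd}(g_0)(x) \le A\rho$ and the definition of $r_{\gd}$ furnish a scale $s_x \in [\rho, 2A\rho]$ with $\Vol_{g_0} B_{s_x}(x) < \gd\gw_n s_x^n$: the initial metric is uniformly volume-collapsed at a scale comparable, up to $A$, to $\rho$. The essential step is to propagate this collapse forward, i.e.\ to find $\gt = \gt(A,\gd) < 1$ and $C_0 = C_0(A,\gd)$ with $\Vol_{\bar g} B_{C_0\rho}(x) \le \gt\gw_n(C_0\rho)^n$ for every $x$. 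On $[0,\rho^4]$ this is precisely the kind of control on volumes of metric balls along the $L^2$ flow developed for Theorem~\ref{mainthm} in \S\ref{mainthmsec} (``tubular averaging''), which is needed because $\brs{\del_t g} \le C(\brs{\N^2\Rm} + \brs{\Rm}^2)$ behaves like $t^{-3/2}$ and so is not integrable near $t = 0$; on $[\rho^4, T]$ the curvature decay makes $\brs{\del_t g}$ integrable in $t$ with small integral, so ordinary integration of the volume variation closes the bound there. Granting the collapse bound at time $T$, the curvature bound on $\bar g$ makes volume comparison (Bishop--Gromov, or Croke's inequality) effectively Euclidean on balls of radius $\le C_0\rho$, so $\inj_{\bar g}(x) \ge r$ with $r \le C_0\rho$ would force $\Vol_{\bar g}B_r(x) \ge c(n)r^n$; comparing with the collapse bound gives $\inj_{\bar g}(x) \le C(A,\gd)\rho$.

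Combining this with the displayed estimates, $\sup_M \brs{\Rm}_{\bar g}\inj_{\bar g}^2 \le C(A,\gd)\ge^{1/6}\tau^{-1/2}$, which for $\ge$ small lies below the threshold $\ge(n)$ in the Cheeger--Fukaya--Gromov theorem; applying that theorem to $\bar g$ (whose higher derivatives are controlled) produces a nilpotent Killing structure of positive rank, hence in particular an $F$-structure, along which $M$ collapses. Taking $\tau \asymp \ge^{-5/3}$, which is admissible for $\ge$ small by Remark~(3), sharpens the estimate to the stated $C(A,\gd)\ge$; the dependence of the constant on $A$ enters only through the collapse scale $C_0\rho$ and the propagation constant $\gt$.

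I expect the main obstacle to be the volume-propagation step on $[0,\rho^4]$: because $\brs{\del_t g}$ fails to be integrable there one cannot simply integrate the variation of volume, and the full strength of the tubular averaging estimates of \S\ref{mainthmsec} is required; one must also check that the resulting $\gt < 1$ and $C_0$ depend only on $A$ and $\gd$, so that $\sup_M \brs{\Rm}_{\bar g}\inj_{\bar g}^2$ genuinely tends to $0$ as $\ge \to 0$.
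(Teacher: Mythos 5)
Your core strategy --- run the flow to a later time, propagate the initial volume collapse at scale $\approx A\rho$ forward using the tubular-averaging distance control of \S\ref{mainthmsec}, and convert the resulting volume upper bound plus the curvature bound into an upper bound on an injectivity-type radius --- is essentially the paper's argument. (One detail you gloss over: the containment needed here is the \emph{reverse} of (\ref{ballcont}); you must show distances cannot \emph{decrease} too fast, i.e.\ run the tubular-averaging estimate for the time-reversed flow, which the paper points out explicitly.) The endgame differs: you bound the true injectivity radius via G\"unther/Croke comparison, whereas the paper bounds Yang's $\mu$-weak injectivity radius and then invokes \cite{YangF} Theorem 0.1. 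Your version is fine for the qualitative conclusion, but it has a genuine quantitative gap.

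The gap is the sharpening from $C\ge^{1/6}$ to $C\ge$. At time $\rho^4$ your estimates give only $\sup\brs{\Rm}\,\inj^2 \leq C(A,\gd)\,\ge^{1/6}$, and your fix --- flowing to $T=\tau\rho^4$ with $\tau\asymp\ge^{-5/3}$ --- does not work. First, your claim that on $[\rho^4,T]$ one can close the distance bound by ordinary integration is false: $\brs{\del_t g}=\brs{\grad\FF}\gtrsim\brs{\N^2\Rc}$, and Lemma \ref{derivest} only gives $\brs{\N^2\Rc}\leq C t^{-1}$ with a \emph{universal} (not small) constant, so $\int_{\rho^4}^{\tau\rho^4}\brs{\del_t g}\,dt \gtrsim \log\tau\to\infty$; the tubular-averaging error terms likewise grow with $\tau$ (e.g.\ the bulk term scales like $R^{-3/2}\tau^{1/2}\ge^{1/2}$). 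Second, and more decisively, the plan is internally inconsistent: if the estimates of Theorem \ref{mainthm} persisted to $t=\tau\rho^4$, then $\inj_{g_T}\geq\iota\tau^{1/4}\rho$, which for $\tau$ large exceeds any bound $C(A,\gd)\rho$ and forces nearly Euclidean volume ratios at scale $A\rho$ --- contradicting the very collapse you are trying to propagate. So either the long-time extension of Remark (3) is unavailable at fixed $\ge$ for these collapsed manifolds, or the volume upper bound cannot survive to time $T$; either way $\tau$ cannot be taken large. The paper gets the stated $C(A,\gd)\ge$ without long-time flow by bounding $\bigl[\sup_M\inj^{\mathrm{wk}}\bigr]^2\int_M\brs{\Rc}^3\leq C\rho^2\cdot\sup\brs{\Rm}\cdot\int_M\brs{\Rm}^2\leq C(A,\gd)\,\ge^{7/6}$ --- feeding in the energy $\FF\leq\ge$ directly rather than the pointwise bound --- and then letting Yang's theorem produce the metric $\bar g$. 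You should adopt that endgame (or be content with $\ge^{1/6}$, which still yields the $F$-structure).
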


\begin{rmk} To emphasize the type of manifolds which are allowed under Corollary
\ref{Fstructcor}, one can take an arbitrary closed three-manifold $(N^3, g)$,
and construct $M^4 \cong N^3 \times S^1$, and consider the family of metrics
$g_{\ge} = g + \ge g_{S^1}$.  Straightforward calculations show that
$\FF(g_{\ge}) \leq C \ge$, and moreover there is a constant $A$ independent of
$\ge$ such that the pinching condition (2) in the statement of Corollary
\ref{Fstructcor} holds.  Thus, for sufficiently small $\ge$ and any perturbation
of the metric which is small in $C^0$ and yields a small perturbation of $\FF$,
the $L^2$ flow smooths the metric sufficiently to ``recover'' the $F$-structure.
\end{rmk}

The last corollary we obtain is a gap theorem for Einstein
four-manifolds.  Again, prior results of this type have appeared in for
instance \cite{ACFin, Gao, PetersenWei}, and all require bounds on a
``supercritical'' curvature quantity.

\begin{thm} \label{einv} There is a constant $0 < \eta < 1$ with the
following property: given $\rho,A,V > 0$ there exists $\ge = \ge(\rho,A,V) > 0$
such that given $(M^4, g)$ a compact Riemannian manifold satisfying
\begin{enumerate}
\item $\Vol \leq V$,
\item $\chi(M) \leq A$,
 \item $r_{1 - \eta} \geq \rho$,
\item $\nm{z}{L^2} \leq \ge$,
\end{enumerate}
then $M^4$ admits an Einstein metric.
\end{thm}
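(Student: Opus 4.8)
The plan is to argue by contradiction and compactness, using the smoothing and $\ge$-regularity results above to produce a limiting Einstein metric. First I would pass to a statement about a sequence via the Gauss--Bonnet formula: in dimension four one has $8\pi^2\chi(M)=\int_M\bigl(\brs{W}^2-\tfrac12\brs{z}^2+\tfrac{1}{24}s^2\bigr)\,dV$ with $\brs{W}^2,s^2\geq 0$, so $\chi(M)\leq A$ together with $\nm{z}{L^2}\leq\ge$ yields a \emph{universal} energy bound $\FF(g)\leq\bar E(A)$ and a bound $\int_M s^2\leq\bar S(A)$ (and, $\chi$ being an integer, $\chi(M)\geq 0$ once $\ge$ is small). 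Suppose the theorem fails for the constant $\eta$ to be fixed below: there are $\rho,A,V>0$ and compact $(M_i^4,g_i)$ with $\Vol\leq V$, $\chi(M_i)\leq A$, $r_{1-\eta}(g_i)\geq\rho$, $\nm{z_i}{L^2}\leq\ge_i\to 0$, none admitting an Einstein metric; a packing argument from the lower volume-growth bound and $\Vol\leq V$ gives a uniform diameter bound, so $(M_i,g_i)$ has uniformly bounded energy and diameter and uniform volume non-collapse, with $\nm{z_i}{L^2}\to 0$.

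Next I would run the $L^2$ flow from each $g_i$ for a short time and invoke the local smoothing / $\ge$-regularity estimates of this paper with the energy bound $\bar E$ in the role of the curvature-related bound: away from a number $N=N(\bar E)$ of concentration regions the flow at once controls $\brs{\Rm}$, $\inj$, and (as in estimate (3) of Theorem \ref{mainthm}) the diameter, uniformly in $i$, while near each concentration region one rescales to extract a bubble. After passing to a subsequence, the appropriately normalized metrics converge in the orbifold Cheeger--Gromov sense to a compact limit $(X_\infty,g_\infty)$, smooth off finitely many orbifold points, with finitely many complete bubbles attached. Since $\nm{z}{L^2}^2$ is scale invariant in dimension four and $\FF$ is monotone along the flow, the Gauss--Bonnet identities of the first step propagate through the flow and through each rescaling, forcing $z_\infty\equiv 0$ on $X_\infty$ and $z\equiv 0$ on every bubble; and as $\int s^2$ stays bounded while the bubbles are noncompact and non-collapsed, each bubble is scalar-flat, hence Ricci-flat, hence a Ricci-flat ALE space. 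This is where $\eta$ enters: the near-maximal volume growth $r_{1-\eta}(g_i)\geq\rho$ is inherited by $g_\infty$ at scales $\leq\rho$ and by every bubble at \emph{all} scales (the scale $\rho$ having blown up), whereas an orbifold point of $X_\infty$, or a nonflat Ricci-flat ALE bubble, has volume ratio at infinity of the form $\gw_4/\brs{\G}$ with $\brs{\G}\geq 2$, hence at most $\tfrac12\gw_4$. Choosing any $\eta<\tfrac12$ then forces, through Bishop--Gromov monotonicity and its rigidity case, that $X_\infty$ has no orbifold points and that every bubble is flat $\mathbb R^4$. Hence $X_\infty$ is a smooth compact four-manifold and $g_\infty$ is a smooth metric with $z_\infty\equiv 0$, i.e.\ Einstein; the convergence $g_i\to g_\infty$ is smooth Cheeger--Gromov, so $M_i$ is diffeomorphic to $X_\infty$ for $i$ large, and $M_i$ thus admits an Einstein metric --- a contradiction.

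The hard part will be the second step: running the $L^2$ flow quantitatively from \emph{bounded} rather than small energy, and extracting a limit that still remembers both $\nm{z}{L^2}\to 0$ and the near-maximal volume growth. Both of these quantities are scale invariant in dimension four, and reconciling them with the natural smoothing time, with the rescaling performed near concentration points, and with the (a priori large) total decrease of $\FF$ over the smoothing interval is exactly where the \emph{local} smoothing estimates and the $\ge$-regularity theorem for $L^2$-critical metrics with possibly non-constant scalar curvature --- rather than the global small-energy statements --- are required. A secondary issue is to make the ALE volume rigidity effective enough that one universal $\eta$ works for all $\rho,A,V$.
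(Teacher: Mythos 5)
Your overall scaffolding (contradiction, Chern--Gauss--Bonnet to convert $\chi\leq A$ and $\nm{z}{L^2}\leq\ge$ into a uniform energy bound, and a volume-rigidity statement for complete Ricci-flat spaces as the mechanism that fixes $\eta$) matches the paper's, but the central step of your argument rests on machinery that neither you nor the paper supplies. You propose to run the flow from \emph{bounded} energy, isolate $N(\bar E)$ concentration regions, and extract an orbifold Cheeger--Gromov limit with Ricci-flat ALE bubbles, then kill the orbifold points and bubbles using $r_{1-\eta}\geq\rho$ with $\eta<\tfrac12$. No such energy-quantization/bubbling theory for the $L^2$ flow is available here: the paper explicitly points out (after Proposition \ref{thickblowup}) that it cannot even bound the \emph{number} of concentration regions, because local stability of energy along the flow is not known. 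So "away from $N$ concentration regions the flow controls $\brs{\Rm}$ and $\inj$ uniformly" is an assertion, not a consequence of the stated tools, and the orbifold-compactness limit you take afterwards has no foundation. A second, related gap is the claim that the bubbles inherit near-maximal volume growth "at all scales": the hypothesis $r_{1-\eta}\geq\rho$ is a statement about the metric at time $0$, while the bubbles form at a positive flow time and a much smaller length scale; transferring the volume-ratio bound across the time interval requires precisely the quantitative distance/volume control (the tubular-averaging estimate) that you defer to "the hard part."

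For contrast, the paper avoids bubbling entirely. It proves a uniform short-time smoothing estimate $\brs{\Rm}\leq Kt^{-1/2}$, $\inj\geq t^{1/4}$ by a blowup at the first time either inequality becomes an equality. The curvature equality is ruled out because the blowup limit is Ricci-flat (via Gauss--Bonnet and monotonicity of $\nm{z}{L^2}^2$) with an injectivity radius lower bound, hence has bounded curvature by harmonic-coordinate estimates. The injectivity-radius equality is ruled out by showing, via the tubular-averaging distance estimate, that large balls in the blowup limit have volume ratio $\geq(1-\zeta)\gw_4$ for all radii, so Anderson's Gap Lemma forces the limit to be $\mathbb R^4$, contradicting $\inj=1$; this is where $\eta$ is fixed (small with respect to Anderson's $\zeta$, not merely $<\tfrac12$). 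With the uniform smoothing in hand, the time-$\tau$ slices have bounded geometry and ordinary smooth Cheeger--Gromov compactness produces the Einstein limit diffeomorphic to $M_i$. If you want to salvage your route, you would need either to prove local energy quantization for the $L^2$ flow or to replace the bubbling analysis by the paper's first-failure-time blowup; as written, the proposal has a genuine gap.
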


\begin{rmk} Note that we do not claim long time existence of the $L^2$ flow in
this statement.  This seems quite likely to be true, and verifying it requires a
stability analysis of the flow near Einstein metrics.  Since Einstein metrics
are global
minima for $\FF$ by the Chern-Gauss-Bonnet theorem, we know a priori that such
metrics are linearly stable.  However, information on the integrability of
solutions to the linearized equation is still required to complete the stability
analysis.  This issues are addressed in great detail in recent work of
Gursky-Viaclovsky \cite{GV}.
\end{rmk}

\subsection{Further tools required for Theorem \ref{thickthin1}}

We now return to our main line of discussion, which is the proof of Theorem
\ref{thickthin1}.  We require more a priori estimates for the $L^2$ flow which
are interesting general tools on their own, and so we state them separately. 
The first is a local smoothing estimate in the presence of bounds on curvature
and its first two derivatives.  The proof is a generalization of the
blowup/lifting argument used in \cite{SL2LTB} to show global smoothing estimates
for the $L^2$ flow in the presence of a curvature bound.  See \S \ref{bckgrnd}
for the definitions of the various quantities in this statement.

\begin{thm} \label{smoothing1} (Local Smoothing, cf. Theorem \ref{smoothing2})
Fix $m,n \in \mathbb N^+$.  There
exists a constant $C = C(m,n)$ so
that given $r, K \geq 0$, and $(M^n, g_t)$
a complete solution to $L^2$ flow on $[0,r^4]$, and $x \in M$ satisfying
\begin{align*}
\sup_{P_r(x,r^4)} f_2 \leq K,
\end{align*}
then for all $t \in (0,r^4]$,
\begin{align*}
\sup_{B_r(x,t)} \brs{\N^m \Rm} \leq C \left( (1 + r^{\frac{1}{2}}
\rho_{x,t}^{-\frac{1}{2}}) K + t^{-\frac{1}{2}} +
 \rho_{x,t}^{-2}\right)^{1 + \frac{m}{2}},
\end{align*}
where $\rho_{x,t}(y) := r - d(x,y,t)$.
\end{thm}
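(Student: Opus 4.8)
The plan is to argue by contradiction via a point-picking/blowup argument, exactly in the spirit of the global smoothing estimates in \cite{SL2LTB} but carried out locally inside the parabolic cylinder $P_r(x,r^4)$. First I would reduce to the case $m=0$: once a bound of the form $\sup_{B_r(x,t)} \brs{\Rm} \leq C(\cdots)^{1}$ is established on the whole time interval, the higher-derivative bounds follow from the interior derivative estimates of (\cite{SL2LTB} Theorem 1.3, cf. Lemma \ref{derivest}), applied on slightly smaller sub-cylinders, which accounts for the exponent $1 + \frac{m}{2}$ through scaling. So the heart of the matter is the $C^0$ curvature bound. To set this up I would fix the constants and suppose, for contradiction, that there is a sequence of solutions $(M^n_i, g_{t,i})$ on $[0, r_i^4]$, points $x_i$, and times $t_i \in (0, r_i^4]$ with $\sup_{P_{r_i}(x_i, r_i^4)} f_2 \leq K_i$ but with the claimed bound violated at some $y_i \in B_{r_i}(x_i,t_i)$ by a factor tending to infinity.

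Next I would run a parabolic point-selection argument: using the weight $\rho_{x_i,t}(y) = r_i - d(x_i,y,t)$ (which vanishes on $\partial B_{r_i}(x_i,t)$ and is comparable to $r_i$ near $x_i$), one selects a spacetime point $(z_i, s_i)$ maximizing a suitably weighted curvature quantity, roughly $\left( \rho_{z_i,s_i}^{2} + s_i^{1/2}\rho_{z_i,s_i} \right) \brs{\Rm}(z_i,s_i)$ or its appropriate analogue matching the three terms $(1 + r^{1/2}\rho^{-1/2})K$, $t^{-1/2}$, $\rho^{-2}$ in the statement. The standard consequence is that after rescaling by $Q_i := \brs{\Rm}(z_i, s_i) \to \infty$ — parabolically, so that space scales by $Q_i^{1/2}$ and time by $Q_i^2$ — the curvature of the rescaled flows is uniformly bounded on larger and larger backward parabolic cylinders, and the base point has curvature exactly $1$. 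Crucially, the hypothesis $f_2 \leq K$ rescales away: $f_2$ is a curvature-weighted quantity built from $\brs{\Rm}$ and its first two derivatives, so under the blowup it is dominated by a bound that goes to zero (this is where the structure of $f_2$, and the choice of the three competing terms in the estimate, is used). Hence in the limit one obtains a complete ancient (or at least "eternal on a large interval") solution to the $L^2$ flow with $\brs{\Rm} \leq 1$, $\brs{\Rm}(z_\infty, 0) = 1$, and $f_2 \equiv 0$ in the limit.

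The contradiction then comes from a rigidity/unique-continuation fact: a solution to the $L^2$ flow with $f_2 \equiv 0$ must be static and flat (or, at minimum, have vanishing curvature at the base point), contradicting $\brs{\Rm}(z_\infty,0) = 1$. Here I would either invoke the structural fact that $f_2 = 0$ forces the relevant curvature terms driving the evolution to vanish — so the flow is constant in time and the limit is a critical, hence (with the energy/derivative control from the blowup) flat metric — or, if the limiting object is only controlled on a finite backward interval, run a short backward continuation argument combined with the smoothing to push the curvature to zero. Extracting the smooth limit requires the injectivity radius lower bound along the rescaled flows, which must be obtained from the local structure; this is typically handled as in \cite{SL2LTB} by combining the uniform curvature bound with a non-collapsing input, or by working in the lifted/local-cover picture of that paper to avoid needing a global injectivity radius bound.

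The main obstacle, as in the global case, is \emph{extracting a smooth limit flow}: the $L^2$ flow is only degenerate parabolic, so uniform curvature bounds do not immediately give uniform higher-order control, and one must feed the curvature bound back through Lemma \ref{derivest} and an injectivity-radius/non-collapsing argument before Cheeger–Gromov-type compactness applies. The secondary technical point is bookkeeping the three-term structure of the weight so that the point-picking argument produces exactly the stated estimate rather than a weaker one; this is routine but delicate, and is precisely the generalization of \cite{SL2LTB} advertised in the text. Once the smooth limit and its flatness are in hand, the contradiction is immediate and the theorem follows.
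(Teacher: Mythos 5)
Your overall strategy --- selecting a spacetime maximum of a weighted curvature quantity, rescaling parabolically, observing that the hypothesis $f_2 \leq K$ scales to zero along the blowup so the limit is flat, and contradicting the unit normalization at the base point --- is exactly the paper's argument (Theorem \ref{smoothing2}). But your opening reduction to $m=0$ does not work as stated. Lemma \ref{derivest} is a \emph{global} estimate: it requires $\sup_{M \times [0,T]} t^{\frac{1}{2}} \brs{\Rm} \leq A$ over the whole manifold, whereas you would only have a bound on $B_r(x,t)$; a localized version of that derivative estimate ``on slightly smaller sub-cylinders'' is precisely the theorem you are trying to prove, so invoking it is circular. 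The paper avoids this by running the point-picking directly on $f_m$ for each $m \geq 4$, maximizing $f_m(y,s)$ divided by the three-term weight over the parabolic ball. The blowup sequence then automatically carries uniform bounds on at least four covariant derivatives of curvature, which is what the local compactness theorem (Theorem \ref{locacomp}) requires; that theorem also gives convergence in the sense of local $C^\infty$-submersions, so your worry about injectivity radius lower bounds is moot. The contradiction at the end is then immediate (no unique continuation needed): $\bar{f}_2 \equiv 0$ forces $\brs{\Rm} \equiv 0$ pointwise, hence all derivatives of curvature vanish, contradicting $\bar{f}_m(z_\infty,0)=1$.

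The second gap is that you dismiss the ``bookkeeping of the three-term weight'' as routine, but the term $r^{\frac{1}{2}}\rho_{x,t}^{-\frac{1}{2}}K$ is the crux of making the selected point the center of a genuine parabolic ball. Since $\rho_{x,t}(y) = r - d(x,y,t)$ is time-dependent, after selecting the maximizer $(z,t)$ with $\lambda = f_m(z,t)$ one must show that for $s \in [t-\lambda^{-2},t]$ the distance $d(x,z,s)$ increases by at most a small fraction of $\rho_{x,t}(z)$; otherwise the backward parabolic ball escapes the region where the maximality inequality applies. The paper's Lemma \ref{blowuplemma} accomplishes this by integrating the metric distortion $e^{CK^2\lambda^{-2}}$ (coming from $f_2 \leq K$) along a minimizing geodesic from $x$ to $z$, producing an error of size $C r K^2 \lambda^{-2}$, which is absorbed precisely because the extra term in the weight forces $\lambda^2 \geq C_i^2\, r\rho^{-1}K^2$ with $C_i \to \infty$. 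Without this step --- and without that term in the weight --- the blowup argument does not close.
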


After proving Theorem \ref{smoothing1} we will exhibit a natural ``doubling
time" estimate for the $L^2$ flow, which have the further consequence of
showing that $L^2$ flow singularities exhibit a minimal, so-called ``Type I"
blowup rate of curvature.  The next new tool we need is an $\ge$-regularity
result for critical metrics.  

\begin{thm} \label{erthm} (cf. Theorem \ref{er}) Let $(M^4, g)$ be critical
Riemannian manifold, and fix
$r < \frac{\diam(M)}{2}$.  Given $k \geq 0$, there exists $\ge,C_k$ depending on
$C_S$ so that if
\begin{align*}
 \nm{\Rm}{L^2(B_r(p))} \leq \ge,
\end{align*}
 then
 \begin{align*}
  \sup_{B_{\frac{r}{2}}(p)} \brs{\N^k \Rm} \leq \frac{C_k}{r^{2+k}} \left[
\int_{B_r(p)} \brs{\Rm}^2  \right]^{\frac{1}{2}}.
 \end{align*}
\end{thm}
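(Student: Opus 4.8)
The plan is to run the standard elliptic $\ge$-regularity machinery for critical metrics, adapted to the case where the scalar curvature is not assumed constant, following the circle of ideas of Tian--Viaclovsky and Anderson, with the key refinement being a careful bootstrap to absorb the scalar curvature terms. First I would reduce to a scale-invariant statement by rescaling so that $r = 1$; this is legitimate because criticality is scale-invariant and the Sobolev constant is scale-invariant on balls once we work at unit scale (or rather, the hypothesis $r < \diam(M)/2$ is used to guarantee a uniform local Sobolev inequality from $C_S$). After rescaling, the goal is: small $L^2$ norm of $\Rm$ on $B_1(p)$ implies an interior pointwise bound on $\Rm$ (and its derivatives) on $B_{1/2}(p)$ in terms of $\nm{\Rm}{L^2(B_1(p))}$.

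The heart of the argument is to obtain an $L^\infty$ bound on $\brs{\Rm}$ on a slightly smaller ball from the $L^2$-smallness, via Moser iteration applied to the differential inequality satisfied by $u := \brs{\Rm}$. Here is where the nonconstant scalar curvature enters: the Euler--Lagrange equation $\grad \FF = 0$ is a fourth-order equation, but one extracts from criticality (and the second Bianchi identity) an elliptic inequality of Bochner type, schematically $\gD \brs{\Rm} \geq - c \brs{\Rm}^2 - (\text{lower order terms involving } \Rc \text{ and } R)$. When $R$ is not constant, these lower-order terms do not vanish, so one must first control $\Rc$ and $R$ in $L^p$ for suitable $p$. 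The strategy I would use: (i) integrate the critical equation against appropriate test functions to get that $\nm{\Rc}{L^2}$ and $\nm{R}{L^2}$ on interior balls are controlled by $\nm{\Rm}{L^2(B_1)}$ (criticality relates the full curvature tensor's derivatives to Ricci/scalar quantities); (ii) use the local Sobolev inequality (from $C_S$) together with the Bochner-type inequality to start a Moser iteration on $\brs{\Rm}$, using the $L^2$-smallness of $\Rm$ to absorb the bad quadratic term $\int \brs{\Rm}^3 \eta^2 \leq \nm{\Rm}{L^2}^2 \cdot (\text{Sobolev norm of } \brs{\Rm}^{1/2}\eta)^{\text{something}}$ — this is the place where choosing $\ge$ small (depending on $C_S$) is essential; (iii) conclude $\sup_{B_{3/4}} \brs{\Rm} \leq C \nm{\Rm}{L^2(B_1)}$.

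Once the $C^0$ bound on curvature is in hand on $B_{3/4}(p)$, the higher derivative estimates $\sup_{B_{1/2}} \brs{\N^k \Rm} \leq C_k \nm{\Rm}{L^2(B_1)}$ follow from standard elliptic regularity bootstrapping: the critical equation becomes, modulo lower-order terms now known to be bounded, a nondegenerate elliptic system for the metric in harmonic coordinates (which exist on a definite scale thanks to the curvature bound plus the volume lower bound implied by $C_S$), and one iterates Schauder/$L^p$ estimates, at each stage feeding back the improved regularity, with all constants tracking the single scale-invariant quantity $\nm{\Rm}{L^2(B_1(p))}$. Undoing the rescaling restores the factors $r^{-(2+k)}$. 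The main obstacle I anticipate is step (i)--(ii): showing that the nonconstant-scalar-curvature contributions in the Bochner inequality really are controllable purely in terms of $\nm{\Rm}{L^2}$ and $C_S$, without circularity — in particular making sure the interior estimates on $\Rc$ and $R$ do not themselves require the $C^0$ curvature bound one is trying to prove. I expect this is resolved by a careful ordering of the bootstrap (first $L^2 \to$ weak $L^p$ gains for Ricci/scalar via the equation and Sobolev, only then Moser iteration for $\brs{\Rm}$), exactly as in \cite{TV3}, but the nonconstancy of $R$ forces one extra layer of estimates compared to the Einstein or constant-scalar-curvature case.
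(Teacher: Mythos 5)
Your proposal is essentially correct, and for the $k=0$ estimate it matches the paper's proof almost exactly: rescale to $r=1$, control the scalar curvature and Ricci curvature first (the paper's Lemmas \ref{erlem1}--\ref{erlem2}, using $\gD s = 0$ and $\gD \Rc = \tfrac{1}{2}\N^2 s + \Rc * \Rm$ with cutoff functions and the Sobolev inequality), then absorb the cubic term $\int \phi^2 \brs{\Rm}^3$ via H\"older, Sobolev, and the $\ge$-smallness (Lemma \ref{erlem3}), and finally pass from $L^2$ to $L^\infty$ by a Moser-type lemma (the paper imports \cite{TV} Lemma 3.9, which also requires a volume \emph{upper} bound on $B_r(p)$ — obtained via Petersen--Wei from the $L^4$ Ricci bound, a step you do not mention but which is needed since no a priori Ricci bound is available). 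Where you diverge is the higher derivatives: you propose passing to harmonic coordinates and running a Schauder/$L^p$ bootstrap on the elliptic system, whereas the paper stays entirely tensorial, proving by induction on $k$ scale-invariant $L^4$ and $L^2$ bounds on $\N^k s$, $\N^k \Rc$, $\N^k \Rm$ (Proposition \ref{erind}, in the same three-tier order scalar $\to$ Ricci $\to$ Riemann at each level) and then applying the Moser lemma to each $\brs{\N^k \Rm}$. Both routes work; the paper's avoids invoking injectivity radius lower bounds or harmonic radius estimates altogether, so the constants depend only on $C_S$ with no intermediate appeal to Cheeger's lemma, which is the cleaner option given that the intended application is to local blowup limits. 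One small imprecision in your write-up: the term $L(\N^2 \Rc)$ in $\gD \Rm = L(\N^2\Rc) + \Rm * \Rm$ is \emph{not} lower order; it is handled either by integrating by parts (in the integral estimates) or by its $L^4$ bound from the inductive hierarchy (as the inhomogeneous term $h$ in the Moser lemma), and your proposed ordering of the bootstrap does correctly accommodate this.
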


\noindent This proof is similar to related $\ge$-regularity results for Einstein
metrics
\cite{AndersonRicci}, critical metrics \cite{Chang}, Bach-flat metrics
\cite{TV}, and
extremal K\"ahler metrics \cite{CW}.  What differentiates Theorem \ref{erthm}
from the results of \cite{Chang,TV} is that we do not assume
constant scalar curvature, which follows automatically in the settings
considered in those papers.  In
some of our blowup arguments we obtain only a local limit, and thus one cannot
ensure that the scalar curvature is constant.  This causes some technical
difficulties which are
discussed further in \S \ref{ersec}.

The last tool we require is a crucial point-picking statement, Proposition
\ref{ppp}, which says roughly speaking that for points of large curvature in an
$L^2$ flow which are sufficiently noncollapsed, there are points which are
nearby in spacetime where the curvature is still large, and which lie at the
center of parabolic balls.  A crucial difficulty in obtaining this result is to
control the distance between two points which are much farther apart than their
respective curvature scales.  This requires another application of the ``tubular
averaging'' technique introduced  in the proof of Theorem \ref{mainthm}.  With
all of these tools in place we establish Theorems \ref{thickthin1} and
\ref{threefoldtt} in \S \ref{ttsec}.

\begin{rmk} Theorem \ref{thickthin1}, as well as the intermediate results
Theorem \ref{smoothing1}, Theorem \ref{erthm} apply to
solutions to
more general fourth-order curvature flows, including the Calabi flow (in
dimension 4), and the flows introduced in the work of Bour
\cite{Bour}.  Corollaries \ref{diff} and \ref{flatcor} also apply to Calabi
flow,
although in this case one already can identify the complex surfaces in question
from the Kodaira classification.  We will give the appropriate references and
sketches for Calabi flow in \S \ref{cfsec}.
\end{rmk}

Here is an outline of the rest of this paper.  In \S \ref{bckgrnd} we recall
some notation and definitions related to fourth order curvature flows.  In \S
\ref{mainthmsec} we prove Theorems \ref{mainthm} and \ref{mainthm2} and the
attendant corollaries on
rigidity and diffeomorphism finiteness.  Next in
\S \ref{smoothingsec} we prove the local smoothing property (Theorem
\ref{smoothing1}).  Section \ref{ersec} has the proof of Theorem \ref{erthm}. 
We combine these tools in \S \ref{ttsec} to establish the point-picking
algorithm and finish the proof of Theorems \ref{thickthin1} and
\ref{threefoldtt}.  We end by sketching how to modify the proofs to obtain the
results for Calabi
flow in \S \ref{cfsec}.

\vskip 0.1in

\textbf{Acknowledgements:} The author would like to express deep gratitude to
Mark Stern and Gang Tian for their encouragement and 
support of this project in its earliest stages.  Also the author would like to
thank Weiyong He for some comments on an earlier version of this manuscript.

\section{Preliminaries} \label{bckgrnd}
\subsection{Definitions and notation}

\begin{defn} \label{ballsdef} Given $(M^n, g_t)$ a smooth one-parameter family
of metrics on $[A,B]$, and given $x\in M$, $t \in [A,B]$ and $r \geq 0$, we set
\begin{align*}
d(x,y,t) :=&\ \dist_{g_t}(x,y),\\
B_r(x,t) :=&\ \left\{ (y,t) \in M \times \{t\}\ |\ d(x,y,t) \leq r \right\},\\
P_r(x,t) :=&\ \left\{ (y,s) \in M \times [A,B]\ |\  t - r^4 \leq s \leq t,\ y
\in
B(x,r,s) \right\}.
\end{align*}
\end{defn}

Several of our arguments require us to keep track of appropriately
scale-invariant bounds on curvature and a certain finite number of derivatives. 
For this reason we make a convenient definition.

\begin{defn} \label{fdef} Let $(M^n, g_t)$ be a manifold with a one-parameter
family of metrics $g_t$ on $[A,B]$, and fix $k \in \mathbb N^+$.  We define $f_k
: M \times [A,B] \to \mathbb R$ by
\begin{align*}
f_k(x,t) :&= \sum_{j=0}^k \brs{\N^j \Rm}^{\frac{2}{2+j}}_{g(t)}(x).
\end{align*}
\end{defn}

Our next definition is a natural generalization of the usual curvature radius to
include bounds on several derivatives of curvature, appropriately scaled. 
Observe that $\sigma_k$ scales like distance.

\begin{defn} Given $(M^n, g)$ a Riemannian manifold, $x \in M$ and $k \in
\mathbb N^+$, the \emph{$k$-curvature radius at $x$} is defined to be
\begin{align*}
\gs_{k}(x) := \sup \left\{ r > 0\ : \sup_{B_r(x)} f_k \leq r^{-2} \right\}.
\end{align*}
\end{defn}

In the context of a solution to the $L^2$ flow, we can define a version of
$\sigma_k$ using parabolic balls.

\begin{defn} Given $(M^n, g_t)$ a solution to the $L^2$ flow on $[A,B]$, $(x,t)
\in M \times [A,B]$ and $k \in \mathbb N^+$, the \emph{parabolic $k$-curvature
radius at $(x,t)$} is defined to be
\begin{align*}
\bar{\gs}_{k}(x,t) := \sup \left\{ r > 0\ : \sup_{P_r (x,t)} f_k \leq r^{-2}
\right\}.
\end{align*}
\end{defn}

Next, using the definition of $k$-curvature radius we can define a local measure
of ``collapse on the scale of curvature'' as the volume ratio of the ball around
$x$ whose radius is the $k$-curvature radius.
\begin{defn} Given $(M^n, g)$ a Riemannian manifold, $x \in M$ and $m \in
\mathbb N^+$, the \emph{$k$-curvature volume ratio} is defined to be
\begin{align*}
\nu_{k}(x) := \frac{\Vol(B_{\gs_k(x)}(x))}{\gs_k(x)^n}.
\end{align*}
\end{defn}

With this local measure of volume growth, we define ``thin'' points of a
Riemannian
manifold to be those whose $3$-curvature volume ratio is small.  The reason for
choosing $3$-curvature is that various technical local constructions we use to
obtain a priori estimates require knowledge of this many derivatives to succeed.

\begin{defn} \label{thinset} Let $(M^n, g)$ be a Riemannian manifold.  Given
$\mu > 0$ let
\begin{align*}
 \mathcal T_{\mu}(g) = \{x \in M |\ \nu_3(x) \leq \mu \}.
\end{align*}
\end{defn}

\subsection{Riemannian geometry lemmas}

\begin{lemma} \label{arealowerbound} Let $(M^n, g)$ be a complete Riemannian
manifold.  Suppose $\brs{\Rm} \leq K$ and $\inj \geq \iota$.  There are
constants $\mu = \mu(n)> 0$ and $c = c(n) > 0$ so that for all $r < \mu \min
\{\iota, K^{-\frac{1}{2}} \}$ and all $v \in TM$ one has
\begin{align*}
\Area \left[ \exp_{p} \left\{ B_r(0) \cap \IP{v}^{\perp} \right\} \right] \geq c
r^{n-1}.
\end{align*}
\begin{proof} The choice of $\mu$ is made to ensure that for $r$ chosen as in
the statement, $\exp_p$ is a
diffeomorphism on $B_r(0) \subset T_p M$, and moreover one has uniform control
over the differential of $\exp_p$ on this ball.  Thus the area of the disc can
be computed by diffeomorphically pulling it back via $\exp_p$, where the metric
is uniformly equivalent to the standard metric on $T_pM$, and the statement
follows.
\end{proof}
\end{lemma}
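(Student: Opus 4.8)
The plan is to show that the hypotheses force $\exp_p$, restricted to the Euclidean ball $B_r(0) \subset T_p M$, to be a diffeomorphism whose differential is uniformly two-sidedly bounded in terms of $n$ alone, and then to deduce the area estimate by pulling the disc $\exp_p\{ B_r(0) \cap \IP{v}^{\perp}\}$ back to $T_p M$ and comparing with Euclidean area.

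First I would fix $\mu = \mu(n) > 0$ so small that $r < \mu \min\{\iota, K^{-\frac{1}{2}}\}$ forces both $r < \iota$ and $\sqrt{K}\, r$ small. The former, together with $\inj \geq \iota$, guarantees that $\exp_p \colon B_r(0) \to M$ is a diffeomorphism onto its image; the latter permits a clean application of Jacobi field comparison on all of $B_r(0)$.

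Next comes the analytic core. For a unit vector $w \in T_p M$, $s \in (0,r]$, and a Jacobi field $J$ along $\gg_w(s) := \exp_p(s w)$ with $J(0) = 0$, the bound $\brs{\Rm} \leq K$ and $s \leq r < \mu K^{-\frac{1}{2}}$ give, via the Rauch comparison theorem against the constant-curvature model spaces of curvature $\pm K$,
\[
\tfrac{1}{2}\, s\, \brs{J'(0)} \;\leq\; \brs{J(s)} \;\leq\; 2\, s\, \brs{J'(0)}
\]
once $\mu$ is sufficiently small (one compares $\brs{J(s)}$ with the model Jacobi fields $\tfrac{1}{\sqrt{K}}\sin(\sqrt{K}\, s)$ and $\tfrac{1}{\sqrt{K}}\sinh(\sqrt{K}\, s)$, both within a factor $2$ of $s$ for $\sqrt{K}\, s$ small; this simultaneously excludes conjugate points in $B_r(0)$, so $\exp_p$ is an immersion there). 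Since $d(\exp_p)_{sw}$ preserves the radial direction isometrically and sends $\xi \in \IP{w}^{\perp}$ to $\tfrac{1}{s} J_{\xi}(s)$, where $J_{\xi}$ is the Jacobi field along $\gg_w$ with $J_{\xi}(0) = 0$ and $J_{\xi}'(0) = \xi$, the displayed bound translates into the uniform equivalence
\[
\tfrac{1}{4}\, g_{\mathrm{eucl}} \;\leq\; \exp_p^{\ast} g \;\leq\; 4\, g_{\mathrm{eucl}} \quad \text{on } B_r(0) \subset T_p M ,
\]
with $g_{\mathrm{eucl}}$ the flat metric on $T_p M$ and constants depending only on $n$.

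Finally I would conclude: the set $\gS := \exp_p\{ B_r(0) \cap \IP{v}^{\perp}\}$ is the embedded image, under the diffeomorphism $\exp_p$, of the flat $(n-1)$-disc $D := B_r(0) \cap \IP{v}^{\perp}$, so
\[
\Area_g(\gS) = \int_D \Jac(\exp_p|_D) \, d \mathcal{H}^{n-1}_{\mathrm{eucl}} \;\geq\; c_0(n)\, \Area_{\mathrm{eucl}}(D) \;=\; c(n)\, r^{n-1} ,
\]
where the lower bound on the Jacobian comes from the metric equivalence above and $c(n) := c_0(n)\, \gw_{n-1}$; the bound is manifestly independent of $v$. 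I expect the only genuine obstacle to be the two-sided control of $d\exp_p$: one must invoke Rauch with the full bound $\brs{\Rm} \leq K$ --- the upper bound keeping $\brs{J(s)}$ from collapsing, the lower (sectional) bound keeping it from blowing up and ruling out conjugate points within distance $r$ --- and verify that each constant produced ($\mu$, the equivalence constants, $c_0$, hence $c$) depends on $n$ alone. The remaining steps are routine.
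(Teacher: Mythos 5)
Your proposal is correct and follows essentially the same route as the paper's (much terser) proof: choose $\mu$ so that $\exp_p$ is a diffeomorphism on $B_r(0)$ with uniformly controlled differential, then compute the area of the disc by pulling back to $T_pM$ where the metric is uniformly equivalent to the Euclidean one. The Rauch/Jacobi-field estimates you supply are exactly the content of the phrase ``uniform control over the differential of $\exp_p$'' in the paper, so there is nothing to add beyond the minor remark that it is the upper sectional bound (not the lower one) that excludes conjugate points — a point which is in any case moot since $r < \iota$ already makes $\exp_p$ a diffeomorphism on $B_r(0)$.
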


\begin{lemma} \label{coarea} \emph{(Coarea Formula)} Let $(M, g)$, $(N, h)$ be
Riemannian manifolds and let $F : M \to N$ be a smooth submersion.  Given $\phi
: M \to [0,\infty)$ a smooth function one has
\begin{align*}
\int_M \phi dV_g = \int_{y \in N} \int_{x \in F^{-1}(y)} \frac{\phi(x)}{N \Jac
F(x)} d F^{-1}(y) dV_h.
\end{align*}
\end{lemma}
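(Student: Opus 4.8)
The plan is to reduce the identity to a local statement via a partition of unity, use the local normal form for a submersion to put $F$ into the shape of a coordinate projection, and then apply Fubini's (really Tonelli's, since $\phi \geq 0$) theorem; the only genuine content is a linear-algebra identity for the normal Jacobian $N \Jac F$. Recall that since $F$ is a submersion, at each $x \in M$ the differential $dF_x$ is surjective; writing $H_x := (\ker dF_x)^{\perp} \subset T_x M$, the restriction $dF_x|_{H_x} \colon H_x \to T_{F(x)} N$ is a linear isomorphism, and $N \Jac F(x)$ is the absolute value of its determinant with respect to the inner products induced by $g$ and $h$; equivalently $N \Jac F(x) = \sqrt{\det\bigl(dF_x \circ (dF_x)^{*}\bigr)}$, where $(dF_x)^{*}$ denotes the metric adjoint. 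Note also that each fiber $F^{-1}(y)$ is an embedded submanifold, so the inner integral on the right-hand side is meaningful.

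First I would fix $p \in M$ and invoke the submersion normal form: there are coordinates $u = (u^1,\dots,u^n)$ on a neighborhood $U \ni p$ and $v = (v^1,\dots,v^k)$, $k = \dim N$, on a neighborhood $V \supseteq F(U)$ of $F(p)$ in which $F$ takes the form $(u^1,\dots,u^n) \mapsto (u^1,\dots,u^k)$. Taking a partition of unity subordinate to a cover of $M$ by such charts and using that both sides of the asserted identity are additive in $\phi \geq 0$, it suffices to treat the case $\supp \phi \subset U$. In this chart write $g = (g_{ij})$, $h = (h_{ab})$, and let $\hat g$ be the $(n-k)\times(n-k)$ block $(g_{ij})_{i,j>k}$; then $\hat g$ is the metric induced on each fiber $F^{-1}(y) = \{u^1 = v^1,\dots,u^k = v^k\}$, parametrized by $(u^{k+1},\dots,u^n)$, so that $dV_g = \sqrt{\det g}\, du$, $dV_h = \sqrt{\det h}\, dv$, and $dF^{-1}(y) = \sqrt{\det \hat g}\, du^{k+1}\cdots du^n$.

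The key computation is that, in these coordinates, $N \Jac F(u) = \sqrt{\det h \cdot \det \hat g / \det g}$. Indeed $dF$ is the matrix $[\,I_k \mid 0\,]$, so the defining relation of the adjoint gives $g\,(dF)^{*} = [\,I_k \mid 0\,]^{T} h$, whence $dF \circ (dF)^{*} = [\,I_k \mid 0\,]\, g^{-1}\, [\,I_k \mid 0\,]^{T} h$ equals the top-left $k\times k$ block of $g^{-1}$ multiplied by $h$. Writing $g = \bigl(\begin{smallmatrix} A & B \\ B^{T} & \hat g\end{smallmatrix}\bigr)$ in block form and setting $S := A - B\hat g^{-1}B^{T}$, the Schur-complement identities give $\det g = \det \hat g \cdot \det S$ while the top-left block of $g^{-1}$ is $S^{-1}$, hence $\det\bigl(dF \circ (dF)^{*}\bigr) = \det(S^{-1})\det h = \det h \cdot \det \hat g / \det g$, which is the claimed formula. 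I expect this step — correctly identifying the block inverse and matching it to the right Schur complement — to be the one genuinely fiddly point; everything else is bookkeeping.

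Finally I would apply Fubini/Tonelli. Writing $u = (u',u'')$ with $u' = (u^1,\dots,u^k)$ and $u'' = (u^{k+1},\dots,u^n)$, and identifying $y = v = u'$,
\begin{align*}
\int_M \phi\, dV_g = \int \left( \int \phi(u',u'') \sqrt{\det g}\, du'' \right) du' = \int \sqrt{\det h(u')} \left( \int \frac{\phi(u',u'')}{N \Jac F} \sqrt{\det \hat g}\, du'' \right) du',
\end{align*}
where the second equality substitutes the normal Jacobian identity and uses that $\sqrt{\det h(u')}$ is constant in $u''$. Recognizing the inner integral as $\int_{F^{-1}(y)} \frac{\phi}{N \Jac F}\, dF^{-1}(y)$ and the outer integration as integration against $dV_h$ over $V$ completes the proof.
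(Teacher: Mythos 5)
Your proof is correct and complete: the reduction via the submersion normal form and a partition of unity, the Schur-complement computation identifying $N\Jac F = \sqrt{\det h\cdot\det\hat g/\det g}$ in adapted coordinates, and the final application of Tonelli's theorem all check out. The paper itself offers no proof of this lemma — it is quoted as a standard fact — so there is nothing to compare against; your argument is the standard derivation and fills that gap correctly.
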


\begin{lemma} \label{normchris} Let $(M^n, g)$ be a complete Riemannian manifold
satisfying $f_1 \leq K$.  There exist universal constants $C = C(n) > 0, \mu =
\mu(n) > 0$ such that, in any normal coordinate chart around a point $p \in M$,
one has
\begin{align*}
\sup_{B_{\mu K^{-\frac{1}{2}}}(p)} \brs{\gG} \leq C K^{\frac{1}{2}}.
\end{align*}
\begin{proof} First note that for $\mu$ chosen sufficiently small, normal
coordinates will always exist on $B_{\mu K^{-\frac{1}{2}}}(p)$.  By
appropriately scaling the metric and the choice of normal coordinates, it
suffices to show that statement for $K=1$.  This follows via a detailed analysis
using Jacobi fields similar to the Rauch comparison theorem, and was carried out
in (\cite{Kaul}, cf. \cite{Eich}).
\end{proof}
\end{lemma}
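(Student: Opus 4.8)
The plan is to reduce to unit curvature scale and then extract the bound from the classical Jacobi field description of the metric in geodesic normal coordinates. First I would normalize the scale. Replacing $g$ by $\tilde g := K g$ leaves the Levi--Civita connection unchanged but rescales $f_1$ by $K^{-1}$, multiplies all distances by $K^{1/2}$, and dilates the associated normal coordinate chart by the constant linear factor $K^{1/2}$; since the Christoffel symbols of a fixed connection transform tensorially under a constant linear change of coordinates, their components get multiplied by $K^{-1/2}$. Hence it suffices to prove the statement for $K=1$, i.e.\ under $\brs{\Rm} \leq 1$ and $\brs{\N \Rm} \leq 1$, with the conclusion $\brs{\Gamma} \leq C(n)$ on $B_{\mu}(p)$ for dimensional constants $C,\mu$.

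For the $K=1$ case I would fix $\mu = \mu(n)$ small (e.g.\ $\mu < \pi/2$ to accommodate positive curvature), so that Rauch comparison guarantees that $\exp_p$ is a nondegenerate immersion on $B_\mu(0) \subset T_pM$ with $\nm{d(\exp_p)_v}{}$ and $\nm{d(\exp_p)_v^{-1}}{}$ bounded above and below by dimensional constants for $\brs{v} \leq \mu$; wherever $\exp_p$ is in addition injective this is a genuine normal chart, and in any case the computation below is purely local along radial geodesics. In such a chart, along each unit-speed radial geodesic $\gamma(t) = \exp_p(tv)$ the coordinate vector field $\del_i$ at $\gamma(t)$ equals $t^{-1} J_i(t)$, where $J_i$ is the Jacobi field along $\gamma$ with $J_i(0) = 0$ and $J_i'(0) = e_i$, so that $g_{ij} = t^{-2}\IP{J_i,J_j}$. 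The Jacobi equation $J'' + R(J,\dot\gamma)\dot\gamma = 0$ together with $\brs{\Rm} \leq 1$ gives, by a Gronwall/Rauch argument, two-sided $C^0$ bounds on $J_i$ and $J_i'$ on $[0,\mu]$; consequently $g_{ij} = \delta_{ij} + O(\mu^2)$ and $g^{ij}$ is uniformly controlled.

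Since $\Gamma = \tfrac12 g^{-1}(\del g + \del g - \del g)$, it remains to bound $\del_m g_{ij}$. These first derivatives of the metric in normal coordinates are governed by the variation of the Jacobi fields $J_i$ as the radial direction $v$ varies: the relevant variation fields $Y = \N_s J$ satisfy a Jacobi-type equation whose inhomogeneous term is schematically $(\N R)(J,\dot\gamma)\dot\gamma$ plus lower-order terms involving $R$ and $Y$. Feeding in $\brs{\N \Rm} \leq 1$ and the bounds on $J_i$ already obtained, a further Gronwall estimate bounds $Y$, hence $\del g$, hence $\brs{\Gamma} \leq C(n)$ on $B_\mu(p)$. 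In the write-up I would make the $K=1$ reduction explicit and then cite (\cite{Kaul}, cf.\ \cite{Eich}) for this comparison analysis. The one genuinely nontrivial point — the ``hard part'' — is precisely this second Jacobi field estimate controlling $\del g$ using only the first derivative of curvature: the naive normal-coordinate Taylor expansion has a remainder that a priori involves $\N^2 \Rm$ and higher, so one really must run the ODE comparison argument to keep the dependence confined to $\brs{\N \Rm}$. The scaling reduction, the $C^0$ Jacobi bounds, and the passage from $\del g$ to $\Gamma$ are all routine.
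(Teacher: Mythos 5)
Your proposal is correct and follows essentially the same route as the paper: reduce to $K=1$ by the constant conformal rescaling $\tilde g = Kg$ (which rescales $f_1$ by $K^{-1}$ and the normal coordinates linearly by $K^{1/2}$), and then invoke the Jacobi-field/Rauch comparison analysis of \cite{Kaul} (cf. \cite{Eich}) for the unit-scale bound $\brs{\gG} \leq C(n)$. The extra detail you supply on the second-variation Jacobi estimate controlling $\del g$ via $\brs{\N \Rm}$ is exactly the content the paper delegates to those references.
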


\begin{lemma} \label{cheegers} \emph{(Cheeger's Lemma)}:  Let $(M^n, g)$ be a
complete
Riemannian manifold.  Given $K, R, V > 0$ there exists $\iota(K,R,V) > 0$ so
that if $x \in M$ satisfies $\sup_{B_R(x)} \brs{\Rm} \leq K$ and $\Vol(B_R(x))
\geq V$ then $\inj(x) \geq \iota$.
\begin{proof} This is a direct corollary of \cite{CGT} Theorem 4.7.
\end{proof}
\end{lemma}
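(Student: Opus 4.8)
The plan is to reduce the injectivity radius estimate to a lower bound on the length of the shortest geodesic loop at $x$, and then to turn a short loop into an upper bound on $\Vol(B_R(x))$ that contradicts the hypothesis. Set $r_0 := \min\{R, \tfrac{\pi}{4} K^{-1/2}\}$. Since $\brs{\Rm} \leq K$ on $B_R(x)$, the conjugate radius at $x$ is at least $\pi K^{-1/2}$, so Klingenberg's lemma gives the dichotomy: either $\inj(x) \geq r_0$, in which case we are done with $\iota = r_0$, or $\inj(x) =: \iota_0 < \pi K^{-1/2}$ and there is a geodesic loop $\gamma$ based at $x$ of length $2\iota_0$.

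In the second case I would unwrap the geometry near $x$. Because $2r_0$ lies below the conjugate radius, $\exp_x$ is an immersion on $B_{2r_0}(0) \subset T_x M$, and pulling back $g$ yields a Riemannian manifold $(\widehat M, \widehat g) := (B_{2r_0}(0), \exp_x^* g)$ with $\brs{\Rm}_{\widehat g} \leq K$, on which $\pi := \exp_x$ is a local isometry onto $B_{2r_0}(x)$. By construction the radial geodesics of $\widehat g$ emanating from $\widehat x := 0$ are the rays $t \mapsto tv$, so $\inj_{\widehat g}(\widehat x) \geq 2r_0$ and $B_{r_0}(\widehat x)$ is embedded. Lifting $\gamma$ starting at $\widehat x$ produces a point $\widehat y \in \pi^{-1}(x)$ with $d_{\widehat g}(\widehat x,\widehat y) \leq 2\iota_0$. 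The partially-defined local isometry carrying a neighbourhood of $\widehat x$ onto a neighbourhood of $\widehat y$, iterated along the lifted loop, produces on the order of $r_0 / \iota_0$ distinct points of $\pi^{-1}(x)$ inside $B_{r_0}(\widehat x)$; hence $\pi$ is at least $\sim r_0/\iota_0$ to one on a substantial portion of $B_{r_0}(\widehat x)$, which gives
\[
\Vol_g(B_{r_0}(x)) \leq C(n)\,\frac{\iota_0}{r_0}\,\Vol_{\widehat g}(B_{r_0}(\widehat x)) \leq C(n)\,\frac{\iota_0}{r_0}\, V_{-K}(r_0),
\]
where $V_{-K}(r_0)$ is the volume of the $r_0$-ball in the space form of curvature $-K$ (Bishop comparison, using $\Rc_{\widehat g} \geq -(n-1)K$).

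Combining this with $\Vol_g(B_{r_0}(x)) \geq \Vol_g(B_R(x)) \cdot 0$... more precisely with $\Vol_g(B_{r_0}(x)) \leq \Vol_g(B_R(x))$ is the wrong direction, so instead I combine with the hypothesis $V \leq \Vol_g(B_R(x))$ and the bound $\Vol_g(B_R(x)) \leq \Vol_g(B_{r_0}(x)) + (\text{Bishop tail on } B_R \setminus B_{r_0})$ — or, more cleanly, replace $R$ by $r_0$ from the start, observing that the hypothesis $\Vol(B_R(x)) \geq V$ together with Bishop comparison forces $\Vol(B_{r_0}(x)) \geq V / C(n,K,R) =: V'$. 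Then the displayed inequality yields $\iota_0 \geq c(n)\, r_0\, V' / V_{-K}(r_0) =: \iota(K,R,V)$, and shrinking $\iota$ so that it also handles the first case of the dichotomy completes the proof.

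The step I expect to be the main obstacle is the multiplicity count in the second paragraph: the ``deck transformations'' of the local isometry $\pi$ form only a pseudogroup of partially-defined maps, so manufacturing $\sim r_0/\iota_0$ genuinely distinct preimages of $x$ inside a ball of fixed radius requires chaining these partial isometries and verifying the iterates remain in $B_{r_0}(\widehat x)$. This bookkeeping is exactly what is carried out in Cheeger--Gromov--Taylor, so in practice I would simply invoke \cite{CGT} Theorem 4.7 rather than reproduce it.
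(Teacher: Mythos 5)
Your proposal is correct and ends up in the same place as the paper, whose entire proof is the citation of Cheeger--Gromov--Taylor Theorem 4.7; the preceding sketch you give (Klingenberg's dichotomy, pulling back by $\exp_x$ below the conjugate radius, counting preimages of $x$ along the lifted short loop to bound the volume from above, and Bishop--Gromov to pass from $B_R$ to $B_{r_0}$) is an accurate outline of what is inside that theorem. Since you conclude by invoking the same reference, the two arguments coincide.
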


\subsection{\texorpdfstring{$L^2$}{L2} flow background}

Here we recall some fundamental facts about solutions to the $L^2$ flow.  As
exhibited in \cite{SL21}, for any dimension the $L^2$ flow is a fourth-order
degenerate parabolic equation, with degeneracies arising from the action of the
diffeomorphism group.  Using the DeTurck gauge-fixing technique it is possible
to prove general short-time existence of solutions on compact manifolds.  
By deriving $L^2$ smoothing estimates for higher derivatives of curvature, one
can establishing a general result ensuring long time existence of the flow as
long as the curvature and Sobolev constant remain bounded along the flow
(\cite{SL21} Theorem 6.2).  By the use of blowup/compactness/covering arguments,
in \cite{SL2LTB} Theorem 1.3 we established that a bound on curvature suffices
to ensure long time existence of the flow.  
In the course of the proof we established global pointwise smoothing estimates
for derivatives of curvature in the presence of a curvature bound.  We record
here an easy corollary of this estimate which 
will be needed below.

\begin{lemma} (Corollary of \cite{SL2LTB} Theorem 1.3) \label{derivest} Fix $m,n
\geq 0$. 
There exists a constant $C = C(m,n) > 0$ so that if $(M^n, g_t)$ is a complete
solution to the $L^2$ flow on $[0,T]$ satisfying
\begin{align*}
\sup_{M \times [0,T]} t^{\frac{1}{2}} \brs{\Rm}_{g(t)} \leq A,
\end{align*}
then for all $t \in(0,T]$,
\begin{align*}
\sup_{M} \brs{\N^m \Rm}_{g(t)} \leq C \left((A + 1)
t^{-\frac{1}{2}}
\right)^{1 + \frac{m}{2}}.
\end{align*}
\begin{proof} Fix a time $t \in (0,T]$ and apply (\cite{SL2LTB} Theorem 1.3) on
the interval $\left[\frac{t}{2}, t \right]$.
\end{proof}
\end{lemma}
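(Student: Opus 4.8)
The difficulty is that the hypothesis $t^{1/2}\brs{\Rm}_{g(t)} \leq A$ gives only a curvature bound that degenerates as $t \to 0$, so the global pointwise smoothing estimate of \cite{SL2LTB} (Theorem 1.3), which asks for a genuine $L^\infty$ bound on $\brs{\Rm}$ over the whole time interval of the flow, cannot be applied directly on $[0,T]$. The plan is to localize in time. Fix $t \in (0,T]$ and restrict attention to the subinterval $[t/2,t]$: there the hypothesis yields the honest uniform bound $\brs{\Rm}_{g(s)} \leq A s^{-1/2} \leq \sqrt{2}\,A\,t^{-1/2} =: K$ for every $s \in [t/2,t]$.

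Since the $L^2$ flow is autonomous, translating time turns this restricted flow into a complete solution on $[0, t/2]$ with $\sup \brs{\Rm} \leq K$. I would then invoke the scale-invariant form of \cite{SL2LTB} (Theorem 1.3) at the final time of this interval, which gives
\[
\sup_M \brs{\N^m \Rm}_{g(t)} \leq C(m,n)\left( K + (t/2)^{-1/2} \right)^{1 + \frac{m}{2}}.
\]
Substituting $K = \sqrt{2}\,A\,t^{-1/2}$ and $(t/2)^{-1/2} = \sqrt{2}\,t^{-1/2}$ gives $K + (t/2)^{-1/2} = \sqrt{2}\,(A+1)\,t^{-1/2}$, and absorbing the resulting factor $(\sqrt 2)^{1+m/2}$, which depends only on $m$, into the constant yields exactly $\sup_M \brs{\N^m \Rm}_{g(t)} \leq C(m,n)\big((A+1)t^{-1/2}\big)^{1+m/2}$. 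As $t \in (0,T]$ was arbitrary, this is the claim.

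There is no real obstacle here; this is a routine rescaling and time-shifting argument, the substantive work having already been done in \cite{SL2LTB}. The only points that need checking are: (i) that \cite{SL2LTB} (Theorem 1.3) is indeed stated in the scale-invariant shape $\brs{\N^m \Rm}_{g(t)} \leq C(m,n)(K + t^{-1/2})^{1+m/2}$ under the uniform bound $\brs{\Rm} \leq K$ --- if instead it appears in the additive form $C(m,n)(K^{1+m/2} + t^{-1/2-m/4})$, the identical computation together with the elementary inequality $a^{1+m/2}+b^{1+m/2} \leq 2(a+b)^{1+m/2}$ recovers the same bound; and (ii) that the particular choice $t/2$ of subinterval endpoint is immaterial, any fixed fraction of $t$ working equally well at the cost of a different $C(m,n)$.
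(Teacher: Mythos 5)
Your argument is exactly the paper's: the printed proof is the one-liner "apply \cite{SL2LTB} Theorem 1.3 on $[\frac{t}{2},t]$," and you have simply filled in the routine details (the uniform bound $K=\sqrt{2}\,A\,t^{-1/2}$ on that subinterval, the time translation, and the absorption of the factor $(\sqrt{2})^{1+m/2}$ into $C(m,n)$). The proposal is correct and takes the same route.
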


\section{Smoothing Results and their corollaries} \label{mainthmsec}

In this section we prove Theorem \ref{mainthm}, and the corollaries on
compactness and diffeomorphism finiteness stated in the introduction.  Before we
begin let us describe the strategy of the proof.  We want to use a
blowup/compactness/contradiction argument to establish the smoothing estimates
of
the theorem.  As long as the injectivity radius estimate holds, one can
construct blowup limits at the scale of curvature and use smoothing estimates to
establish the curvature bound.  Much more difficult is to establish the
injectivity radius
lower bound.  Roughly speaking, by rescaling at a point where the injectivity
radius inequality becomes equality, since there
is a curvature bound at the end time, Cheeger's Lemma (cf. Lemma \ref{cheegers})
shows that the volume of a metric ball around that point in spacetime must be
quite small.  Thus it suffices to show
that the volume of a unit ball cannot get too small too quickly under the
hypotheses of the theorem, since there
will be a lower bound on the volume of unit balls at the initial time of the
flow.

On the one hand, the flow changes the volume form in an easily controlled
manner, but on the other hand it is difficult to see how the flow changes the
distance between points, which is required to establish volume control over
metric balls along the flow.  A direct estimate of the distance using the
smoothing estimates which come from the curvature decay is not possible, as this
requires an integration of $t^{-1}$.  Our method to obtain this estimate is to
use the strong control over $\grad \FF$ coming from the fundamental energy
monotonicity for solutions of the $L^2$ flow.  In particular, by averaging
$\grad \FF$ over a tubular neighborhood around a curve, one can exchange the
integral of $\grad \FF$ over a curve, which one doesn't know too much about,
with one over an open set in $M$, which one has strong control over.  This of
course introduces error terms which must be controlled as well.  The most
technical part of this estimate is that, as the metric is changing over time, we
cannot continue to 
measure how the distance between points is changing by measuring the length of
some fixed curve.  In particular, any such choice of curve which is say a
geodesic at the initial time will quickly acquire geodesic curvature, and then
even the construction of tubular neighborhoods is not clear.  For this reason we
break up the time interval into very small segments on which we can choose a
curve which is very close to being a geodesic, and on which we can carry out the
construction of a controlled tubular neighborhood.

\subsection{Quasi-geodesics}

\begin{defn} Let $(M^n, g_t)$ be a one-parameter family of complete Riemannian
manifolds defined for $t \in [t_1,t_2]$.  Given a constant $\gb > 0$ and $x,y
\in M$ we say that a one-parameter family of curves $\gg(t,s) : [t_1,t_2] \times
[0,1] \to M$ is a \emph{$\gb$-quasi-geodesic connecting $x$ and $y$} if there
is a constant $S > 0$ so that:
\begin{enumerate}
 \item For all $t \in [t_1,t_2]$ one has $\gg(t,0) = x, \gg(t,1) = y$.
 \item For all $n \in \mathbb N^+$ such that $t_1 + n S \leq t_2$, $\gg_{t_1 +
nS}$ is a minimizing geodesic.
 \item For all $n \in \mathbb N^+$ such that $t_1 + n S \leq t_2$, and all $t
\in [t_1 + nS, t_1 + (n+1) S] \cap [t_1,t_2]$, one has $\gg_{t} = \gg_{t_1 +
nS}$.
\item For all $t \in [t_1,t_2]$ one has
\begin{gather} \label{qge}
 \begin{split}
d(x,y,t) \leq L(\gg_t,g_t) \leq&\ d(x,y,t) + \gb, \qquad
\brs{\N_{\dot{\gg}} \dot{\gg}} \leq \gb.
\end{split}
\end{gather}
\end{enumerate}
\end{defn}

Note that the first condition of (\ref{qge}) says intuitively that $\gg$ is
close to a geodesic in the
global sense of nearly minimizing distance, and the second condition says that
the curve is close to being a geodesic in the infinitesimal sense.  The
construction of a $\gb$-quasi-geodesic is as follows: given a
one-parameter family of metrics, given a small constant $\gb > 0$ one can pick
a sufficiently small parameter $S$ so that on time intervals of length $S$ one
can control the distortion of lengths
and second-fundamental forms of curves.  Then one chooses a minimizing geodesic
at each time $t_1 +
nS$, and this defines the piece-wise constant one-parameter family of curves. 
This is made precise in the next lemma.

\begin{lemma} \label{almgeodlemma} Let $(M^n, g_t)$ denote a smooth
one-parameter family of complete
Riemannian manifolds on $[t_1,t_2]$.  Given $\gb > 0$ and $(x,y) \in M$ there
exists a $\gb$-quasi-geodesic connecting $x$ and $y$.
\begin{proof} It suffices to show that for $S$ chosen sufficiently small with
respect to the family $\{g_t\}$ we can obtain the estimate (\ref{qge}) on an
interval of length $S$.  In particular, fix a time $T \in [t_1,t_2]$ and let
$\gg$ denote a minimizing 
geodesic connecting $x$ to $y$ in the metric $g_{T}$, parameterized with
constant speed on $[0,1]$.  We will show that
estimates (\ref{qge}) hold on $[T,T+S]$ for $S$ sufficiently small.  To begin,
let
\begin{align*}
 A := \sup_{M \times [t_1,t_2]} \brs{\frac{\del g}{\del t}} + \brs{\N \frac{\del
g}{\del t}}.
\end{align*}
First, we may estimate
\begin{align*}
\dt L(\gg_t) \leq&\ \int_{\gg} \brs{\frac{\del g}{\del t}} d \gs \leq A
L(\gg_t),
\end{align*}
and therefore
\begin{align*}
L(\gg_t) \leq L(\gg_{T}) e^{A (t - T)} \leq L(\gg_T) e^{AS}.
\end{align*}
Note that in particular this type of estimate implies a bound for $d(x,y)$ along
the whole time
interval $[t_1,t_2]$.  Choosing $S$ small with respect to this upper bound, $A$
and $\gb$ yields
\begin{align*}
L(\gg_t) \leq d(x,y,g_{T}) \left(1 + 2C S \right)) \leq d(x,y,g_{T}) + \gb.
\end{align*}
Next, a direct calculation yields
\begin{align*}
\dt \brs{\N_{\dot{\gg}} \dot{\gg}}^2 \leq&\ \brs{\frac{\del g}{\del t}}
\brs{\N_{\dot{\gg}} \dot{\gg}}^2 + C \brs{\dot{\gg}}^2 \brs{\N_{\dot{\gg}}
\dot{\gg}} \brs{\N \frac{\del g}{\del t}} \leq C (A,L(\gg_{T}))\left(1 +
\brs{\N_{\dot{\gg}}
\dot{\gg}}^2 \right).
\end{align*}
Since $\brs{\N_{\dot{\gg}} \dot{\gg}}_{g_{T}} \equiv 0$ by construction, a
direct estimate of this ODE yields the required estimate for $S$ chosen
sufficiently small with respect to $\gb$.  
With this final choice of $S$ one can build the quasi-geodesic by enforcing the
first
three conditions to hold, and then the estimates above show that (\ref{qge})
will hold.
\end{proof}
\end{lemma}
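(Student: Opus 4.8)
The plan is to reduce the construction of a global $\gb$-quasi-geodesic to a purely local estimate on a single short time interval, and then glue. First I would fix a target time $T \in [t_1,t_2]$ and let $\gg$ be a constant-speed minimizing geodesic from $x$ to $y$ in the metric $g_T$. The goal is to show that if we hold this curve fixed on $[T,T+S]$, then for $S$ small enough the two estimates in (\ref{qge}) — the near-minimizing length bound and the small geodesic curvature bound — persist throughout $[T,T+S]$, with the smallness of $S$ depending only on $\gb$ and on $C^1$-type bounds for $\del_t g$ over the whole compact time interval $[t_1,t_2]$. Once this is done, choosing a single such $S$ uniformly (using compactness of $[t_1,t_2]$ to bound $A := \sup_{M\times[t_1,t_2]}(\brs{\del_t g} + \brs{\N \del_t g})$), one defines the quasi-geodesic by enforcing conditions (1)--(3): pick minimizing geodesics at the times $t_1 + nS$ and declare $\gg_t$ constant in between. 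Conditions (1)--(3) then hold by fiat, and the local estimate just established gives (4).

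The two local estimates are each controlled by a Gr\"onwall-type argument. For the length, the first variation under the metric evolution gives $\dt L(\gg_t) \leq \int_{\gg} \brs{\del_t g}\, d\gs \leq A\, L(\gg_t)$, hence $L(\gg_t) \leq L(\gg_T) e^{A(t-T)} \leq L(\gg_T)e^{AS}$; this simultaneously furnishes an a priori upper bound for $L(\gg_t)$ — and hence for $d(x,y,g_t)$ — over all of $[t_1,t_2]$, which is what one needs to convert the multiplicative estimate $L(\gg_t) \leq d(x,y,g_T)(1 + CS)$ into the additive form $L(\gg_t) \leq d(x,y,g_T) + \gb$ appearing in (\ref{qge}). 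For the curvature, I would compute $\dt \brs{\N_{\dot\gg}\dot\gg}^2$; since $\dot\gg$ is fixed but the connection $\N$ depends on $g_t$, this derivative involves $\del_t g$ and $\N \del_t g$ contracted against $\dot\gg$ and $\N_{\dot\gg}\dot\gg$, yielding a differential inequality of the shape $\dt \brs{\N_{\dot\gg}\dot\gg}^2 \leq C(A, L(\gg_T))\left(1 + \brs{\N_{\dot\gg}\dot\gg}^2\right)$. Since $\brs{\N_{\dot\gg}\dot\gg}_{g_T} \equiv 0$ at $t = T$ (it is a geodesic there), integrating this ODE on a short interval keeps $\brs{\N_{\dot\gg}\dot\gg}$ below $\gb$ for $S$ small. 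Finally one takes $S$ to be the minimum of the two smallness thresholds (plus small enough that the length distortion lemma is not vacuous, e.g. so that minimizing geodesics between $x$ and $y$ at each $g_{t_1+nS}$ exist — automatic by completeness).

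I do not expect any genuine obstacle here; the statement is essentially a bookkeeping lemma packaging standard variation-of-arclength and variation-of-connection computations into a clean interface for later use. The one place requiring a little care is making sure the thresholds for $S$ depend only on $\gb$ and on the global quantity $A$ (and the resulting a priori distance bound), not on the individual time $T$ or on the geometry of $\gg_T$ beyond its length; this is what allows a single $S$ to work simultaneously on every subinterval $[t_1+nS, t_1+(n+1)S]$, so that the piecewise-constant family is well-defined and satisfies (\ref{qge}) everywhere. Since $L(\gg_T) = d(x,y,g_T)$ is uniformly bounded above by the Gr\"onwall bound derived in the first step, all constants can indeed be made to depend only on $A$ and on $\sup_{[t_1,t_2]} d(x,y,g_t)$, and the argument closes.
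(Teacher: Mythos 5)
Your proposal follows essentially the same route as the paper: fix a minimizing $g_T$-geodesic, hold it fixed on $[T,T+S]$, control $L(\gg_t)$ and $\brs{\N_{\dot\gg}\dot\gg}^2$ by Gr\"onwall-type arguments driven by $A = \sup(\brs{\del_t g} + \brs{\N \del_t g})$, and glue the piecewise-constant family at the times $t_1 + nS$. Your explicit attention to making $S$ uniform in $T$ (via the a priori distance bound) is a point the paper leaves implicit, but the argument is the same.
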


\subsection{Tubular neighborhoods of quasi-geodesics}

\begin{defn} Let $(M^n, g)$ be a Riemannian manifold, and let $\gg : [a,b] \to
M$ be a smooth curve.  Given $r > 0$, and $t \in [a,b]$, let
\begin{align*}
D_r(\gg(t)) := \exp_{\gg(t)} \left\{ B_r(0) \cap \IP{\dot{\gg}(p)}^{\perp}
\right\},
\end{align*}
and let
\begin{align*}
D_r(\gg) := \bigcup_{t \in [a,b]} D_r(\gg(t)).
\end{align*}
\end{defn}

\begin{lemma} \label{tubularfocal} Given $n,K,\iota > 0$ there exists a constant
$\gb = \gb(n,K,\iota) > 0$ and a universal constant $\mu > 0$ so that if $(M^n,
g)$ is a complete Riemannian manifold satisfying $f_3 \leq K$,
$\inj \geq
\iota$, and $\gg : [0,L] \to M$, is a unit speed curve satisfying
\begin{align*}
\brs{\N_{\dot{\gg}} \dot{\gg}} \leq \gb, \qquad L(\gg) \leq d(\gg(0),\gg(L)) +
\gb,
\end{align*}
then $D_R(\gg)$ is foliated by $D_R(\gg(t))$ for  $R = \mu \min \left\{\iota,
K^{-\frac{1}{2}} \right\}$.  Furthermore, if $\pi : D_R(\gg) \to \gg$ is the
projection map sending a point $p \in D_R(\gg(t))$ to $\gg(t)$, which is
well-defined by the foliation property, then
\begin{align} \label{Jacobianestimate}
\brs{d \pi} \leq 2 \mbox {\quad for all \quad } x \in D_R(\gg).
\end{align}
\begin{proof} First we obtain a local estimate which says that discs arising
from sufficiently close points along $\gg$ cannot intersect.  Consider $s_0,s_1
\in [0,L]$ and suppose there exists $p \in D_R(\gg(s_0)) \cap D_R(\gg(s_1))$. 
Since $R \leq \mu K^{-\frac{1}{2}}$, we can choose a normal coordinate chart
around $p$ of radius $20 R$ around $p$ for $\mu $ chosen sufficiently small. 
Since $\gg(s_0)$ is in this normal coordinate chart, and geodesics originating
at $p$ are radial, it follows that
\begin{align*}
\IP{\frac{\del}{\del r}, \dot{\gg}}_{\gg(s_0)} = 0.
\end{align*}
We will show that, for $\ge$ sufficiently small, this inner product cannot be
zero at any other points of $\gg$ inside our normal coordinate chart.  However,
this will yield a contradiction since there is a geodesic in this chart
connecting $\gg(s_1)$ to $p$, which implies $\IP{\frac{\del}{\del r},
\dot{\gg}}_{\gg(s_1)} = 0$.  To establish this claim we compute the derivative
along $\gg$,
\begin{align*}
\frac{\del}{\del t} \IP{ \frac{\del}{\del r}, \dot{\gg}} =&\ \IP{D_t
\frac{\del}{\del r}, \dot{\gg}} + \IP{\frac{\del}{\del r}, D_t \dot{\gg}}.
\end{align*}
We will estimate the right hand side using various properties of normal
coordinates.  Let $\gg(t) = (\gg^1(t),\dots,\gg^n(t))$ denote the coordinates of
the path $\gg$ in this chart.  Also, let $\del_i$ denote the coordinate vector
fields for this normal coordinate chart, and let $r(x)$ denote the radius
(distance from $p$) of a point $x$.  Using the expression for the radial vector
field in normal coordinates we have
\begin{align*}
\left. \frac{\del}{\del r} \right|_{\gg(t)} = \frac{\gg^i(t)}{r(\gg(t))} \del_i.
\end{align*}
Thus
\begin{align*}
D_t \frac{\del}{\del r} = \frac{\dot{\gg}^i}{r} \del_i - \frac{\gg^i
\IP{\dot{\gg}, \frac{\del}{\del r}}}{r^2} \del_i + \frac{\gg^i}{r} D_t \del_i.
\end{align*}
Since the radial vector field is a unit vector with respect to $g$ we can
estimate
\begin{align*}
\IP{ \gg^i \del_i, \dot{\gg}} = \IP{ r \frac{\del}{\del r}, \dot{\gg}} \leq r
\brs{\dot{\gg}} = r,
\end{align*}
and so
\begin{align*}
\IP{D_t \frac{\del}{\del r}, \dot{\gg}} \geq&\ \frac{1}{r} \brs{\dot{\gg}}^2 -
\frac{C}{r^2} \IP{\dot{\gg}, \frac{\del}{\del r}} r -
\brs{\dot{\gg}} \brs{D_t \del_i} \geq \frac{1 - C \IP{\dot{\gg},\frac{\del}{\del
r}} - r \brs{\gG}}{r}.
\end{align*}
Next, again using that $\frac{\del}{\del r}$ is a unit vector we estimate
\begin{align*}
\brs{\IP{\frac{\del}{\del r}, D_t \dot{\gg}}} \leq \brs{\frac{\del}{\del r}}
\brs{\N_{\dot{\gg}} \dot{\gg}} \leq \gb.
\end{align*}
Now choose $\mu$ sufficiently small so that Lemma \ref{normchris} applies, and
still smaller so that the estimate $\mu K^{-\frac{1}{2}} \brs{\gG} \leq
\frac{1}{4}$ holds.  Next, if we choose $\gb$ sufficiently small with respect to
$R = \mu \min \{\iota, K^{-\frac{1}{2}} \}$ we yield $\gb r \leq C \gb R \leq
\frac{1}{4}$.
 Putting these estimates together, and 
applying Lemma \ref{normchris} yields
\begin{align*}
\frac{\del}{\del t} \IP{\frac{\del}{\del r}, \dot{\gg}} \geq&\ \frac{1 - C
\IP{\dot{\gg},\frac{\del}{\del r}} - r \brs{\gG} - \gb r}{r} \geq \frac{1}{r}
\left[ \frac{1}{2} - C \IP{\frac{\del}{\del r},\dot{\gg}}
\right].
\end{align*}
Thus a straightforward estimate shows that $\IP{\frac{\del}{\del r}, \dot{\gg}}$
cannot vanish other than at $\gg(s_0)$, in the domain of validity of this
estimate, which is inside the normal coordinate chart of radius $20R$.  Since
this estimate holds at every point along $\gg$, we conclude that $D_R(\gg(s_0))
\cap D_R(\gg(s_1)) = \emptyset$ for $\brs{s_1 - s_0} \leq 10 R$.

To conclude that no two discs can intersect for $\brs{s_1 - s_0} > 10 R$, we
turn to the property that $\gg$ nearly minimizes length.  In particular, suppose
there exist $s_0,s_1 \in [0,d]$ such that $\brs{s_1-s_0} > 10 R$ but $p \in
D_R(\gg(s_0)) \cap D_R(\gg(s_1))$.  Construct a piecewise smooth curve $\ga$ as
follows: Follow $\gg$ from $s=0$ to $s = s_0$, then follow the geodesic
connecting $\gg(s_0)$ to $p$, then the geodesic connecting $p$ to $\gg(s_1)$,
then follow $\gg$ from $\gg(s_1)$ to $\gg(L)$.  See Figure \ref{globalcurve}. 
\begin{figure}[ht]
\begin{tikzpicture}

\draw  [color=blue,dashed] plot[smooth, tension=.7] coordinates
{(-6.1367,-0.9343) (-3.6177,-0.8054) (-0.5279,-1.5904) (2.3083,-1.089)        
};

\draw [color=white,fill=white] (-0.4606,-2.8878) rectangle (3.9682,0.0296);

\draw  [color=blue,dashed] plot[smooth, tension=.7] coordinates {(3.4566,1.6463)
(0.668,2.2119) (-2.1205,3.2663) (-4.4555,2.8211) (-6.2447,2.7743)};

\draw  [color=white,fill=white] (0.4181,0.5393) rectangle (3.6343,3.4918);

\draw  plot[smooth, tension=.7] coordinates {(-6.1593,-0.9719) (-3.6403,-0.843)
(-0.5505,-1.628) (2.3083,-1.089)         };
\draw  plot[smooth, tension=.7] coordinates {(3.4566,1.7463) (0.668,2.2619)
(-2.1205,3.3163) (-4.4555,2.8711) (-6.2447,2.8243)};

\draw [fill=black] (2.6833,-0.9719) circle (0.04);
\draw [fill=black] (3.48,-0.7376) circle (0.04);
\draw [fill=black] (3.1051,-0.8547) circle (0.04);
\draw [fill=black] (4.511,1.2308) circle (0.04);
\draw [fill=black] (4.1595,1.4417) circle (0.04);
\draw [fill=black] (3.7846,1.6057) circle (0.04);

\draw [fill=black] (-6.1743,-0.9897) node (v1) {} circle (0.07);
\draw [fill=black] (-6.2364,2.8299) circle (0.07);
\draw [fill=black] (-0.589,0.6272) node (v3) {} circle (0.07);
\draw [fill=black] (-0.5656,-1.599) circle (0.07);
\draw [fill=black] (0.3249,2.3612) circle (0.07);

\draw  (-0.5739,-1.5343) node (v2) {} ellipse (0.4687 and 2.6479);

\draw  [rotate = -30] (-0.8584,2.2303) node (v4) {} ellipse (0.4687 and 2.6479);

\node at (-6.3111,3.1931) {$\gamma(L)$};
\node at (-6.2603,-1.3588) {$\gamma(0)$};
\node at (-0.5642,-1.9915) {$\gamma(s_0)$};
\node at (0.6309,2.7889) {$\gamma(s_1)$};
\node at (-0.6715,0.346) {$p$};

\draw [color=blue,dashed] (v2) edge (v3);

\draw [color=blue,dashed] (v3) edge (v4);

\node at (-3.7647,3.6852) {\textcolor{blue}{$\approx L - s_1$}};
\node at (-0.9527,1.875) {\textcolor{blue}{$\approx R$}};
\node at (-3.6416,-1.2709) {\textcolor{blue}{$\approx s_0$}};
\node at (0.4005,-0.3219) {\textcolor{blue}{$\approx R$}};
\end{tikzpicture}
\caption{Construction of the test curve $\ga$, in blue.}
\label{globalcurve}
\end{figure}
 Since $\gg$ is parameterized by arclength, a straightforward calculation yields
that
\begin{align*}
d(\gg(0), \gg(L)) \leq&\ L(\ga) \leq s_0 + R + R + (L - s_1)\\
\leq&\ d + 2 R - (s_1 - s_0)\\
\leq&\ L(\gg) - 8 R\\
\leq&\ d(\gg(0),\gg(L)) + \gb - 8 R.
\end{align*}
Thus, if $\gb$ is chosen sufficiently small with respect to $R$, which recall
depends on $K$ and $\iota$, we obtain a contradiction here, finishing the claim
that the discs $D_R(\gg(s))$ form a foliation.

We now establish (\ref{Jacobianestimate}).  This can be proved by direct
methods, but we give an argument using blowups.  If the statement were false, we
have a sequence of constants $\mu_i \to 0$ and a sequence of complete Riemannian
manifolds $(M^n_i, g_i)$ satisfying $f_3 \leq K_i$, $\inj \geq \iota_i$, and
curves $\gg_i : [0,L_i] \to M_i$ satisfying
\begin{align*}
\brs{\N_{\dot{\gg_i}} \dot{\gg_i}} \leq \gb_i, \qquad L(\gg) \leq
d(\gg_i(0),\gg_i(L_i)) + \gb_i,
\end{align*}
where $\gb_i \leq 1$, such that $D_R(\gg_i)$ is foliated by $D_R(\gg_i(t))$ for
$r_i := \mu_i \min \{ \iota_i, K_i^{-\frac{1}{2}} \}$, but on the other hand
there exists $p_i = \gg_i(t)$ and $y_i \in D_{r} (\gg_i(t))$ such that
\begin{align} \label{largejac}
\brs{d \pi_i}(y_i) > 2.
\end{align}
Now consider the pointed sequence of Riemannian manifolds $\{(M^n_i, r_i^{-2}
g_i, p_i)\}$.  Observe that in the rescaled manifold, $f_3 \to 0$ pointwise, and
the injectivity radius goes to infinity.  Therefore one can show that a
subsequence converges in the $C^{2,\ga}$ Cheeger-Gromov sense to $(\mathbb R^n,
g_{\mbox{Eucl}}, 0)$.  

However, we observe that, since $\gb_i \leq 1$, after reparameterizing $\gg_i$
to be unit speed with respect to the blowup metric, $\brs{\N_{\dot{\gg_i}}
\dot{\gg_i}} \to 0$ in the limit.  That is, the curves $\gg_i$ converge to a
geodesic through the origin, which we may take without loss of generality to be
the line $\gg(t) = (t,0,\dots,0)$.  Moreover, the points $y_i$ are contained in
the unit ball around $p_i$ in each blowup manifold, and thus can be though of as
points in $B_2(0) \subset \mathbb R^n$ in the convergence process, and these
will converge to a point $y \in D_1(0)$, i.e. the unit disc orthogonal to the
geodesic $\gg$ in $\mathbb R^n$, which is just the intersection of the unit ball
with the plane $x_1 = 0$.  Furthermore, the inequality (\ref{largejac}) is
scaling invariant due to the fact that it involves both the inverse metric on
$M_i$ and the metric induced on $\gg_i$ by $M_i$, and so the scaling factors
cancel out.  Due to the $C^{2,\ga}$ convergence of the Riemannian metrics, and
the 
fact that the projection map $\pi$ sends a point to the point on $\gg$ to which
it is closest, it follows that the maps $\pi_i$ converge in $C^{1,\ga}$ to the
corresponding map $\pi$ on the limit space.  It thus follows that $\brs{d
\pi}(y) > 2$.  But the map $\pi$ on $\mathbb R^n$ is given explicitly as
$(x_1,\dots,x_n) \to (x_1,0\dots,0)$, and certainly $\brs{d \pi} \equiv 1$
everywhere.  This is a contradiction, and the lemma is finished.
\end{proof}
\end{lemma}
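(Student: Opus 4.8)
The plan is to prove the foliation property first, by showing that the discs $D_R(\gg(s_0))$ and $D_R(\gg(s_1))$ are pairwise disjoint whenever $s_0 \neq s_1$, and then to deduce the Jacobian bound (\ref{Jacobianestimate}) by a separate blowup argument. For the disjointness I would treat a ``local'' regime $\brs{s_1 - s_0} \leq 10R$ and a ``global'' regime $\brs{s_1 - s_0} > 10R$ differently. In the local regime, suppose $p \in D_R(\gg(s_0)) \cap D_R(\gg(s_1))$. Since $f_3 \leq K$ gives in particular $f_1 \leq K$, for $\mu$ chosen small enough (universally) there is a normal coordinate chart of radius $20R$ about $p$ containing both $\gg(s_0)$ and $\gg(s_1)$, and inside it the geodesic segments from $p$ to these points are radial, so $\IP{\del/\del r, \dot{\gg}}$ vanishes at both $\gg(s_0)$ and $\gg(s_1)$. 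The crux is to differentiate $\IP{\del/\del r, \dot{\gg}}$ along $\gg$: expanding $\del/\del r$ in normal coordinates, bounding the Christoffel symbols by $CK^{1/2}$ via Lemma \ref{normchris} (and shrinking $\mu$ so that $\mu K^{-1/2}\brs{\gG} \leq \tfrac14$), and estimating the term $\IP{\del/\del r, D_t\dot{\gg}}$ using $\brs{\N_{\dot{\gg}}\dot{\gg}} \leq \gb$, one obtains — once $\gb$ is small relative to $R = \mu\min\{\iota, K^{-1/2}\}$ — a differential inequality of the form $\dt \IP{\del/\del r, \dot{\gg}} \geq \tfrac1r\big(\tfrac12 - C\IP{\del/\del r,\dot{\gg}}\big)$. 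This forces $\IP{\del/\del r,\dot{\gg}}$, once zero, to become and remain positive, so it cannot vanish again within the chart, contradicting its vanishing at $\gg(s_1)$.

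For the global regime I would use the near-minimizing hypothesis directly. If $p \in D_R(\gg(s_0)) \cap D_R(\gg(s_1))$ with $\brs{s_1 - s_0} > 10R$, concatenate the initial arc of $\gg$ up to $\gg(s_0)$, the two geodesic segments $\gg(s_0)\to p$ and $p\to\gg(s_1)$ (each of length $\leq R$), and the final arc of $\gg$ from $\gg(s_1)$. Since $\gg$ is unit speed this competitor curve from $\gg(0)$ to $\gg(L)$ has length at most $L(\gg) - (s_1 - s_0) + 2R \leq d(\gg(0),\gg(L)) + \gb - 8R$, which is strictly smaller than $d(\gg(0),\gg(L))$ once $\gb < 8R$ — a contradiction. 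Combining the two regimes gives the foliation, whence $\pi$ is well-defined.

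Finally, for (\ref{Jacobianestimate}) I would argue by contradiction and blowup, as this avoids a messy direct computation. Suppose there are $\mu_i \to 0$, complete manifolds $(M_i^n, g_i)$ with $f_3 \leq K_i$, $\inj \geq \iota_i$, curves $\gg_i$ satisfying the hypotheses, and points $y_i \in D_{r_i}(\gg_i(t_i))$ with $\brs{d\pi_i}(y_i) > 2$, where $r_i = \mu_i\min\{\iota_i, K_i^{-1/2}\}$. After replacing $g_i$ by $r_i^{-2}g_i$ one has $f_3 \to 0$ pointwise and $\inj \to \infty$, so a subsequence converges in pointed $C^{2,\ga}$ Cheeger--Gromov topology to $(\mathbb R^n, g_{\mbox{Eucl}}, 0)$; reparametrizing $\gg_i$ to unit speed in the rescaled metric makes $\brs{\N_{\dot{\gg_i}}\dot{\gg_i}} \to 0$, so the $\gg_i$ converge to a line, without loss of generality the $x_1$-axis, and the $y_i$ (which lie in a fixed ball about the basepoint) converge to a point $y$ of the orthogonal unit disc. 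Because $\brs{d\pi}$ involves both the ambient inverse metric and the metric $\gg$ induces, it is scale invariant, and since $\pi$ is nearest-point projection onto $\gg$ the $C^{2,\ga}$ convergence of metrics upgrades to $C^{1,\ga}$ convergence $\pi_i \to \pi$; on $\mathbb R^n$ the limiting map is $(x_1,\dots,x_n)\mapsto(x_1,0,\dots,0)$ with $\brs{d\pi} \equiv 1$. Passing to the limit yields $\brs{d\pi}(y) \geq 2$, a contradiction.

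I expect the main obstacle to be the first step: carefully organizing the hierarchy of constants (the universal $\mu$ must be fixed before the $K,\iota$-dependent $\gb$, yet the normal-coordinate Christoffel bound, the condition $\mu K^{-1/2}\brs{\gG}\leq \tfrac14$, and the smallness of $\gb$ relative to $R$ must all be compatible), together with ensuring the differential inequality for $\IP{\del/\del r,\dot{\gg}}$ is valid over the entire portion of $\gg$ lying in the radius-$20R$ chart. The remaining difficulties — the $C^{2,\ga}$ compactness from $f_3$-bounds and the $C^{1,\ga}$ convergence of the nearest-point projections — are standard but require the usual harmonic-coordinate and elliptic-regularity care.
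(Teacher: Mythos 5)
Your proposal is correct and follows essentially the same route as the paper: the same split into a local regime (normal coordinates about $p$, the differential inequality for $\IP{\del/\del r,\dot{\gg}}$ forcing non-vanishing away from $\gg(s_0)$) and a global regime (the concatenated competitor curve contradicting near-minimization), followed by the same rescaling/blowup argument for the Jacobian bound. No substantive differences to report.
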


\subsection{Proofs of Smoothing Theorems}

\begin{proof}[Proof of Theorem \ref{mainthm}] Fix $0 < \gd < 1$.  We first
observe that the estimates on curvature decay and injectivity radius growth in
the conclusion of theorem are parabolically scaling invariant.  In particular,
it suffices to exhibit the statement corresponding to $\rho = 1$.  In
particular, we claim that there exist $\ge, \iota,$ and $A$ depending only on
$\gd$ so that if $(M^4, g)$ is a compact Riemannian
manifold satisfying
\begin{enumerate}
\item $r_{\gd} \geq 1$
 \item $\FF(g) \leq \ge$,
\end{enumerate}
then the solution to $L^2$ flow with
initial condition $g$ exists on $[0,1]$ and moreover satisfies
\begin{gather} \label{ncsmoothing}
 \begin{split}
 \brs{\Rm}_{C^0(g_t)} \leq&\ A \FF^{\frac{1}{6}}(g_t) t^{-\frac{1}{2}}, \\
 \inj_{g_t} \geq&\ \iota t^{\frac{1}{4}}.
 \end{split}
\end{gather}

Suppose we have such a solution, but that the estimates of
(\ref{ncsmoothing}) do not hold on $[0,1]$.  Then there exists a point $(x,\tau)
\in M
\times [0,1]$ so that $[0,\tau]$ is the maximal interval containing $0$ on
which (\ref{ncsmoothing}) holds.  In particular, one of the inequalities of
(\ref{ncsmoothing}) is an equality at $(x,\tau)$.  Consider the solution to
$L^2$ flow defined by
\begin{align*}
 \bar{g}_t := \tau^{-\frac{1}{2}} g \left( \tau t \right).
\end{align*}
The solution $(M^4, \bar{g}_t)$ exists on $[0,1]$, and moreover satisfies
(\ref{ncsmoothing}) on $[0,1]$ due to the scaling invariance of those estimates.
 Also, one of (\ref{ncsmoothing}) is an equality at $(x,1)$.  First, suppose
that the curvature
inequality of
(\ref{ncsmoothing}) is an equality at $(x,1)$.  Observe that there is a uniform
curvature bound on $[\frac{1}{2},1]$, and moreover a
uniform lower bound on injectivity radius on this time interval.  Without loss
of generality we assume that $x$ satisfies
\begin{align*}
\brs{\bar{\Rm}}(x) = \sup_{M}
\brs{\bar{\Rm}} = A \FF^{\frac{1}{6}}(\bar{g}_1) =: K \leq 1,
\end{align*}
where the last iniequality holds provided $\ge$ is chosen
small with respect to $A$, which ultimately will only depend on $\gd$.  It
follows from (\cite{SL2LTB} Theorem 1.3)
that, for some universal constant $C$ we have
$\sup_{M} \brs{\bar{\N \Rm}} \leq C$.  Note that the solution has not existed
on a sufficiently long time interval to ensure $\brs{\bar{\N \Rm}} \leq C
K^{\frac{3}{2}}$, as one might expect.  In particular, on $B_{\mu K}(x)$ we
obtain $\brs{\bar{\Rm}} \geq \frac{K}{2}$, for a small universal constant $\mu$.
 If $\ge$ is furthermore chosen small with respect to
$\iota$ and $A$ we have that $\mu K \leq \iota$ and so
$\Vol B_{\mu K} \geq  c \mu^4 K^4$ for a small universal constant
$c$.  We conclude
that
\begin{align*}
\FF(\bar{g}_1) \geq \int_{B_{\mu K}(x)} \brs{\bar{\Rm}}^2 \geq
\frac{K^2}{4} \Vol(B_{\mu K_i}(x_i)) \geq c \mu^4 K^6 = c \mu^4  A^6
\FF(\bar{g}_1).
\end{align*}
Thus if $A$ is chosen sufficiently large with universal constants this yields a
contradiction.

Now suppose that the
injectivity
radius inequality of (\ref{ncsmoothing}) is an equality at $(x,1)$.  Our goal
is to derive a contradiction using the volume noncollapsing hypothesis on the
initial metrics.  Recall our noncollapsing hypothesis, which, after
parabolically rescaling by $\rho$ as we have done, says that for all $r
\leq 1$ and $x \in M$,
\begin{align} \label{ncsmoothing10}
 \frac{\Vol B_r(x,\bar{g}_0)}{r^4} \geq \gd.
\end{align}
On the other hand, by Lemma \ref{cheegers}, since we have a curvature bound
$\sup_M \brs{\bar{\Rm}} \leq A \FF^{\frac{1}{6}} \leq 1$, if we choose $\iota$
sufficiently small with respect to $\gd$ we obtain
\begin{align} \label{ncsmoothing20}
 \Vol B_{1}(x,\bar{g}_1) \leq \frac{\gd}{64}.
\end{align}

The strategy of the rest of the proof is to derive a contradiction from these
two inequalities, by showing that the flow can not in fact dissipate volume too
quickly under these hypotheses.  To simplify notation, at this stage of the
proof we remove the bar and refer to the blowup solution
as $(M, g_t)$, defined for $t \in [0,1]$.  The most difficult part, addressed
below, is to show that for $\ge$ sufficiently small one has
\begin{align} \label{ballcont}
U := B_{\frac{1}{2}} (x,g_0) \subset B_1(x,g_1).
\end{align}
Once this is established we can control the volume of $U$ along the flow to
derive a contradiction.  First note that by (\ref{ncsmoothing10})
\begin{align*}
\Vol_{g_0}(U) \geq \frac{\gd}{16}.
\end{align*}
On the other hand, we can estimate
\begin{align*}
\frac{d}{dt} \Vol_{g_t} (U) =&\ \frac{d}{dt} \int_U dV_g\\
=&\ - \frac{1}{2} \int_U \tr \grad \FF dV_g\\
\geq&\ - C \left(\int_U \brs{\grad \FF}^2 dV_g \right)^{\frac{1}{2}}
\Vol_{g_t}(U)^{\frac{1}{2}}.
\end{align*}
A direct calculation then yields
\begin{align*}
\Vol_{g_1}(U)^{\frac{1}{2}} \geq&\ \Vol_{g_0}(U)^{\frac{1}{2}} - C \int_0^1
\left[\int_U \brs{\grad \FF}^2 dV_g \right]^{\frac{1}{2}} dt\\
\geq&\ \left(\frac{\gd}{16} \right)^{\frac{1}{2}} - C \left[ \int_0^1 \int_M
\brs{\grad \FF}^2 dV_g dt \right]^{\frac{1}{2}}\\
\geq&\ \left(\frac{\gd}{16} \right)^{\frac{1}{2}} - C \ge^{\frac{1}{2}}\\
\geq&\ \left( \frac{\gd}{32} \right)^{\frac{1}{2}},
\end{align*}
for $\ge$ chosen sufficiently small with respect to $\gd$ and universal
constants.  Combining this with (\ref{ncsmoothing20}) and (\ref{ballcont}) we
obtain
\begin{align*}
\frac{\gd}{32} \leq \Vol_{g_1}(U) \leq \Vol (B_1(x,g_1)) \leq \frac{\gd}{64},
\end{align*}
a contradiction.

The remainder of the proof  is devoted to establishing (\ref{ballcont}).  The
direct path to showing an estimate on how distance changes is to control the
metric tensor in $C^0$ along a curve, which in turn
requires estimating
\begin{align*}
 \int_0^1 \int_{\gg} \brs{\grad \FF} d\gs dt.
\end{align*}
Observe that a direct estimate of this integral is not possible since naively
one only has $\brs{\grad \FF} \leq C t^{-1}$, just on the borderline of
integrability.  To overcome this problem we need to more cleverly exploit the
fundamental energy estimate for $L^2$ flow solutions.  In particular, since we
have very good control over the ``bulk" integral of $\brs{\grad \FF}^2$, we
essentially replace $\brs{\grad \FF}$ at each point of the curve by its average
over a small disc orthogonal to the curve.  As it turns out this yields
quantities which are integrable, and moreover controllable using the small
energy hypothesis, provided $n \leq 4$.

Fix constants $0 < R < 1, \ga > \frac{1}{4}$ and let $r_t := R t^{\ga}$.  These
constants will be chosen more precisely in the course of the proof.  First,
observe that if $\ge \leq 1$ and $R$ is chosen sufficiently small with respect
to $A$, $\iota$, and the universal constant $\mu$ of Lemma \ref{tubularfocal},
we have via (\ref{ncsmoothing}) that $r_t \leq \mu \max \left\{\inj_{g_t},
\brs{\Rm}_{g_t}^{-\frac{1}{2}} \right\}$ for all $t \in [0,1]$.  It follows from
Lemma \ref{tubularfocal} that we may choose a small parameter $\beta > 0$ so
that a unit speed curve  $\gg : [0,L] \to M$, satisfying
\begin{align*}
\brs{\N_{\dot{\gg}} \dot{\gg}} \leq \gb, \qquad L(\gg) \leq d(\gg(0),\gg(L)) +
\gb,
\end{align*}
has the property that $D_{r_t}(\gg)$ is foliated by the discs $D_{r_t}(\gg(s))$,
and moreover that the map $\pi : D_{r_t}(\gg) \to \gg$ has a bounded normal
differential.  With this choice of $\gb$, fix a point $y \in U$ and choose a
$\gb$-quasi-geodesic from $x$ to $y$ on the time interval $[0,1]$, which is
possible by Lemma \ref{almgeodlemma}.  

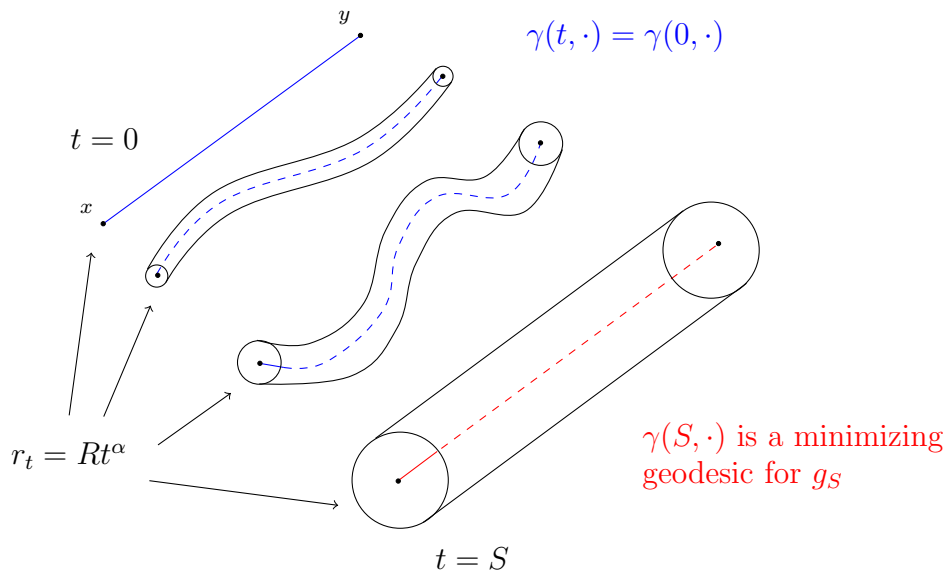
\begin{figure}[ht]
\begin{tikzpicture} [scale=1.4]

\node (v1) at (-6.1978,3.1962) {};
\node (v2) at (-8.8442,1.259) {};
\draw [color=blue] (v1) edge (v2);

\draw  plot[smooth, tension=.7] coordinates {(-8.1478,0.777) (-7.5321,1.4348)
(-6.1964,1.9269) (-5.4363,2.6801)};
\draw  [dashed,color=blue] plot[smooth, tension=.7] coordinates {(-8.235,0.8547)
(-7.6199,1.5402) (-6.2842,2.0323) (-5.5217,2.7403)};
\draw  plot[smooth, tension=.7] coordinates {(-8.3236,0.9025) (-7.6727,1.6457)
(-6.337,2.1378) (-5.5819,2.8106)};

\draw  plot[smooth, tension=.7] coordinates {(-7.261,-0.1878) (-6.3873,-0.1118)
(-5.9515,0.3451) (-5.7722,0.9603) (-5.4032,1.4348) (-4.7353,1.4524)
(-4.3838,2.0393)};
\draw  plot[smooth, tension=.7] coordinates {(-7.2821,0.227) (-6.7759,0.1989)
(-6.2749,0.5701) (-6.064,1.266) (-5.6844,1.737) (-5.1149,1.7828)
(-4.7424,2.2362)};
\draw  [color=blue,dashed] plot[smooth, tension=.7] coordinates
{(-7.0281,-0.0315) (-6.6113,0.0288) (-6.0815,0.5209) (-5.9409,1.1184)
(-5.5719,1.5929) (-4.904,1.6105) (-4.5877,2.1026)};

\node (v3) at (-6.2608,-0.7816) {};
\node (v4) at (-3.1325,1.5332) {};
\draw  (v3) edge (v4);
\node (v5) at (-5.8079,-1.581) {};
\node (v6) at (-2.5311,0.8443) {};
\draw  (v5) edge (v6);
\node (v7) at (-5.6351,-0.8781) {};
\node (v8) at (-2.781,1.2309) {};
\draw  [color=red,dashed] (v7) edge (v8);

\draw [color=blue](-7.2641,0.0087) -- (-7.0281,-0.0315);
\draw [color=red] (-5.9535,-1.1161) -- (-5.5719,-0.8299);

\draw  (-8.2351,0.8372) circle (0.1054);

\draw  (-5.5166,2.7353) circle (0.0954);
\draw  (-7.261,0.0161) circle (0.2071);
\draw  (-4.5877,2.0956) circle (0.2071);
\draw  (-5.9234,-1.103) node (v9) {} circle (0.4569);
\draw  (-2.9708,1.0833) circle (0.4569);

\node at (-8.7294,2.1418) {$t =0$};
\node at (-5.2379,-1.8418) {$t=S$};
\node at (-9.0665,-0.8734) {$r_t = R t^{\alpha}$};

\draw [->] (-9.0665,-0.4867) -- (-8.8556,1.0598);
\draw [->] (-8.7384,-0.4985) -- (-8.2932,0.556);
\draw [->] (-8.2229,-0.7679) -- (-7.5316,-0.2758);
\draw [->] (-8.3049,-1.1077) -- (-6.5123,-1.342);

\draw  [fill=black] (-5.9415,-1.1111) circle (0.02);
\draw  [fill=black] (-4.5908,2.1026) circle (0.02);
\draw  [fill=black] (-7.2551,0.0077) circle (0.02);
\draw  [fill=black] (-5.5187,2.7353) circle (0.02);
\draw  [fill=black] (-8.2252,0.8443) circle (0.02);
\draw  [fill=black] (-6.3001,3.1238) circle (0.02);
\draw  [fill=black] (-8.7433,1.3345) circle (0.02);
\draw  [fill=black] (-2.9001,1.1459) circle (0.02);

\node at (-8.9024,1.4816) {\tiny{$x$}};
\node at (-6.4537,3.2977) {\tiny{$y$}};
\node at (-3.785,3.1142) {\textcolor{blue}{$\gamma(t,\cdot) =
\gamma(0,\cdot)$}};
\node [text width = 4.5cm] at (-2.0159,-0.8693) {\textcolor{red}{$\gamma(S,
\cdot)$ is a minimizing geodesic for $g_S$}};
\end{tikzpicture}
\caption{Time-dependent disc neighborhood around $\gb$-quasi geodesic}
 \label{tube}
\end{figure}

Recall that this construction comes with
a decomposition of $[0,1]$ into finitely many subintervals $I_j := [jS,(j+1)S)$
on which $\gg_t$ is constant.  Figure \ref{tube} is a picture of the situation
in the first interval $[0,S]$.  The indicated blue curves are the same curve
which is a geodesic in the metric $g_0$, but which acquires geodesic curvature
as time
progresses.  The curve is ``tightened" at time $S$ by replacing it by a new
geodesic in the metric $g_S$.  Let us restrict attention to one interval $I_j$. 
Fix a time $t \in I_j$ and give $\gg$ a $g_t$-unit speed parameterization
$\gg:[0,L] \to M$.  Fix an arbitrary $p \in \gg$.  

To begin our estimate, first observe that if we choose $\ge$ small enough to
ensure $A \ge^{\frac{1}{6}} \leq 1$ then, using (\ref{ncsmoothing}),
it follows from Lemma \ref{derivest} that we have the estimates
\begin{align*}
\sup_{M} \brs{\N^m \Rm}_{g_t} \leq C(m,n) t^{-\frac{2+m}{4}}.
\end{align*}
In particular, since $\grad \FF = L(\N^2 \Rc) + \Rm^{*2}$ (\cite{Besse}
Proposition 4.70), we conclude an
estimate of the form $\brs{\N \grad \FF} \leq C t^{-\frac{5}{4}}$, and thus
there is a universal constant $C$ such that, for any $q \in D_{r_t}(p)$,
\begin{align*}
\brs{\grad \FF}(p) \leq \brs{\grad \FF}(q) + C r_t t^{-\frac{5}{4}}.
\end{align*}
Thus we obtain the pointwise estimate
\begin{gather} \label{pntest}
\begin{split}
\brs{\grad \FF}(p,t) =&\ \Area(D_{r_t}(p))^{-1} \int_{D_{r_t}(p)} \brs{\grad
\FF}(p) dA(q)\\
\leq&\ \Area( D_{r_t}(p))^{-1} \int_{D_{r_t}(p)} \left[ \brs{\grad \FF}(q) + C
r_t t^{-\frac{5}{4}} \right] dA(q)\\
=&\ \Area(D_{r_t}(p))^{-1} \int_{D_{r_t}(p)} \brs{\grad \FF}(q) dA(q) + C R
t^{\ga -\frac{5}{4}}\\
\leq&\ \Area(D_{r_t}(p))^{-\frac{1}{2}} \left[ \int_{D_{r_t}(p)} \brs{\grad
\FF}^2(q) dA(q) \right]^{\frac{1}{2}} + C R t^{\ga - \frac{5}{4}}.
\end{split}
\end{gather}
Next, observe that by Lemma \ref{arealowerbound} we have that, for all $s \in
[0,L]$,
\begin{align*}
\Area(D_{r_t}(\gg(s))) \geq c r_t^{n-1} = c R^{n-1} t^{\ga(n-1)}.
\end{align*}
for a small but universal constant $c$.  Putting these estimates together and
applying the coarea formula (Lemma \ref{coarea}), we yield for $t \in I_j$,
\begin{gather} \label{lengthestimate}
\begin{split}
\frac{d}{dt} L(\gg) \leq&\ \int_{\gg} \brs{\grad \FF} d \gs\\
\leq&\ C R^{\frac{1-n}{2}} t^{\frac{\ga(1-n)}{2}} \int_{\gg} \left[
\int_{D_{r_t}(p)} \brs{\grad \FF}^2(q) dA(q) \right]^{\frac{1}{2}} d \gs + C R
t^{\ga - \frac{5}{4}} L(\gg)\\
\leq&\ C R^{\frac{1-n}{2}} t^{\frac{\ga(1-n)}{2}} \left[ \int_{\gg}
\int_{D_{r_t}(p)} \brs{\grad \FF}^2 (q) dA(q) d \gs \right]^{\frac{1}{2}}
L(\gg)^{\frac{1}{2}} + C R t^{\ga - \frac{5}{4}} L(\gg)\\
\leq&\ C R^{\frac{1-n}{2}} t^{\frac{\ga(1-n)}{2}} \sup_{D_{r_t}(\gg)}
\brs{d\pi}^{\frac{1}{2}} \left[\int_M \brs{\grad \FF}^2 dV_g
\right]^{\frac{1}{2}}
L(\gg)^{\frac{1}{2}} + C R t^{\ga - \frac{5}{4}} L(\gg)\\
\leq&\ C R^{\frac{1-n}{2}} t^{\frac{\ga(1-n)}{2}} \left[\int_M \brs{\grad \FF}^2
dV_g \right]^{\frac{1}{2}} L(\gg)^{\frac{1}{2}} + C R t^{\ga - \frac{5}{4}}
L(\gg).
\end{split}
\end{gather}
Recall that, initially, $L(\gg_0) = d(x,y,0) \leq \frac{1}{2}$.  Provided
$d(x,y,jS) < 1$, this differential inequality can be integrated, as long as
$L(\gg) \leq 1$, along the interval $t \in [j S,(j+1) S]$, to yield
\begin{align*}
d(x,y,g_t) \leq d(x,y,g_{jS}) + C R^{\frac{1-n}{2}} \int_{jS}^{jS+1}
t^{\frac{\ga(1-n)}{2}} \left[\int_M \brs{\grad \FF}^2 dV_g \right]^{\frac{1}{2}}
dt + C R \int_{jS}^{jS+1} t^{\ga - \frac{5}{4}} dt.
\end{align*}
Thus after an inductive application of this estimate we obtain for any $t \in
[0,1]$,
\begin{align*}
d(x,y,g_t) \leq&\ d(x,y,g_0) + C R^{\frac{1-n}{2}} \int_0^1
t^{\frac{\ga(1-n)}{2}} \left[\int_M \brs{\grad \FF}^2 dV_g \right]^{\frac{1}{2}}
dt + C R \int_0^1 t^{\ga - \frac{5}{4}} dt\\
\leq&\ \frac{1}{2} + C R^{\frac{1-n}{2}} \left[ \int_0^1 t^{\ga(1-n)} dt
\right]^{\frac{1}{2}} \left[\int_0^1 \int_M \brs{\grad \FF}^2 dV_g dt
\right]^{\frac{1}{2}} + C R \int_0^1 t^{\ga - \frac{5}{4}} dt\\
=:&\ \frac{1}{2} + I + II.
\end{align*}
It is at this point that one can clearly see the dimensional restriction of this
proof.  Controlling the integral in the second term certainly requires a choice
of $\ga > \frac{1}{4}$.  But on the other hand controlling the first integral of
the first term requires $\ga(1-n) > -1$, or $\ga < \frac{1}{n-1}$.  There is an
appropriate choice of $\ga$ satisfying both conditions if and only if $n \leq
4$.  

Now we set $n=4$ in the above estimate and make the final choices of constants. 
First, pick any
$\frac{1}{4} < \ga < \frac{1}{3}$.  For concreteness we choose $\ga =
\frac{7}{24}$.  Then
\begin{align*}
II \leq 24 C R \leq \frac{1}{8},
\end{align*}
as long as $R$ is chosen sufficiently small with respect to the universal
constant $C$.  With this choice we then estimate
\begin{align*}
I \leq C R^{-\frac{3}{2}} \sqrt{8} \left[ \int_0^1 \int_M \brs{\grad \FF}^2
\right]^{\frac{1}{2}} \leq C R^{-\frac{3}{2}} \ge^{\frac{1}{2}} \leq
\frac{1}{8},
\end{align*}
for $\ge$ chosen sufficiently small with respect to $R$, which recall implicitly
depends on $\iota$, which in turn depends on $\gd$.  This finishes the proof of
(\ref{ballcont}), and so finishes the proof of the uniform short-time existence
claim, together with the claimed properties (1) and (2).

It remains to furnish property (3), the diameter estimate, on the
requisite time
interval. We observe that the length estimate (\ref{lengthestimate}) in fact
applies to arbitrary curves which are $\gb$-quasi geodesics for
sufficiently small $\gb$, the existence of which is guaranteed by Lemma
\ref{almgeodlemma}.  Applying this estimate and the arguments above to a
$\gb$-quasi
geodesic connecting two points realizing the diameter of $g_1$ yields the
estimate
\begin{align*}
\frac{d}{dt} L(\gg) \leq&\ C R^{- \frac{3}{2}} t^{- \frac{3 \ga}{2}}
\left[\int_M \brs{\grad \FF}^2 dV \right]^{\frac{1}{2}} L(\gg)^{\frac{1}{2}} + C
R t^{\ga - \frac{5}{4}} L(\gg).
\end{align*}
Integrating the resulting ODE over $[jS,(j+1)S]$ yields, as long as $L(\gg_t) <
2 (1 + \diam(g_0))$,
\begin{align*}
d(x,y,g_{(j+1)S}) \leq&\ d(x,y,g_{jS}) + C R^{-\frac{3}{2}}
(1 + \diam_{g_0})^{\frac{1}{2}} \int_{jS}^{jS+1} t^{-\frac{3 \ga}{2}} \left[
\int_M \brs{\grad \FF}^2 dV_g
\right]^{\frac{1}{2}} dt \\
&\ + C R (1 + \diam_{g_0}) \int_{jS}^{jS+1} t^{\ga - \frac{5}{4}} dt.
\end{align*}
Applying this inductively yields, for all $t \in [0,r_0^4]$,
\begin{align*}
\diam_{g_t} =&\ d(x,y,g_t)\\
\leq&\ d(x,y,g_0) + C R^{-\frac{3}{2}} (1 + \diam_{g_0})^{\frac{1}{2}} \int_0^1
t^{-\frac{3 \ga}{2}} \left[ \int_M \brs{\grad \FF}^2 dV_g \right]^{\frac{1}{2}}
dt \\
&\ + C R (1 + \diam_{g_0})\int_0^1 t^{\ga - \frac{5}{4}} dt.
\end{align*}
As above, choosing $\ga = \frac{7}{24}$, choosing $R$ small with respect to
universal constants (note it has also earlier been chosen small with respect to
$\gd$), and then choosing $\ge$ small with respect to this choice of $R$ we can
obtain
\begin{align*}
\diam_{g_t} \leq&\ d(x,y,g_0) + (1 + \diam_{g_0})^{\frac{1}{2}} + \frac{1}{2} (1
+ \diam_{g_0})\\
\leq&\ \diam_{g_0} + \frac{1}{2} + \frac{1}{2} \left(1 + \diam_{g_0} \right) +
\frac{1}{2}(1 + \diam_{g_0})\\
<&\ 2 (1 + \diam_{g_0}),
\end{align*}
as required.
\end{proof}

\begin{proof} [Proof of Theorem \ref{mainthm2}] The proof is very similar to the
proof of Theorem \ref{mainthm}, and we indicate the required changes here.  Fix
$0 < \gd < 1$ and $\rho, E > 0$.  We claim that there exists $\ge = \ge(\gd)$ so
that if we set
\begin{align*}
 T := \min \left\{ \ge^{4} E^{-4}, \rho^4 \right\},
\end{align*}
then if $(M^3, g_0)$ is a compact Riemannian manifold satisfying $r_{\gd} \geq
\rho$, $\FF(g) \leq E$, then the solution to the $L^2$ flow with this initial
condition exists on $[0,T]$ and satisfies the estimates of the statement of
Theorem \ref{mainthm2}.

As in the proof of Theorem \ref{mainthm}, we begin with a rescaling.  Consider
the solution to $L^2$ flow defined by
\begin{align*}
 \bar{g}_t := T^{-\frac{1}{2}} g_{T t}.
\end{align*}
Observe that, by construction, we have
\begin{align} \label{3folds10}
 \FF(\bar{g}_0) = \FF(T^{-\frac{1}{2}} g_0) = T^{\frac{1}{4}} \FF(g_0) \leq \ge.
\end{align}
Also, 
\begin{align} \label{3folds20}
 r_{\gd}(\bar{g}) = T^{-\frac{1}{4}} r_{\gd}(g) \geq 1.
\end{align}
This is now exactly the setup of the proof of Theorem \ref{mainthm}, and the
proof is almost identical.  We claim that for metrics $\bar{g}_0$ 
satisfying (\ref{3folds10}) and (\ref{3folds20}), the solution to the $L^2$ flow
with this condition exists on $[0,1]$ and moreover satisfies
\begin{align*}
 \brs{\Rm}_{C^0(\bar{g}_t)} \leq&\ A \FF^{\frac{1}{5}}(\bar{g}_t)
t^{-\frac{1}{2}},\\
 \inj_{\bar{g}_t} \geq&\ \iota t^{\frac{1}{4}},\\
\diam_{\bar{g}_t} \leq&\ 2 \left(1 + \diam_{\bar{g}_0} \right),
\end{align*}
where the constants $A$ and $\iota$ depend on $\gd$.  Observe that
the power on $\FF$ has changed from $\frac{1}{6}$ to $\frac{1}{5}$, in
correspondence with the change in dimension.  The argument ruling out equality
of the curvature inequality now proceeds exactly as in the proof of Theorem
\ref{mainthm}.  Moreover, ruling out the injectivity radius equality proceeds in
analogy with the proof of Theorem \ref{mainthm}.  As observed there, since $n
\leq 4$, the method of averaging over disc neighborhoods of radius $r_t = R
t^{\ga}$ can succeed for appropriately chosen $\ga$.  Thus the solution
$\bar{g}_t$ exists on $[0,1]$ with the appropriate curvature decay estimates,
and rescaling back by $T$ yields the statement of the theorem for the solution
$g_t$.  Note in particular that, when rescaling the curvature bound one has
\begin{align*}
\brs{\Rm}_{C^0(g_t)} =&\ \brs{\Rm (T^{\frac{1}{2}} \bar{g}_{T^{-1} t})}_{C^0}\\
=&\ T^{-\frac{1}{2}} \brs{\Rm(\bar{g}_{T^{-1} t})}\\
\leq&\ A \FF(\bar{g}_{T^{-1} t})^{\frac{1}{5}} t^{-\frac{1}{2}}\\
=&\ A \FF^{\frac{1}{5}}(T^{-\frac{1}{2}} g_t) t^{-\frac{1}{2}}\\
=&\ A \left[ T^{\frac{1}{4}} \FF(g_t) \right]^{\frac{1}{5}} t^{-\frac{1}{2}}.
\end{align*}
\end{proof}

\subsection{Rigidity and diffeomorphism finiteness}

We finish this section with the various corollaries on rigidity and finiteness
stated in the introduction.  To begin we use the smoothing effect of Theorem
\ref{mainthm} to establish Corollary \ref{diff}.

\begin{proof}[Proof of Corollary \ref{diff}] Fix $0 < \gd < 1$ and choose
$\ge(\gd)$ as in Theorem
\ref{mainthm}.  Given $\rho,V$ as in the statement, let $\{(M_i, g^i)\}$ be a
sequence of compact manifolds satisfying the hypotheses.  It suffices to show
that there exists a subsequence of $\{M_i\}$ which are diffeomorphic.  Let
$g^i_t$ denote the solution to $L^2$ flow with initial condition $g^i$.  By
Theorem \ref{mainthm} these solutions all exist on $[0,\rho^4]$, and moreover
satisfy the estimates
\begin{align*}
\brs{\Rm}_{g^i_{\rho^4}} \leq C(\rho,\gd), \qquad \inj_{g^i_{\rho^4}} \geq
\iota(\rho,\gd).
\end{align*}
Moreover, since the flow preserves volume we have $\Vol(g^i_{\rho^4}) =
\Vol(g^i)
\leq V$.  It follows from Cheeger-Gromov compactness that there exists a
subsequence of $\left\{(M_i, g^i_{\rho^4}) \right\}$ converging in the
$C^{\infty}$ Cheeger-Gromov sense to a smooth compact four-manifold
$(M_{\infty}, g_{\infty})$, finishing the proof of the corollary.
\end{proof}

\begin{proof} [Proof of Corollary \ref{diff2}] The proof follows the same
argument as the proof of Corollary \ref{diff}, using Theorem \ref{mainthm2} in
place of Theorem \ref{mainthm}.
\end{proof}

\begin{proof}[Proof of Corollary \ref{flatcor}]  Again we argue by
contradiction.  If the statement were false, then given $0 < \gd < 1$ and
$\rho,V > 0$ we could choose a sequence of
compact Riemannian manifolds $\left\{ (M_i,g^i) \right\}$ satisfying
conditions $(1)$ and $(2)$ of the Corollary, and also $\FF(g^i) \leq \ge_i \to
0$, but for which the solutions to the $L^2$ flow with this initial condition
does not exist for all time and converge to a flat metric.  By Theorem
\ref{mainthm}, for sufficiently large $i$ the solutions exist on $[0,\rho^4]$,
and moreover the sequence of Riemannian manifolds $\left\{(M_i, g^i_{\rho^4})
\right\}$ have uniform bounds on curvature and all covariant derivatives of
curvature, a uniform lower bound on injectivity radius, and a uniform upper
bound on volume.  By \cite{CheegerComp} we can choose a subsequence of these
manifolds converging in the $C^{k,\ga}$-Cheeger-Gromov topology for any $k,\ga$,
and the limit is necessarily flat since the $L^2$ norm of curvature approaches
zero along the sequence.  We can now 
invoke \cite{SL22} Theorem 1.6, which is a stability result for solutions to the
$L^2$ flow starting $C^{k}$-close to flat metrics, to conclude that eventually
our solutions do in fact exist for all time and converge to flat metrics,
contradicting the original assumption, and finishing the proof.
\end{proof}

\begin{proof}[Proof of Corollary \ref{sobflatcor}]
Using an argument in \cite{Car2} (cf. \cite{Hebey} Lemma 3.2) one has that if
$(M^n, g)$ is a Riemannian manifold with $C_S \leq A$, then for
any $r > 0$, $x \in M$ one has
\begin{align*}
\Vol B_r(x) \geq \min \left\{ \frac{1}{2 A}, \frac{r}{C(n,q) A} \right\}^n
\end{align*}
Choosing $\rho$ and $\gd$ sufficiently small with respect to $A$ we verify
condition (2) of Corollary (\ref{flatcor}).  If we choose $\ge$ now with respect
to this choice of $\rho,\gd$, which in turn only depends on universal constants
and $A$, we may apply Corollary \ref{flatcor} to obtain the result.
\end{proof}

Next we establish Corollary \ref{Fstructcor}.  Before we begin we will need to
recall a certain notion of weak injectivity radius introduced in \cite{YangF}.

\begin{defn} Given $(M^n, g)$, $0 < \mu < 1$ and $x \in M$, the \emph{$\mu$-weak
injectivity radius at $x$}, denoted $\inj^{\mbox{wk}}(\mu,x,g)$, is the largest
radius $R$ such that
\begin{enumerate}
 \item $C_S(B_{R}(x)) \leq \mu^{-2} C_S(\mathbb R^n)$
 \item For any $B_r(y) \subset B_R(x)$, one has $\Vol(B_r(y)) \leq \mu^{-n}
\gw_n$.
\end{enumerate}
\end{defn}

\begin{proof} [Proof of Corollary \ref{Fstructcor}] Fix constants $\gd, A$ and a
compact Riemannian manifold $(M^4, g)$ as in the statement.  Let $\rho =
r_{\gd}$.  By Theorem \ref{mainthm}, for $\ge$ chosen sufficiently small with
respect to $\gd$ we obtain that the solution to $L^2$ flow with this initial
condition exists on $[0,\rho^4]$ and moreover satisfies the smoothing estimates
claimed in that theorem.  We claim that for $\ge$ and $\mu$ chosen small with
respect to the constants $\gd$ and $A$ that the $\mu$-weak injectivity radius is
bounded \emph{above} at the time $\rho^4$.  More
specifically, we claim
\begin{align} \label{wkinjub}
\inj^{\mbox{wk}}(\mu, x,g_{\rho^4}) \leq C(A,\gd) \rho.
\end{align}
Assuming this is the case,
observe that for the metric $g_{\rho^4}$ we have
\begin{align*}
\left[ \sup_M \inj^{\mbox{wk}}(\mu,x) \right]^2 \int_M \brs{\Rc}^3 \leq \left[
C(A,\gd) \rho \right]^2 \sup_M \brs{\Rm} \int_M \brs{\Rm}^2 \leq C(A,\gd) \ge.
\end{align*}
In particular, for $\ge$ chosen sufficiently small with respect to $C(A,\gd)$
and $\mu$
we can verify the hypotheses of (\cite{YangF} Theorem 0.1) to conclude the
inequality for the metric $g_{\rho^4}$.  The existence of the $F$-structure
follows from \cite{CG2}.

It remains to establish (\ref{wkinjub}).  Fix some point $x \in M$.  We claim
that it suffices to establish a local volume upper bound for the metric at time
$t = \rho^4$.  In particular, we claim that it suffices to show that for $\ge$
chosen sufficiently small with respect to $\gd$ and $A$, if we set $\bar{\gd} =
\frac{1 + \gd}{2}$, we have for all $x \in M$ that
\begin{align} \label{Fstruct10}
r_{\bar{\gd}}(x,g_{\rho^4}) \leq C(A,\gd) \rho.
\end{align}
Indeed, since a local Sobolev constant bound implies a local lower volume growth
bound, (\ref{Fstruct10}) implies an upper bound on the $\mu$-weak injectivity
radius at $x$ for $\mu$ chosen sufficiently small with respct to $C(A,\gd)$.  

To establish (\ref{Fstruct10}) fix $x \in M$, and fix some $\bar{A} \leq A$ such
that
\begin{align*}
r_{\gd}(x,g) = \bar{A} r_{\gd}(g) = \bar{A} \rho.
\end{align*}
In particular, this implies that
\begin{align*}
\Vol(B_{\bar{A} \rho}(x,g)) = \gd \left(\bar{A} \rho \right)^4 \gw_4.
\end{align*}
Let $U = B_{\bar{A} \rho}(x,g)$.  Given some $\eta > 1$, we can fix $\ge$
sufficiently small with respect to
$\eta$, $\gd$, and $A \geq \bar{A}$ so that by following the proof of the volume
estimate in Theorem
\ref{mainthm}, we can estimate
\begin{align*}
\Vol(U,g_{\rho^4}) \leq&\ \eta \gd \bar{A}^4 \rho^4 \gw_4.
\end{align*}
Next we claim that for any choice of $\eta > 1$ we can choose $\ge$ sufficiently
small so that 
\begin{align} \label{Fstructballcont}
B_{\eta^{-1} \bar{A} \rho}(x,g_{\rho^4}) \subset U 
\end{align}
Note that, if this holds, then
\begin{align*}
\Vol B_{ \eta^{-1} \bar{A} \rho}(x,g_{\rho^4}) \leq \Vol(U,g_{\rho^4}) \leq \eta
\gd
A^4
\rho^4 \gw_4 = \eta^{5} \gd \left( \eta^{-1} \bar{A} \rho \right)^4 \gw_4.
\end{align*}
Now if we choose $\eta$ sufficiently close to $1$ so that $\eta^{5} \gd <
\frac{1 + \gd}{2} = \bar{\gd}$, then this finishes the proof of
(\ref{Fstruct10}).  

The last step is to establish (\ref{Fstructballcont}).  This proceeds precisely
as in the proof of the distance estimate (\ref{ballcont}) of Theorem
\ref{mainthm}.  The key difference is that the containment is the reverse of the
one established there. That is, this time we are trying to show that distances
cannot decay too quickly along the flow, which is the same as saying that they
do not grow too fast under the time-reversed flow.  One observes that the
argument for establishing (\ref{ballcont}) only relies on the energy identity
for the flow and the decay properties of curvature.  Thus the argument applies
also if we reverse time, provided choosing $\ge$ small with respect to $A$ and
$\eta$, which depend only on $\gd$.  This establishes (\ref{Fstructballcont}),
finishing the proof.   
\end{proof}

Lastly we establish Theorem \ref{einv}, which requires a slight reworking of the
proof of Theorem \ref{mainthm}.

\begin{proof}[Proof of Theorem \ref{einv}]  Following the proof of Theorem
\ref{mainthm}, we first establish a uniform short time existence statement for
the $L^2$ flow with the given initial conditions.  Observe that by rescaling it
suffices to consider the case $\rho = 1$.  Fix $\rho=1$ and $A,V > 0$.   We will
show that there exists a universal constant $0 < \eta < 1$ and constants
$\ge,\tau > 0$ depending on $A$ and $V$ so that the solution to $L^2$ flow with
initial condition $g$ satisfying $\Vol \leq V, \chi(M) \leq A, r_{1 - \eta}
\geq 1$ and $\nm{z}{L^2} \leq \ge$ exists on $[0,\tau]$ and moreover satisfies
\begin{gather} \label{esmoothing}
 \begin{split}
 \brs{\Rm}_{C^0(g_t)} \leq&\ K t^{-\frac{1}{2}}, \\
\inj_{g_t} \geq&\  t^{\frac{1}{4}}.
 \end{split}
\end{gather}
where $K$ is some universal constant to be determined below.   To prove this we
proceed by blowup/contradiction as in Theorem
\ref{mainthm}.  Suppose that there exists a sequence of manifolds $(M_i,
g^i)$, satisfying $\chi(M_i) \leq A$, $\Vol \leq V$, $r_{1 - \eta} \geq 1$ and
$\nm{z}{L^2}
\leq \ge_i \to 0$, but that along the corresponding solutions to the $L^2$ flow,
the estimates (\ref{esmoothing}) hold only on a maximal time interval
$[0,\tau_i],  \tau_i \to 0$.   As in the proof of Theorem \ref{mainthm}, we
construct the sequence of blowup solutions
$\bar{g}_t^i := \tau_i^{-\frac{1}{2}} g_i(\tau_i t)$.  We aim to derive a
contradiction from this setup.

First, recall the Chern-Gauss-Bonnet formula (cf. \cite{Besse})
\begin{align*}
\FF(g) = 8 \pi^2 \left( \chi(M) + 4 \nm{z}{L^2}^2 \right).
\end{align*}
It follows that the $L^2$ flow in dimension $4$ is also the gradient flow of the
$L^2$ norm of $z$.  Therefore this norm is also nonincreasing along the flow,
and we conclude
\begin{align} \label{e20}
\lim_{i \to \infty} \nm{z}{L^2(\bar{g}^i_{1})} = 0.
\end{align}
To derive a contradiction from this blowup sequence, first suppose that we have
infinitely many metrics $\bar{g}_1^i$ such that the curvature inequality of
(\ref{esmoothing}) is an equality at $(x_i,1)$.  Applying (\cite{SL2LTB}
Corollary 1.5) we may take a limiting solution $(M_{\infty}, g^{\infty}_t,
x_{\infty})$
which exists on $[\frac{1}{2},1]$ and satisfies
\begin{align*}
\sup_{M_{\infty} \times \{1\}} \brs{\Rm} = \brs{\Rm}(x_{\infty}, 1) =  K, \qquad
\inj_{g_1^{\infty}} \geq 1.
\end{align*}
On the other hand, by (\ref{e20}) we have that $g^{\infty}_1$ is an Einstein
metric.  Moreover, since we assume a uniform upper bound on $\chi(M_i)$, it
follows from the Chern-Gauss-Bonnet formula that the $L^2$ norm of the full
Riemann curvature tensor is bounded.  Since the volume of the blowup limit is
infinite and the scalar curvature is constant, it follows that the scalar
curvature must vanish.   Thus $g^{\infty}_1$ is Ricci flat. It follows from the
local harmonic coordinate estimates of Einstein metrics (\cite{Anderson}, cf.
\cite{Petersenbook} Lemma 52, Theorem 76) that there is a constant $C$ depending
ultimately on the dimension (which is $4$) and the injectivity radius lower
bound (which is $1$) such that $\brs{\Rm}_{g^{\infty}_1} \leq C$.  Choosing $K$
larger than this constant $C$ yields a contradiction.

We now turn to the case where there are infinitely many elements of our sequence
where the injectivity radius estimate of (\ref{esmoothing}) is an equality at
$(x^i, 1)$.  We want to follow the same argument as in Theorem \ref{mainthm}. 
However, due to the fact that our curvature estimate in this case depends on the
injectivity radius estimate, we cannot apply Cheeger's Lemma to ensure that
small injectivity radius implies smallness of the volume of geodesic balls. 
Instead we will exploit Anderson's Gap Lemma for complete Ricci flat manifolds
with nearly maximal volume growth.  The crucial point remains the same: to
estimate the volume of metric balls along the flow.

Our main goal is to show that for all sufficiently large metric balls, one has
nearly Euclidean volume.  As in the proof of Theorem \ref{mainthm}, for
notational convenience we will refer to one element of the blowup sequence as
$(M, g_t)$.  Fix some radius $\gs \geq 1$.  Since our blowup factor $\tau_i \to
0$ it follows that for sufficiently large $i$ in our sequence, we have
\begin{align*}
\Vol_{g_0}( B_{\gs}(x,0)) \geq (1-\eta) \gw_4 \gs^4.
\end{align*}
Fix a constant $\mu < 1$ to be determined, but very close to $1$, and let $U :=
B_{\mu \gs}(x, g_0)$.  As in the proof of Theorem \ref{mainthm}, a direct
integration of the evolution of volume yields the estimate
\begin{align} \label{evolest}
\Vol_{g_1}(U)^{\frac{1}{2}} \geq&\ \Vol_{g_0}(U)^{\frac{1}{2}} - C
\ge^{\frac{1}{2}} \geq \left[ (1-\eta) \mu^{4} \gs^4 \gw_4
\right]^{\frac{1}{2}} - C \ge^{\frac{1}{2}} \geq \left[(1 - 2 \eta) \mu^4
\gs^4 \gw_4 \right]^{\frac{1}{2}},
\end{align}
where the last inequality holds for $\ge$ chosen sufficiently small with respect
to universal constants, $\mu$, and $\eta$, but NOT $\gs$.  

We next claim that we can find $\ge$ sufficiently small
with respect to $\mu$ and universal constants so that for blowup metrics in our
sequence satisfying $\nm{z}{L^2(g_0)} \leq \ge$, in analogy with
(\ref{ballcont}) we have
\begin{align} \label{ballcont2}
U := B_{\mu \gs}(x,g_0) \subset B_{\gs}(x,g_1).
\end{align}
Note that by combining (\ref{ballcont2}) with (\ref{evolest}) we obtain for any
$\gs \geq 1$ the volume estimate
\begin{align*}
\Vol_{g_1} B_{\gs}(x,g_1) \geq \Vol_{g_1} U \geq (1 - 2 \eta) \mu^4 \gs^4 \gw_4.
\end{align*}

Assuming the claim of (\ref{ballcont2}), let us finish the proof of the
short-time existence claim. 
The arguments from above ruling out the curvature equality
along the blowup sequence still apply here to conclude that the sequence of
blowups $(M_i, \bar{g}^i_1,x_i)$ admits a limit which is a complete Ricci flat
manifold
with
$\inj_{x_{\infty}} = 1$.  Observe that for any choice of $\zeta > 0$ we can
choose $\eta$
sufficiently small and $\mu$ sufficiently close to $1$ to guarantee $(1 - 2
\eta) \mu^4
\geq (1-\zeta)$.  Assuming $\ge$ is chosen small to guarantee the
validity of (\ref{evolest}) and (\ref{ballcont2}), we conclude moreover that
this limit space satisfies
\begin{align} \label{ein20}
\frac{\Vol B_{\gs}(x,g_{\infty})}{\gs^4} \geq (1-\zeta) \gw_4
\end{align}
for all $\gs \geq 1$.  It follows from the Bishop-Gromov inequality that
(\ref{ein20}) will hold for all $\gs > 0$.  Thus, if $\zeta$ is chosen
sufficiently small we conclude from
(\cite{Anderson} Lemma 3.1) that the limiting manifold is in fact isometric to
$(\mathbb R^4, g_{\mbox{Eucl}},0)$, contradicting the fact that
$\inj_{x_{\infty}} = 1$.

We now proceed to establish (\ref{ballcont2}), following closely the proof of
(\ref{ballcont}) in Theorem \ref{mainthm}.  Again we simplify notation and let
$g_t$ refer to any element of our blowup sequence.  Fix $y \in U$, and choose a
small constant $\gb > 0$ so that $\gb$-quasi-geodesics admit foliated disc
neighborhoods with appropriately chosen radii (cf. Lemma \ref{tubularfocal}). 
With the notation $r_t = R t^{\ga}$ as in Theorem \ref{mainthm}, the preliminary
discussion leading to line (\ref{lengthestimate}) is justified in this setting,
so we may conclude the estimate
\begin{align*}
\frac{d}{dt} L(\gg) \leq C R^{- \frac{3}{2}} t^{- \frac{3 \ga}{2}} \left[\int_M
\brs{\grad \FF}^2 dV_g \right]^{\frac{1}{2}} L(\gg)^{\frac{1}{2}} + C R t^{\ga -
\frac{5}{4}} L(\gg).
\end{align*}
Thus, for all times such that the inequality $L(\gg) \leq \gs$ holds, since $\gs
\geq 1$ we conclude for
$t \in [jS,(j+1)S]$ that
\begin{align*}
d(x,y,g_t) \leq d(x,y,g_{jS}) + \gs \left[ C R^{-\frac{3}{2}} \int_{jS}^{jS+1}
t^{- \frac{3\ga}{2}} \left[\int_M \brs{\grad \FF}^2 dV_g \right]^{\frac{1}{2}}
dt + C R \int_{jS}^{jS+1} t^{\ga - \frac{5}{4}} dt \right].
\end{align*}
Thus after an inductive application of this estimate we obtain for any $t \in
[0,1]$,
\begin{align*}
d(x,y,g_t) \leq&\ d(x,y,g_0) + \gs \left[ C R^{- \frac{3}{2}} \int_0^1
t^{-\frac{3 \ga}{2}} \left[\int_M \brs{\grad \FF} dV_g \right]^{\frac{1}{2}}
dt + C R \int_0^1 t^{\ga - \frac{5}{4}} dt\right]\\
\leq&\ \mu \gs + \gs \left[ C R^{- \frac{3}{2}} \left[ \int_0^1 t^{-3\ga} dt
\right]^{\frac{1}{2}} \left[\int_0^1 \int_M \brs{\grad \FF}^2 dV_g dt
\right]^{\frac{1}{2}} + C R \int_0^1 t^{\ga - \frac{5}{4}} dt \right]\\
=:&\ \gs \left[ \mu + I + II \right],
\end{align*}
where $I$ and $II$ are defined by the final equality.  As in Theorem
\ref{mainthm}, we choose $\ga = \frac{7}{24}$.  Then we obtain
\begin{align*}
II \leq 24 C R < \frac{1-\mu}{2},
\end{align*}
for $R$ chosen sufficiently small with respect to universal constants and $\mu$.
 With this choice we then estimate
\begin{align*}
I \leq C R^{-\frac{3}{2}} \sqrt{8} \left[ \int_0^1 \int_M \brs{\grad \FF}^2
\right]^{\frac{1}{2}} \leq C R^{-\frac{3}{2}} \ge^{\frac{1}{2}} <
\frac{1-\mu}{2},
\end{align*}
as long as $\ge$ is chosen sufficiently small with respect to universal
constants, $\mu$, and $R$ (which already depends on $\mu$ as well).  Combining
these estimates yields, for any $t \in [0,1]$,
\begin{align*}
d(x,y,g_t) < \gs \left[ \mu + \frac{1-\mu}{2} + \frac{1-\mu}{2} \right] = \gs.
\end{align*}
This completes the proof of (\ref{ballcont2}), and so finishes the claim of
uniform short-time existence.

With this smoothing result in hand, the proof proceeds in analogy with the proof
of Corollary \ref{flatcor}.  In particular, if the statement were false, we
could take a sequence of compact Riemannian four-manifolds $(M^4_i, g^i)$
satisfying the hypotheses of the Theorem, and moreover $\lim_{i \to \infty}
\nm{z}{L^2(g_i)} = 0$, but for which no $M^4_i$ admits an Einstein metric.  By
the above discussion, there exists a uniform $\tau > 0$ so that the $L^2$ flow
with initial condition $g^i$ exists on $[0,\tau]$ and moreover satisfies the
estimates (\ref{esmoothing}).  It follows that the sequence of Riemannian
manifolds $\{(M^4_i, g^i_{\tau}) \}$ is precompact, and so some subsequence of
these manifolds converges to a limiting space $(M^4_{\infty}, g_{\infty})$,
which
necessarily is an Einstein manifold.  By the nature of the convergence, $M_i
\cong M_{\infty}$ for sufficiently large $i$, and so we have arrived at a
contradiction.
\end{proof}

\section{Local smoothing property} \label{smoothingsec}

The goal of this section is to establish a local smoothing property
for solutions to the $L^2$ flow in arbitrary dimension.  First we record a lemma
comparing parabolic balls as we have defined them with product regions in
spacetime along an $L^2$ flow.  Then we state a local
compactness theorem for $L^2$ flow solutions, implicit in \cite{SL2LTB},
as Theorem \ref{locacomp} below.  The local smoothing estimates then appear in
Theorem \ref{smoothing2}.  We also record a global curvature
doubling estimate, which in turn implies an upper bound on the blowup rate of
$L^2$ flow solutions, i.e. solutions blow up no faster than the so-called Type I
rate.  Recall that, for the Ricci flow, local smoothing can
be obtained by combining the maximum principle with the use of cutoff functions.
 Moreover, a global doubling time estimate for
the curvature follows directly from the maximum principle applied to the
evolution equation for $\brs{\Rm}^2$ (cf. \cite{CLN} Lemma 6.1).  In the case of
fourth order curvature flows we require the technique of blowup/compactness
arguments.

\begin{lemma} \label{parballcomp} Suppose $(M^n, g_t)$ is a complete solution to
$L^2$ flow, and suppose there exists $x \in M$ such that
\begin{align*}
\sup_{P_r(x,t)} f_2 \leq K.
\end{align*}
There exists a constant $c = c(n)$ such that, setting $\Lambda = e^{-c r^4
K^2}$,
we have $B_{\Lambda r}(x,t) \times [t - r^4,t]
\subset P_{r}(x,t)$.
\begin{proof} One needs to show that for $y \in B_{\Lambda r}(x,t)$ and $s \in
[t
- r^4,t]$ one has $d(x,y,s) \leq r$.  Directly integrating the evolution
equation for $g$ and using the hypothesis on $f_2$ yields a uniform bound for
the metric, and hence the distance, along the flow.
\end{proof}
\end{lemma}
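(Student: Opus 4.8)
The plan is to convert the hypothesis $\sup_{P_r(x,t)} f_2 \leq K$ into a uniform pointwise bound on $\del_\sigma g$ over $P_r(x,t)$, and then to control, for a single \emph{fixed} curve joining $x$ to $y$ — a $g_t$-minimizing geodesic — how its length in the metrics $g_\sigma$ changes as $\sigma$ decreases from $t$; a continuity argument in $\sigma$ keeps the curve inside $P_r(x,t)$, where the bound is available, and that is what forces the conclusion.

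For the first step, since the flow equation is $\del_\sigma g = -\grad \FF$ and $\grad \FF = L(\N^2 \Rc) + \Rm^{*2}$, there is $C = C(n)$ with $\brs{\del_\sigma g}_{g_\sigma} \leq C(\brs{\N^2 \Rm} + \brs{\Rm}^2)$; from the definition of $f_2$ one has $\brs{\N^2 \Rm} \leq f_2^2$ and $\brs{\Rm} \leq f_2$, so the hypothesis gives $\brs{\del_\sigma g}_{g_\sigma} \leq K_0 := 2CK^2$ at every point of $P_r(x,t)$. For the second step, fix $y$ with $d(x,y,t) \leq \Lambda r$ (we may assume $r,K > 0$, the degenerate cases being immediate), and using completeness of $g_t$ let $\gg : [0,1] \to M$ be a $g_t$-minimizing geodesic from $x$ to $y$, so that $L(\gg,g_t) = d(x,y,t) \leq \Lambda r$. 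Because $\gg$ is a fixed smooth curve, $\sigma \mapsto L(\gg,g_\sigma)$ is smooth with $\brs{\frac{d}{d\sigma}\log L(\gg,g_\sigma)} \leq \frac{1}{2}\sup_{\gg}\brs{\del_\sigma g}_{g_\sigma}$. Let $s_* := \inf\{ s \in [t-r^4,t] : L(\gg,g_\sigma) \leq r \text{ for all } \sigma \in [s,t]\}$, which is well defined since $L(\gg,g_t) \leq \Lambda r < r$. On $[s_*,t]$ one has $L(\gg,g_\sigma) \leq r$, and since each point $\gg(u)$ is joined to $x$ by the subarc $\gg|_{[0,u]}$ of $g_\sigma$-length $\leq r$, this forces $\gg \subset B_r(x,\sigma) \subset P_r(x,t)$; hence the bound from the first step applies along $\gg$, and integrating the resulting differential inequality backwards from $t$ (using $t - \sigma \leq r^4$) gives $L(\gg,g_\sigma) \leq \Lambda r\, e^{\frac{1}{2}K_0 r^4} = r\, e^{(C-c)K^2 r^4}$ for all $\sigma \in [s_*,t]$.

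Choosing the universal constant $c = c(n)$ with $c > C(n)$ (say $c = 2C(n)$) makes this bound strictly less than $r$. If $s_* > t - r^4$, then by continuity of $\sigma \mapsto L(\gg,g_\sigma)$ the inequality $L(\gg,g_\sigma) < r$ persists slightly below $s_*$, contradicting the definition of $s_*$; hence $s_* = t - r^4$, so $d(x,y,\sigma) \leq L(\gg,g_\sigma) < r$ for every $\sigma \in [t-r^4,t]$, i.e. $(y,\sigma) \in B_r(x,\sigma) \subset P_r(x,t)$, which is the claim. I expect the only genuinely delicate point to be the self-referential nature of the estimate: the bound on $\del_\sigma g$ holds only on $P_r(x,t)$, i.e. only where lengths are already known to be at most $r$, so it cannot be fed directly into a single Gronwall estimate; organizing the argument as a backwards-in-time continuity argument anchored at $\sigma = t$ is what resolves this, and working with one fixed curve rather than the $\sigma$-dependent minimizing geodesics avoids the measure-theoretic care otherwise needed for the first variation of distance.
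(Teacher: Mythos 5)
Your proof is correct and follows the same route the paper only sketches: integrate the evolution equation for $g$, using the $f_2$ bound on $P_r(x,t)$ to control $\brs{\grad\FF}=\brs{\del_\sigma g}$ pointwise, and hence control lengths backwards in time. The backwards continuity argument keeping the fixed $g_t$-geodesic inside $P_r(x,t)$ is exactly the detail the paper's one-line proof leaves implicit, and you have supplied it correctly.
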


\begin{thm} \label{locacomp} (Local compactness theorem) Let $ \{(M_i^n, g^i_t,
p_i) \}$, $t \in
[0,T]$ be a sequence of complete pointed solutions to the $L^2$ flow.  Fix $m
\geq 4$ and $r > 0$.  Suppose
\begin{align} \label{locacomp10}
\sup_{B_{g^i_0(p_i,r)} \times [0,T]} f_m(g^i) \leq K,
\end{align}
and suppose $\bar{B_{g^i_0}(p_i,r)}$ is compact for all $i$.  There exists a
constant $\ga(n) > 0$ so that setting $\bar{r} = e^{-\ga K^2 T} r$, a
subsequence
of
\begin{align*}
\{(B_{\bar{r}}(p_i,g^i_0), g^i_t,p_i)\}, \qquad t \in [0,T],
\end{align*}
converges to a pointed solution of the $L^2$ flow
\begin{align*}
(B_{\bar{r}}(p_{\infty}, g^{\infty}_0), g^{\infty}_t, p_{\infty}), \qquad t \in
(0,T].
\end{align*}
in the sense of $C^{\infty}$-local submersions, and
$\bar{B_{s}(p_{\infty},g^{\infty}_0)}$ is compact for all $s < \bar{r}$.  If
moreover
\begin{align*}
\inj_{g^i_0}(p_i) \geq \gd > 0
\end{align*}
for all $i$, then the convergence is in the $C^{\infty}$ Cheeger-Gromov sense,
and moreover
\begin{align*}
\inj_{g^{\infty}_0}(p_{\infty}) \geq \gd > 0.
\end{align*}
\begin{proof} We only give a brief sketch, as the proof uses standard ideas
which have already been established for solutions to the $L^2$ flow.  Using
techniques for establishing compactness of one-parameter families of metrics
established in \cite{Hamilton}, we can obtain a limiting space as described in
the theorem, with the sequence converging in the $C^{m-4}(g^{\infty}_0)$
topology.  Moreover, the $L^2$ flow equation passes to this limit.  To improve
the convergence to $C^{\infty}(g^{\infty}_0)$ we exploit local $L^2$ smoothing
estimates of the flow obtained by pulling back to the tangent space via the
exponential map to obtain estimates on higher derivatives of curvature.
\end{proof}
\end{thm}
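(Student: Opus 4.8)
The plan is to run a localized version of Hamilton's compactness theorem for one-parameter families of metrics, after first exploiting the $L^2$ flow equation to convert the finite-order bound (\ref{locacomp10}) into uniform control of the metrics themselves. First I would note that $\grad \FF = L(\N^2 \Rc) + \Rm^{*2}$, so on $B_{g^i_0}(p_i,r)\times[0,T]$ the hypothesis forces $\brs{\del_t g^i} \leq C(n)(\brs{\N^2\Rc} + \brs{\Rm}^2) \leq C(n)K^2$; integrating in time gives $e^{-C(n)K^2 t} g^i_0 \leq g^i_t \leq e^{C(n)K^2 t}g^i_0$ on that region, precisely as in Lemma \ref{parballcomp}. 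Hence $g^i_t$- and $g^i_0$-distances differ there by at most the factor $e^{C(n)K^2 T/2}$, so setting $\bar r = e^{-\alpha(n) K^2 T}r$ with $\alpha(n)=C(n)/2$ yields $B_{\bar r}(p_i,g^i_t)\subset B_{g^i_0}(p_i,r)$ for every $t\in[0,T]$. Thus the full bound on $f_m$, in particular the curvature bound $\brs{\Rm_{g^i_t}}\leq K$, is valid on the relevant spacetime region, and all the $g^i_t$ there are uniformly equivalent to $g^i_0$.

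Next I would upgrade this finite-order control to bounds on \emph{all} covariant derivatives of curvature at positive times. Since $\brs{\Rm}\leq K$ on $B_{\bar r}(p_i,g^i_0)\times[0,T]$, the interior $L^2$-flow smoothing estimates — obtained by pulling the flow back to the tangent space via $\exp^{g^i_0}_{p_i}$, which by the curvature bound \emph{alone} is a local diffeomorphism on a ball of radius comparable to $K^{-1/2}$, and then applying the local parabolic estimates of \cite{SL2LTB} there — give for each $j\geq 0$, $\bar r' < \bar r$, and $0 < \epsilon \leq T$ a bound $\sup_{B_{\bar r'}(p_i,g^i_0)\times[\epsilon,T]} \brs{\N^j \Rm_{g^i_t}} \leq C(j,n,K,T,\epsilon,\bar r - \bar r')$. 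Combined with Lemma \ref{normchris} and the standard bootstrap relating metric components in geodesic coordinates to bounds on curvature and its derivatives, this produces uniform $C^\infty$ bounds for the $g^i_t$ in suitable local charts on $B_{\bar r}(p_i,g^i_0)$, uniformly for $t$ in compact subsets of $(0,T]$ (the restriction to positive times being necessary since at $t=0$ one only has $C^{m-4}$-type control).

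With these uniform local bounds in hand I would extract the limit by an Arzela-Ascoli plus diagonal argument. In the general case (no injectivity radius hypothesis) one works with the tangent-space pullbacks on balls in $T_{p_i}M_i$ and patches the resulting local $C^\infty$ limits into a pointed limit in the sense of $C^\infty$-local submersions, using the techniques for one-parameter families of metrics of \cite{Hamilton}; because the $L^2$ flow is a local quasilinear parabolic system it passes to the limit, and uniform metric equivalence together with the uniform compactness of $\bar{B_s(p_i,g^i_0)}$ forces $\bar{B_s(p_\infty,g^\infty_0)}$ compact for all $s<\bar r$. If moreover $\inj_{g^i_0}(p_i)\geq\gd$, then the curvature and derivative bounds together with this basepoint bound give, by \cite{CGT}, a uniform injectivity radius lower bound on a fixed neighborhood of $p_i$; the exponential charts are then genuine embeddings, the local submersions become diffeomorphisms, and one obtains honest $C^\infty$ Cheeger-Gromov convergence, with $\inj_{g^\infty_0}(p_\infty)\geq\gd$ by lower semicontinuity.

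The step I expect to be the main obstacle is the second one: making the smoothing estimates genuinely local in space. The standard $L^2$-flow smoothing results (e.g. Lemma \ref{derivest}) are global, so they cannot be quoted directly; the remedy — passing to a definite-size ball in the tangent space, where the curvature bound by itself guarantees $\exp_{p_i}$ is a local diffeomorphism and the pulled-back metrics are uniformly controlled — is exactly what allows the argument to run without an injectivity radius assumption, and is the mechanism by which the dichotomy between Cheeger-Gromov convergence and local-submersion convergence appears. A secondary technical point is the careful patching of the tangent-space limits into a single pointed limit space in the collapsed case, which requires tracking the transition maps between overlapping exponential charts and invoking uniqueness of continuation for the resulting local isometries.
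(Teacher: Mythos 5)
Your proposal follows essentially the same route as the paper's (admittedly brief) proof: Hamilton-type compactness for one-parameter families of metrics to get a $C^{m-4}$ limit, combined with local $L^2$ smoothing estimates obtained by pulling the flow back to the tangent space via the exponential map to upgrade to $C^{\infty}$ convergence at positive times, with the injectivity radius hypothesis deciding between Cheeger–Gromov and local-submersion convergence. Your write-up in fact fills in several details (the metric-equivalence computation justifying the factor $e^{-\ga K^2 T}$, and the role of Lemma \ref{normchris} in converting curvature-derivative bounds into chart bounds) that the paper's sketch leaves implicit.
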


\begin{thm} \label{smoothing2} Fix $m,n \in \mathbb N^+$.  There
exists a constant $C = C(m,n)$ so
that given $r, K \geq 0$, and $(M^n, g_t)$
a complete solution to $L^2$ flow on $[0,r^4]$, and $x \in M$ satisfying
\begin{align*}
\sup_{P_r(x,r^4)} f_2 \leq K,
\end{align*}
then for all $t \in (0,r^4]$,
\begin{align*}
\sup_{B_r(x,t)} \brs{\N^m \Rm} \leq C \left( (1 + r^{\frac{1}{2}}
\rho_{x,t}^{-\frac{1}{2}}) K + t^{-\frac{1}{2}} +
 \rho_{x,t}^{-2}\right)^{1 + \frac{m}{2}},
\end{align*}
where $\rho_{x,t}(y) := r - d(x,y,t)$.
\begin{rmk} The reason for the seemingly ``extra" term $r^{\frac{1}{2}}
\rho_{x,t}^{-\frac{1}{2}} K$ in this estimate is technical, and is exploited in
Lemma \ref{blowuplemma}.
\end{rmk}
\begin{proof} 
By construction it suffices to show that, for each $m \geq 4$, there exists a
constant $C_m$ such that for a solution satisfying the hypotheses of the
theorem, for all $t \in (0,r^4]$ one has
\begin{align*}
\sup_{B_r(x,t)} f_m \leq C_m \left( (1 + r^{\frac{1}{2}}
\rho_{x,t}^{-\frac{1}{2}} )K + t^{-\frac{1}{2}} + \rho_{x,t}^{-2}
\right).
\end{align*}

Suppose that this claim is false.  Choose a sequence of complete solutions to
the $L^2$ flow $(M^n_i, g^i_t)$ and fix points $x_i \in M^n_i$, and $r_i, K_i >
0$ such that the flow $g^i_t$ exists on $[0,r_i^4]$ and 
\begin{align*}
\sup_{P_{r_i}(x_i,r_i^4)} f_2 \leq K_i.
\end{align*}
Moreover, suppose that there exist points $(z_i,t_i) \in P_{r_i}(x_i,r_i^4)$
such
that
\begin{align} \label{lambnd}
\gl_i := f_m(z_i,t_i) = C_i \left( (1 + r_i^{\frac{1}{2}}
\rho_{x_i,t}^{-\frac{1}{2}} ) K_i + t_i^{-\frac{1}{2}} +
\rho_{x_i,t_i}^{-2} \right), \qquad \lim_{i \to \infty} C_i = \infty.
\end{align}
We aim to derive a contradiction from such a sequence.  Note that since each
solution is smooth and the parabolic balls in question are compact, we may
assume without loss of generality that the points $(z_i,t_i)$ satisfy
\begin{align} \label{supbnd}
\frac{f_m(z_i,t_i)}{(1 + r_i^{\frac{1}{2}}
\rho_{x,t}^{-\frac{1}{2}} ) K_i + t_i^{-\frac{1}{2}} + \rho_{x_i,t_i}^{-2}(z_i)}
=
\sup_{(y,s) \in P_{r_i}(x_i,r_i^4)} \frac{f_m(y,s)}{(1 + r_i^{\frac{1}{2}}
\rho_{x_i,t}^{-\frac{1}{2}} )K_i + s^{-\frac{1}{2}} +
\rho_{x_i,t_i}(y)^{-2}}.
\end{align}
Observe that, since each solution is smooth and the denominator of the terms
above goes to infinity at the boundary of the parabolic ball, the point
realizing the supremum occurs in the interior of the parabolic ball.  

We will construct a blowup sequence around this sequence of points.  Before
that,
we begin with a lemma showing the existence of a parabolic ball with a
controlled radius on which one has a priori control of $f_m$.

\begin{lemma} \label{blowuplemma} For sufficiently large $i$, we have the
estimate 
\begin{align*}
\sup_{P_{\gl_i^{-\frac{1}{2}}}(z_i,t_i)} f_m \leq 16 \gl_i.
\end{align*}
\begin{proof} To begin, we make some observations which follow directly from
(\ref{lambnd}).  In particular, we may choose $N$ sufficiently large so that for
all $i \geq N$ one has
\begin{gather} \label{lambnd2}
 \begin{split}
 \gl_i^{-2} \leq&\ \frac{3}{4} t_i\\
 \gl_i^{-\frac{1}{2}} \leq&\ \frac{1}{16} \rho_{x_i,t_i}(z_i),\\
 \gl_i^{-2} \leq&\ \frac{1}{C K_i^2},\\
 \gl_i^{-2} \leq&\ \frac{\rho_{x_i,t_i}(z_i)}{32 C r_i K_i^2},
 \end{split}
\end{gather}
where the constant $C$ above is a universal constant to be determined below.

First note that for all
points $(y,s) \in P_{r_i}(x_i,r_i^4)$ satisfying
\begin{align} \label{bul1}
d(x_i,y,s) \leq \frac{3}{4} r_i +
\frac{1}{4} d(x_i,z_i,t_i)
\end{align}
one has that
\begin{align*}
\rho_{x_i,s}(y) = r_i - d(x_i,y,s) \geq r_i - \left( \frac{3}{4} r_i +
\frac{1}{4} d(x_i,z_i,t_i) \right) = \frac{1}{4} \rho_{x_i,t_i}(z_i).
\end{align*}
By the estimate (\ref{lambnd2}), for $s \geq t_i
- \gl_i^{-2}$ one has $s^{-\frac{1}{2}} \leq 2
t_i^{-\frac{1}{2}}$.  Thus for
$(y,s)$ satisfying (\ref{bul1}) and $s \geq t_i - \gl_i^{-2}$ one has by
(\ref{supbnd}) that
\begin{gather} \label{supbnd3}
\begin{split}
f_m(y,s) \leq&\ \gl_i \frac{(1 + r_i^{\frac{1}{2}}
\rho_{x_i,t_i}^{-\frac{1}{2}}(y) ) K_i + s^{-\frac{1}{2}} +
\rho_{x_i,s}(y)^{-2}}{(1
+ r_i^{\frac{1}{2}}
\rho_{x_i,t_i}^{-\frac{1}{2}}(z_i) K_i +
t_i^{-\frac{1}{2}} + \rho_{x_i,t_i}(z_i)^{-2}}\\
\leq&\ \gl_i \frac{(1 + 2 r_i^{\frac{1}{2}}
\rho_{x_i,t_i}^{-\frac{1}{2}}(z_i)) K_i + 2 t_i^{-\frac{1}{2}} + 16
\rho_{x_i,t_i}(z_i)^{-2}}{(1 + r_i^{\frac{1}{2}}
\rho_{x_i,t_i}^{-\frac{1}{2}}(z_i)) K_i
+ t_i^{-\frac{1}{2}} + \rho_{x_i,t_i}(z_i)^{-2}}\\
\leq&\ 16 \gl_i.
\end{split}
\end{gather}
Next we claim that for $i$ chosen sufficiently large, we can ensure that for all
$s
\geq t_i - \gl_i^{-2}$ one has
\begin{align} \label{distbnd10}
d(x_i,z_i,s) \leq d(x_i,z_i,t_i) + \frac{1}{16} \rho_{x_i,t_i}(z_i).
\end{align}

\begin{figure} \label{fig1}
\begin{tikzpicture}[scale=0.8]

\draw  (2.7401,1.1791) ellipse (1.3357 and 5.6942);
\draw  (-10.4717,1.1239) ellipse (1.3357 and 5.6942);

\draw  [fill=white,color=white] (-8.9642,7.0207) rectangle (-10.4088,-4.9921);

\draw [dashed] (-10.4717,1.1239) ellipse (1.3357 and 5.6942);
\draw  plot[smooth, tension=.7] coordinates {(-10.4639,6.815) (-7.3942,6.3563)
(-4.2307,6.6844) (-0.7392,6.1689) (2.6585,6.8484)};

\draw  plot[smooth, tension=.7] coordinates {(-10.4405,-4.56) (-7.6754,-4.0245)
(-3.9964,-4.3994) (-0.7099,-4.0245) (2.682,-4.4984)};
\draw  [fill=black](2.7801,1.1791) circle (0.0537);
\node (v1) at (2.7623,1.2011) {};
\node (v2) at (2.7623,6.9015) {};
\draw [<->] (v1) -- (v2);
\node at (3.1213,3.849) {$r$};
\node at (2.9456,0.7383) {$(x,r^4)$};

\draw  [fill=black](-4.0667,3.7084) node (v3) {} circle (0.0537);

\draw  (v3) ellipse (0.4921 and 1.2708);
\draw  (-6.8787,3.6557) ellipse (0.4921 and 1.6708);
\draw [color=white,fill=white] (-6.8787,1.822) rectangle (-6.2225,5.5362);
\draw  [dashed] (-6.8787,3.6557) ellipse (0.4921 and 1.6708);
\draw  plot[smooth, tension=.7] coordinates {(-6.8505,1.979) (-6.1827,2.2532)
(-5.3391,2.2391) (-4.7275,2.4079) (-4.0667,2.43)};
\draw  plot[smooth, tension=.7] coordinates {(-6.8435,5.328) (-6.1686,5.1003)
(-5.4867,5.1214) (-4.7346,4.8824) (-4.0808,4.9838)};

\draw  [fill=black](-4.0467,1.1791) node(v4) {} circle (0.0537);

\draw  [color=red] plot[smooth, tension=.7] coordinates {(-8.2261,5.3838)
(-7.5465,5.4776)(-6.8318,5.3253) (-6.1171,5.5479) (-5.0275,5.4073)
(-4.0784,5.7002) (-3.0825,5.4073)};
\node at (-8.3901,5.8349) {$\textcolor{red}{\rho_{x,s} = \frac{15}{16}
\rho_{x,t}(z)}$};
\draw [<->] (v3) -- (v4);
\node (v5) at (-4.0667,4.9855) {};
\draw [<->,blue] (v3) -- (v5);

\node at (-3.3872,0.8964) {$(x,t)$};
\node (v6) at (-2.3795,3.31) {$(z,t)$};
\draw[<-,dotted] (v3) -- (v6);

\node (v7) at (-1.5218,4.5051) {$\textcolor{blue}{\lambda \leq \epsilon
\rho_{x,t}(z)}$};
\node (v8) at (-4.0667,4.3411) {};
\draw[<-,dotted] (v8) -- (v7);
\node (v9) at (-4.0667,1.9978) {};
\node (v10) at (-2.0733,1.9041) {$d(x,z,t)$};
\draw[<-,dotted] (v9) -- (v10);
\node (v12) at (-6.8318,5.3253) {};
\node (v13) at (-5.9882,7.7623) {Rule out this intersection};
\draw [->,dotted] (v13) -- (v12);
\end{tikzpicture}
\caption{A parabolic neighborhood around the blowup point}
\end{figure}
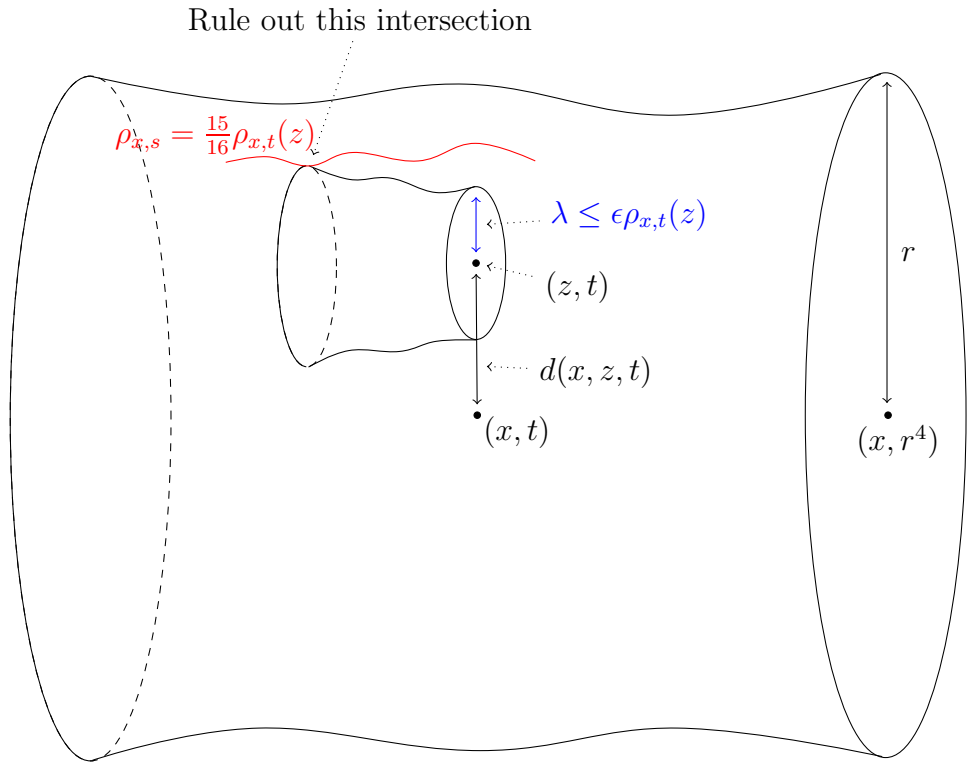
\noindent For such $i$ that (\ref{bul1}) and (\ref{distbnd10}) hold, observe
that for any point
$(y,s) \in
P_{\gl_i^{-\frac{1}{2}}}(z_i,t_i)$, one has that
\begin{align*}
d(x_i,y,s) \leq&\ d(x_i,z_i,s) + d(z_i,y,s)\\
\leq&\ d(x_i,z_i,t_i) + \frac{1}{16} \rho_{x_i,t_i}(z_i) +
\gl_i^{-\frac{1}{2}}\\
\leq&\ d(x_i,z_i,t_i) + \frac{1}{8} \rho_{x_i,t_i}(z_i)\\
=&\ \frac{1}{8} r_i + \frac{7}{8} d(x_i,z_i,t_i)\\
\leq&\ \frac{3}{4} r_i + \frac{1}{4} d(x_i,z_i,t_i).
\end{align*}
Thus by the above discussion, estimate (\ref{supbnd3}) applies to finish the
proof.  

It remains to show (\ref{distbnd10}).  We recall the basic estimate for
a one-parameter family of metrics,
\begin{align*}
g_t \leq \exp \left[ \int_s^t \brs{\frac{\del g}{\del t}}_{g_p} dp \right] g_s.
\end{align*}
Using the uniform estimate on $f_2$ we obtain that for all $s \geq t_i -
\gl_i^{-2}$ one has the pointwise estimate
\begin{align*}
 g_s \leq \exp \left[ \int_{t_i - \gl_i^{-2}}^{t_i} C K_i^2 \right] g_{t_i}.
\end{align*}
for all points $y$ such that $\{y\} \times [s,t] \subset P_{r_i}(x_i,r_i^4)$. 
Let $\gg : [0,d(x_i,z_i,t_i)] \to M$ denote a unit-speed minimizing geodesic in
the metric $g_{t_i}$ connecting $x_i$ to $z_i$.  For all times $s \leq t_i$ such
that the length of $\gg$ is bounded above by $r_i$, we can ensure that all
points along the curve do indeed lie in $P_{r_i}(x_i,r_i^4)$, and so the
pointwise estimate of $f_2$ applies at such points.  Arguing implicitly with
this continuity method we obtain for $s \geq t_i - \gl_i^{-2}$,
\begin{align*}
 d(x_i,z_i,s) \leq&\ d(x_i,z_i,t) e^{C K_i^2 \gl_i^{-2}}.
\end{align*}
Now using this constant $C$ as the definition for the third inequality of
(\ref{lambnd2}), we have $C K_i^2 \gl_i^{-2} \leq 1$ and hence 
$e^{C K_i ^2
\gl_i^{-2}} \leq 1 + 2 C K_i^2
\gl_i^{-2}$.  Using this inequality and the final inequality of (\ref{lambnd2})
we obtain
\begin{align*}
 d(x_i,z_i,s) \leq&\ d(x_i,z_i,t_i) \left[1 + 2 C K_i^2 \gl_i^{-2} \right]\\
 \leq&\ d(x_i,z_i,t_i) + 2 C r_i K_i^2\gl_i^{-2}\\
 \leq&\ d(x_i,z_i,t_i) + \frac{1}{16} \rho_{x_i,t_i}(z_i).
\end{align*}
This finishes the proof that (\ref{distbnd10}) holds for sufficiently large $i$,
finishing the proof.
\end{proof}
\end{lemma}

We now construct a blowup sequence around these points.  In particular, let
\begin{align*}
\bar{g}^i_t(x) := \gl_i g^i \left(x, t_i + \frac{t}{\gl_i^2} \right).
\end{align*}
Let us make some observations about the family of solutions
$\bar{g}^i_t$.  First, for each $i$, by construction we certainly have
\begin{align} \label{blowup30}
\bar{f}_m(z_i,0) = 1.
\end{align}
Next, observe that the solution $(M^i, \bar{g}^i_t)$ exists by construction on
$(-\gl_i^2 t_i, 0]$.  But also
\begin{align*}
\lim_{i \to \infty} \gl_i t_i^{\frac{1}{2}} \geq \lim_{i \to \infty} C_i =
\infty.
\end{align*}
Thus for sufficiently large $i$ the solution exists on $[-1,0]$.  Thirdly, by
Lemma \ref{blowuplemma} we have a uniform estimate on $\bar{f}_m$ on
$\bar{P}_{1}(z_i,0)$.  By Lemma \ref{parballcomp} this implies a uniform bound
for $f_m$ on a product neighborhood $B_{c}(z_i,0) \times [-c^4,0]$.  Theorem
\ref{locacomp} implies that there exists a subsequence of the pointed spaces
$\left\{  \left( B_c(z_i,0) \times [-c^4,0], z_i \right) \right\}$ converging in
the sense of local $C^{\infty}$-submersions to a limiting space
$\{\bar{B}_{\bar{c}}(z_{\infty},\bar{c}), \bar{g}^{\infty}_t, z_{\infty}\}$. 
Observe that, by (\ref{blowup30}), one has that
$\bar{f}_m(z_{\infty},0) = 1$.  On the other hand, observe that
\begin{align*}
\sup \bar{f}_2 \leq&\ \lim_{i \to \infty} \gl_i^{-1} K_i \leq \lim_{i \to
\infty}
C_i^{-1} = 0.
\end{align*}
Thus $\bar{f}_2 \equiv 0$, and so $\bar{g}_{\infty}$ is flat, contradicting that
$\bar{f}_m(z_{\infty},0) = 1$, and finishing the proof.
\end{proof}
\end{thm}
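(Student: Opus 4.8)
The plan is to argue by blowup, compactness, and contradiction, since for a fourth order flow one cannot run the maximum principle against cutoff functions as in the Ricci flow case. First I would reduce the statement: because $f_m = \sum_{j \leq m} \brs{\N^j \Rm}^{2/(2+j)}$, it suffices to produce, for each $m \geq 4$, a constant $C_m$ with $\sup_{B_r(x,t)} f_m \leq C_m\big( (1 + r^{1/2}\rho_{x,t}^{-1/2})K + t^{-1/2} + \rho_{x,t}^{-2}\big)$ for all $t \in (0,r^4]$; raising to the power $1 + m/2$ then yields the stated bound on $\brs{\N^m \Rm}$, and the cases $m < 4$ follow from the $m=4$ estimate.

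Suppose this reduced claim fails. Then there is a sequence of complete solutions $(M_i^n, g^i_t)$ on $[0, r_i^4]$ with $\sup_{P_{r_i}(x_i,r_i^4)} f_2 \leq K_i$ and points $(z_i,t_i)$ where $\lambda_i := f_m(z_i,t_i) = C_i\big((1 + r_i^{1/2}\rho^{-1/2})K_i + t_i^{-1/2} + \rho^{-2}\big)$ with $C_i \to \infty$ and $\rho := \rho_{x_i,t_i}(z_i)$. The crucial point-picking step is to choose $(z_i,t_i)$ so as to (nearly) maximize the ratio of $f_m(y,s)$ to the weight $(1 + r_i^{1/2}\rho_{x_i,s}(y)^{-1/2})K_i + s^{-1/2} + \rho_{x_i,s}(y)^{-2}$ over all of $P_{r_i}(x_i,r_i^4)$; since this weight diverges as $y$ approaches the boundary sphere or as $s \to 0$, the maximizer is interior, and it is precisely this weighted normalization that localizes the estimate.

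Next I would show that on the much smaller parabolic ball $P_{\lambda_i^{-1/2}}(z_i,t_i)$ one has $f_m \leq 16\lambda_i$ for all large $i$. From $\lambda_i \geq C_i K_i$ and $C_i \to \infty$ one extracts the comparisons $\lambda_i^{-1/2} \leq \frac{1}{16}\rho$, $\lambda_i^{-2} \leq \frac{3}{4}t_i$, and the key estimate that $\lambda_i^{-2}$ is small relative to $\rho/(r_i K_i^2)$. Using the $f_2 \leq K_i$ bound to control metric distortion along a minimizing $g_{t_i}$-geodesic from $x_i$ to $z_i$ gives $g_s \leq (1 + C K_i^2 \lambda_i^{-2}) g_{t_i}$ for $s \geq t_i - \lambda_i^{-2}$, hence $d(x_i,z_i,s) \leq d(x_i,z_i,t_i) + \frac{1}{16}\rho$; the triangle inequality then forces every $(y,s) \in P_{\lambda_i^{-1/2}}(z_i,t_i)$ to satisfy $d(x_i,y,s) \leq \frac{3}{4}r_i + \frac{1}{4}d(x_i,z_i,t_i)$, so $\rho_{x_i,s}(y) \geq \frac{1}{4}\rho$ and $s^{-1/2} \leq 2t_i^{-1/2}$. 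Feeding these bounds into the weighted-maximum property of $(z_i,t_i)$ bounds the weight at $(y,s)$ by at most $16$ times the weight at $(z_i,t_i)$, giving $f_m(y,s) \leq 16\lambda_i$.

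Finally, rescale: set $\bar{g}^i_t := \lambda_i g^i(\cdot, t_i + \lambda_i^{-2} t)$, which exists on $[-1,0]$ for large $i$ since $\lambda_i t_i^{1/2} \geq C_i \to \infty$, satisfies $\bar{f}_m(z_i,0) = 1$, and by the previous step obeys $\bar{f}_m \leq 16$ on $\bar{P}_1(z_i,0)$; Lemma \ref{parballcomp} upgrades this to a uniform $f_m$ bound on a product neighborhood $B_c(z_i,0) \times [-c^4, 0]$. The local compactness theorem (Theorem \ref{locacomp}) then extracts a $C^\infty$ local-submersion limit $(\bar{B}_{\bar{c}}(z_\infty,\bar{c}), \bar{g}^\infty_t, z_\infty)$ with $\bar{f}_m(z_\infty,0) = 1$; but $\sup \bar{f}_2 \leq \lim \lambda_i^{-1} K_i \leq \lim C_i^{-1} = 0$, so the limit metric is flat and $\bar{f}_m \equiv 0$, a contradiction. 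I expect the main obstacle to be the distance-control step — ensuring $P_{\lambda_i^{-1/2}}(z_i,t_i) \subset P_{r_i}(x_i,r_i^4)$ and that the metric-distortion error is absorbed by the slack $\frac{1}{16}\rho$ — which is exactly why the extra term $r^{1/2}\rho_{x,t}^{-1/2}K$ must be carried in the statement.
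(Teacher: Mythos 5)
Your proposal is correct and follows essentially the same route as the paper's own argument: the same reduction to bounding $f_m$ for $m \geq 4$, the same weighted point-picking normalized by $(1 + r^{1/2}\rho^{-1/2})K + s^{-1/2} + \rho^{-2}$, the same distance-distortion lemma giving $f_m \leq 16\lambda_i$ on $P_{\lambda_i^{-1/2}}(z_i,t_i)$, and the same rescaling/compactness step yielding a flat limit that contradicts $\bar{f}_m(z_\infty,0)=1$. You have also correctly identified the role of the extra $r^{1/2}\rho^{-1/2}K$ term, which is exactly where the paper uses it.
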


\begin{cor} \label{globaldoubling} Let $(M^n, g_t)$ be a complete solution to
$L^2$ flow which exists on a maximal time interval $[0,T)$.  Given $\ga > 0$,
there exists a constant $C = C(\ga,n) > 0$ so that if $\tau \in [0,T)$
satisfies 
\begin{align*}
K := \sup_{M \times [0,\tau]} \brs{\Rm} \geq \frac{\ga}{\tau^{\frac{1}{2}}},
\end{align*}
then the solution exists smoothly on $[0,\tau + \frac{C}{K^2}]$, and moreover
\begin{align*}
\sup_{M \times [0,\tau + \frac{C}{K^2}]} \leq 2 K.
\end{align*}
\begin{proof} By hypothesis the flow exists smoothly on $[\tau -
\frac{\ga^2}{K^2},\tau]$ with a global curvature bound of $K$.  Let $T_0 = \sup
\left\{ t \geq \tau | \sup_{M \times [0,t]} \brs{\Rm} \leq 2 K \right\}$.  It
suffices to show a uniform lower bound for $T_0$ since the curvature bound
implies smooth existence of the flow by \cite{SL2LTB} Corollary 1.9.  Observe
that for all $t \leq T_0$ the flow exists smoothly on $[t -
\frac{\ga^2}{K^2},t]$ with a global curvature bound of $2 K$.  By \cite{SL2LTB}
Theorem 1.3 it follows that there exists a constant $C = C(n,\ga)$ such that
\begin{align*}
\sup_{M \times \{t\}} f_4 \leq C K.
\end{align*}
A direct calculation using \cite{SL21} Proposition 4.2 shows that for a solution
to $L^2$ flow one has the pointwise ODE
\begin{align*}
\dt \brs{\Rm} \leq C f_4^3,
\end{align*}
for a universal constant $C$.  Thus on $[0,T_0]$ we have $\dt \brs{\Rm}
\leq C K^3$, and then it follows directly that $T_0 \geq \tau + \frac{C}{K^2}$
for a constant $C$ depending only on $n$ and $\ga$, as required.
\end{proof}
\end{cor}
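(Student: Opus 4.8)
The plan is a continuity (bootstrap) argument in time. The idea is that, because the flow has already run for a definite time of order $K^{-2}$ with bounded curvature, the smoothing estimate of \cite{SL2LTB} (recorded here as Lemma \ref{derivest}) converts the $C^0$ bound on $\Rm$ into scale-correct bounds on the derivatives of $\Rm$, and these feed into the evolution equation for $\brs{\Rm}$ to bound how fast the curvature can grow.

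First I would rewrite the scaling hypothesis $K \geq \ga \tau^{-1/2}$ as $\tau \geq \ga^2 K^{-2}$; in particular the flow exists on $[\tau - \ga^2 K^{-2}, \tau] \subset [0,\tau]$ with $\brs{\Rm} \leq K$ there. Set
\begin{align*}
T_0 := \sup \left\{ t \in [\tau, T) \ :\ \sup_{M \times [0,t]} \brs{\Rm} \leq 2K \right\},
\end{align*}
so $T_0 > \tau$ by continuity. If $T_0 = T$, then the flow carries a global curvature bound on all of $[0,T)$, which by \cite{SL2LTB} Corollary 1.9 forces $T = \infty$, and the conclusion is immediate; so I may assume $T_0 < T$. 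Since a uniform curvature bound extends the flow smoothly, it then suffices to prove $T_0 - \tau \geq C(\ga,n) K^{-2}$.

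For the key estimate, fix $t \in [\tau, T_0]$. Since $t - \ga^2 K^{-2} \geq \tau - \ga^2 K^{-2} \geq 0$, the flow exists on $[t - \ga^2 K^{-2}, t] \subset [0, T_0]$ with $\brs{\Rm} \leq 2K$ there; reparametrizing this interval to begin at $0$, its length is $\ga^2 K^{-2}$, so $s^{1/2}\brs{\Rm} \leq 2\ga$ along it, and Lemma \ref{derivest} applied at the right endpoint yields $\sup_M \brs{\N^j \Rm}(t) \leq C(j,\ga,n) K^{1 + j/2}$ for every $j$. Since $(1 + \tfrac j2)\cdot \tfrac{2}{2+j} = 1$, this is precisely $\sup_{M \times \{t\}} f_4 \leq C(\ga,n) K$, uniformly for $t \in [\tau, T_0]$. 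Now I would invoke the pointwise evolution inequality $\dt \brs{\Rm} \leq C f_4^3$ for the $L^2$ flow, which follows from \cite{SL21} Proposition 4.2 and the structure $\grad \FF = L(\N^2 \Rc) + \Rm^{*2}$ of \cite{Besse} Proposition 4.70: this makes $\dt \Rm$ fourth order in $\Rm$ with all remaining terms dominated by $f_4^3$. Combining the last two facts, for each $x \in M$ and all $t \in [\tau, T_0]$ we obtain $\dt \brs{\Rm}(x,t) \leq C_1(\ga,n) K^3$; integrating from $\tau$, where $\brs{\Rm}(x,\tau) \leq K$, and taking the supremum over $x$ gives, for every $t \in [\tau, T_0]$,
\begin{align*}
\sup_{M \times \{t\}} \brs{\Rm} \leq K + C_1(\ga,n) K^3 (t - \tau).
\end{align*}
If $T_0 - \tau$ were strictly less than $\bigl(C_1(\ga,n) K^2\bigr)^{-1}$, then the right-hand side would be strictly less than $2K$ for all such $t$, so $\sup_{M \times [0,T_0]}\brs{\Rm} < 2K$, contradicting the maximality of $T_0$ (using $T_0 < T$, so that the flow exists slightly beyond $T_0$). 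Hence $T_0 - \tau \geq \bigl(C_1(\ga,n) K^2\bigr)^{-1}$, and smooth existence on $[0,\tau + C K^{-2}]$ with the bound $\brs{\Rm} \leq 2K$ follows from the definition of $T_0$ together with \cite{SL2LTB} Corollary 1.9.

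The step I expect to need the most care is the quantitative application of the smoothing estimate above: since the flow has run for only a time of order $K^{-2}$, one must verify that one still gets the scale-correct bound $f_4 \leq C K$ (a shorter elapsed time would yield a strictly worse bound, with a higher negative power of the elapsed time), and this is exactly the role of the hypothesis $K \geq \ga \tau^{-1/2}$ — and why $C$ must be allowed to depend on $\ga$. Everything else is the standard bootstrap that, for a fourth-order flow, substitutes for the elementary maximum-principle argument on $\dt \brs{\Rm}^2$ available in the Ricci flow setting.
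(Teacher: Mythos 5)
Your proposal is correct and follows essentially the same route as the paper: the same definition of $T_0$, the same application of the smoothing estimate of \cite{SL2LTB} Theorem 1.3 on intervals of length $\ga^2 K^{-2}$ to obtain $f_4 \leq C(\ga,n)K$, and the same integration of the pointwise ODE $\dt \brs{\Rm} \leq C f_4^3$. The only differences are that you spell out the exponent arithmetic and the edge cases ($T_0 = T$, maximality of $T_0$) more explicitly than the paper does.
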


\begin{cor} \label{typeI} Let $(M^n, g_t)$ be a complete solution to $L^2$ flow
which exists on a maximal finite time interval $[0,T)$.  There exists a constant
$C = C(n) > 0$ such that
\begin{align*}
\limsup_{t \to T} (T-t)^{\frac{1}{2}} \brs{\Rm}_{C^0(g_t)} \geq C.
\end{align*}
\begin{proof} Given $n \in \mathbb N$, let $C = C(1,n)$ be the constant from
Corollary \ref{globaldoubling}.  Suppose $(M^n, g_t)$ is a complete solution to
$L^2$ flow as in the statement, but
\begin{align} \label{typeI10}
\limsup_{t \to T} (T-t)^{\frac{1}{2}} \brs{\Rm}_{C^0(g_t)} < C^{\frac{1}{2}}.
\end{align}
Since $T$ is the maximal existence time, it follows from \cite{SL2LTB} Corollary
1.9 that $\displaystyle{\limsup_{t \to T}} \brs{\Rm} = \infty$, thus we may
choose a sequence of times $t_i \to T$ such that
\begin{align*}
\sup_{M \times [0,t_i]} \brs{\Rm} \geq \frac{1}{t_i^{\frac{1}{2}}}.
\end{align*}
Also, by (\ref{typeI10}) we know that
\begin{align*}
\gl_i := \sup_{M \times[0,t_i]} \brs{\Rm} < C^{\frac{1}{2}} \left(T - t_i
\right)^{-\frac{1}{2}}.
\end{align*}
It follows from Corollary \ref{globaldoubling} that the solution $(M^n, g_t)$
exists smoothly on $\left[0,t_i + \frac{C}{\gl_i^2} \right]$.  However, by
construction we have
\begin{align*}
t_i + \frac{C}{\gl_i^2} > t_i + C \left( \frac{(T -
t_i)^{\frac{1}{2}}}{C^{\frac{1}{2}}} \right)^2 = T,
\end{align*}
contradicting maximality of $T$.
\end{proof}
\end{cor}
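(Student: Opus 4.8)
The plan is to argue by contradiction, leveraging the doubling-time estimate of Corollary \ref{globaldoubling}. Fix the dimension $n$ and let $C = C(1,n)$ denote the constant produced by Corollary \ref{globaldoubling} with the free parameter there set to $\ga = 1$; this will be (the square of) the constant appearing in the statement. Suppose, for contradiction, that a complete maximal solution $(M^n, g_t)$ on $[0,T)$ with $T < \infty$ satisfies
\begin{align*}
\limsup_{t \to T} (T-t)^{\frac{1}{2}} \brs{\Rm}_{C^0(g_t)} < C^{\frac{1}{2}}.
\end{align*}

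First I would invoke the sharp long-time existence criterion (\cite{SL2LTB} Corollary 1.9): since $[0,T)$ is the maximal interval of existence and $T < \infty$, the curvature cannot remain bounded, i.e. $\limsup_{t \to T} \brs{\Rm}_{C^0(g_t)} = \infty$. Consequently the nondecreasing quantity $t \mapsto \sup_{M \times [0,t]} \brs{\Rm}$ tends to $\infty$ as $t \to T$, so we may select a sequence $t_i \nearrow T$ with $\sup_{M \times [0,t_i]} \brs{\Rm} \geq t_i^{-\frac{1}{2}}$ for all $i$; this is automatic once $t_i$ is close enough to $T$, since the left side diverges while $t_i^{-\frac{1}{2}} \to T^{-\frac{1}{2}} < \infty$.

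Next, set $\gl_i := \sup_{M \times [0,t_i]} \brs{\Rm}$. By the contradiction hypothesis, for all large $i$ we have $\gl_i < C^{\frac{1}{2}}(T - t_i)^{-\frac{1}{2}}$. Now apply Corollary \ref{globaldoubling} with $\ga = 1$, $\tau = t_i$, and $K = \gl_i$: the hypothesis $K \geq \ga \tau^{-\frac{1}{2}}$ holds by the choice of $t_i$, so the solution extends smoothly to $\left[0, t_i + \frac{C}{\gl_i^2}\right]$. But then
\begin{align*}
t_i + \frac{C}{\gl_i^2} > t_i + \frac{C (T - t_i)}{C} = T,
\end{align*}
so the flow exists smoothly past time $T$, contradicting maximality of $T$. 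That completes the argument.

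I do not expect a genuine obstacle here: all the analytic substance is already packaged into Corollary \ref{globaldoubling}, which itself rests on the smoothing estimates of \cite{SL2LTB}. The only points requiring mild care are (i) verifying that the lower curvature bound hypothesis of Corollary \ref{globaldoubling} can be arranged along the chosen sequence $t_i$ (handled above by taking $t_i$ near $T$), (ii) citing the correct no-breakdown criterion to force curvature blowup at a finite maximal time, and (iii) matching constants — the constant $C(n)$ in the statement is $C(1,n)^{1/2}$, with the harmless square root arising only so that the threshold reads $(T-t)^{1/2}\brs{\Rm} \geq C$ rather than $\geq C^{1/2}$.
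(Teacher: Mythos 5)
Your proposal is correct and follows essentially the same argument as the paper: the same contradiction setup, the same choice of $t_i$ and $\gl_i$, and the same application of Corollary \ref{globaldoubling} with $\ga = 1$ to extend the flow past $T$. The minor clarifications you add (why the sequence $t_i$ exists, and the bookkeeping of the square root on the constant) are consistent with what the paper does implicitly.
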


\section{\texorpdfstring{$\ge$}{Epsilon}-regularity for critical metrics}
\label{ersec}

In this section we prove a local $\ge$-regularity theorem for critical metrics. 
Two direct precursors to this result appear in \cite{CW} and \cite{TV}. 
Specifically, \cite{TV} Theorem 3.1 asserts scale-invariant pointwise estimates
on curvature and its derivatives for metrics satisfying the equation
\begin{align} \label{er10}
 \gD \Rc = \Rm * \Rc.
\end{align}
This system of equations holds automatically for critical metrics on compact
manifolds.  However, in our setting we obtain only \emph{local} limits, and so
we do not automatically obtain that our limits have constant scalar curvature,
and thus we only have the coupled system of curvature equations given in Lemma
\ref{criteqns}.  This is similar in some ways to the extra terms which arise in
showing $\ge$-regularity for extremal K\"ahler metrics as in \cite{CW}.  Roughly
speaking, the method in \cite{TV} is to couple (\ref{er10}) to the general
equation satisfied by any Riemannian metric,
\begin{align} \label{er20}
 \gD \Rm = L(\N^2 \Rc) + \Rm * \Rm,
\end{align}
where $L$ denotes a universal linear expression.  This system of equations is
used in a two-step iteration process, first obtaining estimates on Ricci
curvature then exploiting this to control the term $L(\N^2\Rc)$ and obtain
bounds on the Riemann curvature tensor.  Our method is similar, in that we
consider the coupled system of three equations given by (\ref{er20}),
(\ref{crit1}) and (\ref{crit2}), and then employ a three-step iteration
process, first using (\ref{crit2}) to obtain estimates on scalar curvature,
which are used to control the $\N^2 s$ term in (\ref{crit1}) to obtain estimates
on Ricci curvature, which are in turn used in (\ref{er20}) to obtain estimates
on the Riemann curvature.

Moreover, due to the lack of a priori Ricci curvature bounds, in the course of
the corresponding arguments in \cite{TV} a preliminary Ricci curvature bound
must be
obtained, which is essential in controlling volume growth using Bishop-Gromov
volume comparison, and in turn obtaining the higher order estimates.  In our
setting this is not possible, and so instead we first obtain an $L^p$ bound on
Ricci curvature which allows the application of volume growth estimates due to
Petersen-Wei \cite{PetersenWei}.

\begin{lemma} \label{criteqns} (\cite{Besse} pg. 134) Let $(M^4, g)$ be a smooth
manifold with critical Riemannian metric.  The curvature of $g$ satisfy the
system of equations
\begin{align} 
\label{crit1}
  \gD \Rc =&\ \frac{1}{2} \N^2 s + \Rc * \Rm,\\ \label{crit2}
 \gD s =&\ 0.
\end{align} 
\end{lemma}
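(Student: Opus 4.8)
The plan is to read (\ref{crit1}) off as the vanishing of the $L^2$-gradient of $\FF$ (a symmetric $2$-tensor) and (\ref{crit2}) as the $g$-trace of that same vanishing; the gradient computation itself is classical (it is the content of \cite{Besse} around p.~134), and the only point needing a remark beyond it is an algebraic cancellation special to dimension four. First I would reduce via the Chern--Gauss--Bonnet theorem: in dimension four $\int_M(\brs{\Rm}^2 - 4\brs{\Rc}^2 + s^2)\,dV$ equals the topological constant $32\pi^2\chi(M)$ (with the convention $\brs{\Rm}^2 = R_{ijkl}R^{ijkl}$), so on a fixed $M^4$ a metric is critical for $\FF$ exactly when it is critical for the quadratic functional $\mathcal G(g) := \int_M(4\brs{\Rc}^2 - s^2)\,dV$, and moreover $\grad\FF = \grad\mathcal G$ since the two differ by the gradient of a topological constant. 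The advantage is that $\grad\mathcal G$ is built from the Ricci and scalar curvatures alone rather than the full curvature tensor.

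Next I would compute $\grad\mathcal G = 4\,\grad\!\int\brs{\Rc}^2 - \grad\!\int s^2$ by the standard first-variation machinery: the formula $\tfrac{d}{dt}dV = \tfrac12(\tr_g h)\,dV$, the linearizations of $\Rc$ and $s$ under $g\mapsto g+th$, integration by parts, and the contracted second Bianchi identity $\N^i R_{ij} = \tfrac12\N_j s$, $\N^i\N^j R_{ij} = \tfrac12\gD s$ together with the Ricci identity to commute covariant derivatives. This produces
\begin{align*}
\grad\!\int\brs{\Rc}^2 = -\gD\Rc + \N^2 s - \tfrac12(\gD s)\,g + Q_1, \qquad \grad\!\int s^2 = -2(\gD s)\,g + 2\N^2 s - 2 s\Rc + \tfrac12 s^2 g,
\end{align*}
where $\gD = \N^k\N_k$ and $Q_1$ is a symmetric $2$-tensor that is \emph{algebraic} in the curvature, i.e. a universal contraction of $\Rm\otimes\Rc$ with no derivatives (signs depend on conventions — only the structure matters below). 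The $(\gD s)g$ contributions then cancel and one gets $\grad\mathcal G = -4\gD\Rc + 2\N^2 s + Q$ with $Q := 4Q_1 + 2 s\Rc - \tfrac12 s^2 g$ again algebraic and, in particular, containing no $\Rm\otimes\Rm$ contraction.

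Criticality is $\grad\mathcal G \equiv 0$. Taking the $g$-trace and using $\tr_g\gD\Rc = \gD s = \tr_g\N^2 s$ gives $2\gD s = \tr_g Q$; but since $Q$ is an algebraic contraction of $\Rm\otimes\Rc$ only, $\tr_g Q = a\brs{\Rc}^2 + b\,s^2$ for absolute constants $a,b$, while scale invariance of $\FF$ in dimension four forces $\int_M\tr_g(\grad\FF)\,dV = 0$ on every closed $(M^4,g)$, hence (using $\int_M\gD s\,dV = 0$) $\int_M(a\brs{\Rc}^2 + b\,s^2)\,dV = 0$ on every closed four-manifold; testing on two metrics with distinct ratios $\int\brs{\Rc}^2 : \int s^2$, e.g. the round $S^4$ and $S^3\times S^1$, gives $a = b = 0$. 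Thus $\tr_g Q \equiv 0$ as a universal pointwise identity — valid on the given $M$ whether or not it is closed — so $\gD s = 0$, which is (\ref{crit2}); and dividing $\grad\mathcal G = 0$ by $-4$ yields $\gD\Rc = \tfrac12\N^2 s + \tfrac14 Q$ with $\tfrac14 Q$ of the schematic form $\Rc * \Rm$, which is (\ref{crit1}).

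The bulk of the work, and the only real obstacle, is the first-variation and integration-by-parts bookkeeping that yields the two displayed gradient formulas with the correct coefficients — in particular verifying that the second-order part of $\grad\mathcal G$ is exactly $-4\gD\Rc + 2\N^2 s$ (relative coefficient $\tfrac12$) and that the $(\gD s)g$ terms cancel — but this is entirely classical and carried out in \cite{Besse}. The genuinely new (though short) step is the trace argument isolating $\gD s = 0$, which rests only on the scale invariance of $\FF$ in dimension four.
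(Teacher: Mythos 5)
The paper offers no proof of this lemma beyond the citation to \cite{Besse} (the material around Propositions 4.70--4.72), and your sketch is precisely that classical derivation: trade $\grad\FF$ for $\grad\int_M(4\brs{\Rc}^2 - s^2)\,dV$ using the Gauss--Bonnet integrand, read (\ref{crit1}) off the first variation, and extract (\ref{crit2}) from the trace together with scale invariance of $\FF$ in dimension four; the argument is correct, and the scale-invariance trick identifying $\tr_g Q \equiv 0$ without computing the algebraic terms is a clean way to get $\gD s = 0$. One point to tighten: since the paper applies this lemma to \emph{local} (noncompact, incomplete) blowup limits, every reduction must be stated as a universal pointwise tensor identity rather than as a consequence of an integral identity on the given $M$ --- you do this explicitly for $\tr_g Q$, but the opening step $\grad\FF = \grad\int_M(4\brs{\Rc}^2 - s^2)\,dV$ should likewise be phrased as the identical vanishing of the gradient of the Pfaffian (a null Lagrangian in dimension four), verified on closed test manifolds and then valid everywhere by universality, rather than as an application of Chern--Gauss--Bonnet to the possibly noncompact manifold at hand.
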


\begin{thm} \label{er} Given $(M^4, g)$ a solution to (\ref{crit1}),
(\ref{crit2}), and let
$r < \frac{\diam(M)}{2}$.  Given $k \geq 0$, there exists $\ge,C_k$ depending on
$C_S$ so that if
\begin{align*}
 \nm{\Rm}{L^2(B_r(p))} \leq \ge,
\end{align*}
 then
 \begin{align*}
  \sup_{B_{\frac{r}{2}}(p)} \brs{\N^k \Rm} \leq \frac{C_k}{r^{2+k}} \left[
\int_{B_r(p)} \brs{\Rm}^2  \right]^{\frac{1}{2}}.
 \end{align*}
\end{thm}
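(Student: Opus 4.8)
The plan is to establish a scale–invariant pointwise bound by running a Moser–iteration cascade through the three coupled equations (\ref{er20}), (\ref{crit1}), (\ref{crit2}) in order of increasing curvature strength — scalar curvature, then Ricci, then Riemann — and then to bootstrap all higher derivatives by elliptic regularity in harmonic coordinates, following the broad strategy of \cite{TV,CW}. I would first normalize the scale: in dimension four both the hypothesis $\nm{\Rm}{L^2(B_r(p))} \leq \ge$ and the desired conclusion are invariant under $g \mapsto r^{-2} g$, so it suffices to produce $\ge = \ge(C_S)$ and $C_k = C_k(C_S)$ such that $\nm{\Rm}{L^2(B_1(p))} \leq \ge$ forces $\sup_{B_{1/2}(p)} \brs{\N^k \Rm} \leq C_k \nm{\Rm}{L^2(B_1(p))}$. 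From the $C_S$ bound together with $r < \tfrac{1}{2}\diam M$ one extracts the analytic input needed to iterate on balls: a local Sobolev inequality $\nm{f}{L^4}^2 \leq C(C_S) \nm{\N f}{L^2}^2$ for $f$ supported in $B_1(p)$, and a local lower volume bound $\Vol B_{\gs}(p) \geq c \gs^4$ for $\gs \leq 1$. Since, in contrast to the constant–scalar–curvature equation (\ref{er10}) of \cite{TV}, the system (\ref{crit1}) for $\Rc$ is not self–contained, I would also record a preliminary integral bound on $\Rc$ and invoke the relative volume comparison of Petersen–Wei \cite{PetersenWei}, which plays here the role that the a priori pointwise Ricci bound plays in \cite{TV}.

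The core is the three–step iteration. First, from $\gD s = 0$ the functions $\brs{s}^2$ and $\brs{\N s}^2$ are subsolutions up to a term with coefficient $\brs{\Rc}$, so Moser iteration together with a standard Caccioppoli estimate for $\N s$ gives $\sup_{B_{2/3}(p)} \left( \brs{s} + \brs{\N s} \right) \leq C \nm{s}{L^2(B_{3/4}(p))} \leq C \nm{\Rm}{L^2(B_1(p))}$. Second, feeding this into (\ref{crit1}) and writing $\tfrac{1}{2} \gD \brs{\Rc}^2 = \brs{\N \Rc}^2 + \IP{\tfrac{1}{2} \N^2 s + \Rc * \Rm, \Rc}$, I would test against $\phi^2 \brs{\Rc}^{2(p-1)}$ and integrate by parts: the $\N^2 s$ term becomes one involving only the already–controlled $\N s$ paired with $\N \Rc$ and is absorbed into the good gradient term, while the term $\Rc * \Rm$ carries a factor $\Rm$ small in $L^{n/2} = L^2$ and is absorbed using the Sobolev inequality; Moser iteration then yields $\sup_{B_{2/3}(p)} \brs{\Rc} \leq C \nm{\Rm}{L^2(B_1(p))}$.

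Third, feeding this into (\ref{er20}), the dangerous fourth–order contribution $\IP{L(\N^2 \Rc), \Rm}$ is, after integration by parts, bounded by $\N \Rc * \N \Rm$; since $\brs{\N \Rc} \leq c \brs{\N \Rm}$ it can be absorbed into the Bochner gradient term $\brs{\N \Rm}^2$ with a small constant, the smallness again coming from $\nm{\Rm}{L^2}$, and Moser iteration produces $\sup_{B_{1/2}(p)} \brs{\Rm} \leq C \nm{\Rm}{L^2(B_1(p))}$. I expect this last step to be the main obstacle, exactly as in \cite{TV}: absorbing the fourth–order term $L(\N^2 \Rc)$ against the Bochner term while keeping every constant dependent only on the small quantity $\nm{\Rm}{L^2}$ is delicate, and it must be carried out only after the Ricci step, which itself has to handle the genuinely new forcing $\N^2 s$ with no metric regularity available beyond the harmonicity of $s$ and the $L^2$ bound on $\Rm$.

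Once the scale–invariant bound $\sup_{B_{1/2}(p)} \brs{\Rm} \leq C \nm{\Rm}{L^2(B_1(p))}$ is in hand, the pointwise curvature bound together with the volume lower bound yields harmonic coordinates on balls of a definite size in which $g$ is $C^{1,\ga}$–controlled, so elliptic bootstrapping of the coupled system (\ref{er20}), (\ref{crit1}), (\ref{crit2}) — again scalar, then Ricci, then Riemann at each stage — upgrades the estimate to $\sup_{B_{1/4}(p)} \brs{\N^k \Rm} \leq C_k \nm{\Rm}{L^2(B_1(p))}$ for every $k$, and a covering and radius–iteration argument recovers the estimate on $B_{1/2}(p)$ as stated.
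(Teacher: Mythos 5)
Your overall architecture — rescale to $r=1$, run a three--step cascade through (\ref{crit2}), (\ref{crit1}), (\ref{er20}) in the order scalar, Ricci, Riemann, and use Petersen--Wei in place of the Bishop--Gromov comparison of \cite{TV} — is exactly the paper's strategy, and your scalar and Ricci steps are essentially sound. (The paper organizes things slightly differently: it first establishes $L^2$ and $L^4$ bounds on \emph{all} derivatives of $s$, $\Rc$, $\Rm$ by an inductive cascade, and only at the end applies the Moser lemma of \cite{TV} to $\brs{\N^k\Rm}$, rather than going pointwise at order zero and then bootstrapping in harmonic coordinates; but that is a matter of packaging.)

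The genuine gap is in your third step. After integrating by parts, the contribution of $L(\N^2\Rc)$ produces the term $\int \phi^2\, \N\Rc * \N\Rm$, and you propose to absorb it into the Bochner term $\int\phi^2\brs{\N\Rm}^2$ using the pointwise inequality $\brs{\N\Rc}\leq c\brs{\N\Rm}$, "the smallness again coming from $\nm{\Rm}{L^2}$." This cannot work: the pointwise domination gives $\int\phi^2\,\N\Rc*\N\Rm \leq c\int\phi^2\brs{\N\Rm}^2$ with $c$ a \emph{universal} (order--one) constant, and no factor of undifferentiated $\Rm$ is available to pair with the $L^2$ smallness — that smallness only helps with the cubic term $\int\phi^2\Rm^{*3}$ via H\"older and Sobolev. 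If this absorption were legitimate, the argument would go through using (\ref{er20}) alone, which is an identity satisfied by \emph{every} Riemannian metric and certainly does not yield $\ge$-regularity. The correct mechanism — and the reason the Ricci step must precede the Riemann step — is to apply Cauchy--Schwarz with a small parameter, $\int\phi^2\,\N\Rc*\N\Rm \leq \gd\int\phi^2\brs{\N\Rm}^2 + \gd^{-1}\int\phi^2\brs{\N\Rc}^2$, absorb the first piece, and control the second by the \emph{independently established} energy estimate $\int_{B_{1/8}}\brs{\N\Rc}^2 \leq C\int_{B_1}\brs{\Rc}^2$, which comes from testing the critical equation (\ref{crit1}) (whose right-hand side involves $\N^2 s$ and $\Rc*\Rm$ but no second derivatives of $\Rm$) against $\phi^2\Rc$. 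Your Ricci step as written produces only a sup bound on $\brs{\Rc}$; you must also extract this $L^2$ gradient bound on $\N\Rc$ before entering the Riemann step, and then invoke it — not the pointwise comparison with $\N\Rm$ — to close the estimate. A secondary point: the Moser lemma you need at the end (\cite{TV} Lemma 3.9) requires a volume \emph{upper} bound $\Vol(B_r)\leq Cr^4$, which the paper derives from the $L^4$ bound on $\Rm$ via \cite{PetersenWei}; your write-up mentions only a lower volume bound from $C_S$, so you should make explicit that Petersen--Wei is being used in the upper direction.
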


To prepare for the proof of this theorem we need a series of preliminary
estimates given in the lemmas below.  One observes that, by scaling, it suffices
to prove the theorem on balls of unit size (i.e. $r=1$), a reduction which will
be used in all the proofs below without further comment. In particular, the
statements will refer to balls of arbitrary size but will only be proved for
balls of unit size.

Also, we will state each intermediary estimate in the form of estimating a
higher order term on a ball of radius $\frac{1}{2}$ in terms of the energy on a
unit ball.  However, the overall proof is inductive and requires repeated
applications of prior estimates.  Thus strictly speaking the $n$-th ``layer" of
the proof should yield estimates on a ball of radius $\frac{1}{2^n}$ in terms of
the total energy.  This point is ignored in the proofs below in the interest of
a cleaner presentation, as it has no effect on the form of the final estimates. 
Lastly, for notational simplicity we will drop the volume form terms $dV_g$ in
all the integrals below.

\begin{lemma} \label{erlem1} There exists $C > 0$ so that
 \begin{align}
\left[\int_{B_{\frac{r}{2}}(p)} s^4  \right]^{\frac{1}{2}} \leq&\
\frac{C}{r^2} \int_{B_r(p)} s^2,\\
 \int_{B_{\frac{r}{2}}(p)} \brs{\N s}^2  \leq&\ \frac{C}{r^2}
\int_{B_r(p)} s^2,\\
\int_{B_{\frac{r}{2}}(p)} \brs{\N^2 s}^2  \leq&\ \frac{C}{r^4}
\int_{B_r(p)} s^2.
\end{align}
\begin{proof} Let $\phi$ denote a smooth function supported in $B_1(p)$.  We
integrate by parts to yield
\begin{align*}
 \int_{B_1(p)} \phi^2 \brs{\N s}^2 \leq \int_M \phi \brs{\N \phi} \brs{\N s}
\brs{s} \leq \gd \int_M \phi^2 \brs{\N s}^2 + \gd^{-1} \brs{\N \phi}^2 s^2.
\end{align*}
If $\phi$ is chosen to be a cutoff function for $B_{\frac{1}{2}}(p)$, the first
inequality follows.  For the second we again set $\phi$ to be arbitrary and
estimate
\begin{align*}
 \int_{B_1(p)} \phi^2 \brs{\N^2 s}^2 =&\ \int_{B_1(p)} \phi \N \phi * \N s *
\N^2 s + \phi^2 \N s * \gD \N s\\
\leq&\ \int_{B_1(p)} \gd \phi^2 \brs{\N^2 s}^2 + \gd^{-1} \brs{\N \phi}^2
\brs{\N s}^2 + \phi^2 \brs{\Rc} \brs{\N s}^2.
\end{align*}
Now estimate using the Sobolev inequality
\begin{align*}
 \int_M \phi^2 \brs{\Rc} \brs{\N s}^2 \leq&\ \nm{\Rc}{L^2(B_1(p))} \nm{\phi
\brs{\N s}}{L^4}^2\\
 \leq&\ \ge C_S \left[ \int_{B_1(p)} \brs{\N (\phi \brs{\N s})}^2 + \phi^2
\brs{\N s}^2 \right]\\
 \leq&\ \ge C_S \left[ \int_{B_1(p)} \phi^2 \brs{\N^2 s}^2 + (\phi^2 + \brs{\N
\phi}^2) \brs{\N s}^2 \right].
\end{align*}
Combining the two estimates above, choosing $\gd$ small and choosing $\ge$
sufficiently small with respect to $C_S$ yields
\begin{align*}
 \int_{B_1(p)} \phi^2 \brs{\N^2 s}^2 \leq&\  C \int_{B_1(p)} (\phi^2 + \brs{\N
\phi}^2) \brs{\N s}^2.
\end{align*}

\end{proof}
\end{lemma}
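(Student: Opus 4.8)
The plan is to prove the three inequalities in the order listed, reducing throughout to the case $r = 1$ by scaling, and in each step pairing a partial differential identity with a test function $\phi^2$, where $\phi$ equals $1$ on the smaller ball, is supported in $B_1(p)$, and satisfies $\brs{\N\phi} \leq C$. The gradient estimate and the $L^4$ estimate for $s$ are essentially soft: they use only the harmonicity $\gD s = 0$ coming from \eqref{crit2}, together with the Sobolev inequality. The Hessian estimate is the only genuinely new ingredient, and it is the only place where the smallness hypothesis $\nm{\Rm}{L^2(B_r(p))} \leq \ge$ of Theorem \ref{er} is actually used; I expect it to be the main obstacle.

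For the gradient bound I would test $\gD s = 0$ against $\phi^2 s$ and integrate by parts, obtaining
\[
\int_{B_1(p)} \phi^2 \brs{\N s}^2 = - 2 \int_{B_1(p)} s\, \phi\, \IP{\N\phi, \N s} \leq \gd \int_{B_1(p)} \phi^2 \brs{\N s}^2 + \gd^{-1} \int_{B_1(p)} \brs{\N\phi}^2 s^2.
\]
Choosing $\gd = \tfrac12$ and absorbing the first term gives $\int \phi^2\brs{\N s}^2 \leq C\int_{B_1(p)}\brs{\N\phi}^2 s^2$, which is the stated bound once $\phi$ is taken to be a cutoff for $B_{1/2}(p)$. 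The $L^4$ bound on $s$ then follows by applying the Sobolev inequality to $\psi s$ for a cutoff $\psi$ supported in a slightly larger ball: one gets $\nm{s}{L^4(B_{1/2}(p))}^2 \leq C_S\left( \nm{\N(\psi s)}{L^2}^2 + \nm{\psi s}{L^2}^2 \right) \leq C\left( \int \brs{\N s}^2 + \int_{B_1(p)} s^2 \right)$, and the gradient estimate already established (applied on the intermediate ball) bounds $\int\brs{\N s}^2$ by $C\int_{B_1(p)}s^2$.

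For the Hessian estimate I would integrate $\brs{\N^2 s}^2$ against $\phi^2$ by parts and use the Bochner commutation identity $\gD\N s = \N\gD s + \Rc * \N s = \Rc * \N s$, valid since $\gD s = 0$, to obtain
\[
\int_{B_1(p)} \phi^2 \brs{\N^2 s}^2 = - 2\int \phi\, \N\phi * \N s * \N^2 s - \int \phi^2\, \Rc(\N s, \N s).
\]
The first term is handled by Cauchy--Schwarz and absorption exactly as before. For the curvature term one estimates $\int \phi^2\brs{\Rc}\brs{\N s}^2 \leq \nm{\Rc}{L^2(B_1(p))}\nm{\phi\brs{\N s}}{L^4}^2$, applies the Sobolev inequality to the function $\phi\brs{\N s}$, and uses Kato's inequality $\brs{\N(\phi\brs{\N s})} \leq \brs{\N\phi}\brs{\N s} + \phi\brs{\N^2 s}$ to bound $\nm{\phi\brs{\N s}}{L^4}^2$ by $C_S$ times $\int\phi^2\brs{\N^2 s}^2$ plus terms involving only $\brs{\N s}^2$. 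Since $\brs{\Rc} \leq C\brs{\Rm}$ pointwise we have $\nm{\Rc}{L^2(B_1(p))} \leq C\ge$, so choosing $\ge$ small with respect to $C_S$ makes the coefficient of $\int\phi^2\brs{\N^2 s}^2$ strictly less than $1$; after absorbing, one is left with $\int\phi^2\brs{\N^2 s}^2 \leq C\int_{B_1(p)}(\phi^2 + \brs{\N\phi}^2)\brs{\N s}^2$, and a final application of the gradient estimate produces the $r^{-4}$ bound. The hard part is precisely this absorption: the $\Rc * \N s$ term forces one to bring in both the Sobolev inequality and the $\ge$-smallness simply to move the top-order term back to the left side. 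As in the preliminary discussion, one should suppress the bookkeeping of the nested radii between $B_{1/2}(p)$ and $B_1(p)$ generated by repeatedly invoking earlier estimates, since it affects only the values of the constants.
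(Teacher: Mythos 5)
Your proposal is correct and follows essentially the same route as the paper's proof: integration by parts against $\phi^2 s$ for the gradient bound, Sobolev applied to a cutoff times $s$ for the $L^4$ bound, and the commutation identity $\gD \N s = \Rc * \N s$ together with H\"older, Sobolev, Kato, and the $\ge$-smallness of $\nm{\Rm}{L^2(B_r(p))}$ to absorb the top-order Hessian term. The only difference is that you spell out the $L^4$ estimate for $s$, which the paper leaves implicit.
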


\begin{lemma} \label{erlem2} There exists a constant $C$ so that
\begin{align} \label{erlem21}
\left[ \int_{B_{\frac{r}{4}}(p)} \brs{\Rc}^4 \right]^{\frac{1}{2}} \leq
\frac{C}{r^2} \int_{B_r(p)} \brs{\Rc}^2\\ \label{erlem22}
\int_{B_{\frac{r}{8}}(p)} \brs{\N \Rc}^2 \leq \frac{C}{r^2} \int_{B_r(p)}
\brs{\Rc}^2.
\end{align}
\begin{proof} Using the Sobolev inequality and the Kato inequality $\brs{\N
\brs{\Rc}} \leq \brs{\N \Rc}$ we obtain, for $\phi$ a smooth function,
\begin{align*}
\left[\int_{B_1(p)} \left( \phi \brs{\Rc}^4 \right) \right]^{\frac{1}{2}} \leq&\
C_S \int_{B_1(p)} \brs{\N (\phi \brs{\Rc})}^2 \leq C_S \int_{B_1(p)} \left[
\brs{\N \phi}^2 \brs{\Rc}^2 + \phi^2 \brs{\N
\Rc}^2 \right].
\end{align*}
Integrating by parts yields
\begin{gather} \label{erlem210}
\begin{split}
\int_{B_1(p)} \phi^2 \brs{\N \Rc}^2 =&\ \int_{B_1(p)} \phi^2 \IP{\Rc, - \gD \Rc}
+ \phi \N \phi * \Rc * \N \Rc\\
=&\ \int_{B_1(p)} \phi^2 \IP{\Rc, -\frac{1}{2} \N^2 s + \Rc * \Rm} + \phi \N
\phi * \Rc * \N \Rc\\
\leq&\ C \nm{\phi \brs{\Rc}}{L^2} \nm{\phi \brs{\N^2 s}}{L^2}\\
&\ + C \int_{B_1(p)} \left[ \phi^2 \brs{\Rc}^2 \brs{\Rm} + \gd \phi^2 \brs{\N
\Rc}^2 + \gd^{-1} \brs{\N \phi}^2 \brs{\Rc}^2 \right]\\
\leq&\ C \nm{\phi \brs{\Rc}}{L^2} \nm{\phi \brs{\N^2 s}}{L^2} +
\left(\int_{B_1(p)} (\phi \brs{\Rc})^4 \right)^{\frac{1}{2}} \left(\int_{B_1(p)}
\brs{\Rm}^2 \right)^{\frac{1}{2}}\\
&\ + C \gd^{-1} \int_{B_1(p)} \brs{\N \phi}^2 \brs{\Rc}^2,
\end{split}
\end{gather}
where in the last line we choose $\gd$ sufficiently small with respect to
universal constants to absorb one term.  Now choosing $\phi$ to be a cutoff
function for $B_{\frac{1}{4}}(p)$ and applying Lemma \ref{erlem1} yields
\begin{align*}
\nm{\phi \brs{\Rc}}{L^2} \nm{\phi \brs{\N^2 s}}{L^2} \leq \nm{\phi
\brs{\Rc}}{L^2}^2.
\end{align*}
Combining the above three estimates and choosing $\ge$ sufficiently small with
respect to the Sobolev constant yields equation (\ref{erlem21}).  Now choosing
$\phi$ to be a cutoff function for $B_{\frac{1}{2}}(p)$ and applying
(\ref{erlem21}) in line (\ref{erlem210}) yields (\ref{erlem22}).
\end{proof}
\end{lemma}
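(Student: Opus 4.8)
The plan is to reduce to the case $r = 1$ by scaling and then run a Moser-type iteration based on the Sobolev inequality and the critical equation \eqref{crit1}. Concretely, one applies the Sobolev inequality to $\phi \brs{\Rc}$ for a suitable cutoff $\phi$, converts the resulting interior term $\int \phi^2 \brs{\N \Rc}^2$ into zeroth-order quantities by integration by parts and substitution of $\gD \Rc = \tfrac{1}{2} \N^2 s + \Rc * \Rm$, and controls the inhomogeneous $\N^2 s$ contribution using the scalar-curvature estimates already provided by Lemma \ref{erlem1}. The reason the estimate closes is that the quadratic term $\Rc * \Rm$, once passed through the Sobolev embedding, contributes a factor $\nm{\Rm}{L^2} \leq \ge$ in front of the very $L^4$ norm being estimated, which is absorbed once $\ge$ is small relative to $C_S$.

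To prove \eqref{erlem21}, take $\phi$ supported in $B_{1/2}(p)$ with $\phi \equiv 1$ on $B_{1/4}(p)$. The Sobolev inequality applied to $\phi \brs{\Rc}$, together with the Kato inequality $\brs{\N \brs{\Rc}} \leq \brs{\N \Rc}$, gives
\[
\left( \int_{B_1(p)} (\phi \brs{\Rc})^4 \right)^{1/2} \leq C C_S \int_{B_1(p)} \left( \brs{\N \phi}^2 \brs{\Rc}^2 + \phi^2 \brs{\N \Rc}^2 \right),
\]
so it suffices to bound $\int \phi^2 \brs{\N \Rc}^2$. Integrating by parts and inserting \eqref{crit1},
\[
\int_{B_1(p)} \phi^2 \brs{\N \Rc}^2 = \int_{B_1(p)} \phi^2 \IP{\Rc, -\tfrac{1}{2} \N^2 s + \Rc * \Rm} + \phi \, \N \phi * \Rc * \N \Rc .
\]
The gradient cross-term is handled by Cauchy--Schwarz, absorbing a small multiple of $\phi^2 \brs{\N \Rc}^2$ into the left and leaving $\gd^{-1} \brs{\N \phi}^2 \brs{\Rc}^2$. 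The Hessian-of-scalar term is estimated by $\nm{\phi \brs{\Rc}}{L^2} \nm{\phi \brs{\N^2 s}}{L^2}$, and then Lemma \ref{erlem1} (third inequality) together with $s^2 \leq 4 \brs{\Rc}^2$ gives $\nm{\phi \brs{\N^2 s}}{L^2}^2 \leq C \int_{B_1(p)} \brs{\Rc}^2$, so this piece is at most $C \int_{B_1(p)} \brs{\Rc}^2$. Finally the quadratic term obeys $\int \phi^2 \brs{\Rc}^2 \brs{\Rm} \leq \nm{\phi \brs{\Rc}}{L^4}^2 \, \nm{\Rm}{L^2(B_1(p))} \leq \ge \, \nm{\phi \brs{\Rc}}{L^4}^2$, which, once routed through the Sobolev inequality above, is absorbed into the left-hand side provided $C C_S \ge < \tfrac{1}{2}$. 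Collecting these bounds and using $\brs{\N \phi} \leq C$ yields \eqref{erlem21}.

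For \eqref{erlem22} I would reuse the same integration-by-parts identity, now with $\phi$ supported in $B_{1/4}(p)$ and $\phi \equiv 1$ on $B_{1/8}(p)$, and bound the quadratic term directly by H\"older and the freshly proved estimate \eqref{erlem21}:
\[
\int \phi^2 \brs{\Rc}^2 \brs{\Rm} \leq \left( \int_{B_{1/4}(p)} \brs{\Rc}^4 \right)^{1/2} \nm{\Rm}{L^2(B_1(p))} \leq C \ge \int_{B_1(p)} \brs{\Rc}^2 .
\]
The Hessian-of-scalar and gradient cross-terms are treated exactly as before, which gives \eqref{erlem22}.

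The main obstacle, and the point distinguishing this argument from the constant-scalar-curvature settings of \cite{TV, Chang}, is the presence of the term $\N^2 s$ in \eqref{crit1}: one cannot estimate $\Rc$ before $\N^2 s$ is controlled, which is precisely why Lemma \ref{erlem1} is established first and why the $\ge$-regularity theorem is organized as a three-step cascade---scalar curvature, then Ricci curvature, then full Riemann curvature. A minor bookkeeping matter is that every invocation of a previous estimate shrinks the working ball, so the radii $\tfrac{r}{4}$ and $\tfrac{r}{8}$ in the statement are chosen to accommodate the nested cutoffs; one could equally state the conclusion on $B_{r/2}(p)$ at the cost of a standard covering argument, as noted in the remarks preceding Lemma \ref{erlem1}.
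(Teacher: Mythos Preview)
Your proposal is correct and follows essentially the same approach as the paper: Sobolev plus Kato to reduce the $L^4$ bound to $\int \phi^2 \brs{\N \Rc}^2$, integration by parts against $\gD \Rc$, substitution of \eqref{crit1}, absorption of the $\Rc * \Rm$ term via $\nm{\Rm}{L^2} \leq \ge$, and control of the $\N^2 s$ term through Lemma \ref{erlem1}. The only differences are cosmetic---your choice of cutoff radii for \eqref{erlem22} and your explicit remark about the three-step cascade---and both you and the paper acknowledge that the precise radii are flexible bookkeeping.
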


\begin{lemma} \label{erlem3} There exists a constant $C$ such that
\begin{align} \label{erlem31}
\left[ \int_{B_{\frac{r}{2}}(p)} \brs{\Rm}^4 \right]^{\frac{1}{2}} \leq
\frac{C}{r^2} \int_{B_r(p)} \brs{\Rm}^2\\ \label{erlem32}
\int_{B_{\frac{r}{2}}(p)} \brs{\N \Rm}^2 \leq \frac{C}{r^2} \int_{B_r(p)}
\brs{\Rm}^2.
\end{align}
\begin{proof} Using the Sobolev inequality and the Kato inequality $\brs{\N
\brs{\Rm}} \leq \brs{\N \Rm}$ we obtain, for $\phi$ a smooth function,
\begin{gather} \label{erlem305}
\begin{split}
\left[\int_{B_1(p)} \left( \phi \brs{\Rm}^4 \right) \right]^{\frac{1}{2}} \leq&\
C_S \int_{B_1(p)} \brs{\N (\phi \brs{\Rm})}^2 + \phi^2 \brs{\Rm}^2\\
\leq&\ C_S \int_{B_1(p)} \left[ \phi^2 \brs{\N \Rm}^2 + (\phi^2 + \brs{\N
\phi}^2 ) \brs{\Rm}^2 \right].
\end{split}
\end{gather}
Now observe the general identity for a Riemannian metric,
\begin{align*}
\int \IP{ \gD \Rm, \phi^2 \Rm} =&\ \int \IP{\N^2 \Rc + \Rm * \Rm, \phi^2 \Rm}\\
=&\ \int \phi \N \phi * \N \Rc * \Rm + \phi^2 \N \Rc * \N \Rm + \phi^2 \Rm^{*3}.
\end{align*}
Integrating by parts and using this identity and (\ref{erlem22}) yields
\begin{gather} \label{erlem310}
\begin{split}
\int_{B_1(p)} \phi^2 \brs{\N \Rm}^2 =&\ \int_{B_1(p)} \phi^2 \IP{\Rm, - \gD \Rm}
+ \phi \N \phi * \Rm * \N \Rm\\
=&\ \int_{B_1(p)} \phi^2 \N \Rc * \N \Rm + \phi^2 \Rm^{*3} + \phi \N \phi * \N
\Rc * \Rm\\
\leq&\ C \int_{B_1(p)} \left[ \gd \phi^2 \brs{\N \Rm}^2 + \gd^{-1} \brs{\N
\Rc}^2 + \phi^2 \brs{\Rm}^3 + \brs{\N \phi}^2 \brs{\Rm}^2 \right]\\
\leq&\ C \int_{B_1(p)} \left[ \gd^{-1} \brs{\N \Rc} + \brs{\N \phi}^2
\brs{\Rm}^2 \right]\\
&\ + C \left[ \int_{B_1(p)} (\phi \brs{\Rm})^4 \right]^{\frac{1}{2}} \left[
\int_{B_1(p)} \brs{\Rm}^2 \right]^{\frac{1}{2}}\\
\leq&\ C \int_{B_1(p)} \brs{\Rm}^2 + C \ge \left[ \int_{B_1(p)} (\phi
\brs{\Rm})^4 \right]^{\frac{1}{2}}.
\end{split}
\end{gather}
Now choosing $\phi$ to be a cutoff function for $B_{\frac{1}{2}}(p)$ and
combining
this with (\ref{erlem305}) yields (\ref{erlem31}).  With this established one
obtains (\ref{erlem32}) by plugging (\ref{erlem31}) into (\ref{erlem310}).
\end{proof}
\end{lemma}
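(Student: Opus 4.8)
The plan is to run the same ``integrate by parts, then absorb'' scheme used in the proof of Lemma~\ref{erlem2}, but now with the full curvature tensor $\Rm$ in place of $\Rc$, feeding in the Ricci gradient estimate (\ref{erlem22}) just established together with the universal identity (\ref{er20}). By scaling we may take $r = 1$, and we let $\phi$ denote a cutoff function, suppressing as discussed above the fact that the chain of earlier lemmas forces $\phi$ to be supported on a slightly smaller ball at each layer. The first step reduces the $L^4$ bound to a bound on $\int \phi^2 \brs{\N \Rm}^2$: applying the Sobolev inequality to $\phi \brs{\Rm}$ together with the Kato inequality $\brs{\N \brs{\Rm}} \leq \brs{\N \Rm}$ gives
\begin{align*}
\left[ \int_{B_1(p)} \left( \phi \brs{\Rm} \right)^4 \right]^{\frac{1}{2}} \leq C_S \int_{B_1(p)} \left[ \phi^2 \brs{\N \Rm}^2 + \left( \phi^2 + \brs{\N \phi}^2 \right) \brs{\Rm}^2 \right].
\end{align*}

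Next I would estimate $\int \phi^2 \brs{\N \Rm}^2$ by integrating by parts,
\begin{align*}
\int_{B_1(p)} \phi^2 \brs{\N \Rm}^2 = \int_{B_1(p)} \phi^2 \IP{\Rm, - \gD \Rm} + \phi \N \phi * \Rm * \N \Rm,
\end{align*}
then substituting $\gD \Rm = L(\N^2 \Rc) + \Rm * \Rm$ from (\ref{er20}) and integrating by parts a second time to move one derivative off the $\N^2 \Rc$ term. This produces terms of the schematic form $\phi^2\, \N \Rc * \N \Rm$, $\phi \N \phi * \N \Rc * \Rm$, $\phi^2 \Rm^{*3}$, and $\phi \N \phi * \Rm * \N \Rm$. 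The two terms carrying a factor $\brs{\N \Rm}$ are absorbed by Young's inequality with a small universal $\gd$, leaving $\gd^{-1}$ times $\brs{\N \Rc}^2$ and $\brs{\N \phi}^2 \brs{\Rm}^2$ contributions; the $\brs{\N \Rc}^2$ ones are bounded by $C \int_{B_1(p)} \brs{\Rm}^2$ by (\ref{erlem22}) on the appropriate sub-ball. The cubic term is the crucial one, controlled by H\"older and the hypothesis:
\begin{align*}
\int_{B_1(p)} \phi^2 \brs{\Rm}^3 \leq \left[ \int_{B_1(p)} \left( \phi \brs{\Rm} \right)^4 \right]^{\frac{1}{2}} \left[ \int_{B_1(p)} \brs{\Rm}^2 \right]^{\frac{1}{2}} \leq \ge \left[ \int_{B_1(p)} \left( \phi \brs{\Rm} \right)^4 \right]^{\frac{1}{2}}.
\end{align*}

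Combining these gives $\int \phi^2 \brs{\N \Rm}^2 \leq C \int_{B_1(p)} \brs{\Rm}^2 + C \ge \big[ \int_{B_1(p)} (\phi \brs{\Rm})^4 \big]^{\frac{1}{2}}$; inserting this into the Sobolev bound above and choosing $\ge$ small with respect to $C_S$ lets me absorb the residual $L^4$ term on the right, which yields (\ref{erlem31}) for $\phi$ a cutoff of $B_{\frac{1}{2}}(p)$. Reinserting (\ref{erlem31}) into the bound for $\int \phi^2 \brs{\N \Rm}^2$ — now using (\ref{erlem31}) to control the leftover $L^4$ term rather than absorbing it — then gives (\ref{erlem32}). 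Both constants depend only on $C_S$ and universal constants.

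I expect the main obstacle to be the absence of any pointwise control on $\N^2 \Rc$: the commuted second Bianchi identity only expresses $\gD \Rm$ in terms of $\N^2 \Rc$ and $\Rm * \Rm$, so one cannot estimate $\gD \Rm$ directly and must integrate by parts to trade the extra derivative, at which point the already-proved $L^2$ bound on $\N \Rc$ from Lemma~\ref{erlem2} — which itself rests on the critical equations (\ref{crit1}), (\ref{crit2}) and Lemma~\ref{erlem1} — is precisely what makes the argument close. The remaining care is bookkeeping: ordering the small parameters ($\gd$ universal, then $\ge$ small relative to $C_S$) and tracking the shrinking ball radii through the chain of lemmas, neither of which affects the final scale-invariant form of the estimates.
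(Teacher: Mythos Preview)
Your proposal is correct and follows essentially the same approach as the paper: the same Sobolev--Kato reduction, the same integration by parts against $\gD\Rm = L(\N^2\Rc) + \Rm*\Rm$ followed by a second integration by parts to shift a derivative off $\N^2\Rc$, the same use of (\ref{erlem22}) to control the resulting $\brs{\N\Rc}^2$ terms, and the same H\"older/$\ge$-absorption of the cubic term. Your remarks about ordering the small parameters and tracking the shrinking balls are exactly the bookkeeping the paper alludes to.
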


\begin{lemma} \label{ervol} There exist constants $\ge, C$ so that if
$\nm{\Rm}{L^2(B_r(p))} \leq \ge$ then
\begin{align*}
 \Vol(B_{\frac{r}{2}}(p)) \leq C r^4.
\end{align*}
\begin{proof} Rescaling the ball to unit size and applying Lemma \ref{erlem3} we
have a uniform $L^4$ bound on $\Rm$ on $B_{\frac{1}{2}}(p)$.  Applying
(\cite{PetersenWei} Theorem 1.1) we obtain a uniform upper bound on
$\Vol(B_{\frac{1}{2}}(p))$, and then scaling back yields the result.
\end{proof}
\end{lemma}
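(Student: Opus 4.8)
The plan is to follow the brief sketch: by the scale invariance noted before Lemma~\ref{erlem1} it suffices to treat $r=1$, so I must show $\Vol(B_{1/2}(p)) \leq C$ with $C$ depending only on $C_S$, assuming $\nm{\Rm}{L^2(B_1(p))} \leq \ge$. The idea is to promote this $L^2$ bound on curvature to a scale-invariant $L^4$ bound using the already-established elliptic estimate of Lemma~\ref{erlem3}, and then feed the resulting integral Ricci bound into the relative volume comparison of Petersen--Wei \cite{PetersenWei}.

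First, taking $\ge$ small enough for Lemma~\ref{erlem3} to apply, estimate \eqref{erlem31} gives
\[
\left( \int_{B_{1/2}(p)} \brs{\Rm}^4 \right)^{1/2} \leq C \int_{B_1(p)} \brs{\Rm}^2 \leq C \ge^2,
\]
so $\nm{\Rm}{L^4(B_{1/2}(p))} \leq C\ge$. Since $\brs{\Rc} \leq c(n) \brs{\Rm}$ pointwise, the negative part $\Rc_-$ of the Ricci curvature appearing in \cite{PetersenWei} (the function $x \mapsto (\,-\inf \mathrm{spec}\,\Rc_x\,)_+$, comparing against $H=0$) satisfies $\Rc_- \leq c(n)\brs{\Rm}$, hence $\int_{B_{1/2}(p)} (\Rc_-)^4 \leq C\ge^4$. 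Because $4 > n/2 = 2$, this is precisely the scale-invariant smallness of integral Ricci curvature needed to run the comparison, provided $\ge$ is taken small enough that the structural hypothesis of that theorem is met.

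Next I would invoke (\cite{PetersenWei} Theorem~1.1) on $B_{1/2}(p)$ with $H=0$ and $p=4$: it bounds $\big( \Vol(B_s(p)) / (\gw_4 s^4) \big)^{1/(2p)}$ by the same quantity at a smaller radius plus $C(n,p)$ times a scale-invariant power of $\nm{\Rc_-}{L^p(B_s(p))}$. Letting the inner radius tend to $0$, where the volume ratio tends to $1$ since $g$ is smooth, and then setting $s = 1/2$, we obtain $\Vol(B_{1/2}(p)) \leq \gw_4 (1/2)^4 (1 + C\ge^{1/2})^{2p} \leq C$. Undoing the rescaling converts this into $\Vol(B_{r/2}(p)) \leq C r^4$, as claimed. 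There is no serious obstacle here: the only points requiring care are choosing $\ge$ small enough simultaneously for Lemmas~\ref{erlem1}--\ref{erlem3} and for the Petersen--Wei comparison constant, and the elementary pointwise domination $\Rc_- \leq c(n)\brs{\Rm}$ --- the substantive work has already been carried out in establishing the $L^4$ curvature bound.
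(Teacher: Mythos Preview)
Your proof is correct and follows exactly the same route as the paper: rescale to $r=1$, use Lemma~\ref{erlem3} to upgrade the $L^2$ curvature bound to an $L^4$ bound on $B_{1/2}(p)$, and then apply the Petersen--Wei relative volume comparison \cite{PetersenWei} to bound the volume. Your version simply spells out in more detail how the Petersen--Wei theorem is invoked (via $\Rc_- \leq c(n)\brs{\Rm}$ and the limit of the volume ratio as the inner radius shrinks), which the paper leaves implicit.
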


\begin{prop} \label{erind} For any $k \geq 0$ there exist constants $\ge,C$ so
that if
$\nm{\Rm}{L^2(B_r(p))} \leq \ge$, then
\begin{align}
\label{erind1} \left[ \int_{B_{\frac{r}{2}}(p)} \brs{\N^k s}^4
\right]^{\frac{1}{2}} \leq&\
\frac{C}{r^{2(k+1)}} \int_{B_r(p)} \brs{\Rm}^2,\\
\label{erind2} \left[ \int_{B_{\frac{r}{2}}(p)} \brs{\N^{k+1} s}^2 \right]
\leq&\
\frac{C}{r^{2(k+1)}} \int_{B_r(p)} \brs{\Rm}^2,\\
\label{erind3} \left[ \int_{B_{\frac{r}{2}}(p)} \brs{\N^k \Rc}^4
\right]^{\frac{1}{2}} \leq&\
\frac{C}{r^{2(k+1)}} \int_{B_r(p)} \brs{\Rm}^2,\\
\label{erind4} \left[ \int_{B_{\frac{r}{2}}(p)} \brs{\N^{k+1} \Rc}^2
\right] \leq&\ \frac{C}{r^{2(k+1)}} \int_{B_r(p)} \brs{\Rm}^2,\\
\label{erind5} \left[ \int_{B_{\frac{r}{2}}(p)} \brs{\N^k \Rm}^4
\right]^{\frac{1}{2}} \leq&\
\frac{C}{r^{2(k+1)}} \int_{B_r(p)} \brs{\Rm}^2,\\
\label{erind6} \left[ \int_{B_{\frac{r}{2}}(p)} \brs{\N^{k+1} \Rm}^2 \right]
\leq&\
\frac{C}{r^{2(k+1)}} \int_{B_r(p)} \brs{\Rm}^2.
\end{align}
\begin{proof} The proof is by induction.  We have already verified case $k=0$ of
each inequality in the above lemmas.  Now assume the lemma is true for all
$i=0,\dots,k-1$.  We proceed in three steps: scalar curvature, Ricci curvature,
Riemann curvature.
 
To obtain the scalar curvature estimates, first observe that since $\gD s = 0$
commutation of derivatives yields
 \begin{align} \label{scalarcommute}
  \gD (\N^k s) = \sum_{l=0}^{k-1} \N^l \Rm * \N^{k-l} s.
 \end{align}
Let $\phi$ denote a cutoff function for $B_{\frac{1}{2}}(p)$.  Using the Sobolev
inequality we
first observe
\begin{align*}
 \left[ \int_{B_1(p)} (\phi \brs{\N^k s})^4 \right]^{\frac{1}{2}} \leq&\ C
\int_{B_1(p)} \left[
\brs{\N (\phi \brs{\N^k s})}^2 + \phi^2 \brs{\N^k s}^2 \right]\\
 \leq&\ C \int_{B_1(p)} \left[ \phi^2 \brs{\N^{k+1} s}^2 + (\phi^2 + \brs{\N
\phi}^2)
\brs{\N^k s}^2 \right]
\end{align*}
Integrating by parts and applying (\ref{scalarcommute}) yields
\begin{align*}
 \int_{B_1(p)} \phi^2 \brs{\N^{k+1} s}^2 =&\ \int_{B_1(p)} \left[ \phi \N \phi *
\N^k s * \N^{k+1}
s - \phi^2 \IP{\N^k s, \gD \N^k s} \right]\\
 =&\ \int_{B_1(p)} \left[ \phi \N \phi * \N^k s * \N^{k+1} s + \phi^2 \N^k s *
\sum_{l=0}^{k-1} \N^l \Rm * \N^{k-l} s \right]\\
\leq&\ C \int_{B_1(p)} \left[ \gd \phi^2 \brs{\N^{k+1} s}^2 + \gd^{-1} \brs{\N
\phi}^2
\brs{\N^k s}^2 + \phi^2 \brs{\N^k s} \sum_{l=0}^{k-1} \brs{\N^l \Rm}
\brs{\N^{k-l} s} \right]\\
\leq&\ C \int_{B_1(p)} \left[ \gd^{-1} \brs{\N \phi}^2 \brs{\N^k s}^2 + \phi^2
\brs{\N^k s} \sum_{l=0}^{k-1} \brs{\N^l \Rm} \brs{\N^{k-l} s} \right],
\end{align*}
where the last line follows by choosing $\gd$ sufficiently small and absorbing
one term.  We next
estimate the terms in the sum above.  For $l=0$, we apply H\"older's inequality
and the Sobolev inequality to yield
\begin{align*}
 \int_{B_1(p)} \phi^2 \brs{\Rm} \brs{\N^k s}^2 \leq&\ \left[ \int_{B_1(p)}
\brs{\Rm}^2 \right]^{\frac{1}{2}} \left[\int_{B_1(p)}
(\phi \brs{\N^k s})^4\right]^{\frac{1}{2}}\\
 \leq&\ C \ge  \left[ \int_{B_1(p)} \brs{\N (\phi \brs{\N^k s})}^2 + \phi^2
\brs{\N^k
s}^2 \right]\\
 \leq&\ C \ge \left[ \int_{B_1(p)} \phi^2 \brs{\N^{k+1} s}^2 + (\phi^2 + \brs{\N
\phi}^2)
\brs{\N^k s}^2 \right].
\end{align*}
Next, applying H\"older's inequality and the inductive hypothesis we can
estimate, for any $1 \leq l \leq k-1$,
\begin{align*}
 \int_{B_1(p)} \phi^2 \brs{\N^{k} s} \brs{\N^{k-l} s} \brs{\N^l \Rm} \leq&\
\nm{\phi \N^k
s}{L^2} \nm{\phi^{\frac{1}{2}} \N^l \Rm}{L^4} \nm{\phi^{\frac{1}{2}}
\N^{k-l}s}{L^4}\\
\leq&\ C \left( \int_{B_1(p)} \brs{\Rm}^2 \right)^{\frac{1}{2}} \left(
\int_{B_1(p)} \brs{\Rm}^2
\right)^{\frac{1}{4}} \left( \int_{B_1(p)} \brs{\Rm}^2
\right)^{\frac{1}{4}}\\
=&\ C \int_{B_1(p)} \brs{\Rm}^2.
\end{align*}
Combining these estimates and choosing $\ge$ sufficiently small yields estimates
(\ref{erind1}) and (\ref{erind2}).

Next we prove (\ref{erind3}) and (\ref{erind4}).  To begin we observe that by
commuting derivatives and applying (\ref{crit1}) we obtain that
\begin{align} \label{riccicommute}
 \gD (\N^k \Rc) = \sum_{l=0}^k \N^l \Rm * \N^{k-l} \Rc + \N^{k+2} s.
\end{align}
Let $\phi$ again denote a cutoff function for $B_{\frac{1}{2}}(p)$.  Using the
Sobolev
inequality we first observe
\begin{align*}
  \left[ \int_{B_1(p)} (\phi \brs{\N^k \Rc})^4 \right]^{\frac{1}{2}} \leq&\ C
\int_{B_1(p)} \left[
\brs{\N (\phi \brs{\N^k \Rc})}^2 + \phi^2 \brs{\N^k \Rc}^2 \right]\\
 \leq&\ C \int_{B_1(p)} \left[ \phi^2 \brs{\N^{k+1} \Rc}^2 + (\phi^2 + \brs{\N
\phi}^2)
\brs{\N^k \Rc}^2 \right].
\end{align*}
Integrating by parts and applying (\ref{riccicommute}) and H\"older's inequality
yields
\begin{align*}
  \int_{B_1(p)} \phi^2 \brs{\N^{k+1} \Rc}^2 =&\ \int_{B_1(p)} \phi \N \phi *
\N^k \Rc * \N^{k+1}
\Rc - \phi^2 \IP{\N^k \Rc, \gD \N^k \Rc}\\
 =&\ \int_{B_1(p)} \phi \N \phi * \N^k \Rc * \N^{k+1} \Rc\\
 &\ \qquad + \phi^2 \N^k \Rc *
\sum_{l=0}^{k} \N^l \Rm * \N^{k-l} \Rc + \phi^2 \N^k \Rc * \N^{k+2} s\\
\leq&\ C \int_{B_1(p)} \gd \phi^2 \brs{\N^{k+1} \Rc}^2 + \gd^{-1} \brs{\N
\phi}^2
\brs{\N^k \Rc}^2 + \phi^2 \brs{\N^k \Rc} \sum_{l=0}^k \brs{\N^l \Rm}
\brs{\N^{k-l} \Rc}\\
&\ + \phi \N \phi \N^k \Rc * \N^{k+1} s + \phi^2 \N^{k+1} \Rc * \N^{k+1} s\\
\leq&\ C \int_{B_1(p)} \gd \phi^2 \brs{\N^{k+1} \Rc}^2 + \gd^{-1} \left( \brs{\N
\phi}^2
\brs{\N^k \Rc}^2 + \phi^2 \brs{\N^{k+1} s}^2 \right)\\
&\ + \phi^2 \brs{\N^k \Rc} \sum_{l=0}^k \brs{\N^l \Rm} \brs{\N^{k-l} \Rc}\\
\leq&\ C \int_{B_1(p)} \brs{\Rm}^2 + \phi^2 \brs{\N^k \Rc} \sum_{l=0}^k
\brs{\N^l \Rm} \brs{\N^{k-l} \Rc},
\end{align*}
where the last line follows by applying the inductive hypothesis as well as
estimate (\ref{erind2}) for the given $k$.  We now estimate the sum above. 
First, for $l=0$ we apply H\"older's inequality
and the Sobolev inequality to yield
\begin{align*}
 \int_{B_1(p)} \phi^2 \brs{\Rm} \brs{\N^k \Rc}^2 \leq&\ \left[ \int_{B_1(p)}
\brs{\Rm}^2 \right]^{\frac{1}{2}} \left[\int_{B_1(p)}
(\phi \brs{\N^k \Rc})^4\right]^{\frac{1}{2}}\\
 \leq&\ C \ge  \left[ \int_{B_1(p)} \brs{\N (\phi \brs{\N^k \Rc})}^2 + \phi^2
\brs{\N^k
\Rc}^2 \right]\\
 \leq&\ C \ge \left[ \int_{B_1(p)} \phi^2 \brs{\N^{k+1} \Rc}^2 + (\phi^2 +
\brs{\N
\phi}^2)
\brs{\N^k \Rc}^2 \right].
\end{align*}
Next, applying H\"older's inequality and the inductive hypothesis we can
estimate, for any $1 \leq l \leq k-1$,
\begin{align*}
 \int_{B_1(p)} \phi^2 \brs{\N^{k} \Rc} \brs{\N^{k-l} \Rc} \brs{\N^l \Rm} \leq&\
\nm{\phi
\N^k
\Rc}{L^2} \nm{\phi^{\frac{1}{2}} \N^l \Rm}{L^4}
\nm{\phi^{\frac{1}{2}}\N^{k-l}\Rc}{L^4}\\
\leq&\ C \left( \int_{B_1(p)} \brs{\Rm}^2 \right)^{\frac{1}{2}} \left(
\int_{B_1(p)} \brs{\Rm}^2
\right)^{\frac{1}{4}} \left( \int_{B_1(p)} \brs{\Rm}^2
\right)^{\frac{1}{4}}\\
=&\ C \int_{B_1(p)} \brs{\Rm}^2.
\end{align*}
Now consider the term $l=k$.  We may estimate
\begin{align*}
 \int_{B_1(p)} & \phi^2 \N^k \Rc * \N^k \Rm * \Rc\\
 =&\ \int_{B_1(p)} \phi * \N \phi * \N^k \Rc * \N^{k-1} \Rm * \Rc + \phi^2
\N^{k+1} \Rc *
\N^{k-1} \Rm * \Rc + \phi^2 \N^{k} \Rc * \N^{k-1} \Rm * \N \Rc.
\end{align*}
For the first term above we estimate using the inductive hypotheses
\begin{align*}
 \int_{B_1(p)} \phi * \N \phi * \N^k \Rc * \N^{k-1} \Rm * \Rc \leq&\ C
\nm{\phi^{\frac{1}{2}} \N^k \Rc}{L^2}
\nm{\phi^{\frac{1}{4}} \N^{k-1} \Rm}{L^4} \nm{\phi^{\frac{1}{4}}\Rc}{L^4}\\
 \leq&\ C \int_{B_1(p)} \brs{\Rm}^2.
\end{align*}
For the next term we estimate using the Cauchy-Schwarz inequality and H\"older's
inequality
\begin{align*}
 \int_{B_1(p)} \phi^2 \N^{k+1} \Rc * \N^{k-1} \Rm * \Rc \leq&\ C \int_{B_1(p)}
\phi^2 \left[ \gd
\brs{\N^{k+1} \Rc}^2 + \gd^{-1} \brs{\N^{k-1} \Rm}^2 \brs{\Rc}^2 \right]\\
 \leq&\ C \gd \int_{B_1(p)} \phi^2 \brs{\N^{k+1} \Rc}^2 + C \gd^{-1}
\nm{\phi^{\frac{1}{2}} \N^{k-1}
\Rm}{L^4}^2 \nm{\phi^{\frac{1}{2}} \Rc}{L^4}^2\\
 \leq&\ C \gd \int_{B_1(p)} \phi^2 \brs{\N^{k+1} \Rc}^2 + C \gd^{-1}
\left(\int_{B_1(p)}
\brs{\Rm}^2 \right)^2.
\end{align*}
Lastly we estimate, if $k=1$,
\begin{align*}
 \int_{B_1(p)} \phi^2 \N^k \Rc * \N^{k-1} \Rm * \N \Rc \leq&\ C \int_{B_1(p)}
\phi^2 \brs{\Rm}
\brs{\N \Rc}^2\\
 \leq&\ C \left(\int_{B_1(p)} \brs{\Rm}^2 \right)^{\frac{1}{2}} \left[
\int_{B_1(p)}
(\phi \brs{\N^k \Rc} )^4 \right]^{\frac{1}{2}}.
\end{align*}
And if $k \geq 2$, we have
\begin{align*}
 \int_{B_1(p)} \phi^2 \N^k \Rc * \N^{k-1} \Rm * \N \Rc \leq&\ C \int_{B_1(p)}
\phi^2 \left[
\brs{\N^{k-1} \Rm}^4 + \brs{\N \Rc}^4 + \brs{\N^k \Rc}^2 \right].
\end{align*}
Combining these estimates and choosing $\ge$ sufficiently small yields estimates
(\ref{erind3}) and (\ref{erind4}).

It remains to show (\ref{erind5}) and (\ref{erind6}).  To begin we observe that
by
commuting derivatives and applying (\ref{crit1}) we obtain that
\begin{align} \label{riemanncommute}
 \gD (\N^k \Rm) = \sum_{l=0}^k \N^l \Rm * \N^{k-l} \Rm + \N^{k+2} \Rc.
\end{align}
Let $\phi$ again denote a cutoff function for $B_{\frac{1}{2}}(p)$.  Using the
Sobolev
inequality we first observe
\begin{align*}
  \left[ \int_{B_1(p)} (\phi \brs{\N^k \Rm})^4 \right]^{\frac{1}{2}} \leq&\ C
\int_{B_1(p)} \left[
\brs{\N (\phi \brs{\N^k \Rm})}^2 + \phi^2 \brs{\N^k \Rm}^2 \right]\\
 \leq&\ C \int_{B_1(p)} \left[ \phi^2 \brs{\N^{k+1} \Rm}^2 + (\phi^2 + \brs{\N
\phi}^2)
\brs{\N^k \Rm}^2 \right].
\end{align*}
Integrating by parts and applying (\ref{riemanncommute}) and H\"older's
inequality
yields
\begin{align*}
  \int_{B_1(p)} \phi^2 \brs{\N^{k+1} \Rm}^2 =&\ \int_{B_1(p)} \phi \N \phi *
\N^k \Rm * \N^{k+1}
\Rm - \phi^2 \IP{\N^k \Rm, \gD \N^k Rm}\\
 =&\ \int_{B_1(p)} \phi \N \phi * \N^k \Rm * \N^{k+1} \Rm\\
 &\ \qquad + \phi^2 \N^k \Rm *
\sum_{l=0}^{k} \N^l \Rm * \N^{k-l} \Rm + \phi^2 \N^k \Rm * \N^{k+2} \Rc\\
\leq&\ C \int_{B_1(p)} \gd \phi^2 \brs{\N^{k+1} \Rm}^2 + \gd^{-1} \brs{\N
\phi}^2
\brs{\N^k \Rm}^2\\
&\ + \phi^2 \brs{\N^k \Rm} \sum_{l=0}^k \brs{\N^l \Rm}
\brs{\N^{k-l} \Rm}\\
&\ + \phi \N \phi \N^k \Rm * \N^{k+1} \Rc + \phi^2 \N^{k+1} \Rm * \N^{k+1} \Rc\\
\leq&\ C \int_{B_1(p)} \gd \phi^2 \brs{\N^{k+1} \Rm}^2 + \gd^{-1} \left( \brs{\N
\phi}^2
\brs{\N^k \Rm}^2 + \phi^2 \brs{\N^{k+1} \Rc}^2 \right)\\
&\ + \phi^2 \brs{\N^k \Rm} \sum_{l=0}^k \brs{\N^l \Rm} \brs{\N^{k-l} \Rm}\\
\leq&\ C \int_{B_1(p)} \brs{\Rm}^2 + \phi^2 \brs{\N^k \Rc} \sum_{l=0}^k
\brs{\N^l \Rm} \brs{\N^{k-l} \Rc},
\end{align*}
where the last line follows by applying the inductive hypothesis as well as
estimate (\ref{erind4}) for the given $k$.  We now estimate the sum above. 
First, for $l=0$ we apply H\"older's inequality
and the Sobolev inequality to yield
\begin{align*}
 \int_{B_1(p)} \phi^2 \brs{\Rm} \brs{\N^k \Rm}^2 \leq&\ \left[ \int_{B_1(p)}
\brs{\Rm}^2 \right]^{\frac{1}{2}} \left[\int_{B_1(p)}
(\phi \brs{\N^k \Rm})^4\right]^{\frac{1}{2}}\\
 \leq&\ C \ge  \left[ \int_{B_1(p)} \brs{\N (\phi \brs{\N^k \Rm})}^2 + \phi^2
\brs{\N^k
\Rm}^2 \right]\\
 \leq&\ C \ge \left[ \int_{B_1(p)} \phi^2 \brs{\N^{k+1} \Rm}^2 + (\phi^2 +
\brs{\N
\phi}^2)
\brs{\N^k \Rm}^2 \right].
\end{align*}
Next, applying H\"older's inequality and the inductive hypothesis we can
estimate, for any $1 \leq l \leq k-1$,
\begin{align*}
 \int_{B_1(p)} \phi^2 \brs{\N^{k} \Rm} \brs{\N^{k-l} \Rm} \brs{\N^l \Rm} \leq&\
\nm{\phi
\N^k
\Rm}{L^2} \nm{\phi^{\frac{1}{2}} \N^l \Rm}{L^4} \nm{\phi^{\frac{1}{2}}
\N^{k-l}\Rm}{L^4}\\
\leq&\ C \int_{B_1(p)} \brs{\Rm}^2.
\end{align*}
Now consider the term $l=k$.  We may estimate
\begin{align*}
 \int_{B_1(p)} & \phi^2 \N^k \Rm * \N^k \Rm * \Rm\\
 =&\ \int_{B_1(p)} \phi * \N \phi * \N^k \Rm * \N^{k-1} \Rm * \Rm + \phi^2
\N^{k+1} \Rm *
\N^{k-1} \Rm * \Rm\\
&\ \quad + \phi^2 \N^{k} \Rm * \N^{k-1} \Rm * \N \Rm.
\end{align*}
For the first term above we estimate using the inductive hypotheses
\begin{align*}
 \int_{B_1(p)} \phi * \N \phi * \N^k \Rm * \N^{k-1} \Rm * \Rm \leq&\ C
\nm{\phi^{\frac{1}{2}} \N^k \Rm}{L^2}
\nm{\phi^{\frac{1}{4}} \N^{k-1} \Rm}{L^4} \nm{\phi^{\frac{1}{4}} \Rm}{L^4}\\
\leq&\ C \left( \int_{B_1(p)} \brs{\Rm}^2 \right)^{\frac{1}{2}} \left(
\int_{B_1(p)} \brs{\Rm}^2
\right)^{\frac{1}{4}} \left( \int_{B_1(p)} \brs{\Rm}^2
\right)^{\frac{1}{4}}\\
=&\ C \int_{B_1(p)} \brs{\Rm}^2.
\end{align*}
For the next term we estimate using the Cauchy-Schwarz inequality and H\"older's
inequality
\begin{align*}
 \int_{B_1(p)} \phi^2 \N^{k+1} \Rm * \N^{k-1} \Rm * \Rm \leq&\ C \int_{B_1(p)}
\phi^2 \left[ \gd
\brs{\N^{k+1} \Rm}^2 + \gd^{-1} \brs{\N^{k-1} \Rm}^2 \brs{\Rm}^2 \right]\\
 \leq&\ C \gd \int_{B_1(p)} \phi^2 \brs{\N^{k+1} \Rm}^2 + C \gd^{-1}
\nm{\phi^{\frac{1}{2}} \N^{k-1}
\Rm}{L^4}^2 \nm{\phi^{\frac{1}{2}} \Rm}{L^4}^2\\
 \leq&\ C \gd \int_{B_1(p)} \phi^2 \brs{\N^{k+1} \Rm}^2 + C \gd^{-1}
\left(\int_{B_1(p)}
\brs{\Rm}^2 \right)^2.
\end{align*}
Lastly we estimate, if $k=1$,
\begin{align*}
 \int_{B_1(p)} \phi^2 \N^k \Rm * \N^{k-1} \Rm * \N \Rm \leq&\ C \int_{B_1(p)}
\phi^2 \brs{\Rm}
\brs{\N \Rm}^2\\
 \leq&\ C \left(\int_{B_1(p)} \brs{\Rm}^2 \right)^{\frac{1}{2}} \left[
\int_{B_1(p)}
(\phi \brs{\N^k \Rm} )^4 \right]^{\frac{1}{2}}.
\end{align*}
And if $k \geq 2$, we have
\begin{align*}
 \int_{B_1(p)} \phi^2 \N^k \Rm * \N^{k-1} \Rm * \N \Rm \leq&\ C \int_{B_1(p)}
\phi^2 \left[
\brs{\N^{k-1} \Rm}^4 + \brs{\N \Rm}^4 + \brs{\N^k \Rm}^2 \right].
\end{align*}
Combining these estimates and choosing $\ge$ sufficiently small yields estimates
(\ref{erind5}) and (\ref{erind6}).
\end{proof}
\end{prop}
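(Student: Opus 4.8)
The plan is to induct on $k$. The base case $k=0$ is already contained in Lemmas \ref{erlem1}, \ref{erlem2} and \ref{erlem3}, once one notes that $s^2 \leq C \brs{\Rm}^2$ and $\brs{\Rc}^2 \leq C \brs{\Rm}^2$ pointwise, so that the right-hand sides $\int s^2$ and $\int \brs{\Rc}^2$ appearing there are dominated by $\int \brs{\Rm}^2$. Assuming \eqref{erind1}--\eqref{erind6} for all indices $0, \dots, k-1$, I would prove them for $k$ in three successive stages---scalar curvature, then Ricci curvature, then full curvature---in that order. The reason the order matters, and the feature that distinguishes this from the constant-scalar-curvature setting of \cite{TV}, is that the system \eqref{crit1}--\eqref{crit2} couples the three tensors: commuting derivatives into \eqref{crit1} produces for $\N^k \Rc$ a term $\N^{k+2} s$, and commuting derivatives into \eqref{er20} produces for $\N^k \Rm$ a term $\N^{k+2} \Rc$, so each stage at level $k$ consumes the output of the previous stage at the same level.

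For the scalar stage, commuting derivatives into $\gD s = 0$ yields $\gD (\N^k s) = \sum_{l=0}^{k-1} \N^l \Rm * \N^{k-l} s$. Testing the Sobolev inequality against $\phi \brs{\N^k s}$, with $\phi$ a cutoff for $B_{r/2}(p)$ and using Kato's inequality, reduces the $L^4$ estimate \eqref{erind1} to a bound on $\int \phi^2 \brs{\N^{k+1} s}^2$; integrating this by parts and inserting the commutator identity produces, up to cutoff errors, the sum $\int \phi^2 \brs{\N^k s} \sum_{l} \brs{\N^l \Rm} \brs{\N^{k-l} s}$. The borderline term is $l=0$, i.e.\ $\int \phi^2 \brs{\Rm} \brs{\N^k s}^2$: this is handled by H\"older ($\brs{\Rm} \in L^2$) and the Sobolev inequality, the resulting $\int \phi^2 \brs{\N^{k+1} s}^2$ being absorbable since $\ge$ is small. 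Each term $1 \leq l \leq k-1$ is estimated by a three-factor H\"older inequality of type $L^2 \times L^4 \times L^4$ and then by the inductive hypothesis, giving a clean multiple of $\int_{B_r(p)} \brs{\Rm}^2$. This establishes \eqref{erind1} and \eqref{erind2} at level $k$.

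The Ricci stage is identical in structure, starting from $\gD(\N^k \Rc) = \sum_{l=0}^{k} \N^l \Rm * \N^{k-l} \Rc + \N^{k+2} s$: the new $\N^{k+2} s$ contribution, after one integration by parts, is controlled by $\int \phi^2 \brs{\N^{k+1} s}^2$, which is precisely \eqref{erind2} at level $k$, just obtained. The only genuinely new point is the top-order summand $l = k$, namely $\int \phi^2 \N^k \Rc * \N^k \Rm * \Rc$, where two factors carry $k$ derivatives; one performs an extra integration by parts to redistribute a derivative, producing three terms, one of which requires a Cauchy--Schwarz step with small weight $\gd$ to absorb a stray $\int \phi^2 \brs{\N^{k+1} \Rc}^2$, and one of which must be handled by separate arguments in the cases $k=1$ and $k \geq 2$. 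The Riemann stage is the exact analogue with $\Rc$ replaced by $\Rm$, using $\gD(\N^k \Rm) = \sum_{l=0}^k \N^l \Rm * \N^{k-l} \Rm + \N^{k+2} \Rc$ and disposing of the $\N^{k+2} \Rc$ term via \eqref{erind4} at level $k$. I expect the principal difficulty to be organizational rather than analytic: one must verify that every integration by parts places its derivative either on the cutoff, or on a tensor of order $\leq k-1$ (hence controlled by induction), or on the principal term of the same tensor (hence absorbable with small $\ge$ or $\gd$), and one must thread the nested dependence so that the three stages are run in the right order at each $k$; the progressive shrinking of the support of $\phi$ across these steps is harmless and, as the text notes, suppressed for readability.
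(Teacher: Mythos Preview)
Your proposal is correct and follows essentially the same approach as the paper: induction on $k$ with the three-stage scalar/Ricci/Riemann structure in that order, the same commutator identities, the same Sobolev--Kato reduction of the $L^4$ bound to the gradient bound, the same $L^2 \times L^4 \times L^4$ H\"older treatment of intermediate summands, the same absorption of the $l=0$ term via small $\ge$, and the same extra integration by parts (with the $k=1$ versus $k\geq 2$ split) for the top-order $l=k$ term in the Ricci and Riemann stages. You have also correctly identified that the $\N^{k+2} s$ and $\N^{k+2} \Rc$ contributions are handled by the just-established estimates \eqref{erind2} and \eqref{erind4} at level $k$, which is exactly the inter-stage dependence the paper exploits.
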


 \begin{lemma} \label{ermoser} (\cite{TV} Lemma 3.9) Given $0 \leq u \in
L^2(B_r)$ and $f,h$ smooth nonnegative functions satisfying
 \begin{align*}
  \gD u \geq&\ - uf - h,\\
  \int_{B_r(p)} f^4  \leq&\ \frac{C_1}{r^4}.
 \end{align*}
If $\Vol(B_r(p)) \leq C_2 r^4$, then there exists a constant $C$ depending on
$C_1,C_2,C_S$ so that
\begin{align*}
 \sup_{B_{\frac{r}{2}}(p)} u \leq \frac{C}{r^2} \nm{u}{L^2(B_r(p))} + C r
\nm{h}{L^4(B_r(p))}.
\end{align*}
\end{lemma}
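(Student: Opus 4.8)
The plan is to prove the estimate by a standard De Giorgi--Nash--Moser iteration, after two routine reductions. First, by scaling: replacing $g$ by $r^{-2}g$ turns $f$ into $r^2 f$ and $h$ into $r^2 h$, converts the hypotheses into $\int_{B_1(p)} f^4 \leq C_1$ and $\Vol(B_1(p)) \leq C_2$, and the powers of $r$ appearing in the conclusion are exactly the scaling weights of $\nm{u}{L^2}$ and $\nm{h}{L^4}$, so it suffices to treat $r=1$. Second, I would reduce to a homogeneous differential inequality: if $h \equiv 0$ put $v := u$, and otherwise set $H := \nm{h}{L^4(B_1(p))} > 0$ and $v := u + H \geq H > 0$, so that, since $f,h \geq 0$,
\[
\gD v = \gD u \geq -uf - h \geq -vf - \tfrac{h}{H}\,v = -b\,v, \qquad b := f + \tfrac{h}{H},
\]
weakly on $B_1(p)$, with $\nm{b}{L^4(B_1(p))} \leq C_1^{1/4} + 1$. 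It then suffices to show that a nonnegative weak subsolution of $\gD v \geq -bv$ with $b \in L^4(B_1(p))$ satisfies $\sup_{B_{1/2}(p)} v \leq C \nm{v}{L^2(B_1(p))}$; undoing the shift and using $\nm{v}{L^2(B_1)} \leq \nm{u}{L^2(B_1)} + H\,\Vol(B_1)^{1/2} \leq \nm{u}{L^2(B_1)} + C_2^{1/2}\nm{h}{L^4(B_1)}$, then rescaling, yields the lemma. This is the one point where $\Vol(B_r(p)) \leq C_2 r^4$ is genuinely used: it converts the constant $H$ back into an $L^2$-quantity.

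For the iteration I would fix $\beta \geq 1$, radii $\tfrac12 \leq s < s' \leq 1$, and a cutoff $\phi$ with $\phi \equiv 1$ on $B_s(p)$, $\supp \phi \subset B_{s'}(p)$, $\brs{\N \phi} \leq 2(s'-s)^{-1}$. Testing $\gD v \geq -bv$ against $\phi^2 v^{2\beta-1} \geq 0$ (first with $\min\{v,N\}$ in the exponent to guarantee finiteness, then letting $N \to \infty$), integrating by parts, and absorbing the cross term with Young's inequality gives the Caccioppoli bound
\[
\int \brs{\N(\phi v^{\beta})}^2 \leq C\beta^2\left( \int \brs{\N\phi}^2 v^{2\beta} + \int \phi^2 b\, v^{2\beta}\right).
\]
The one delicate term is $\int \phi^2 b\, v^{2\beta}$: by H\"older, the interpolation $\nm{w}{L^{8/3}} \leq \nm{w}{L^2}^{1/2}\nm{w}{L^4}^{1/2}$, and Young's inequality,
\[
\int \phi^2 b\, v^{2\beta} \leq \nm{b}{L^4}\nm{\phi v^{\beta}}{L^{8/3}}^2 \leq C\nm{\phi v^{\beta}}{L^2}\nm{\phi v^{\beta}}{L^4} \leq \gd\nm{\phi v^{\beta}}{L^4}^2 + C\gd^{-1}\nm{\phi v^{\beta}}{L^2}^2 .
\]
Inserting the Caccioppoli bound and this inequality into the Sobolev inequality (constant $C_S$) for $\phi v^{\beta}$, choosing $\gd \sim (C_S \beta^2)^{-1}$ to absorb the $L^4$-term, and using $s'-s \leq 1$ to absorb the zeroth-order Sobolev term into the cutoff term, one obtains a reverse-H\"older inequality
\[
\left( \int_{B_s(p)} v^{4\beta}\right)^{1/2} \leq \frac{C\beta^{4}}{(s'-s)^2} \int_{B_{s'}(p)} v^{2\beta}, \qquad C = C(C_1,C_2,C_S).
\]
This is where the dimension matters: $L^4$-integrability of $b$ is super-critical in dimension $4$ (strictly above $L^{n/2}=L^2$), which is precisely what leaves room in the interpolation step to absorb a small fraction of $\nm{\phi v^{\beta}}{L^4}^2$.

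To finish, iterate with $\beta_j := 2^j$ and radii $s_j := \tfrac12(1+2^{-j})$, so $s_0 = 1$, $s_j \downarrow \tfrac12$, $s_j - s_{j+1} = 2^{-j-2}$. The reverse-H\"older inequality gives
\[
\nm{v}{L^{2^{j+2}}(B_{s_{j+1}})} \leq \left( C\,2^{4j}\,2^{2(j+2)}\right)^{2^{-(j+1)}}\nm{v}{L^{2^{j+1}}(B_{s_j})},
\]
and since $\sum_j 2^{-(j+1)}(\log C + 6j + 4) < \infty$ the infinite product of the prefactors converges; letting $j \to \infty$ yields $\sup_{B_{1/2}(p)} v \leq C\nm{v}{L^2(B_1(p))}$, which together with the two reductions proves the lemma.

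The argument is routine in outline; the step I expect to demand the most care is the potential term $\int \phi^2 b\, v^{2\beta}$ with $b$ only in $L^4$ -- one must exploit super-criticality to absorb a small fraction of the top-order term while carefully tracking the $\beta$-dependence of every constant, so that the telescoping product of Moser prefactors converges -- together with the bookkeeping of the inhomogeneity $h$, which I would sidestep entirely via the shift $v = u + \nm{h}{L^4}$.
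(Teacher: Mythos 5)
Your argument is correct, and it is essentially the standard Moser-iteration proof of this local maximum principle; the paper itself supplies no proof, simply citing \cite{TV} Lemma 3.9, whose proof proceeds along the same lines. Both of your reductions (scaling to $r=1$, and absorbing the inhomogeneity via $v = u + \nm{h}{L^4}$, which is where the volume upper bound enters) and your treatment of the potential term using the super-criticality of $L^4 \supset L^{n/2}$ in dimension four are sound.
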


\begin{proof}[Proof of Theorem \ref{er}] With the estimates we have shown in
place, the proof is identical to what appears in \cite{TV}.  We include it here
for convenience.  Using (\ref{riemanncommute}) one obtains the inequality
\begin{align*}
 \gD \brs{\N^k \Rm} \geq&\ - C \sum_{l=0}^k \brs{\N^l \Rm} \brs{\N^{k-l} \Rm} -
C \brs{\N^{k+2} \Rm}\\
 =&\ - C \brs{\N^k \Rm} \brs{\Rm} - C \sum_{l=1}^{k-1} \brs{\N^l \Rm}
\brs{\N^{k-l}
\Rm} - C \brs{\N^{k+2} \Rc}.
\end{align*}
Now combining this inequality, the estimates of Proposition \ref{erind}, and
Lemma \ref{ervol} we may apply Lemma \ref{ermoser} inductively to obtain the
required pointwise estimates.
\end{proof}

\section{Thick-thin decomposition} \label{ttsec}
\subsection{A point-picking result} \label{ppsec}

A crucial issue in understanding local regions of curvature flows is to produce
points ``near" singular points which are at the centers of controlled parabolic
balls.  By exploiting the tubular neighborhood technique developed in the proof
of Theorem \ref{mainthm}, we establish such a point-picking result below in
Proposition \ref{ppp}.  There is a natural algorithm consisting of picking
points in parabolic neighborhoods where the curvature norm has doubled relative
to the center (cf. Lemma \ref{ppl}) which produces a natural candidate for a
point of ``large" curvature which is at the center of a controlled parabolic
ball.  To ensure the existence of this parabolic ball requires controlling the
distance decay of two points which roughly speaking are farther
apart than their respective curvature scales.  For the Ricci flow, this central
issue is overcome by exploiting the second
variation of length  (\cite{P1} Lemma 8.3(b) cf. \cite{KL} Lemma 27.8).  This
technique seems to not apply to fourth-order curvature flows, and so we apply
the tubular neighborhood technique.

\begin{lemma} \label{ppl} Let $(M^n, g_t)$ be a complete solution to the $L^2$
flow with bounded curvature on
$[0,K^{-2}]$, and suppose $(x,t) \in M$ satisfies $f_3(x,K^{-2}) = K$.  Given $A
\geq 1$ and $0 < \eta < 1$, there exists a point $(\bar{x},\bar{t})$ and a
constant $C = C(\eta)$
such
that $d(x,\bar{x},\bar{t}) \leq C A f_3(x,t)^{-\eta}$ and for all
$(x',t')$ satisfying
\begin{align} \label{maxprop}
 d(x,x',t') \leq&\ d(x,\bar{x},\bar{t}) + A
f_3(\bar{x},\bar{t})^{-\eta}, \qquad \bar{t} -
\frac{f_3(\bar{x},\bar{t})^{-2}}{2} \leq t' \leq \bar{t}
\end{align}
one has
\begin{align} \label{maxprop2}
 f_3(x',t') \leq 2 f_3(\bar{x},\bar{t}).
\end{align}
\begin{proof} Let $(x_1,t_1) = (x,t)$.  Given
$(x_k,t_k)$, if the required property fails for this point, then by hypothesis
there is a point $(x_{k+1},t_{k+1})$ such that 
\begin{align*}
t_k -  f_3(x_k,t_k)^{-2}<&\ t_{k+1} \leq t_k,\\
 d(x,x_{k+1},t_{k+1}) \leq&\ d(x,x_k,t_k) + A f_3(x_k,t_k)^{-\eta},\\
f_3(x_{k+1},t_{k+1}) >&\ 2 f_3(x_k,t_k).
\end{align*}
Observe that at each step, by induction we have
\begin{align*}
t_k - f_3(x_k,t_k)^{-2} >&\ t_{k-1} - \frac{f_3(x_{k-1},t_{k-1})^{-2}}{2} -
\frac{f_3(x_k,t_k)^{-2}}{2}\\
 >&\ \dots > t_1 - \frac{1}{2} \sum_{j=1}^{k} f_3(x_j,t_j)^{-2}\\
>&\ t_1 - \frac{f_3(x_1,t_1)^{-2}}{2} \sum_{j=1}^{\infty}  2^{-2(j-1)}\\
=&\ K^{-2} - \frac{2 K^{-2}}{3}\\
>&\ 0,
\end{align*}
thus the choices of $t_k$ remain in our time interval of definition.  Since the
solution is smooth this process must terminate after finitely many
steps.  Call this final point $(\bar{x},\bar{t}) = (x_k,t_k)$.  It remains to
show the distance claim.  Observe the estimate
\begin{align*}
 d(x,\bar{x},\bar{t}) =&\ d(x,x_{k},t_{k})\\
\leq&\ d(x,x_{k-1},t_{k-1}) + A f_3(x_{k-1},t_{k-1})^{-\eta}\\
\leq&\ \dots \leq d(x,x_1,t_1) + A \sum_{j=1}^{k-1} f_3^{-\eta}
(x_j,t_j)\\
\leq&\ A f_3(x_1,t_1)^{-\eta} \sum_{j=1}^{\infty} 2^{-\eta j}\\
\leq&\ C(\eta) A  f_3^{-\eta} (x_1,t_1).
\end{align*}
\end{proof}
\end{lemma}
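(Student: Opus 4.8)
The plan is to run a Perelman-type point-selection iteration, passing repeatedly to a point of the flow where $f_3$ has at least doubled. Set $(x_1,t_1) := (x,t)$, so that $f_3(x_1,t_1) = K$ by hypothesis, and write $r_k := f_3(x_k,t_k)^{-1}$. Given $(x_k,t_k)$, there are two cases. Either it already has the property asserted in the lemma, namely that $f_3(x',t') \leq 2 f_3(x_k,t_k)$ for every $(x',t')$ with $d(x,x',t') \leq d(x,x_k,t_k) + A r_k^{\eta}$ and $t_k - \frac{1}{2} r_k^2 \leq t' \leq t_k$; in that case we halt and take $(\bar x,\bar t) := (x_k,t_k)$. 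Or the property fails, in which case there is a point $(x_{k+1},t_{k+1})$ lying in that parabolic region with $f_3(x_{k+1},t_{k+1}) > 2 f_3(x_k,t_k)$, equivalently $r_{k+1} < \frac{1}{2} r_k$.

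The first thing to verify is that the iteration never leaves the time interval of definition. Since $r_{k+1} < \frac{1}{2} r_k$ we get $r_k < 2^{-(k-1)} K^{-1}$, and telescoping the constraint $t_{j+1} \geq t_j - \frac{1}{2} r_j^2$ gives
\begin{align*}
t_k \;\geq\; K^{-2} - \frac{1}{2} \sum_{j \geq 0} 4^{-j} K^{-2} \;=\; \frac{1}{3} K^{-2} \;>\; 0,
\end{align*}
so every point visited lies in the slab $M \times \left[\frac{1}{3} K^{-2}, K^{-2}\right]$, and in particular the parabolic slices used in the selection stay inside $[0,K^{-2}]$. Next one checks that the iteration terminates: the curvature bound hypothesis together with the interior derivative estimates of Lemma \ref{derivest} yields a uniform bound on $f_3$ over that slab, while the selection forces $f_3(x_k,t_k) \geq 2^{k-1} K$; hence only finitely many steps are possible, and the terminal point $(\bar x,\bar t)$ satisfies (\ref{maxprop})--(\ref{maxprop2}) by construction.

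It remains to estimate $d(x,\bar x,\bar t)$. Telescoping the distance constraint $d(x,x_{j+1},t_{j+1}) \leq d(x,x_j,t_j) + A r_j^{\eta}$ from $x_1 = x$, where the distance is zero, and using $r_j \leq 2^{-(j-1)} K^{-1}$,
\begin{align*}
d(x,\bar x,\bar t) \;\leq\; A \sum_{j \geq 1} r_j^{\eta} \;\leq\; A K^{-\eta} \sum_{j \geq 1} 2^{-\eta(j-1)} \;=\; \frac{A K^{-\eta}}{1 - 2^{-\eta}} \;=:\; C(\eta)\, A\, f_3(x,t)^{-\eta},
\end{align*}
which is exactly the asserted bound, with $C(\eta) = (1 - 2^{-\eta})^{-1}$. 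I do not expect any genuine obstacle; the only care needed is bookkeeping the nested parabolic regions and checking that the two geometric series (in $r_j^2$ for the time estimate and in $r_j^{\eta}$ for the distance estimate) converge with the right constants. This is precisely why the selection is arranged so that $f_3$ \emph{doubles} at each step rather than merely increasing, and it is also what makes $C(\eta)$ depend only on $\eta$.
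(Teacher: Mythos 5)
Your proposal is correct and is essentially the paper's own argument: the same doubling iteration on $f_3$, the same geometric-series bookkeeping showing $t_k \geq \frac{1}{3}K^{-2}$ so the points stay in the time slab, and the same telescoped distance estimate yielding $C(\eta) = (1-2^{-\eta})^{-1}$. Your termination step is in fact slightly more careful than the paper's (which appeals only to smoothness), since you invoke the bounded-curvature hypothesis together with Lemma \ref{derivest} to get a uniform bound on $f_3$ over the slab, which is what actually forces the iteration to stop.
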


\begin{rmk} The reason for stating the above lemma with an arbitrary choice of
$\eta$ is due to the next proposition, where we show that the point
$(\bar{x},\bar{t})$ constructed above has a parabolic neighborhood of a
controlled size, provided $\eta$ is chosen properly with respect to the
dimension of the manifold $n \leq 4$.  One would hope to be able to take $\eta =
\frac{1}{2}$ in every dimension, but we are not able to show this.  For the next
proposition we set
\begin{align} \label{etadef}
\eta := \begin{cases}
\frac{1}{4} & n = 3,\\
\frac{1}{10} & n = 4
\end{cases}
\end{align}
\end{rmk}

\begin{prop} \label{ppp} Assume the setup of Lemma \ref{ppl}, assume $n \leq 4$,
and let $\bar{K} =
f_3(\bar{x},\bar{t}) \geq 1$.  Fix some $\rho > 0$ and suppose that for all
$(x',t')$ such that (\ref{maxprop}) holds, one has
\begin{align} \label{pppnc}
\nu_{3}(x',t') \geq \mu.
\end{align}
There exists a constant $\gd = \gd(\mu, \FF(g_0),A) > 0$ so that one has
$\bar{\gs}_3(\bar{x},\bar{t}) \geq \gd \bar{K}^{-\frac{1}{2}}$.
\begin{proof} Let $\gg$ denote the minimizing geodesic connecting $x$ to
$\bar{x}$ in the metric $g_{\bar{t}}$.  Our goal is to show, by a continuity
method, that every point $p \in \gg$ satisfies $\bar{\gs}_3(\bar{x},\bar{t})
\geq
\gd \bar{K}^{-\frac{1}{2}}$ for appropriately chosen $\gd$.  Parameterize $\gg$
by unit speed, i.e. $\gg :
[0,d(x,\bar{x},\bar{t})] \to M$, and let
\begin{align*}
 \Omega := \left\{ s \in [0,d(x,\bar{x},\bar{t})]\ | \
\bar{\gs}_3(\gg(s),\bar{t})
> \gd \bar{K}^{-\frac{1}{2}} \right\}.
\end{align*}
First we show that $0 \in \Omega$.  Given $(x',t') \in P_{\gd
\bar{K}^{-\frac{1}{2}}}(x,\bar{t})$, then
\begin{align*}
 d(x,x',t') \leq \gd \bar{K}^{-\frac{1}{2}} \leq d(x,\bar{x},\bar{t}) + A
\bar{K}^{-\eta},
 \end{align*}
for $\gd \leq 1 \leq A$, since $\bar{K} \geq 1$.  Thus (\ref{maxprop}) holds,
and so by construction one has
$f_3(x',t') \leq 2 \bar{K}$, and the claim follows.  Next, observe that since
the defining inequality for $\Omega$ is strict and the one-parameter family of
metrics is smooth, it follows that $\Omega$ is open.

Lastly we show that $\Omega$ is closed.  We assume $[0,s) \subset \Omega$, and
show that this implies $s \in \Omega$.  This proceeds by an independent
continuity method.  Let $x' = \gg(s)$, and let
\begin{align*}
 \Xi = \left\{ t \in [\bar{t} - \gd^4 \bar{K}^{-2},\bar{t}] \ |\ \forall y \in
B_{\gd
\bar{K}^{-\frac{1}{2}}}(x',t), \ d(x,y,t) < d(x,\bar{x},\bar{t}) + A
\bar{K}^{-\eta} \right\}.
\end{align*}
Note that, if $\Xi = [\bar{t} - \gd^4 \bar{K}^{-2}, \bar{t}]$ then every point
in $P_{\gd \bar{K}^{-\frac{1}{2}}}(x',\bar{t})$ satisfies (\ref{maxprop}) and so
(\ref{maxprop2}) applies, finishing the claim that $s \in \Omega$, thus
finishing the proposition.
First we show that $\bar{t} \in \Xi$.  Note that since the geodesic $\gg$ is
minimizing in the metric $g_{\bar{t}}$ and $x'$ is a point on $\gg$ it follows
that $d(x,x',\bar{t}) \leq d(x,\bar{x},\bar{t})$.  Thus if $y \in B_{\gd
\bar{K}^{-\frac{1}{2}}}(x',\bar{t})$ then by the triangle
inequality one has that
\begin{align*}
 d(x,y,\bar{t}) \leq&\ d(x,x',\bar{t}) + d(x',y,\bar{t}) \leq
d(x,\bar{x},\bar{t}) + \gd \bar{K}^{-\frac{1}{2}} < d(x,\bar{x},\bar{t}) + A
\bar{K}^{-\eta}.
\end{align*}
Thus $\bar{t} \in \Xi$. As above, openness of $\Xi$ follows from openness of its
defining inequality.

Lastly we show that $\Xi$ is closed.  We assume that $(\bar{t} - \ge
\gd^4 \bar{K}^{-2},\bar{t}] \subset \Xi$ for some $\ge \leq 1$, and show
that $\bar{t} - \ge \gd^4 \bar{K}^{-2} \in \Xi$.  As the solution is smooth on
the interval $[\bar{t} - \ge \gd^4 \bar{K}^{-2}, \bar{t}]$, there exists some
$\gb > 0$ such that $\gb$-quasi geodesics in this time interval admit foliated
disc neighborhoods as in Lemma \ref{tubularfocal}.  With this choice of $\gb$
construct a $\gb$-quasi geodesic $\gg$ connecting $x$ to $x'$, restricted to the
time interval $[\bar{t} - \ge \gd^4 \bar{K}^{-2},\bar{t}]$, according to Lemma
\ref{almgeodlemma}.  This construction comes with a decomposition of the time
interval into finitely many subintervals $I_j := [a_j,a_{j+1}]$ which
partition $[\bar{t} - \ge \gd^4 \bar{K}^{-2}, \bar{t}]$.  Restrict attention to
one such interval, on which the curve $\gg$ is fixed in time.

Fix a real number $0 < R < \gd \bar{K}^{-\frac{1}{2}}$. Observe that, in the
context of the continuity method, (\ref{maxprop}) will hold at all points of
$D_R (\gg)$ for $R$ chosen as above.  In particular, for these points, by the
hypothesis
$\nu_3 \geq \mu$ we obtain by Lemma \ref{cheegers} a uniform lower bound on the
injectivity
radius of the form $\inj \geq c(\mu)\bar{K}^{-\frac{1}{2}}$.  If
$\gd$ is chosen sufficiently small with respect to this constant $c(\mu)$ it follows that Lemma
\ref{tubularfocal} applies to $D_R(\gg)$.

Fix a point $p \in \gg$ and a time $t' \in I_j$.
Given $q \in D_R(p)$, note that by the defining property for $\Xi$ we have that
(\ref{maxprop}) holds at $q$, and so we have $f_3(q,t') \leq 2 \bar{K}$.  This
provides us with a uniform estimate $\brs{\N \grad \FF} \leq C
\bar{K}^{\frac{5}{2}}$, which can be integrated along a geodesic to yield
\begin{align*}
\brs{\grad \FF}(p) \leq \brs{\grad \FF}(q) + C R \bar{K}^{\frac{5}{2}}.
\end{align*}
By arguing as in (\ref{pntest}) we can average over $D_R(p)$ to yield
\begin{align*}
\brs{\grad \FF}(p,t) \leq&\ \Area(D_R(p))^{-\frac{1}{2}} \left[ \int_{D_R(p)}
\brs{\grad \FF}^2(q) dA(q) \right]^{\frac{1}{2}} + C R \bar{K}^{\frac{5}{2}}.
\end{align*}
Now arguing similarly to (\ref{lengthestimate}) we yield
\begin{align*}
\frac{d}{dt} L(\gg) \leq C R^{\frac{1-n}{2}} \left[\int_M \brs{\grad \FF}^2
\right]^{\frac{1}{2}} L(\gg)^{\frac{1}{2}} + C R \bar{K}^{\frac{5}{2}} L(\gg).
\end{align*}
Observe that, by the construction of Lemma \ref{ppl}, since we assume $\bar{K}
\geq 1$ we have at least $L(\gg) \leq CA$, where the constant $C$ is universal. 
Thus, as long as $L(\gg) \leq CA + 1$ we can integrate this differential
inequality over our time interval $I_j = [a_j,a_{j+1}]$ to yield
\begin{align*}
d(x,x',t) \leq d(x,x',a_j) + C R^{\frac{1-n}{2}} \int_{a_j}^{a_{j+1}} \left[
\int_M \brs{\grad \FF}^2 dV_g \right]^{\frac{1}{2}} dt + C R
\bar{K}^{\frac{5}{2}}
\int_{a_j}^{a_{j+1}} dt .
\end{align*}
Thus after an iterative application of these estimates we obtain for any $t \in
[\bar{t} - \ge \gd^4 \bar{K}^{-2} ,\bar{t}]$ we have
\begin{align*}
d(x,x',t) \leq&\ d(x,x',\bar{t}) + C R^{\frac{1-n}{2}} \int_{\bar{t} - \ge \gd^4
\bar{K}^{-2}}^{\bar{t}} \left[ \int_M \brs{\grad \FF}^2 dV_g
\right]^{\frac{1}{2}} dt
+ C R \bar{K}^{\frac{5}{2}} \left[ \ge \gd^4 \bar{K}^{-2} \right]\\
\leq&\ d(x,\bar{x},\bar{t}) + C R^{\frac{1-n}{2}} \left[ \int_{\bar{t} - \ge
\gd^4 \bar{K}^{-2}}^{\bar{t}} dt \right]^{\frac{1}{2}} \left[ \int_{\bar{t} -
\ge
\gd^4 \bar{K}^{-2}}^{\bar{t}} \int_M \brs{\grad \FF}^2 dV_g dt
\right]^{\frac{1}{2}} + C \ge \gd^4 R
\bar{K}^{\frac{1}{2}}\\
\leq&\ d(x,\bar{x},\bar{t}) + C \ge^{\frac{1}{2}} \FF(g_0)^{\frac{1}{2}} \gd^2
R^{\frac{1-n}{2}} \bar{K}^{-1} + C \ge \gd^4 R \bar{K}^{\frac{1}{2}}\\
\leq&\ d(x,\bar{x},\bar{t}) + C \FF(g_0)^{\frac{1}{2}} \gd^2 R^{\frac{1-n}{2}}
\bar{K}^{-1} + C \gd^4 R \bar{K}^{\frac{1}{2}}\\
=&\ d(x,\bar{x},\bar{t}) + I + II.
\end{align*}
It remains to estimate the two terms $I$ and $II$ by an appropriate choice of
constants.  First restrict attention to the case $n=4$.  Set
$R = \gd \bar{K}^{-\frac{3}{5}}$.  Since $\bar{K} \geq 1$ this choice satisfies
$0 < R < \gd \bar{K}^{-\frac{1}{2}}$ as required by the construction.  Then, by
choosing $\gd$ appropriately small with respect to universal constants, $A$ and
$\FF(g_0)$, we obtain
\begin{align*}
I \leq&\ C \FF(g_0)^{\frac{1}{2}}  \gd^2 \left( \gd \bar{K}^{-\frac{3}{5}}
\right)^{-\frac{3}{2}} \bar{K}^{-1} = C \FF(g_0)^{\frac{1}{2}} \gd^{\frac{1}{2}}
\bar{K}^{-\frac{1}{10}} \leq \frac{A}{3} \bar{K}^{-\frac{1}{10}}.
\end{align*}
Similarly we estimate
\begin{align*}
II = C \gd^4 (\gd \bar{K}^{-\frac{3}{5}}) \bar{K}^{\frac{1}{2}} = C \gd^5
\bar{K}^{-\frac{1}{10}} \leq \frac{A}{3} \bar{K}^{-\frac{1}{10}}.
\end{align*}
Putting these estimates together and choosing further $\gd \leq \frac{A}{3}$ we
obtain for $t = \bar{t} - \ge \gd^4 \bar{K}^2$ and any $y \in B_{\gd
\bar{K}^{-\frac{1}{2}}}(x',t)$ the estimate
\begin{align*}
d(x,y,t) \leq d(x,x',t) + d(x',y,t) < d(x,\bar{x},\bar{t}) + \frac{2A}{3}
\bar{K}^{-\frac{1}{10}} + \frac{A}{3} \bar{K}^{-\frac{1}{2}} \leq
d(x,\bar{x},\bar{t}) + A \bar{K}^{-\frac{1}{10}},
\end{align*}
as required.

In the case $n=3$, we set $R = \gd \bar{K}^{-\frac{3}{4}} < \gd
\bar{K}^{-\frac{1}{2}}$.  Then by choosing $\gd$ appropriately small with
respect to universal constants, $A$ and $\FF(g_0)$, we obtain
\begin{align*}
I \leq&\ C \FF(g_0)^{\frac{1}{2}}  \gd^2 \left( \gd \bar{K}^{-\frac{3}{4}}
\right)^{-1} \bar{K}^{-1} = C \FF(g_0)^{\frac{1}{2}} \gd
\bar{K}^{-\frac{1}{4}} \leq \frac{A}{3} \bar{K}^{-\frac{1}{4}}.
\end{align*}
Similarly we estimate
\begin{align*}
II = C \gd^4 (\gd \bar{K}^{-\frac{3}{4}}) \bar{K}^{\frac{1}{2}} = C \gd^5
\bar{K}^{-\frac{1}{4}} \leq \frac{A}{3} \bar{K}^{-\frac{1}{4}}.
\end{align*}
Putting these estimates together and choosing further $\gd \leq \frac{A}{3}$ we
obtain for $t = \bar{t} - \ge \gd^4 \bar{K}^2$ and any $y \in B_{\gd
\bar{K}^{-\frac{1}{2}}}(x',t)$ the estimate
\begin{align*}
d(x,y,t) \leq d(x,x',t) + d(x',y,t) < d(x,\bar{x},\bar{t}) + \frac{2A}{3}
\bar{K}^{-\frac{1}{4}} + \frac{A}{3} \bar{K}^{-\frac{1}{2}} \leq
d(x,\bar{x},\bar{t}) + A \bar{K}^{-\frac{1}{4}},
\end{align*}
as required.
\end{proof}
\end{prop}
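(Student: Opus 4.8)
I would prove the bound $\bar{\gs}_3(\bar{x},\bar{t})\geq\gd\bar{K}^{-1/2}$ by a nested continuity argument. The outer continuity method runs along the $g_{\bar{t}}$-minimizing geodesic $\gg:[0,d(x,\bar{x},\bar{t})]\to M$ from $x$ to $\bar{x}$: set $\Omega=\{s:\bar{\gs}_3(\gg(s),\bar{t})>\gd\bar{K}^{-1/2}\}$ and show it is nonempty, open and closed, so $\Omega=[0,d(x,\bar{x},\bar{t})]$; since $\bar{x}=\gg(d(x,\bar{x},\bar{t}))$ this gives $\bar{\gs}_3(\bar{x},\bar{t})>\gd\bar{K}^{-1/2}$. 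Nonemptiness ($0\in\Omega$) is immediate: any $(x',t')\in P_{\gd\bar{K}^{-1/2}}(x,\bar{t})$ has $d(x,x',t')\leq\gd\bar{K}^{-1/2}\leq d(x,\bar{x},\bar{t})+A\bar{K}^{-\eta}$ for $\gd\leq1\leq A$ (using $\bar{K}\geq1$), hence satisfies $(\ref{maxprop})$, so $f_3(x',t')\leq 2\bar{K}<(\gd\bar{K}^{-1/2})^{-2}$ once $\gd$ is small. Openness follows from the strictness of the defining inequality and smoothness of the flow.

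\textbf{Closedness of $\Omega$.} Assuming $[0,s)\subset\Omega$ and writing $x'=\gg(s)$, I run an inner continuity method in time on
\[
\Xi=\{\,t\in[\bar{t}-\gd^4\bar{K}^{-2},\bar{t}]:\ \forall\,y\in B_{\gd\bar{K}^{-1/2}}(x',t),\ d(x,y,t)<d(x,\bar{x},\bar{t})+A\bar{K}^{-\eta}\,\}.
\]
One has $\bar{t}\in\Xi$ because $\gg$ is $g_{\bar{t}}$-minimizing so $d(x,x',\bar{t})\leq d(x,\bar{x},\bar{t})$, and the triangle inequality together with $\bar{K}\geq1$, $\eta\leq\tfrac{1}{2}$, $\gd<A$ gives the strict inequality; openness of $\Xi$ is again automatic. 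If $\Xi$ is shown closed, then $\Xi=[\bar{t}-\gd^4\bar{K}^{-2},\bar{t}]$, so every point of $P_{\gd\bar{K}^{-1/2}}(x',\bar{t})$ satisfies $(\ref{maxprop})$, whence $f_3\leq 2\bar{K}$ there and $\bar{\gs}_3(x',\bar{t})>\gd\bar{K}^{-1/2}$, closing $\Omega$.

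\textbf{Closedness of $\Xi$ via tubular averaging.} This is the heart of the matter, and where the hypothesis $(\ref{pppnc})$ and the tubular-averaging technique of Theorem \ref{mainthm} enter. Suppose $(\bar{t}-\ge\gd^4\bar{K}^{-2},\bar{t}]\subset\Xi$ with $\ge\leq1$; build a $\gb$-quasi-geodesic from $x$ to $x'$ on this interval via Lemma \ref{almgeodlemma}, decomposed into subintervals on which the curve is constant. On each subinterval pick $0<R<\gd\bar{K}^{-1/2}$: by the continuity hypothesis all points of $D_R(\gg)$ satisfy $(\ref{maxprop})$, so $f_3\leq 2\bar{K}$ there, and combining this with $\nu_3\geq\mu$ and Cheeger's Lemma (Lemma \ref{cheegers}) yields $\inj\geq c(\mu)\bar{K}^{-1/2}$, so for $\gd$ small with respect to $c(\mu)$ Lemma \ref{tubularfocal} applies to $D_R(\gg)$. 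Using $\grad\FF=L(\N^2\Rc)+\Rm^{*2}$ (\cite{Besse} Prop.~4.70) to get $\brs{\N\grad\FF}\leq C\bar{K}^{5/2}$ on $D_R(\gg)$, averaging $\brs{\grad\FF}$ over the discs $D_R(p)$ with the area lower bound $\Area(D_R(p))\geq cR^{n-1}$ of Lemma \ref{arealowerbound}, and applying the coarea formula (Lemma \ref{coarea}) with $\brs{d\pi}\leq2$, exactly as in $(\ref{pntest})$ and $(\ref{lengthestimate})$, I obtain
\[
\frac{d}{dt}L(\gg)\leq C R^{\frac{1-n}{2}}\left[\int_M\brs{\grad\FF}^2\right]^{1/2}L(\gg)^{1/2}+C R\,\bar{K}^{5/2}L(\gg).
\]

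\textbf{Finishing.} Since $L(\gg)\leq CA$ by the distance estimate in Lemma \ref{ppl}, I integrate this ODE over each subinterval, sum, and apply Cauchy--Schwarz with the energy identity $\int_0^{\infty}\int_M\brs{\grad\FF}^2\leq\FF(g_0)$ to reach
\[
d(x,x',t)\leq d(x,\bar{x},\bar{t})+C\,\FF(g_0)^{1/2}\gd^2 R^{\frac{1-n}{2}}\bar{K}^{-1}+C\,\gd^4 R\,\bar{K}^{1/2}.
\]
Choosing $R=\gd\bar{K}^{-3/5}$ when $n=4$ and $R=\gd\bar{K}^{-3/4}$ when $n=3$, and recalling $\eta=\tfrac{1}{10}$ (resp.\ $\tfrac{1}{4}$) from $(\ref{etadef})$, both error terms are at most $\tfrac{A}{3}\bar{K}^{-\eta}$ once $\gd$ is small in terms of $\mu$, $\FF(g_0)$ and $A$; the triangle inequality then gives $d(x,y,t)<d(x,\bar{x},\bar{t})+A\bar{K}^{-\eta}$ for all $y\in B_{\gd\bar{K}^{-1/2}}(x',t)$, closing $\Xi$ and hence the whole argument. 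The main obstacle is precisely this distance-decay estimate: bounding how fast $x$ and $x'$, which are typically much farther apart than their common curvature scale $\bar{K}^{-1/2}$, can approach one another under the flow. This forces the tubular-averaging construction, and the exponent bookkeeping in the choice of $R$ is exactly what restricts the argument to $n\leq4$ and dictates the dimension-dependent values of $\eta$.
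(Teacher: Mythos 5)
Your proposal is correct and follows essentially the same route as the paper: the outer continuity method along the minimizing geodesic for $\Omega$, the inner continuity method in time for $\Xi$, the tubular-averaging estimate on $\gb$-quasi-geodesics combined with the energy identity and Cheeger's Lemma, and the same choices $R=\gd\bar{K}^{-3/5}$ ($n=4$), $R=\gd\bar{K}^{-3/4}$ ($n=3$) matching the exponents $\eta$ of (\ref{etadef}). No gaps.
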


\subsection{Energy concentration in thick piece}

In this subsection we prove a result on the concentration of energy around
singular points of the flow, assuming a noncollapsing bound along the sequence. 
We begin with a useful definition.

\begin{defn}
 Let $(M^n, g_t)$ be a solution to the $L^2$
flow on $[0,T)$ such
that every time slice is complete.  Call a sequence $\{x_i,t_i\}$ in $M \times
[0,T)$ a \emph{$(\gd,\mu)$-thick blowup sequence} if
 \begin{gather} \label{thicklocalblowup}
  \begin{split}
 \lim_{i \to \infty} f_3(x_i,t_i) =&\ \infty\\
\bar{\gs}_3 (x_i,t_i) \geq&\ \gd f_3(x_i,t_i)^{-\frac{1}{2}}\\
 \liminf_{i \to \infty} \nu_3(x_i,t_i) \geq&\ \mu,   
  \end{split}
 \end{gather}
for some constants $\gd,\mu > 0$.
\end{defn}

\begin{prop} \label{thickblowup} Let $(M^n, g_t)$ be a solution to the $L^2$
flow on $[0,T)$ such
that every time slice is complete.
\begin{enumerate}
 \item If $n=3$, there does not exist a $(\gd,\mu)$-thick blowup sequence.
 \item If $n=4$, there is a constant $\ge = \ge(\gd,\mu)$ such that any
$(\gd,\mu)$-thick blowup sequence satisfies
 \begin{align} \label{thickblowupenergy}
  \liminf_{i \to \infty} \int_{B_{\frac{\bar{\gs}_2(x_i,t_i)}{2}}(x_i,t_i)}
\brs{\Rm}^2 dV >
\ge.
 \end{align}
 In particular, if $\FF(g_0) \leq \ge$ then there does not exist a
$(\gd,\mu)$-thick blowup sequence.
\end{enumerate}
\begin{proof}  Let $\gl_i := f_3(x_i,t_i)$, and consider the blowup sequence of
pointed solution to the $L^2$ flow given  by $\{(M, g^i_t,x_i) \}$, where
\begin{align*}
 g^i_t(x,t) := \gl_i g\left( x, t_i + \frac{t}{\gl_i^2} \right).
\end{align*}
By construction, for every $i$ one has $f_3^i(x_i,0) = 1$, and moreover by the
second equation of (\ref{thicklocalblowup}) there is a uniform bound (depending
on $\gd$) on $f_3^i$ on a parabolic ball of radius $\gd$ around $(x_i,0)$. 
Lastly, by the third equation of (\ref{thicklocalblowup}), in the rescaled
metric there is a uniform lower bound
\begin{align*}
 \Vol B_{\gd}(x_i,0) \geq \mu.
\end{align*}
Since $f_3$ has a uniform bound on this ball, in particular there is a uniform
curvature estimate on this ball, and it follows from Lemma \ref{cheegers} that
there is a constant $i(\gd,\mu)$ such that
\begin{align*}
 \inj_{g^i_0}(x_i) \geq i.
\end{align*}
By Theorem \ref{locacomp}, we obtain a local
limiting solution $(M_{\infty}, g^{\infty}_t,x_{\infty})$ to the $L^2$ flow,
which moreover satisfies.
\begin{align} \label{thicklimit}
 f_3(x_{\infty},0) = 1.
\end{align}

First suppose $n=3$.  Then one observes that
\begin{align*}
 \int_{B(x_{\infty},1)} \brs{\Rm}^2 \leq&\ \limsup_{i \to \infty} \FF(g^i(0))\\
 \leq&\ \limsup_{i \to \infty} \FF(\gl_i g(t_i))\\
 =&\ \limsup_{i \to \infty} \gl_i^{-\frac{1}{2}} \FF(g(t_i))\\
 \leq&\ \limsup_{i \to \infty} \gl_i^{-\frac{1}{2}} \FF(g(0))\\
 =&\ 0.
\end{align*}
This implies that $g_{\infty}$ is flat, contradicting (\ref{thicklimit}).

Now suppose $n=4$.  We show that the metric $g^{\infty}_t \equiv g^{\infty}_0$
is critical.  Using the scaling-invariance of $\FF$ in dimension $4$ we obtain
\begin{align*}
\int_{-1}^0 \int_{M_{\infty}} \brs{\grad \FF_{\infty}}^2 dV_{\infty} dt \leq&\
\lim_{j \to \infty} \int_{\frac{-1}{\gl_j^2} + t_j}^{t_j} \int_M \brs{\grad
\FF_j}^2 dV_j dt\\
=&\ \lim_{j \to \infty} \mathcal F\left(g \left(\frac{-1}{\gl_j^2} + t_j\right)
\right) - \mathcal F(g(t_j))\\
=&\ 0,
\end{align*}
where the last line follows since $\FF$ is monotonically decreasing and bounded
below.  Thus $g^{\infty}_0$ is critical.  Now note that, if
(\ref{thickblowupenergy}) was not satisfied by the original sequence, we could
choose an appropriate subsequence so that, in the limiting space, we have
\begin{align*}
\int_{B_{\frac{\gd}{2}}(x_{\infty},0)} \brs{\Rm}^2 dV \leq \ge.
\end{align*}
Lastly, observe that our uniform curvature and injectivity radius estimates for
the limit space imply that there is a uniform Sobolev constant bound on
$B_{\frac{3 \gd}{4}}(x_{\infty},0)$.  Thus, by Theorem \ref{er}, we conclude
that $f_3(x_{\infty},0) \leq C(\gd,\mu) \ge$.  For $\ge$ chosen sufficiently
small this violates
(\ref{thicklimit}), yielding a contradiction and finishing the proof.
\end{proof}
\end{prop}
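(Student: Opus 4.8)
The plan is to run a parabolic blowup argument: set $\gl_i := f_3(x_i,t_i)$ (so $\gl_i\to\infty$) and consider the rescaled flows $g^i_t(x) := \gl_i\,g\!\left(x,\,t_i + \gl_i^{-2}t\right)$, extract a limiting $L^2$ flow, and derive a contradiction with the normalization $f_3(x_\infty,0)=1$. First I would translate the three defining conditions of a $(\gd,\mu)$-thick blowup sequence into uniform control on the rescaled flows. By construction $f_3^i(x_i,0)=1$; the hypothesis $\bar{\gs}_3(x_i,t_i)\geq\gd\gl_i^{-1/2}$ rescales to $\bar{\gs}_3^i(x_i,0)\geq\gd$, i.e. $\sup_{P_\gd(x_i,0)}f_3^i\leq\gd^{-2}$, which by the local smoothing estimate (Theorem \ref{smoothing1}) and the parabolic ball comparison (Lemma \ref{parballcomp}) upgrades to uniform bounds on $f_m^i$ for every $m$ on a product spacetime neighbourhood of $(x_i,0)$. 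The scale-invariant inequality $\nu_3(x_i,t_i)\geq\mu$ is unchanged by rescaling, and since $\gs_3^i(x_i,0)\geq\bar{\gs}_3^i(x_i,0)\geq\gd$ while $f_3^i$ (hence $\Rc$) is bounded on $B_{\gs_3^i(x_i,0)}(x_i,0)$, Bishop--Gromov relative volume comparison converts it into $\Vol B_\gd(x_i,0)\geq c(n)\mu\gd^n$. Together with the curvature bound this feeds Cheeger's Lemma (Lemma \ref{cheegers}) to give $\inj_{g^i_0}(x_i)\geq\iota(\gd,\mu)>0$.

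These bounds are exactly what the local compactness theorem (Theorem \ref{locacomp}) needs: along a subsequence we obtain a pointed limiting $L^2$ flow $(M_\infty,g^\infty_t,x_\infty)$ on a spacetime neighbourhood, converging in the Cheeger--Gromov sense (so curvature converges pointwise), with $f_3(x_\infty,0)=1$ and $\inj_{g^\infty_0}(x_\infty)>0$. For $n=3$ the proof ends here: $\FF$ has scaling weight $\tfrac{n}{2}-2=-\tfrac{1}{2}$ in dimension three, so by monotonicity of $\FF$ along the flow, $\int_{B_r(x_\infty,0)}\brs{\Rm_\infty}^2\leq\limsup_i\FF(g^i_0)=\limsup_i\gl_i^{-1/2}\FF(g_{t_i})\leq\limsup_i\gl_i^{-1/2}\FF(g_0)=0$ on any ball of the limit, so $g_\infty$ is flat, contradicting $f_3(x_\infty,0)=1$.

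For $n=4$, $\FF$ is scale invariant and the preceding bound only gives $\int\brs{\Rm_\infty}^2\leq\FF(g_0)$, so a sharper argument is required: the limit flow must be \emph{static and critical}. Indeed $s\mapsto\int_M\brs{\grad\FF}^2\,dV$ is integrable on $[0,T)$ (its integral over $[0,t]$ is $\FF(g_0)-\FF(g_t)$) and the dissipation is itself scale invariant in dimension four, so by absolute continuity of the integral $\int_{t_i-\gl_i^{-2}}^{t_i}\int_M\brs{\grad\FF}^2\,dV\,ds\to0$, hence the spacetime integral of $\brs{\grad\FF_\infty}^2$ over the domain of the limit vanishes; since the $L^2$ flow equation passes to the limit, $\del_t g^\infty\equiv0$ and $g^\infty_0$ is critical, so it satisfies the system (\ref{crit1}),(\ref{crit2}) of Lemma \ref{criteqns}, while the uniform curvature and injectivity radius bounds give a uniform local Sobolev constant near $x_\infty$. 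If (\ref{thickblowupenergy}) failed along a subsequence then, using $\bar{\gs}_2^i(x_i,0)\geq\bar{\gs}_3^i(x_i,0)\geq\gd$ (as $f_2\leq f_3$) and scale invariance of $\int\brs{\Rm}^2$, we would get $\int_{B_{\gd/2}(x_i,0)}\brs{\Rm^i}^2\leq\int_{B_{\bar{\gs}_2(x_i,t_i)/2}(x_i,t_i)}\brs{\Rm}^2\leq\ge$, hence $\int_{B_{\gd/2}(x_\infty,0)}\brs{\Rm_\infty}^2\leq\ge$ in the limit. Applying the $\ge$-regularity theorem for critical metrics (Theorem \ref{er}) on this ball bounds $f_3(x_\infty,0)\leq C(\gd,\mu)\ge^{\kappa}$ for a universal $\kappa>0$, which for $\ge$ small (depending only on $\gd,\mu$) contradicts $f_3(x_\infty,0)=1$. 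This proves (\ref{thickblowupenergy}); the final claim follows since $\FF(g_0)\leq\ge$ and monotonicity force $\int_{B_{\bar{\gs}_2(x_i,t_i)/2}(x_i,t_i)}\brs{\Rm}^2\leq\FF(g_{t_i})\leq\ge$ for all $i$.

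The main obstacle is the $n=4$ case, and within it the two points not covered by prior work: showing the blowup limit is a static critical metric, which rests essentially on the scale invariance and monotonicity of $\FF$ in dimension four (this is exactly where the restriction $n\leq4$ enters, through the control of distances and volumes under the flow), and invoking Theorem \ref{er} in the form that does \emph{not} presuppose constant scalar curvature, which is forced on us because the blowup yields only a local limit. The remaining bookkeeping — promoting the parabolic curvature-radius and curvature-scale volume-ratio bounds of a thick blowup sequence to the uniform curvature, volume, and injectivity radius bounds required by Theorem \ref{locacomp}, in particular the Bishop--Gromov step from scale $\gs_3$ to the fixed scale $\gd$ — is routine but needs care.
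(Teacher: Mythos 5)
Your proposal is correct and follows essentially the same route as the paper: the same parabolic blowup at scale $f_3(x_i,t_i)$, the same use of Cheeger's Lemma and the local compactness theorem to extract a limit with $f_3(x_\infty,0)=1$, the same scaling argument for $n=3$, and for $n=4$ the same identification of the limit as a static critical metric via the energy dissipation identity followed by the $\ge$-regularity theorem. Your extra bookkeeping (Bishop--Gromov to pass from the scale $\gs_3$ to the fixed scale $\gd$, and invoking the local smoothing estimate to secure the higher-derivative bounds that Theorem \ref{locacomp} formally requires) only makes explicit steps the paper leaves implicit.
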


\begin{rmk} Again note that such a phenomenon is certainly restricted to
dimension $n \leq 4$: choosing any point on the homogeneous shrinking sphere of
dimension $n\geq5$, and any sequence of times approaching the singular time,
such a sequence is $(\gd,\mu)$-thick, but on the other hand the total energy
goes to zero at the singular time.
\end{rmk}

Observe that, in the statement of Theorem \ref{thickthin1}, we do not claim an
upper bound on the number of thick blowup points which occur, as one might
expect since each one carries a definite amount of energy and the total energy
is finite.  This is due to the fact that we cannot ensure local stability of
energy along the flow as the curvature is blowing up.  In other words, our
construction yields some sequence of times approaching the singular time where
the energy is concentrated near a given point, but we cannot improve this to say
that actually \emph{every} time has concentrated energy as one approaches the
singularity.  Some further local estimates would be required to obtain this
statement.  Assuming one could obtain such estimates of course a bound on the
number of thick blowup regions would follow, as detailed in the next corollary.

\begin{cor} Let $(M^4, g_t)$ be a complete solution to the $L^2$ flow with
finite energy on $[0,T)$.  Given $\gd,\mu > 0$ there exists a constant $N =
N(\gd,\mu,\FF(g_0))$ such that the following holds:
 Given a sequence $t_i \to T$ and $(\gd,\mu)$-thick blowup sequences
$\{(x_i^j,t_i)_{i=1}^{\infty}\}_{j=1}^N$, there exists $\{k_1,k_2\} \in
\{1,\dots,N\}$ such that
 \begin{align} \label{singconv}
  \limsup_{i \to \infty} d_{t_i}(x_i^{k_1},x_i^{k_2}) = 0.
 \end{align}
 \begin{proof} Let $\ge(\gd,\mu)$ denote the constant from Proposition
\ref{thickblowup}, and let $N =\ge^{-1} \FF(g_0)$.  Suppose we had $N$
$(\gd,\mu)$-thick blowup sequences as in the statement, but that
(\ref{singconv}) did not hold.  Observe that, in the notation of Proposition
\ref{thickblowup}, the radii $\rho_i^j \to 0$ for each $j$ as $i \to \infty$. 
In particular, this means we can choose a subsequence of times $t_i$ (not
relabeled) so that sufficiently far along the sequence we have that the balls
$\left\{ B_{\frac{\rho^j_i}{2}}(x^j_i,t_i) \right\}$ are all disjoint.  Then the
estimate (\ref{thickblowupenergy}) implies that, for such times $t_i$
sufficiently close to $T$ we have
\begin{align*}
\FF(g_0) \geq \FF(g_{t_i}) \geq \sum_{j=1}^N \int_{B_{\frac{\rho^j_i}{2}}}
\brs{\Rm}^2 > N \ge = \FF(g_0),
\end{align*}
a contradiction.
\end{proof}
\end{cor}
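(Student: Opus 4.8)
The plan is to argue by contradiction, pitting the energy concentration of Proposition~\ref{thickblowup} against the monotonicity of $\FF$ along the $L^2$ flow. Let $\ge = \ge(\gd,\mu) > 0$ be the constant furnished by Proposition~\ref{thickblowup}, and choose $N := \lfloor \ge^{-1}\FF(g_0) \rfloor + 1$, so that $N$ depends only on $\gd,\mu,\FF(g_0)$ and $N\ge > \FF(g_0)$. Suppose the conclusion failed: there would be $t_i \to T$ and $(\gd,\mu)$-thick blowup sequences $\{(x_i^j,t_i)\}_i$, $j = 1,\dots,N$, such that for \emph{every} pair $\{k_1,k_2\}$ one has $\liminf_{i\to\infty} d_{t_i}(x_i^{k_1},x_i^{k_2}) > 0$ (this is the failure of (\ref{singconv}), read as a statement about subsequential limits). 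Writing $\rho_i^j := \frac12\bar{\gs}_2(x_i^j,t_i)$ for the concentration radius of Proposition~\ref{thickblowup}, line (\ref{thickblowupenergy}) gives $\liminf_{i\to\infty} \int_{B_{\rho_i^j}(x_i^j,t_i)} \brs{\Rm}^2\, dV > \ge$ for each $j$.

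The key preliminary observation I would establish is that $\rho_i^j \to 0$ as $i \to \infty$ for each $j$. By definition of a thick blowup sequence $f_3(x_i^j,t_i) \to \infty$, while the parabolic smoothing estimates (Theorem~\ref{smoothing2}) yield a comparison of the form $\bar{\gs}_2(x_i^j,t_i) \leq C\,\bar{\gs}_3(x_i^j,t_i) \leq C\, f_3(x_i^j,t_i)^{-\frac12} \to 0$. Since there are finitely many indices $j$, also $\rho_i^{\max} := \max_{1\leq j\leq N}\rho_i^j \to 0$. Combining this with the contradiction hypothesis, there is $c > 0$ and $i_0$ with $d_{t_i}(x_i^{k_1},x_i^{k_2}) \geq c$ for all pairs $\{k_1,k_2\}$ and all $i \geq i_0$, and there is $i_1 \geq i_0$ with $\rho_i^{\max} < c/2$ for $i \geq i_1$; hence for $i \geq i_1$ the $N$ balls $B_{\rho_i^1}(x_i^1,t_i),\dots,B_{\rho_i^N}(x_i^N,t_i)$ are pairwise disjoint in $(M,g_{t_i})$. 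Taking $i$ large enough that moreover each integral exceeds $\ge$, disjointness of supports together with $\FF(g_{t_i}) \leq \FF(g_0)$ give
\begin{align*}
\FF(g_0) \geq \FF(g_{t_i}) \geq \sum_{j=1}^N \int_{B_{\rho_i^j}(x_i^j,t_i)} \brs{\Rm}^2\, dV > N\ge > \FF(g_0),
\end{align*}
which is the desired contradiction.

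Since the substantive work is already packaged in Proposition~\ref{thickblowup}, the step I would watch most carefully is the claim $\rho_i^j \to 0$: it is precisely what converts the uniform lower bound on the pairwise distances into genuine disjointness of the concentration balls, and it rests on the parabolic smoothing estimate of Theorem~\ref{smoothing2} to bound $\bar{\gs}_2$ in terms of $f_3^{-1/2}$. Everything else is immediate — the superadditivity of $\FF$ over disjoint balls, the per-ball lower bound $\ge$ from Proposition~\ref{thickblowup}, and the upper bound $\FF(g_{t_i}) \leq \FF(g_0)$ from the fact that $\FF$ is nonincreasing along the $L^2$ flow. One should also note, as in the proof, that (\ref{singconv}) is obtained only after passing to a subsequence of the $t_i$, which is harmless since the thick-blowup hypotheses are inherited by subsequences.
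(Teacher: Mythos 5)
Your proof is correct and follows essentially the same route as the paper's: choose $N$ so that $N\ge$ exceeds $\FF(g_0)$, use $\rho_i^j \to 0$ together with the uniform pairwise distance lower bound to make the concentration balls disjoint for large $i$, and contradict the monotonicity of $\FF$ via the per-ball energy bound of Proposition~\ref{thickblowup}. The only addition is your explicit justification that $\bar{\gs}_2(x_i^j,t_i) \to 0$ (via the smoothing estimate relating $\bar{\gs}_2$ to $\bar{\gs}_3 \leq f_3^{-1/2}$), a step the paper asserts without comment; this is a worthwhile and correct fill-in.
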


\begin{rmk} It is reasonable to expect that singularities to the $L^2$ flow in
the thick piece of the manifold occur via the formation of orbifold points. 
With this in mind, one does not expect the topology around a blowup point, after
rescaling, to be trivial, as occurs in the case of Yang-Mills flow where
singular models correspond to Yang-Mills connections on $S^4$.  Thus it is
natural to phrase ``how many'' singularities have occurred in terms of whether
or
not singular areas maintain a positive distance as time approaches the singular 
time.
\end{rmk}

\subsection{Proofs of Theorems}

\begin{proof}[Proof of Theorem \ref{thickthin1}] Fix $\mu > 0$.  Let $(M^4,
g_t)$ be a solution to the $L^2$ flow as in the statement.  Suppose there exists
a point where the curvature blows up, and the second condition fails, i.e.
\begin{align*}
\liminf_{t \to T} d(x,T_{\mu}(g_t),t) > 0.
\end{align*}
Choose a sequence of times $\{t_i\} \to T$ such
that
\begin{align*}
 \lim_{i \to \infty} \brs{\Rm}(x,t_i) = \infty.
\end{align*}
For $i \in \mathbb N$ sufficiently large, we can verify the setup of Lemma
\ref{ppl} with initial point $(x,t_i)$ and $A = 1$ to obtain a new point
$(\bar{x}_i,\bar{t}_i)$.  Moreover, by (\ref{ttt10}) we see that for
sufficiently large $i$, condition (\ref{pppnc}) is verified, and so by Proposition \ref{ppp} there exists
a constant $\gd > 0$ so that
$\bar{\gs}_3(\bar{x}_i,\bar{t}_i) \geq \gd
f_3(\bar{x}_i,\bar{t}_i)^{-\frac{1}{2}}.$  We have thus verified that
$(\bar{x}_i,\bar{t}_i)$ is a $(\gd,\mu)$-thick blowup sequence.  By Proposition
\ref{thickblowup} we obtain that
 \begin{align*}
  \liminf_{i \to \infty}
\int_{B_{\frac{\bar{\gs}_2(\bar{x}_i,\bar{t}_i)}{2}}(\bar{x}_i,\bar{t}_i)}
\brs{\Rm}^2 dV >
\ge,
 \end{align*}
for $\ge$ depending only on $\gd$ and $\mu$, which in turn only depends on
$\FF(g_0)$ and $\mu$.  It is clear by construction that, for any fixed $r > 0$,
for sufficiently large $i$ we have
$B_{\frac{\bar{\gs}_2(\bar{x}_i,\bar{t}_i)}{2}}(\bar{x}_i,\bar{t}_i) \subset
B_r(x,\bar{t}_i)$, and so the theorem follows.
\end{proof}

\begin{proof}[Proof of Theorem \ref{threefoldtt}] Fix $(M^3, g_t)$ a solution to
the $L^2$ flow as in the statement, and fix $\mu > 0$.  If the claim of the
theorem was false, there exists $x$  such that the curvature blows up but
\begin{align} \label{ttt10}
 \liminf_{t \to T} d(x,T_{\mu}(g_t),t) > 0.
\end{align}
Choose a sequence of times $\{t_i\} \to T$ such
that
\begin{align*}
 \lim_{i \to \infty} \brs{\Rm}(x,t_i) = \infty.
\end{align*}
For $i \in \mathbb N$ sufficiently large, we can verify the setup of Lemma
\ref{ppl} with initial point $(x,t_i)$ and $A = 1$ to obtain a new point
$(\bar{x}_i,\bar{t}_i)$.  Moreover, by (\ref{ttt10}) we see that for
sufficiently large $i$, condition (\ref{pppnc}) is verified, and so there exists
a constant $\gd > 0$ so that
$\bar{\gs}_2(\bar{x}_i,\bar{t}_i) \geq \gd
f_2(\bar{x}_i,\bar{t}_i)^{-\frac{1}{2}}.$  We have thus verified that
$(\bar{x}_i,\bar{t}_i)$ is a $(\gd,\mu)$-thick blowup sequence, contradicting
Proposition \ref{thickblowup}.
\end{proof}

\begin{proof} [Proof of Corollary \ref{singularitygap}] To obtain this
corollary, given $\mu$ simply choose $\ge$ smaller than the constant $\ge(\mu)$
given in the statement of Theorem \ref{thickthin1}.  This rules out the first
possibility in the statement of Theorem \ref{thickthin1}, leaving only the
second possibility, as claimed.
\end{proof}

\begin{proof} [Proof of Corollary \ref{singularityconc}] First fix a constant
$\ga > 0$ and define the $\ga$-cutoff $k$-curvature radius to be
\begin{align*}
 \bar{\gs}_k^{\mbox{cut}}(x) := \min \{\bar{\gs}_k(x),\ga\}.
\end{align*}
With this radius one can define associated quantities $\nu^{\mbox{cut}}$ and
$T_{\mu}^{\mbox{cut}}$ as in \S \ref{bckgrnd}, and repeat the arguments of this
section with these new quantities.  Since our concern is only with points of
large curvature this cutoff has no effect on the proofs.  

If $\ga$ is chosen sufficiently small with respect to $\Omega$, by the
discussion in Corollary \ref{sobflatcor}, we know that there is a certain lower
bound $\mu$ on the volume ratio of balls whose radius is no bigger than $\ga$. 
In particular, for $\mu$ chosen sufficiently small we have that
$T_{\mu}^{\mbox{cut}} = \emptyset$.  Thus as in the proof of Theorem
\ref{thickthin1} we can obtain a $(\gd,\mu)$-thick blowup sequence, where $\gd$
depends on $\Omega$ on $\FF(g_0)$.  Arguing as in the proof of Theorem
\ref{thickthin1} using Proposition \ref{thickblowup} yields the corollary.
\end{proof}

\section{Calabi Flow} \label{cfsec}

As stated in the introduction, all of the results of this paper apply to the
Calabi flow on complex surfaces as well as the $L^2$ curvature flow.  Below we
summarize these
proofs.  First, we address the proof of Theorem \ref{smoothing1}, and the
corollaries in \S \ref{smoothingsec}.  The main tool exploited in these
arguments is the local smoothing of curvature derivatives in $L^2$, combined
with local covering arguments using the exponential map.  These $L^2$ smoothing
estimates for Calabi flow are exhibited in \cite{ChenHe} Theorem 3.1.  Strictly
speaking that theorem only applies to show global $L^2$ derivative estimates,
but by employing a cutoff function as in (\cite{SL21} Corollary 5.2,
\cite{SL2LTB} Theorem 4.4) one can localize these estimates.  Implicitly the
evolution equations for curvature and derivatives depend on the complex
structure $J$ as well as the metric $g$, but this presents no difficulty as one
can simply pull $J$ back as well in defining the solutions to Calabi flow on
local covers.  Next, the required $\ge$-regularity result analogous to Theorem
\ref{erthm} is \cite{CW} Theorem 1.1.  Lastly,
the point-picking result of Proposition \ref{ppp} exploits only the local
smoothing and energy monotonicity properties, and so carries over for Calabi
flow on complex surfaces.  With these tools in place, all of the discussion in
\S \ref{ttsec} follows identically to establish the concentration-collapse
decomposition.

As we have seen, the strongest results for the $L^2$ flow are necessarily
restricted to dimension $n \leq 4$.  However, for Calabi flow one expects much
stronger properties.  In particular Chen has conjectured long time existence of
the Calabi flow with arbitrary initial data in all dimensions.  The proofs here
rely on ``energy methods," which are well-suited to the $L^2$ flow and naturally
exploit the necessary dimensionality restriction.  New methods will likely be
required to obtain characterizations of Calabi flow singularities in higher
dimensions.

Also, one would like to generalize previous low-energy results for the Calabi
flow on surfaces \cite{ChenHe,ChenHe2} to a more general setting.  However, 
the techniques used to prove Theorem \ref{einv} do not immediately extend to
prove a corresponding gap theorem for K\"ahler manifolds with small Calabi
energy and a nearly Euclidean lower bound on the volume of sufficiently small
balls.  The reason is because of the failure of the Gap Lemma (\cite{Anderson}
Lemma 3.1) for complete scalar flat K\"ahler metrics.  If one assumes that
sufficiently small metric balls are also topological balls, then using rigidity
results for ALE scalar flat K\"ahler metrics it is possible to prove a gap
theorem.  This assumption is too difficult to ensure in practice though.  A
similar discussion applies to potentially using the $L^2$ flow to prove a gap
theorem near
scalar-flat, half-conformally flat manifolds.

\bibliographystyle{hamsplain}

\end{document}